\newcommand{\B}{\mathcal{B}}
\newcommand{\D}{\mathcal{D}}
\newcommand{\F}{\mathcal{F}}
\newcommand{\V}{\mathcal{V}}
\newcommand{\X}{\mathcal{X}}
\newcommand{\cL}{\mathcal{L}}
\newcommand{\cS}{\mathcal{S}}
\newcommand{\cP}{\mathcal{P}}
\newcommand{\cN}{\mathcal{N}}
\newcommand{\cZ}{\mathcal{Z}}
\newcommand{\cU}{\mathcal{U}}
\newcommand{\C}{\mathbb{C}}
\newcommand{\N}{\mathbb{N}}
\newcommand{\R}{\mathbb{R}}
\newcommand{\Z}{\mathbb{Z}}
\newcommand{\al}{\alpha}
\newcommand{\be}{\beta}
\newcommand{\de}{\delta}
\newcommand{\ga}{\gamma}
\newcommand{\e}{\varepsilon}
\newcommand{\fy}{\varphi}
\newcommand{\io}{\iota}
\newcommand{\ka}{\kappa}
\newcommand{\la}{\lambda}
\newcommand{\te}{\theta}
\newcommand{\si}{\sigma}
\newcommand{\ta}{\tau}
\newcommand{\x}{\xi}
\newcommand{\y}{\eta}
\newcommand{\z}{\zeta}
\newcommand{\De}{\Delta}
\newcommand{\Om}{\Omega}
\newcommand{\Ga}{\Gamma}
\newcommand{\La}{\Lambda}
\newcommand{\p}{\partial}
\newcommand{\na}{\nabla}
\newcommand{\Cu}{\bigcup}
\newcommand{\re}{\mathop{\mathrm{Re}}}
\newcommand{\weak}{\operatorname{w-}}
\newcommand{\supp}{\operatorname{supp}}
\newcommand{\sign}{\operatorname{sign}}
\newcommand{\lec}{\lesssim}
\newcommand{\gec}{\gtrsim}
\newcommand{\IN}[1]{\text{ in }#1}
\newcommand{\etc}{,\ldots,}
\newcommand{\I}{\infty}
\newcommand{\ti}{\tilde}
\newcommand{\ba}{\overline}
\newcommand{\ck}{\check}
\newcommand{\U}{\underline}
\newcommand{\LR}[1]{{\langle #1 \rangle}}
\newcommand{\Lim}{\lim\limits}
\newcommand{\Liminf}{\varliminf\limits}
\newcommand{\Limsup}{\varlimsup\limits}
\newcommand{\empt}{\varnothing}
\newcommand{\rad}{_{\operatorname{rad}}}
\newcommand{\HL}{\operatorname{HL}}
\newcommand{\LH}{\operatorname{LH}}
\newcommand{\HH}{\operatorname{HH}}
\newcommand{\EQ}[1]{\begin{equation}\begin{split} #1 \end{split}\end{equation}}
\newcommand{\BR}[1]{\left[#1\right]}
\newcommand{\Br}[1]{\left\{#1\right\}}
\newcommand{\define}{\overset{\operatorname{def}}{\iff}}
\newcommand{\tand}{\ \text{ and }\ }
\newcommand{\tor}{\ \text{ or }\ }
\newcommand{\squm}{\sum^{(2)}}
\newcommand{\Del}[1]{}
\newcommand{\CAS}[1]{\begin{cases} #1 \end{cases}}
\newcommand{\pt}{&}
\newcommand{\pr}{\\ &}
\newcommand{\pq}{\quad}
\newcommand{\pQ}{\qquad}
\newcommand{\pn}{}
\newcommand{\prq}{\\ &\quad}
\newcommand{\prQ}{\\ &\qquad}
\newcommand{\fn}{_\star}
\newcommand{\sF}{^{\scriptscriptstyle \textsc{(f)}}}
\newcommand{\sN}{^{\scriptscriptstyle \textsc{(n)}}}
\newcommand{\sD}{^{\scriptscriptstyle \textsc{(d)}}}
\newcommand{\Uf}{U_{\operatorname{f}}}
\newcommand{\rd}{_{\operatorname{rd}}}
\newcommand{\wt}{_{\operatorname{wt}}}
\newcommand{\pert}{_{\operatorname{pe}}}
\newcommand{\ds}{_{\operatorname{ds}}}
\newcommand{\tle}{\textless}
\newcommand{\tge}{\textgreater}
\numberwithin{equation}{section}
\newtheorem{thm}{Theorem}[section]
\newtheorem{cor}[thm]{Corollary}
\newtheorem{lem}[thm]{Lemma}
\newtheorem{prop}[thm]{Proposition}
\theoremstyle{remark}
\newtheorem{rem}{Remark}[section]
\newtheorem{defn}[thm]{Definition}
\begin{document}
\subjclass[2010]{35L70, 35Q55} \keywords{Nonlinear wave equation,
  Nonlinear Schr\"odinger equation, Zakharov system, 
  Scattering, Blow-up, Ground state, Strichartz estimate}

\title[4D Zakharov below the ground state]{The Zakharov system in 4D radial energy space below the ground state}

\author[Z.~Guo, K.~Nakanishi]{Zihua Guo, Kenji Nakanishi} 

\address{School of Mathematical Sciences, Monash University, VIC 3800, Australia}
\email{zihua.guo@monash.edu}

\address{Research Institute for Mathematical Sciences, Kyoto University, Kyoto 606-8502, Japan} \email{kenji@kurims.kyoto-u.ac.jp}

\begin{abstract}
We prove dynamical dichotomy into scattering and blow-up (in a weak sense) for all radial solutions of the Zakharov system in the energy space of four spatial dimensions that have less energy than the ground state, which is written using the Aubin-Talenti function. The dichotomy is characterized by the critical mass of the wave component of the ground state. 
The result is similar to that by Kenig and Merle \cite{KM} for the energy-critical nonlinear Schr\"odinger equation (NLS). Unlike NLS, however, the most difficult interaction in the proof stems from the free wave component. 
In order to control it, the main novel ingredient we develop in this paper is a uniform global Strichartz estimate for the linear Schr\"odinger equation with a potential of subcritical mass solving a wave equation. 
This estimate, as well as the proof, may be of independent interest. 
For the scattering proof, we follow the idea by Dodson and Murphy \cite{DM}. 
\end{abstract}

\maketitle

\tableofcontents

\section{Introduction}
\subsection{The Zakharov system in four dimensions}
We continue from \cite{Zak4D1} the study of the Zakharov system in four space dimensions: 
\EQ{\label{Zak0}
 \CAS{ i\dot u - \De u = nu, \pq u(t,x):\R^{1+4}\to\C,
 \\ \ddot n/\al^2 - \De n = -\De|u|^2, \pq n(t,x):\R^{1+4}\to\R,}}
with a special focus on global behavior of solutions that are not small.

The Zakharov system is a mathematical model for Langmuir waves in a plasma, which couples the envelope $u$ of the electric field and the ion density $n$ with the sound speed $\al>0$. Since the main result is for the system with a fixed speed $\al>0$, we may and will take $\al=1$ without loss of generality, by rescaling. In a few places, however, we will restore $\al$ and consider the limits $\al\to\I$ and $\al\to 0$ for comparison with the limit equations. We refer to \cite{Zak4D1} and references therein for more detailed introduction on the Zakharov system and the preceding studies. 
 
As in \cite{Zak4D1}, the Zakharov system is equivalently transformed to a system of the first order equations by the change of variable
$n \mapsto N:=n-iD^{-1}\dot n$ with $D:=\sqrt{-\De}$: 
\EQ{ \label{Zak}
 \CAS{ (i\p_t-\De-\re N)u=0, \pq u(t,x):\R^{1+4}\to\C,
 \\ (i\p_t+D)N=D|u|^2, \pq N(t,x):\R^{1+4}\to\C.} }
In view of the conservation of the mass and the energy (Hamiltonian):
\EQ{ \label{def M EZ}
 M(u):=\int_{\R^4}|u|^2dx, \pq 
 E_Z(u,N):=\int_{\R^4}|\na u|^2+\frac{|N|^2}{2}-\re N|u|^2 dx,} 
it is natural to study solutions in the energy space $(u(t),N(t))\in H^1(\R^4)\times L^2(\R^4)$ at each $t$, which is the Sobolev space in $x\in\R^4$ normed with the quadratic part of $M(u)+E_Z(u,N)$. 
In fact, once the local wellposedness of the Cauchy problem is established in the energy space, the standard argument yields conservation of $M(u)$ and $E_Z(u,N)$, which plays crucial roles in the global analysis of solutions. 

Although the four dimensional setting does not appear physical, it is interesting from the PDE viewpoint, because the system may be regarded as energy-critical in several aspects. 
Henceforth, the Lebesgue norm on the whole space is denoted by 
\EQ{ \label{def norm-p}
 \|\fy\|_p:=\|\fy\|_{L^p(\R^d)}.} 
The energy criticality is readily seen in the Hamiltonian $E_Z(u,N)$. The nonlinear part is barely controlled by the critical Sobolev embedding as 
\EQ{
 \left|\int_{\R^4}N|u|^2dx\right| \le \|N\|_2\|u\|_4^2 \le C_S^2\|N\|_2\|\na u\|_2^2,}
where $C_S>0$ denotes the best constant in the Sobolev inequality 
\EQ{ \label{Sob}
 \|u\|_{L^4(\R^4)} \le C_S\|\na u\|_{L^2(\R^4)}.}
Another simple way to observe the criticality is by the subsonic limit. Let $\al\to \I$ in \eqref{Zak0}, then the Zakharov system (formally) converges to the energy-critical nonlinear Schr\"odinger equation (NLS):   
\EQ{\label{eq:NLS}
 i\dot u - \De u = |u|^2u, \pq N=|u|^2}
which is scaling invariant for 
\EQ{ \label{scal u}
 u(t,x)\mapsto \la^2u(\la^2t,\la x) \pq (\la>0)} 
in the homogeneous energy space $u\in\dot H^1(\R^4)\subset L^4(\R^4)$. 

The main difficulty, however, stems from the free wave component of $N$ in $L^2(\R^4)$, which can not be seen in \eqref{eq:NLS}, since it is lost as the initial layer in the limit (cf.~\cite{Z2NLS} for a rigorous justification of the limit in the energy space on $\R^3$). 
If we ignore for the moment the evolution of $N$ and just consider the Schr\"odinger equation with a potential:
\EQ{ \label{SP}
 i\dot u - \De u = (\re N)u,}
in general space dimensions $\R^d$, the invariant norm for the scaling \eqref{scal u} is $\|N\|_{L^{d/2}(\R^d)}$. The four dimensional case is special because the free wave equation $(i\p_t-D)N=0$ is wellposed in $L^{d/2}(\R^d)$ if and only if $d=4$. 
If we use dispersive estimates on $N$ such as $L^p(\R)$ in $t$ with $p<\I$ in estimating the interaction $(\re N)u$, then it will break the scale invariance, since the wave equation has weaker dispersion than the Schr\"odinger. 
Thus we are essentially forced to use $\|N(t)\|_{L^2(\R^4)}$. 
Since it does not decay in $t$, it raises difficulty for large solutions to be treated in perturbative ways.

To see the above more concretely, recall the endpoint Strichartz estimate \cite{KT}: 
\EQ{ \label{end Str}
 \int_\R\|u(t)\|_{2^*}^2dt \le C\|u(0)\|_2^2 + C\int_\R\|(i\dot u-\De u)(t)\|_{2_*}^2dt,}
where $d\ge 3$, $2^*:=2d/(d-2)$ and $2_*:=2d/(d+2)$. 
Since the multiplication with a $L^{d/2}$ function sends the norm on the left to that on the right by H\"older: 
\EQ{ \label{end prod}
 \|N(t)u(t)\|_{2_*} \le \|N(t)\|_{d/2}\|u(t)\|_{2^*},}
one can thereby control the global Strichartz norm of $u$ in the linear Schr\"odinger equation \eqref{SP} if $\sup_t\|N(t)\|_{d/2}$ is small enough, as was observed in \cite{RS}. 
Combining such estimates with the normal form argument to avoid the derivative loss, 
the local wellposedness of \eqref{Zak} was proven in \cite{Zak4D1} in the $L^2$-Sobolev space $(u,N)\in H^s(\R^4)\times H^l(\R^4)$ in a certain region of regularity $(s,l)$, improving the results in \cite{GTV}, including 
\EQ{\label{eq:slrange}
 1/2\le s<1, \pq l=0}
as the boundary with the lowest regularity of the wave component $N$, as well as persistence of regularity, and scattering for small initial data. See \cite[Theorem 1.2, Proposition 5.1]{Zak4D1} for the detail. 

If we would replace the endpoint norm in \eqref{end prod} on $\R^4$ by some non-endpoint one, then $N$ would have to disperse in the same way as the Schr\"odinger solutions in terms of some Strichartz norm, because of the scaling of \eqref{SP}. However, such norms can not be uniformly bounded for free waves in $L^2(\R^4)$, even in the radial case. The idea in \cite{Zak4D1} to treat large solutions in \eqref{eq:slrange}, locally in time, was to approximate $N$ by nicer solutions, which must depend on the profile of $N$ at each time. 

The case of energy space $(s,l)=(1,0)$ is more subtle and can not be treated in the same way as for \eqref{eq:slrange} due to logarithmic failure in some crucial estimates, which is related to that of the critical Sobolev embedding: $W^{1,4}(\R^4) \not\subset L^\I(\R^4)$, 
naturally arising in the endpoint Strichartz estimate \eqref{end Str} with a space derivative in $d=4$. 
However, in \cite{Zak4D1} it was also observed that by a  weak compactness argument and conservation of the energy and the mass, the energy space can be handled in an indirect way using the results in the other exponents, and thus the global wellposedness with scattering in the energy space $H^1(\R^4)\times L^2(\R^4)$ was proven for small initial data. 

The main interest of this paper is in the existence and behavior of large solutions to the Zakharov system \eqref{Zak} in the energy space. 
Following the same approach as in \cite{Zak4D1}, the difficulty for large global solutions is mainly in $N\in L^2(\R^4)$ as mentioned above, rather than $u\in H^1(\R^4)$. 
Then from the dynamical viewpoint, it is natural to ask exactly how much smallness of $\|N(0)\|_2$ is needed to ensure the global existence and the scattering. 
This type of question was solved in the case of the energy-critical NLS by Kenig and Merle \cite{KM}, who developed the concentration-compactness and rigidity method,  and proved dynamical dichotomy into scattering and blow-up for solutions to \eqref{eq:NLS} with energy below the ground state in the radial case.  The ground state is the celebrated Aubin-Talenti function, which is the unique (modulo scaling and translation) maximizer of the Sobolev inequality. On $\R^4$, it has the form
\EQ{ \label{def W}
 W:=[1+|x|^2/8]^{-1}.}
Its maximality for \eqref{Sob} can be written precisely as 
\EQ{ \label{max Sob}
 \|W\|_4/\|\na W\|_2=\sup_{0\not=\fy\in H^1(\R^4)}\|\fy\|_4/\|\na\fy\|_2=:C_S,}
with the Euler-Lagrange equation being the static NLS:  
\EQ{ \label{stNLS}
 -\De W = W^3.}

It turns out that the same type of dynamic dichotomy holds for the Zakharov system \eqref{Zak} under the ground state static solution:
\EQ{
 (u,N)=(W,W^2).}
Since it is obviously not a scattering solution, the condition 
\EQ{ \label{mass threshold}
  \|N(t)\|_2 < \|W^2\|_2 }
is possibly the best upper bound on the wave mass for the scattering. Note that it is also the optimal upper bound for positivity of the Schr\"odinger operator in \eqref{SP}:
\EQ{ \label{positivity}
 \|N\|_2 < \|W^2\|_2 \implies -\De + \re N >0,}
since $W$ is the Sobolev optimizer. Indeed, for any $\fy\in H^1(\R^4)$ we have 
\EQ{
 \LR{(-\De-\re N)\fy|\fy}\ge \|\na\fy\|_2^2-\|N\|_2\|\fy\|_4^2 \ge (1-\|N\|_2\|W\|_4^{-2})\|\na\fy\|_2^2,}
hence $\|N\|_2<\|W\|_4^2=\|W^2\|_2$ implies $-\De+V>0$.

Since $\|N(t)\|_2$ is not conserved in the flow of the Zakharov system, \eqref{mass threshold} is not in general preserved for later time. It turns out, however, that if the conserved energy $E_Z(u,N)$ is less than that of the ground state, then \eqref{mass threshold} is topologically preserved. 
Thus we obtain the following, which is the main result of this paper: the scattering and a weak blow-up in the radial energy space below the ground state. 
Henceforth, the subspace of radially symmetric functions in a function space $X$ on $\R^d$ is denoted by 
$X\rad(\R^d)$. 
\begin{thm} \label{main}
For any $(\fy,\psi)\in H^1\rad(\R^4)\times L^2\rad(\R^4)$ satisfying 
\EQ{
 E_Z(\fy,\psi)<E_Z(W,W^2),}
there exists a unique\footnote{The uniqueness is proved only under a space-time integrability condition as in Definition \ref{def:sol}.} local solution $(u,N)\in C(I;H^1(\R^4)\times L^2(\R^4))$ of \eqref{Zak} with the initial condition $(u(0),N(0))=(\fy,\psi)$ on the maximal existence interval $I\ni 0$. 
Moreover, it satisfies either (1) or (2) exclusively. 
\begin{enumerate}
\item $\|N(t)\|_2<\|W^2\|_2$ for all $t\in\R=I$. $(u,N)$ is uniformly bounded in $\dot H^1(\R^4)\times L^2(\R^4)$, while $\|u(t)\|_2$ is conserved. There exist unique $(\fy_\pm,\psi_\pm)\in H^1(\R^4)\times L^2(\R^4)$ such that 
\EQ{
 \|u(t)-e^{-it\De}\fy_\pm\|_{H^1} + \|N(t)-e^{itD}\psi_\pm\|_2 \to 0 \pq (t\to\pm\I).}
\item \label{it:bup} $\|N(t)\|_2>\|W^2\|_2$ for all $t\in I$. If $\sup I=\I$ then $\Limsup_{t\to\I}\|\na u(t)\|_2+\|N(t)\|_2=\I$. If $\inf I=-\I$ then $\Limsup_{t\to-\I}\|\na u(t)\|_2+\|N(t)\|_2=\I$. 
\end{enumerate}
\end{thm}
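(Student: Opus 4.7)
The strategy combines variational trapping (to split and propagate the two regimes), a Dodson--Murphy type scattering argument in case (1) based on the paper's new uniform Strichartz estimate for Schr\"odinger with a non-decaying subcritical-mass wave potential, and a Kenig--Merle rigidity contradiction for the weak blow-up in case (2). The local wellposedness in $(H^1\times L^2)\rad(\R^4)$ together with the associated blow-up criterion are inherited from \cite{Zak4D1}.

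First I would establish the variational barrier. From \eqref{Sob} and the identity $C_S^{-2}=\|W^2\|_2$ (a consequence of $\|\na W\|_2^2=\|W\|_4^4$, obtained by pairing \eqref{stNLS} with $W$) one gets the coercivity
\EQ{
 E_Z(u,N)\ge (1-\|N\|_2/\|W^2\|_2)\|\na u\|_2^2+\|N\|_2^2/2,
}
while a direct computation yields $E_Z(W,W^2)=\|W^2\|_2^2/2$. On the sublevel set $\{E_Z<E_Z(W,W^2)\}$ the critical value $\|N\|_2=\|W^2\|_2$ is therefore strictly forbidden, so continuity of $t\mapsto\|N(t)\|_2$ on the maximal interval $I$ makes the sign of $\|N(t)\|_2-\|W^2\|_2$ invariant; this yields the exclusivity of the two alternatives and preserves each regime along the flow.

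In case (1) the coercivity and energy conservation force $\|\na u(t)\|_2\lec 1$ and $\|N(t)\|_2\le \|W^2\|_2-\de$ for some $\de>0$ depending on the energy gap, ruling out energy-space blow-up, so $I=\R$. For scattering I would follow \cite{DM}: the key input is the new uniform Strichartz estimate for $i\p_t-\De-\re N$ with $N$ a free wave of subcritical mass; combined with a radial virial/Morawetz identity tested against a smooth cutoff, it produces a global spacetime Strichartz bound on $u$. The wave equation $(i\p_t+D)N=D|u|^2$ is then handled by linear Strichartz using the energy bound on $\|u\|_4^2$, and the asymptotic free profiles $(\fy_\pm,\psi_\pm)$ follow by standard limiting. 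In case (2), invariance gives $\|N(t)\|_2>\|W^2\|_2$ throughout $I$; if the solution extended to $+\I$ with $\Limsup_{t\to\I}(\|\na u\|_2+\|N\|_2)<\I$, a profile decomposition / weak-limit argument combined with persistence from the local theory would produce a minimal, non-scattering, orbit-compact object with $\|N\|_2\ge\|W^2\|_2$ and $E_Z<E_Z(W,W^2)$; the variational barrier then forces this object to be $(W,W^2)$, contradicting strict energy inequality (and analogously as $t\to-\I$).

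The main obstacle is the scattering step: since the wave component of $N$ does not decay in any non-endpoint Strichartz norm, the standard small-data bootstrap breaks for arbitrary sub-threshold data, and it is exactly here that the new uniform Strichartz estimate with non-decaying, subcritical-mass potential must enter. Embedding it into the Dodson--Murphy radial framework, and handling the derivative loss in the $N$-equation by the normal-form / bilinear correction of \cite{Zak4D1}, are the most delicate technical points; the rigidity in case (2) relies on the same estimate applied at threshold.
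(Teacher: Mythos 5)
Your variational barrier and the preservation of the sign of $\|N(t)\|_2-\|W^2\|_2$ by continuity are essentially the paper's Lemma \ref{K cond Zak}, and the Dodson--Murphy route you sketch for scattering in case (1) matches the paper's Section \ref{sect:sc}. However, there are two genuine gaps in your plan.

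First, you claim the local theory and blow-up criterion are ``inherited from \cite{Zak4D1}'' and that a uniform $\dot H^1\times L^2$ bound away from the threshold ``rules out energy-space blow-up, so $I=\R$.'' This is precisely where the paper's main new ingredient is indispensable. The blow-up criterion available from \cite{Zak4D1} is Proposition \ref{prop:bc0}: divergence of the spacetime Strichartz norm $\|u\|_{L^2_tB^{1/2}_{4,2}}$, which bounded energy does \emph{not} control; that is exactly the energy-critical obstruction coming from $N\in L^\infty_t L^2$. To convert this into the norm-type criterion that you implicitly use (Corollary \ref{cor:bc}: either $\|u(t)\|_{\dot H^s}\to\infty$ or $\liminf\|N(t)\|_2\ge\|W^2\|_2$), the paper must first prove Theorem \ref{thm:SLWP}, which relies on the uniform Strichartz estimate Theorem \ref{thm:Stz} for a potential decomposed as $V=V\sF+V\sN+V\sD$ via the normal form, not merely a free wave. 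So the global existence in case (1) cannot be treated as background; it is the first application of the new estimate. Without it your argument for $I=\R$ is circular.

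Second, for case (2) you propose a Kenig--Merle style profile decomposition producing a minimal, orbit-compact, non-scattering object forced by the variational barrier to be $(W,W^2)$. The paper explicitly explains (end of Section 1.2) why this route is blocked here: perturbation of the wave potential incurs a loss of derivative that neither the radial Strichartz gain nor the normal form can absorb simultaneously, so the long-time perturbation/stability lemma needed for concentration-compactness is unavailable. Instead the paper proves case (2) directly in Section \ref{sect:bup}: assuming the solution is global forward in time with $\sup_{t\ge0}\|(u,N)\|_{H^1\times L^2}<\infty$, Lemma \ref{lem:estK} gives the uniform upper bound $\dot\V_\I(u,\nu)\le-4\e<0$, and the localized virial identity together with the tail and commutator bounds (\eqref{est L4tail}, \eqref{est beR}, \eqref{nuu tail}, \eqref{xuu bd}) yields $-RM^4\lesssim -2\e T + CM^4R^{-1}T^{3/2}$; choosing $T=R^{4/3}$ and letting $R\to\infty$ gives a contradiction. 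No minimal element, rigidity, or profile decomposition of the nonlinear flow appears. Your proposed compactness-based contradiction would require exactly the perturbation stability the paper says is unavailable.
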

\begin{rem}
In the case of \eqref{it:bup}, there are three possible scenarios for singularity in $t>0$. Let $T:=\sup I$ be the maximal time. (a)Norm blow-up: $T<\I$ and $\Limsup_{t\to T}\|\na u(t)\|_2+\|N(t)\|_2=\I$. (b)Concentration blow-up: $T<\I$ and $\Limsup_{t\to T}\|\na u(t)\|_2+\|N(t)\|_2<\I$. (c)Norm grow-up: $T=\I$ and $\Limsup_{t\to\I}\|\na u(t)\|_2+\|N(t)\|_2=\I$. The above theorem asserts only that one of the three happens in the case of \eqref{it:bup}. 
\end{rem}
\begin{rem}
The local wellposedness holds in the energy space for general data with no size or symmetry restriction, and follows essentially from the same argument as in \cite{Zak4D1}, see Section \ref{sect:LWP} below.
\end{rem}

Compared with Kenig-Merle's result \cite{KM} for NLS, the above result is almost the same if $N$ is replaced with $|u|^2$, except the scattering of $N$ and that the blow-up result in \cite{KM} is the true one in finite time. 
As mentioned before, however, the main difficulty for the proof in the Zakharov system is in the free wave component of $N$ rather than $|u|^2$. A new estimate is derived to control that term: uniform global Strichartz estimates for the linear Schr\"odinger equation \eqref{SP} when $N$ solves the wave equation, which is explained below. 

\subsection{Strichartz estimate with a wave potential}
The main novel ingredient in the proof is a Strichartz estimate for the Schr\"odinger equation in $H^s(\R^4)$ ($s<1$) with a large $L^2(\R^4)$ potential solving a wave equation, which may be of independent interest. 
The estimate is an extension of the following three versions (Lemmas \ref{lem:freeStz}, \ref{lem:stSt}, \ref{lem:smallSt}). 
The full range of the Strichartz admissible exponents, namely the double endpoint, is needed to treat the wave potential $N\in L^\I_tL^2$, cf.~\eqref{end Str}--\eqref{end prod}. 

Before recalling the known Strichartz estimates, we introduce some notation. 
The homogeneous Besov and Sobolev spaces on $\R^d$ is denoted respectively by $\dot B^s_{p,q}(\R^d)$ and $\dot H^s(\R^d)=\dot B^s_{2,2}(\R^d)$, while the inhomogeneous Sobolev space is denoted by $H^s(\R^d)$. 
For brevity, the Sobolev exponent is denoted by 
\EQ{ \label{def p(s)}
 d\in\N,\ p\in[1,\I],\ s\in\R \implies \frac{1}{p(s)}:=\frac{1}{p}+\frac{s}{d},}
so that the Sobolev embedding can be written as 
\EQ{
 \e\ge 0 \implies \dot B^{s}_{p,q}(\R^d) \subset \dot B^{s-\e}_{p(-\e),q}(\R^d).}
For example, we have on $\R^4$ ($d=4$), 
\EQ{
 2(-2)=\I,\pq 2(-1)=4,\pq 2(1)=4/3, \pq 2(2)=1.}
The following notation is introduced for the endpoint Strichartz norms: for $\de\in\R$, 
\EQ{ \label{def X}
 \pt \X^\de:=L^2_t \dot B^{\de}_{2(\de-1),2}, \pq \X^\de_*:= L^2_t \dot B^{-\de}_{2(-\de+1),2},
 \pr \ck \X^\de:= L^\I_t L^2 \cap \X^\de, \pq \ck\X^\de_* := L^1_t L^2 + \X^\de_*,
 \pr \ti \X^\de:=\cL^\I_t L^2 \cap \X^\de, \pq \ti \X^\de_*:=\cL^1_t L^2 + \X^\de_*,}
where $\cL^p_t Z$ denotes the space-time-frequency mixed norm defined by 
\EQ{ \label{def cL}
 \|u\|_{\cL^p_t Z}^2 = \sum_{j\in 2^\Z} \|u_j\|_{L^p_t Z}^2,}
where $u=\sum_{j\in 2^\Z}u_j$ is the Littlewood-Paley decomposition on $\R^d$ such that 
\EQ{ \label{def LP}
 \supp\F u_j \subset \{\x\in\R^d \mid j/2 <|\x|<2 j\}.}
Henceforth the Fourier transform is denoted by 
$\F\fy(\x)=\hat\fy(\x):=\int_{\R^d}\fy(x)e^{-ix\x}dx$. 
We have the following embeddings for $d\ge 3$ by the Sobolev inequality:
\EQ{
 \de_1\ge\de_2\ge\de_3 \implies \CAS{ \ti \X^{\de_1}\subset \ti \X^{\de_2} \subset \ck \X^{\de_2} \subset \X^{\de_2} \subset \X^{\de_3}, 
 \\ \ti \X_*^{\de_1}\supset \ti \X_*^{\de_2}\supset \ck \X_*^{\de_2} \supset \X_*^{\de_2}\supset \X_*^{\de_3},}}
as long as the $L^p$ exponents are within $[1,\I]$. 
For any Banach space $X$ of functions on $\R$, its restriction to any interval $I\subset\R$ is denoted by
\EQ{ \label{def X(I)}
 \|u\|_{X(I)} := \|u_I\|_{X}, \pq u_I(t)=\CAS{u(t) &(t\in I),\\ 0 &(t\not\in I).} }

Using the above notation, we first recall the improved Strichartz estimate for radial solutions of the free Schr\"odinger equation with regularity gain $\de\ge 0$. The following double-endpoint estimate was obtained in \cite[Theorem 1.3]{endStr}, while the single endpoint estimate was proven before in \cite{GW}. We refer to \cite{GW,endStr} and the references therein for more background and related works. 
\begin{lem}[Radial improved Strichartz for free waves] \label{lem:freeStz}
Let $d\ge 3$. There exists a continuous function $C\rd:[0,\de_\star)\to(0,\I)$ with $\de_\star:=\frac{d-1}{2d-1}$ such that for any interval $I\subset\R$ and any $u\in C(I;H^s\rad(\R^4))$, we have 
\EQ{ \label{RFD}
 \|\LR{D}^su\|_{\ti\X^\de(I)} \le C\rd(\de)\BR{\inf_{t\in I}\|u(t)\|_{H^s}+\|\LR{D}^s(i\p_t-\De)u\|_{\ti\X^\de_*(I)}}, }
where $\LR{D}:=\sqrt{1+D^2}=\sqrt{1-\De}=\F^{-1}(1+|\x|^2)^{1/2}\F$. 
\end{lem}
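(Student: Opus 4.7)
My plan is to decompose in frequency via Littlewood--Paley, reduce by scaling to the frequency-$1$ case, prove a uniform frequency-localized estimate by marrying the Keel--Tao endpoint machinery with radial dispersive improvements, and then reassemble using the $\ell^2$ structure already built into the norms.

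The definitions of $\cL^p_t L^2$ and of the Besov norms $L^2_t\dot B^\de_{p,2}$ that appear in $\ti\X^\de$ and $\ti\X^\de_*$ are $\ell^2$ sums over dyadic frequency blocks, so after squaring \eqref{RFD} reduces to the same estimate for each frequency-$2^j$ piece $u_j$, with constant uniform in $j$. The Schr\"odinger scaling $u(t,x)\mapsto u(\la^2t,\la x)$ preserves both sides of the reduced estimate, so I may assume $j=0$; the operator $\LR{D}^s$ is then a bounded multiplier and can be discarded.

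The heart of the matter is the frequency-localized homogeneous estimate $\|e^{-it\De}\fy\|_{L^2_tL^{2(\de-1)}\cap L^\I_tL^2}\lec\|\fy\|_2$ for radial $\fy$ with Fourier support in $\{|\x|\sim 1\}$ and $0\le\de<\de_\star$. I would obtain this by interpolating between the Keel--Tao double endpoint at $\de=0$ and the improved radial pointwise dispersive bound $|e^{-it\De}\fy(x)|\lec|x|^{-(d-1)/2}\|\fy\|_2$, which follows from Bessel-kernel or stationary-phase asymptotics of the radial Schr\"odinger propagator. The threshold $\de_\star=(d-1)/(2d-1)$ is exactly where the decay exponent $(d-1)/2$ becomes borderline for the Strichartz scaling, which forces $C\rd(\de)$ to diverge as $\de\uparrow\de_\star$.

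The inhomogeneous part then follows by Duhamel and a bilinear $TT^*$ argument: the bound $\cL^1_tL^2\to\cL^\I_tL^2$ is the trivial $\|\int_{t_0}^t F(t')\,dt'\|_{L^\I_tL^2}\le\|F\|_{L^1_tL^2}$; the mixed bounds $\cL^1_tL^2\to\X^\de$ and $\X^\de_*\to\cL^\I_tL^2$ come from the homogeneous estimate and its dual; and the genuine double-endpoint $\X^\de_*\to\X^\de$ bound requires the Keel--Tao bilinear interpolation argument applied to the radial-improved dispersive estimate, since Christ--Kiselev fails at the endpoint. The infimum over $t\in I$ in \eqref{RFD} appears by applying Duhamel from any base point $t_0\in I$ and observing that the inhomogeneous contribution is independent of $t_0$. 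The main obstacle is executing the $\X^\de_*\to\X^\de$ bilinear interpolation so as to retain the radial gain and produce a constant $C\rd(\de)$ that is continuous on $[0,\de_\star)$; this is precisely where the subcritical condition $\de<\de_\star$ enters sharply and distinguishes the radial estimate from the classical (non-radial) Keel--Tao theorem.
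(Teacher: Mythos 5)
The paper itself does not prove Lemma~\ref{lem:freeStz}; it cites \cite[Theorem~1.3]{endStr} for the double-endpoint estimate and \cite{GW} for the single endpoint, recording only the reductions you also identify: equivalence of $\ti\X^\de$ with $\ck\X^\de$ via Littlewood--Paley, and reduction by scaling to $s=0$ and $u=u_1$. Your outer framework---frequency decomposition, scaling, Duhamel with Keel--Tao bilinear interpolation for the $\X^\de_*\to\X^\de$ step (correctly flagged as the place where Christ--Kiselev fails), and choosing the base time $t_0\in I$ to produce the infimum---is the intended route.

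There is, however, a genuine gap in the step you call ``the heart of the matter.'' You claim the frequency-localized homogeneous bound $\|e^{-it\De}\fy\|_{L^2_tL^{2(\de-1)}}\lec\|\fy\|_2$ follows by interpolating the Keel--Tao endpoint $\|e^{-it\De}\fy\|_{L^2_tL^{2^*}}\lec\|\fy\|_2$ with the $t$-uniform radial pointwise bound $|e^{-it\De}\fy(x)|\lec|x|^{-(d-1)/2}\|\fy\|_2$. This fails on two counts. First, interpolating $L^2_tL^{2^*}_x$ with an $L^\I_tL^\I_x$ bound gives $L^q_tL^r_x$ with $r\ge 2^*$, whereas the radial gain must \emph{lower} the spatial Lebesgue exponent below $2^*$; the interpolation runs in the wrong direction. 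Second, your pointwise bound carries no decay in $t$, so using it on an exterior region $|x|>R$ produces a divergent $L^2_t$ integral; the justification you offer (Bessel asymptotics $|J_\nu(z)|\lec z^{-1/2}$ plus Cauchy--Schwarz in the radial Fourier variable) yields exactly that $t$-uniform bound and nothing more. The actual mechanism is an $L^2_t$ trace/local-smoothing estimate of the form $\|e^{-it\De}\fy\|_{L^2_t L^\I_x(|x|\sim R)}\lec R^{-(d-1)/2}\|\fy\|_2$ for unit-frequency radial $\fy$, proved by Plancherel in $t$ after changing variables to $|\xi|^2$ in the one-dimensional Hankel integral---not by a $t$-uniform pointwise bound---followed by dyadic summation in $R$. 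As written, your proposal does not supply this and therefore establishes neither the homogeneous estimate nor the constant $C\rd(\de)$.
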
 
Note that the estimate in $\ti\X^\de$ is essentially equivalent to that in $\ck\X^\de$ via the Littlewood-Paley decomposition, which commutes with $\LR{D}^s$ and $i\p_t-\De$. It is also easy to reduce to the case $s=0$ and then to $u=u_1$ by scaling. 

Next we recall the Strichartz estimate for the Schr\"odinger equation with a static potential. 
The following double-endpoint estimate was obtained in \cite{M} in general dimensions, while the 3D case was proven before in \cite{Be}, and the single endpoint in general dimensions in \cite{Go}. 
\begin{lem}[Strichartz with static potential] \label{lem:stSt}
Let $d\ge 3$ and $V\in L^{d/2}(\R^d;\R)$ satisfying $-\De+V>0$ on $L^2(\R^d)$. Then there exists $C\in(0,\I)$ such that for any interval $I\subset\R$ and any $u\in C(I;L^2(\R^d))$, 
\EQ{
 \|u\|_{\ck\X^0(I)} \le C\BR{\inf_{t\in I}\|u(t)\|_2 + \|(i\p_t-\De-V)u\|_{\ck\X^0_*(I)}}. }
\end{lem}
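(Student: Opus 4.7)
My plan is to treat $Vu$ as a perturbation of the free Schr\"odinger evolution in the Keel--Tao double-endpoint Strichartz framework, and to handle the large part of $V$ through spectral theory of $H := -\De + V$, using the positivity hypothesis in an essential way.

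Fix $t_0 \in I$ and set $f := (i\p_t - \De - V)u$. Duhamel's formula $u(t) = e^{-i(t-t_0)\De}u(t_0) - i\int_{t_0}^t e^{-i(t-s)\De}[Vu + f](s)\,ds$ combined with the double-endpoint free Strichartz estimate gives
\EQN{ \|u\|_{\ck\X^0(I)} \le C_0\bigl[\|u(t_0)\|_2 + \|Vu\|_{\ck\X^0_*(I)} + \|f\|_{\ck\X^0_*(I)}\bigr]. }
Since $L^{2_*} \hookrightarrow \dot B^0_{2_*,2}$ and $\dot B^0_{2^*,2} \hookrightarrow L^{2^*}$ for $d \ge 3$, the pointwise-in-time H\"older inequality $\|Vu(t)\|_{2_*} \le \|V\|_{d/2}\|u(t)\|_{2^*}$ upgrades to
\EQN{ \|Vu\|_{\ck\X^0_*} \le C_1 \|V\|_{d/2}\|u\|_{\ck\X^0}. }

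When $\|V\|_{d/2}$ is small enough that $C_0 C_1 \|V\|_{d/2} < 1/2$, the potential term is absorbed on the left and the infimum over $t_0 \in I$ yields the claim. For general $V$, I would split $V = V_r + V_s$ with $\|V_r\|_{d/2}$ as small as needed (by density of $C_c^\I$ in $L^{d/2}$) and $V_s \in C_c^\I$; the remainder $V_r$ is absorbed as above. To control the contribution of $V_s$, I invoke the positivity of $H$: it guarantees that $H$ has purely absolutely continuous spectrum $[0,\I)$ with neither embedded eigenvalues nor threshold resonances, so the limiting absorption principle holds uniformly down to $\la = 0^+$. This yields the Kato-type smoothing bound
\EQN{ \bigl\||V_s|^{1/2} e^{-itH}\fy\bigr\|_{L^2_{t,x}} \le C\|\fy\|_2, }
and, by the corresponding dual/$TT^*$ estimate and a Christ--Kiselev reduction tailored to preserve the endpoint exponents, the associated inhomogeneous bound then closes the estimate for $V_s u$.

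The principal obstacle will be the uniform limiting absorption principle for $H$ at the threshold $\la = 0^+$, since $V \in L^{d/2}$ may be large and merely lies in the Sobolev-critical scale; here the positivity hypothesis is indispensable in excluding zero-energy resonances and producing uniform weighted resolvent bounds. A secondary subtlety is to retain the double-endpoint exponents under the Christ--Kiselev maneuver, which is delicate but can be handled via the refined Keel--Tao endpoint framework together with the atomic/Besov character of $\ck\X^0$ and $\ck\X^0_*$.
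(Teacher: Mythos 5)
This lemma is not proved in the paper: it is quoted from Mizutani \cite{M} (double endpoint, general dimension), Beceanu \cite{Be} ($d=3$), and Goldberg \cite{Go} (single endpoint), so there is no internal proof to compare against. Your proposal attempts to supply one, and while the first reduction (free Strichartz plus H\"older to absorb a small $L^{d/2}$ piece of $V$) is correct and standard, the plan to close the argument for the large part $V_s$ has two genuine gaps. The first is fatal: the Christ--Kiselev lemma provably fails when the time-Lebesgue exponents on the two sides agree, and at the double endpoint they do (both the Strichartz and dual-Strichartz components sit in $L^2_t$). There is no ``refined Keel--Tao framework'' that rescues Christ--Kiselev here; the failure at the retarded double endpoint is precisely why the double-endpoint Strichartz estimate with potential resisted proof for so long and why \cite{Be,Go,M} developed quite different machinery (structure-of-wave-operator and Wiener-algebra methods in \cite{Be}, uniform Sobolev and resolvent methods in \cite{M}) rather than a Kato-smoothing-plus-Christ--Kiselev argument.

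The second gap is the spectral input. Positivity $-\De + V > 0$ on $L^2$ rules out negative eigenvalues and an $L^2$ zero eigenvalue, but for $V$ merely in the scale-invariant class $L^{d/2}$ it does not by itself yield absence of embedded positive eigenvalues, absence of a zero-energy resonance, or a uniform-to-threshold limiting absorption principle. These are nontrivial analytic facts occupying a large part of \cite{Be,Go,M}, and the paper itself notes that those works prove their estimates under broader spectral hypotheses with the continuous projection $P_c$, with positivity entering only as a sufficient condition that makes $P_c = id$, not as a self-contained engine for the estimate. There is also a structural mismatch in the decomposition: after absorbing $V_r u$ in the free Duhamel iteration you are left needing a bound on $\|V_s u\|_{\ck\X^0_*}$, whereas the smoothing bound $\||V_s|^{1/2}e^{-itH}\varphi\|_{L^2_{t,x}}\lesssim\|\varphi\|_2$ concerns the perturbed propagator $e^{-itH}$ and does not directly control the source $V_s u$ against the free propagator; stitching these together is again an endpoint inhomogeneous estimate of exactly the type you set out to prove.
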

All of \cite{Go,Be,M}, as well as the preceding results, work under more general spectral conditions on the potential and with the continuous spectral projection $P_c$ in front of $u$ in the estimate, but the positivity $-\De+V>0$ is a sufficient condition for them together with $P_c=id$. 
Besides that, their potentials are in more general classes, as \cite{Be,M} considered the completion of $C_c^\I(\R^d)$ in the Lorentz space $L^{d/2,\I}(\R^d)\supset L^{d/2}(\R^d)$, while \cite{Go} considered time-periodic potentials. 
For more detail, background and related works, we refer to those papers and the references therein. 

Finally, we give the Strichartz estimate for small free wave potentials on $\R^4$, which follows immediately from the analysis in \cite{Zak4D1}. Similar arguments were used before in \cite{radZak3D} in the 3D radial setting.  This estimate is mentioned just for comparison, but will not be used in this paper. 
\begin{lem} \label{lem:smallSt}
For any $0\le s<1$, there exist $\e\in(0,1)$ and $C\in(1,\I)$ such that for any interval $I\subset\R$ and any $V\in C(I;L^2(\R^4))$ satisfying $(i\p_t+D)V=0$ with $\|V(t)\|_2\le\e$ on $I$, we have the Strichartz estimate for any $u(t,x)\in C(I;H^s(\R^4))$:
\EQ{
 \|\LR{D}^su\|_{\ti \X^0(I)} \le C\BR{\inf_{t\in I} \|u(t)\|_{H^s} + \|\LR{D}^s(i\p_t-\De-\re V)u\|_{\ti \X_*^0(I)}}.}
\end{lem}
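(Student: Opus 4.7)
The plan is to treat $(\re V) u$ as a perturbative forcing in the Schr\"odinger equation and invoke the free endpoint Strichartz estimate. Writing $(i\p_t - \De) u = F + (\re V) u$ with $F := (i\p_t - \De - \re V) u$, I first apply Lemma \ref{lem:freeStz} at $\de = 0$; at this double endpoint the estimate reduces to the Keel--Tao bound applied dyadically and so does not require radial symmetry. Combined with the definitions of $\ti\X^0$ and $\ti\X_*^0$, this yields
\EQN{
 \|\LR{D}^s u\|_{\ti\X^0(I)} \le C_0 \inf_{t \in I} \|u(t)\|_{H^s} + C_0 \|\LR{D}^s F\|_{\ti\X_*^0(I)} + C_0 \|\LR{D}^s ((\re V) u)\|_{\ti\X_*^0(I)}.
}

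The crux is the bilinear estimate
\EQN{
 \|\LR{D}^s ((\re V) u)\|_{\ti\X_*^0(I)} \le C_1 \|V\|_{L^\I_t L^2_x(I)} \|\LR{D}^s u\|_{\ti\X^0(I)},
}
which is of precisely the type established in \cite{Zak4D1} for the Zakharov nonlinearity at regularities $(s,0)$ with $0 \le s < 1$ when $V$ is a free wave. I would decompose the product by paraproduct. In the piece where $V$ has lower dyadic frequency than $u$, the derivative $\LR{D}^s$ effectively falls on $u$ and the H\"older bound $\|V_{<j} (\LR{D}^s u)_j\|_{L^{4/3}_x} \le \|V_{<j}\|_{L^2_x} \|(\LR{D}^s u)_j\|_{L^4_x}$ followed by $\ell^2$ summation over $j \in 2^\Z$ places the contribution into the $\X_*^0$ summand with the correct constant. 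In the converse paraproduct, where $u$ has lower frequency than $V_j$, the derivative apparently lands on the $L^2$ factor $V_j$, and it is here that the wave equation $(i\p_t + D) V = 0$ is essential: one exploits the bilinear wave--Schr\"odinger space-time estimates from \cite{Zak4D1} to compensate the factor $j^s$ against the improved dispersion of free waves at dyadic frequency $j$, which works precisely as long as $s < 1$. The balanced high-high interaction is absorbed into the $\cL^1_t L^2$ summand of $\ti\X_*^0$ by a similar bilinear refinement.

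Once the bilinear estimate is in hand, I choose $\e$ so small that $C_0 C_1 \e \le 1/2$; since the hypothesis $\|V(t)\|_2 \le \e$ (equivalent to $\|V(0)\|_2 \le \e$ because the free wave flow preserves $L^2$) makes the last term on the right of the first display bounded by $\tfrac{1}{2} \|\LR{D}^s u\|_{\ti\X^0(I)}$, it can be absorbed into the left, yielding the claim with $C = 2 C_0$. The main obstacle I anticipate is the second paraproduct piece just described: without the free wave hypothesis on $V$ the fractional derivative loss $j^s$ cannot be recovered and only $s = 0$ would survive by pure H\"older estimation. The logarithmic failure of this compensation as $s \uparrow 1$ is precisely the obstruction which forces the more intricate compactness arguments used elsewhere in the paper for the energy-space setting, and which motivates the new Strichartz estimate with a non-small wave potential announced in the next subsection.
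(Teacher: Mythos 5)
Your proof strategy — treat $(\re V)u$ as a perturbative forcing, estimate it bilinearly, and absorb it by the smallness of $\|V\|_{L^\I_tL^2}$ — cannot close, because the bilinear estimate at its heart is false. The problematic piece is the high--low paraproduct $\sum_{k\ll j}V_ju_k$, localised at output frequency $\sim j$: placing it in $\ti\X^0_*$ with the weight $\LR{D}^s$ requires a factor $j^s$, whereas the only $s$-weight available on the right is $\LR{k}^s$ from $\LR{D}^su_k$, so a factor $(j/\LR{k})^s$ remains uncontrolled. Your suggestion that the wave flow on $V$ supplies the missing decay cannot be encoded in a bilinear inequality whose right-hand side carries only $\|V\|_{L^\I_tL^2}$: that quantity does not distinguish a free wave from an arbitrary time-dependent $L^2_x$ potential, and as the introduction of the paper explains, inserting any $L^p_t$ norm with $p<\I$ on $V$ (which would be needed to exploit wave dispersion) would break the scale invariance of the estimate. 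Contrary to your closing remark, the failure is not merely logarithmic as $s\uparrow 1$; the H\"older step already fails for every $s>0$.

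The missing ingredient — and the mechanism behind the citation to \cite{Zak4D1} — is the normal form transform of \S\ref{ss:NF}. One changes the unknown to $u - \Om_\io(V,u)$, where $\Om_\io$ is the bilinear Fourier multiplier \eqref{def Om} supported on the high--low region, whose symbol carries the non-resonant divisor $|\x|^2\mp|\x-\y|-|\y|^2\sim j^2$. When $(i\p_t+D)V=0$, the identity following \eqref{def Om} shows that the high--low interaction $(\re V,u)_{\HL_\io}$ is exactly \emph{cancelled} in the equation for $u-\Om_\io(V,u)$, not estimated; what remains is the low--high and near-diagonal piece $(\re V,u)_{\LH_\io}$ (which succumbs to H\"older exactly as in your first paraproduct case) together with terms of the form $\Om_\io(V,\cdot)$ carrying the two-derivative gain \eqref{bil est Om}. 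The latter are genuinely perturbative; the absorption argument applies to the modified unknown, and one returns to $u$ via the inverse normal form (Lemma \ref{lem:normal inv}) for small $\io$. The paper does not give a standalone proof of this lemma, deferring to \cite{Zak4D1}, but its Theorem \ref{thm:Stz}, which strictly generalises the statement, is proved precisely by this normal-form device, and the discussion after its statement explains why a direct perturbative treatment of the potential term, such as the one you attempt, is structurally obstructed.
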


The following is the main Strichartz estimate to be proven in this paper, which is in the 4D radial setting.  
\begin{thm} \label{thm:Stz} 
Let $0\le s<1$, $0\le \de<\de_\star$,  $0\le B<\|W^2\|_2$ and $0<M<\I$. 
Then there exists $\e\fn=\e\fn(s,\de,B,M)\in(0,1)$ and $C\fn=C\fn(s,\de,B,M)\in(0,\I)$ such that for any interval $I\subset\R$ and any $V\in L^\I(I;L^2\rad(\R^4))$ in the form $V=V\sF+V\sN+V\sD$ satisfying $(i\p_t+D)V\sF=0$ on $I$ and 
\EQ{ \label{norm cond V}
 \pt \|V\sF\|_{L^\I_tL^2(I)} \le B, 
 \pq  \|V\sN\|_{(L^\I_t\dot B^1_{4/3,1}+L^1_t\dot B^1_{4,1})(I)}+\|V\sD\|_{L^\I_t\dot B^{-1}_{4,\I}(I)} \le \e\fn,
 \pr \|(i\p_t+D)V\sD\|_{(L^1_t\dot B^{-1}_{4,\I}+L^\I_t\dot B^{-1}_{4/3,\I})(I) } +\|V\sD\|_{L^\I_tL^2(I)} +\|V\sN\|_{L^\I_tL^2(I)}\le M,
}
we have the Strichartz estimate for any $u\in C(I;H^s\rad(\R^4))$, 
\EQ{ \label{Stz}
 \|\LR{D}^su\|_{\ti \X^\de(I)} \le C\fn\BR{ \inf_{t\in I}\|u(t)\|_{H^s}+\|\LR{D}^s(i\p_t-\De- \re V)u\|_{\ti \X^\de_*(I)}}. }
\end{thm}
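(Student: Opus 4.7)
My approach combines concentration-compactness for the free-wave potential $V\sF$ with perturbation off the static-potential and free-Schr\"odinger Strichartz estimates already available. First, by H\"older and Sobolev embedding in the Besov scale, the products $\|\LR{D}^s(V\sN u)\|_{\ti\X^\de_*}$ and $\|\LR{D}^s(V\sD u)\|_{\ti\X^\de_*}$ are each bounded by $C(s,\de,M)\,\e\fn\,\|\LR{D}^s u\|_{\ti\X^\de}$, so writing $(i\p_t-\De)u = Vu + [(i\p_t-\De-\re V)u]$ and applying Lemma \ref{lem:freeStz}, the $V\sN, V\sD$ contributions can be absorbed on the left for $\e\fn$ sufficiently small. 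The problem thereby reduces to proving \eqref{Stz} in the case $V = V\sF$ is a single radial free wave with $\|V\sF\|_{L^\I_t L^2}\le B < \|W^2\|_2$.

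For this reduced estimate I argue by contradiction: if it fails at some threshold $B_* < \|W^2\|_2$, there is a sequence of radial free waves $V_n$ with $\|V_n\|_{L^\I_t L^2}\le B_*$ and data/forcings $(\fy_n, f_n)$ for which the Strichartz ratio in \eqref{Stz} diverges. Apply the Bahouri--G\'erard profile decomposition for radial $L^2(\R^4)$ free waves (no spatial translations are needed by radial symmetry),
\EQN{
V_n(t) = \Sum_{j=1}^J (T_{j,n} \V_j)(t) + r_J^n(t),
}
with scaling-and-time-translation operators $T_{j,n}$ built from orthogonal parameters $(\la_{j,n}, t_{j,n})$, asymptotic mass decoupling $\Sum_j \|\V_j\|_{L^\I_t L^2}^2 \le B_*^2 + o(1)$, and $\|r_J^n\|_{L^\I_t \dot B^{-1}_{4,\I}}\to 0$ uniformly in $n$ as $J\to\I$. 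Profiles whose rescaled form is small, either because of extreme relative scale ($\la_{j,n}/\la_{1,n}\to 0$ or $\I$) or of divergent relative time centre, are re-assigned to the $V\sD$ slot (via $\dot B^{-1}_{4,\I}$ smallness from concentration) or to the $V\sN$ slot (via $\dot B^1_{4/3,1}$ or $\dot B^1_{4,1}$ smallness from spreading), and the remainder $r_J^n$ is similarly absorbed. The analysis thereby reduces to a single dominant profile: a radial free wave $V$ with $\|V\|_{L^\I_t L^2}\le B_*$.

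For this remaining single-profile problem, the wave analogue of Lemma \ref{lem:freeStz} for the propagator $e^{-itD}$ gives that $V$ scatters in dispersive norms; in particular, there exists $T_0=T_0(V)$ with $\|V\|_{L^\I\dot B^{-1}_{4,\I}(\{|t|>T_0\})}<\e\fn$, so the tails $|t|>T_0$ fit the $V\sD$ slot. On $[-T_0,T_0]$ I partition into finitely many short sub-intervals $I_k$ and on each write $V(t) = V(t_k) + w_k(t)$, where $V(t_k)\in L^2\rad$ is static with mass $\le B_*<\|W^2\|_2$, satisfying \eqref{positivity} so that Lemma \ref{lem:stSt} furnishes the endpoint Strichartz, while $w_k(t)$ is small in a norm of $V\sN$ type by a continuity-of-flow argument for $e^{-itD}$ in the dispersive Besov topology. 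Gluing the per-interval estimates via a standard inductive Duhamel patching yields a uniform global Strichartz bound on $u_n$, contradicting the assumed divergence.

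The main obstacle is arranging that all these perturbations---the non-dominant rescaled profiles, the decomposition remainder $r_J^n$, the tails of the dominant profile, and the time-slicing error inside $[-T_0,T_0]$---simultaneously land in the precise mixed Besov norms defining $V\sN$ and $V\sD$ in \eqref{norm cond V}. Because the wave scaling of $V\sF$, $(t,x)\mapsto(t/\la,x/\la)$, does not match the Schr\"odinger scaling $(t,x)\mapsto(t/\la^2,x/\la)$ of $u$ in four dimensions, the $V\sN, V\sD$ norms must be engineered to be simultaneously compatible with both; the exponents in \eqref{norm cond V} are chosen precisely so that the profile-rescaling dichotomy produces $V\sD$-type smallness from concentrating profiles and $V\sN$-type smallness from spreading profiles, and so that Lemma \ref{lem:stSt}'s static estimate glues to Lemma \ref{lem:freeStz} through these norms.
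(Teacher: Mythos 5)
Your plan contains several genuine gaps, each corresponding to an obstruction that the paper identifies and handles with substantial machinery the proposal omits.

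\textbf{(i) Absorbing $V\sD$ by perturbation fails for $s>0$.} The bound on $V\sD$ that is small is in the negative-regularity norm $L^\I_t\dot B^{-1}_{4,\I}$; the $L^\I_t L^2$ bound is only $M$, which is large. When a high-frequency ($\sim j$) piece of $V\sD$ multiplies a low-frequency ($\sim k\ll j$) piece of $u$, the output is at frequency $\sim j$, so $\LR{D}^s(V\sD u)$ incurs a factor $\sim (j/k)^s$ relative to $\LR{D}^su$; since the radial gain $\de<\de_\star=3/7$ cannot recover $s$ near $1$, there is no product estimate of the form you assert, and the term cannot be absorbed. This is exactly the ``perturbation of the potential" obstruction explained after Theorem \ref{thm:Stz} and is why the normal form $\Om_\io$ is used for the Duhamel part $\ga^J$ (see \eqref{def ufLR} and \eqref{est Omgu}). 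The same regularity loss occurs in the high-low interaction of $V\sF$ with $u$, so even the reduced problem $V=V\sF$, $s>0$ cannot be handled without the normal form — a device the proposal never invokes.

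\textbf{(ii) Reduction to a single dominant profile is not available.} Under the Schr\"odinger rescaling $S(\si)$ both $\|S(\si)\fy\|_2$ and $\|S(\si)\fy\|_{\dot B^{-1}_{4,\I}}$ are $\si$-independent, so a concentrating profile ($\si^j_n\to\I$) is \emph{not} small in either of the $V\sN$, $V\sD$ norms of \eqref{norm cond V}: these profiles persist. The paper absorbs only the expanding profiles ($\si^j_n\to 0$), and the mechanism is not ``concentration gives $\dot B^{-1}_{4,\I}$ smallness" but the accelerated wave dispersion $t\mapsto t/\si_n$ seen in the lemma leading to \eqref{decop expanding}; your sign convention for concentration/spreading is inverted. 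Similarly, profiles at the same scale with diverging time centres do not become small in any $V\sN$, $V\sD$ norm; the paper handles them via the time-partitioning in Lemma \ref{lem:St-wt}. Moreover, re-classifying the free-wave tail $\{|t|>T_0\}$ into the $V\sD$ slot would destroy the constraint $(i\p_t+D)V\sF=0$ on the complement, so this re-allocation is not licit — the paper instead estimates the tail intervals directly via Lemma \ref{lem:Bsmall} while keeping $V\sF$ intact.

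\textbf{(iii) The $s>0$ upgrade is missing.} After the $L^2$-level wave-train estimate, the weight $\LR{D}^s$ is not scaling invariant, so profiles at scale $\si^A_n\to\I$ contribute a factor $(\si^A_n)^s$ that must be controlled; the $s=0$ analysis does not transfer by rescaling. This is the content of the adapted frequency weight $w$ (Lemma \ref{lem:w}) and the frequency-band decomposition $[A],\vec A$ in Section \ref{sect:Stz}, together with the commutator-free handling of the high-low interactions by normal form. Your proposal stops at the $s=0$ level (static potential plus time-slicing), and the final paragraph acknowledges the wave/Schr\"odinger scaling mismatch but asserts it is resolved by the choice of norms in \eqref{norm cond V}, which begs the question: resolving that mismatch is precisely what the weight $\U w^s$, the wave-train lemma, and the normal form argument do.

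Also minor but worth noting: in the time-slicing step, the error $w_k(t)=V(t)-V(t_k)$ is small in $L^\I_t L^2$ by strong continuity of $e^{itD}$ on $L^2$ (this is what Lemma \ref{lem:L2pert} uses), not in a $V\sN$-type Besov norm; a free wave with $L^2$ data is not continuous in $\dot B^1_{4/3,1}$.
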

\begin{rem}
If $V=0$, then the above result becomes the same as Lemma \ref{lem:freeStz} in the 4D radial case. 
If $V$ is a free wave, i.e.~$V=V\sF$, then the condition \eqref{norm cond V} on $V$ becomes simply $\|V(0)\|_2\le B$, and then Lemma \ref{lem:smallSt} is covered in the radial case. 
If $s=0$, then all the norms in \eqref{norm cond V} and \eqref{Stz} are scaling invariant, which allows us to replace $i\p_t+D$ with $i\p_t+\al D$ for any $\al>0$, keeping the uniform constant $C\fn$ (cf.~\eqref{parab scal} and the argument around it). Sending $\al\to 0$ yields the same estimate for static potentials, which implies Lemma \ref{lem:stSt} in the 4D radial case. 
\end{rem}
\begin{rem}
We can weaken the small norm of $V\sN$ to $L^\I_t\dot B^1_{4/3,\I}+L^1_t(\dot B^1_{4,\I}\cap L^\I)$, by a slight modification of product estimates in the proof, using the radially improved Strichartz estimate. In the application to the Zakharov system, however, we use only $L^\I_t B^{1+\e}_{4/3,2}\subset L^\I_t \dot B^1_{4/3,1}\subset L^\I_tL^2$. 
\end{rem}
The uniformness of $C\fn$ in the above Theorem \ref{thm:Stz} is crucial for our application to the Zakharov system. 
If we do not care about the uniformness, then it is not so hard to derive such an estimate from the static case Lemma \ref{lem:stSt}, by some approximation of the potential. 
The uniformness is not obvious either in the proof of the static case in the literature, but one can expect its importance in genral in nonlinear problems (especially in the critical case as we are considering). 
The threshold $B<\|W^2\|_2$ is optimal for the uniform global estimate, because of the ground state and the scaling invariance. 
In the case of static potential, it is necessary even for time-local estimates and for more general exponents. 
See Appendix \ref{ss:fail} for precise statements. 

The case of $I\subsetneq\R$ is reduced to the case of $I=\R$ by extending $V$ and $u$ such that $(i\p_t+D)V\sF=0$, $(i\p_t+D)V\sD=0$, $V\sN=0$ and $(i\p_t-\De-\re V)u=0$ on $\R\setminus I$. 
Therefore in the proof, we may assume $I=\R$ without losing generality. 
For convenience, the best constant for $V$ in the above estimate with $s=0$ is denoted by 
\EQ{ \label{def C0de}
 C_0^\de(V):=\sup_{0\not=u \in C(\R;L^2\rad(\R^d))}\frac{\|u\|_{\ti \X^\de}}{\inf_{t\in\R}\|u(t)\|_2+\|(i\p_t-\De- \re V)u\|_{\ti \X^\de_*}}.}

The decomposition $V=V\sF+V\sN+V\sD$ is motivated by the normal form transform for the Zakharov system, see \eqref{decop N} below. $V\sF$ corresponds to the main part solving the free wave equation, $V\sN$ corresponds to a bilinear form that is more regular, and $V\sD$ corresponds to the Duhamel term of regular quadratic and higher order terms. 

One may wonder if $V\sN$ and $V\sD$ are really needed in the proof of Theorem \ref{thm:Stz}, as they are assumed to be small. In other words, one may try to derive the general case from the simple case $V=V\sF$ by regarding $(V\sN+V\sD)u$ as perturbation. 
On one hand, it does not essentially simplify the proof, since it uses the normal form argument generating this type of error terms. 
On the other hand, perturbation of the potential is a difficult problem if $s>2\de_\star$, because the potential term and the normal form transform do not commute. It can be seen schematically as follows. 
Consider a potential of the form $V=V_0+V_1$ where $V_0\in L^2$ is the main part:
\EQ{
 (i\p_t-\De-V_0)u=V_1u.}
If $V_1\in L^2$ and $u\in H^s$, then $V_1u$ has no regularity, so the regularity gain $2\de_\star<s$ in the radial Strichartz estimate is not enough to recover the loss. The normal form yields some approximate solution $b$ to remove the singular part (namely high-low interactions) of this contribution, for the free Schr\"odinger equation: 
\EQ{
 (i\p_t-\De)b = V_1u + \text{error}.}
If $V_0$ were absent, $u-b$ would solve a better equation. However, in the presence of $V_0$, it satisfies 
\EQ{
 \pt (i\p_t-\De-V_0)(u-b)=V_0b - \text{error},
 \pr (i\p_t-\De)(u-b)=V_0u - \text{error}.}
Since $V_0\in L^2$, the regularity loss remains the same. 
Approximating $V_0$ by more regular functions, one can avoid the loss of regularity, but then it is difficult to keep the uniformness of estimate, since such an approximation depends on the profile of $V_0$ rather than the $L^2$ norm. 

The difficulty in perturbation of the potential prevents us from following the Kenig-Merle approach to prove the scattering in Theorem \ref{main}, in particular in the long-time perturbation argument needed in the profile decomposition. 
So we follow instead the idea of Dodson-Murphy \cite{DM}, which works directly on general solutions without any perturbation. 
The localized virial-type estimate, which is crucial in both the approaches, is derived similarly to the case of three space dimensions \cite{GNW}. 

\subsection{Outline of paper}
The rest of paper is organized as follows. First in Section \ref{sect:pre}, some notation and basic tools are prepared, namely, frequency decomposition, normal forms and a radial Strichartz estimate with static potential, as well as some dispersive properties of scattering solutions. 
Sections \ref{sect:prof dec}--\ref{sect:Stz} are devoted to a proof of Theorem \ref{thm:Stz}. 
Assuming that the Strichartz constant blows up for some sequence of wave potentials, in Section \ref{sect:prof dec} we apply the profile decomposition to the sequence, extracting some limit objects, namely the profiles. 
Then in Section \ref{sect:WT}, we treat the easier case where $s=0$ (no derivative) and the profiles are concentrated at one frequency, reducing it to the case of static potentials by approximation with step functions in time. In Section \ref{sect:Stz}, we reduce the general case to the previous one, using a sequence of Fourier weights adapted to the profiles' frequencies, and regularity gains by the normal forms for the high-low interactions and by the radial improved Strichartz estimate for the low-high and the high-high interactions, as well as for the remainder of the profile decomposition. 
The remaining Sections \ref{sect:var}--\ref{sect:bup} concern the Zakharov system. 
In Section \ref{sect:var}, we investigate static and variational properties of the system and the ground state, especially the link between the critical mass of the wave potential and the virial identity or the Nehari functional. 
In Section \ref{sect:LWP}, we study local and global wellposedness of the Zakharov system in larger spaces $H^s\times L^2$ ($1/2\le s\le 1$), without imposing the radial symmetry. 
Finally assuming the radial symmetry, we prove Theorem \ref{main} in Sections \ref{sect:GWP}--\ref{sect:bup}. The global wellposedness part is proved in Section \ref{sect:GWP} using the uniform global Strichartz estimate Theorem \ref{thm:Stz}. The scattering part is proved in Section \ref{sect:sc}, following the Dodson-Murphy approach \cite{DM}, and deriving a localized virial-type estimate for the Zakharov system on $\R^4$. The blow-up part is proved in Section \ref{sect:bup} by contradiction argument and the same virial-type estimate.  

\section*{Acknowledgements}
This work was originally initiated from intensive discussions with Ioan Bejenaru and Sebastian Herr during the trimester program``Harmonic Analysis and Partial Differential Equations" at the Hausdorff Research Institute for Mathematics in 2014. 
The authors are grateful to them for the discussions, which clarified the main difficulties and possible approaches, and for their comments later on the manuscript. 
This work was partially supported by ARC DP170101060. 
The second author (K.~N.) was supported by JSPS KAKENHI Grant Number 25400159 and JP17H02854.

\section{Preliminaries} \label{sect:pre}
In this section, basic notation and tools are introduced. Some of them are in common with \cite{Zak4D1}. 
\subsection{Frequency decomposition} 
For any subset $Q\subset 2^\Z$, any proposition on $x>0$ of the form $x\cP$, or any element $j\in 2^\Z$, the corresponding smooth cut-off in the dyadic frequencies is denoted by 
\EQ{ \label{def LPproj}
 \fy_j := P_j \fy:=\F^{-1}\chi_0(\x/j)\hat \fy, \pq \fy_Q:=P_Q \fy:=\sum_{j\in Q}P_j \fy, \pq \fy_\cP:=\sum_{2^\Z\ni j\cP} P_j\fy,}
where $\chi_0\in C^\I(\R^d)$ is a fixed function satisfying 
\EQ{
 \pt 0\le\chi_0\le 1,\pq \sum_{j\in 2^\Z}\chi_0(\x/j)=1\ (\x\not=0), 
 \pr \supp\chi_0\subset\{\x\in\R^d \mid 1/2<|\x|<2\}.  }
For a single piece of the Littlewood-Paley decomposition, we will often omit the third index of Besov norms:
\EQ{
 \|\fy_j\|_{\dot B^s_p} := \|\fy_j\|_{\dot B^s_{p,1}} = \|\fy_j\|_{\dot B^s_{p,\I}} = j^s\|\fy_j\|_p.} 
For the square summation over dyadic frequencies, we use the notation
\EQ{ \label{def squm}
 \squm_k a_k := \BR{\sum_k |a_k|^2}^{1/2}.}

\subsection{Normal forms}  \label{ss:NF} 
An essential tool is the normal form transform, by which we can avoid the loss of regularity from the rough potential. 
With a small parameter $0<\io\ll 1$ for frequency separation, two bilinear Fourier multipliers $\Om_\io^\pm$ are defined as follows. First, their frequency region is denoted by 
\EQ{ \label{def HL}
 \HL_\io:=\{(j,k)\in(2^\Z)^2 \mid \io j\ge \max(k,2)\}, \pq \LH_\io:=(2^\Z)^2\setminus \HL_\io,}
and the corresponding decomposition of a product of two functions $f(x),g(x)$ by 
\EQ{ \label{def HL-prod}
  (f,g)_{\HL_\io}:=\sum_{(j,k)\in \HL_\io} f_j g_k, \pq (f,g)_{\LH_\io}:=fg-(f,g)_{\HL_\io}=\sum_{(j,k)\in\LH_\io} f_jg_k.}
Then the operators $\Om_\io^\pm$ are defined by 
\EQ{ \label{def Ompm}
 \F\Om_\io^\pm = \sum_{(k,l)\in \HL_\io}\int_{\R^4}\frac{\hat f_k(\x-\y)\hat g_l(\y)}{|\x|^2\mp|\x-\y|-|\y|^2}d\y.}
Note that the high-low restriction $(k,l)\in\HL_\io$, or $k\gg \max(l,1)$, ensures 
\EQ{
 |\x|^2\mp|\x-\y|-|\y|^2 \sim k^2 \gg l^2+1,}
so we have a product estimate with two derivative gain: 
\EQ{ \label{bil est Om}
 \|\Om_\io^\pm(f_j,g_k)\|_p \lec (1+j+k)^{-2}\|f_j\|_{p_1}\|g_k\|_{p_2}}
for any $p,p_1,p_2\in[1,\I]$ satisfying $1/p=1/p_1+1/p_2$. Moreover, they satisfy 
\EQ{
 -\De\Om_\io^\pm(f,g)=(f,g)_{\HL_\io} \pm \Om_\io^\pm(Df,g)+\Om_\io^\pm(f,-\De g).}
For the actual potential, it is convenient to introduce 
\EQ{ \label{def Om}
 \Om_\io(f,g):=\frac12\Om_\io^+(f,g)+\frac12\Om_\io^-(\bar f,g),}
which is $\R$-bilinear, satisfying
\EQ{ 
 (i\p_t -\De)\Om_\io(V,u)=(\re V,u)_{\HL_\io}+i\Om_\io((\p_t-iD)V,u)+\Om_\io(V,(i\p_t-\De)u),}
and the same bilinear estimate as \eqref{bil est Om}. 

In the same way, a bilinear operator $\ti\Om_\io$ is defined for the wave equation by
\EQ{ \label{def tiOm}
 \F\ti\Om_\io(f,g)(\x):=\sum_{(k,l) \tor (l,k)\in \HL_\io}\int_{\R^4}\frac{\hat f_k(\x-\y)\hat g_l(\y)}{|\x|-|\x-\y|^2+|\y|^2}d\y,}
while the remaining frequency part is denoted by 
\EQ{ \label{def HH-prod}
 \pt (f,g)_{\HH_\io} := \sum_{\{(j,k),(k,j)\}\cap\HL_\io=\empt}f_jg_k.}
Since $(j,k)$ or $(k,j)\in\HL_\io$ implies 
\EQ{
 |\x|-|\x-\y|^2+|\y|^2 \sim 1+j^2+k^2,}
we have a product estimate with two derivative gain:
\EQ{
 \|\ti\Om_\io(f_j,g_k)\|_p \lec (1+j+k)^{-2}\|f_j\|_{p_1}\|g_k\|_{p_2}}
for $p,p_1,p_2\in[1,\I]$ satisfying $1/p=1/p_1+1/p_2$, and also  
\EQ{
 (i\p_t+D)\ti\Om_\io(u,\ba v) \pt= (u,\ba v)_{\HL_\io}+(\ba v,u)_{\HL_\io} 
 \prQ+ \ti\Om_\io((i\p_t-\De)u,\ba v) - \ti\Om_\io(u,\ba{(i\p_t-\De)v}).}

Using those operators, we can transform the equation \eqref{Zak} into a more regular form. 
Let $(u,N)$ be a (local) solution of \eqref{Zak}. Then we have 
\EQ{ \label{eq normal}
 \CAS{(i\p_t-\De)(u-\Om_\io(N,u))=F_\io(u,N),\\
 (i\p_t+D)(N-D\ti\Om_\io(u,\bar u))=G_\io(u,N),} }
where $F_\io,G_\io$ are nonlinear operators defined by 
\EQ{ \label{def FG}
  \pt F_\io(u,N):=(n,u)_{\LH_\io}+i\Om_\io(iD|u|^2,u)-\Om_\io(N,nu),
  \pr G_\io(u,N):=D(u,\bar u)_{\HH_\io}-D\ti\Om_\io(nu,\bar u)+D\ti\Om_\io(u,n\bar u).}
The free propagator is denoted by 
\EQ{ \label{def Uf}
 \Uf(t)(\fy,\psi):=(e^{-it\De}\fy,e^{itD}\psi)}
and the Duhamel integral is denoted by 
\EQ{ \label{def Duh}
 \D_\io^T(u,N) \pt:=(\cU_\io^T(u,N),\cN_\io^T(u,N))
 \pr:=-i\int_T^t \Uf(t-\ta)(F_\io(u,N),G_\io(u,N))(\ta)d\ta.}
The normal form transform is denoted by, for $\io_1,\io_2\in(0,1)$,   
\EQ{ \label{def normal}
 \pt\vec\Om_{\io_1,\io_2}(\fy,\psi):=(\Om_{\io_1}(\psi,\fy),D\ti\Om_{\io_2}(\fy,\bar\fy)), 
 \pr\Psi_{\io_1,\io_2}(\fy,\psi):=(\fy,\psi)-\vec\Om_{\io_1,\io_2}(\fy,\psi),}
where we will need to choose $\io_1\not=\io_2$ only in the proof of Proposition \ref{prop:bc0}. 
Otherwise we can choose $\io_1=\io_2$, in which case it is abbreviated by 
\EQ{
 \vec\Om_\io(\fy,\psi):=\vec\Om_{\io_1,\io_2}(\fy,\psi), \pq \Psi_\io(\fy,\psi):=\Psi_{\io,\io}(\fy,\psi).}
Hence the Duhamel form of \eqref{eq normal} is written as 
\EQ{
 \Psi_\io(u,N)=\Uf(t-T)\Psi_\io(u,N)(T) + \D^T_\io(u,N),}
for any $T\in\R$ and $\io\in(0,1)$.

\begin{rem}
The operators $\Om^+_\io$ and $\HL_\io$ are essentially the same as $\Om$ and $XL$ in \cite{Zak4D1} (with the parameters $\al=1$ and $K=|\log_2\io|$ therein), except that the former's frequency does not contain $\text{high}<1/\io$, which is included into $\LH_\io$. 
Similarly, the operator $\ti\Om_\io$ is essentially the same as $\ti\Om$ in \cite{Zak4D1} except for $\text{high}<1/\io$, which is included into $\HH_\io$. 
This change is because the main analysis of this paper is essentially done in the inhomogeneous type of Besov spaces, starting from the Strichartz estimate without the derivative. The technical difference is minor in the nonlinear estimates, but this paper is written in a self-contained way.  

The other operator $\Om^-_\io$ was ignored in \cite{Zak4D1}, since the argument therein exhibits no difference from $\Om^+_\io$. The situation is the same in this paper, but $\Om_\io$ is introduced just for the sake of rigor. 
\end{rem}

\subsection{Automatic radial improvement}
In proving the radial improved Strichartz estimate with potential, we may fix the parameter $\de$ of the regularity gain. In the case of $L^2_x$ solutions, the following lemma allows us to ignore it. 
\begin{lem} \label{lem:auto-delta}
Let $d\ge 3$ and $\de\in[0,\de_\star)$. Then there exists $C\in(0,\I)$ such that for any $V\in L^\I(\R;L^{d/2}\rad(\R^d))$ we have $C_0^\de(V)\le C(1+\|V\|_{L^\I_t(L^{d/2})}C_0^0(V))^2$. 
\end{lem}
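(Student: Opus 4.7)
The plan is to bootstrap from the assumed $\de=0$ bound to the improved level $\de>0$ via the free radial Strichartz estimate (Lemma~\ref{lem:freeStz}). The central obstacle is that the forcing $f := (i\p_t - \De - \re V)u$ lies only in $\ti\X^\de_*$ and not in $\ti\X^0_*$, so the $C_0^0(V)$-bound cannot be applied to $u$ directly. I bypass this by peeling off the $\X^\de_*$-component of $f$ via a free Duhamel integral, which already enjoys the $\ti\X^\de$ bound by Lemma~\ref{lem:freeStz}. The workhorse is the product estimate
\[
\|(\re V)\fy\|_{\ti\X^\de_*}\lec\|(\re V)\fy\|_{\X^0_*}\lec\|V\|_{L^\I_tL^{d/2}}\|\fy\|_{\ti\X^0},
\]
which follows from H\"older in $x$ and $t$ together with the Littlewood-Paley embeddings $L^{2_*}\hookrightarrow\dot B^0_{2_*,2}$ (valid since $2_*\le 2$) and $\dot B^0_{2^*,2}\hookrightarrow L^{2^*}$ (valid since $2^*\ge 2$), followed by the Sobolev chain $\X^0_*\hookrightarrow\X^\de_*\hookrightarrow\ti\X^\de_*$. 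Feeding $(i\p_t-\De)u=(\re V)u+f$ into Lemma~\ref{lem:freeStz} then yields
\[
\|u\|_{\ti\X^\de}\lec\inf_t\|u(t)\|_2+\|V\|_{L^\I L^{d/2}}\|u\|_{\ti\X^0}+\|f\|_{\ti\X^\de_*}.
\]

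To control $\|u\|_{\ti\X^0}$, decompose $f=f^{(1)}+f^{(2)}$ radially with $\|f^{(1)}\|_{\cL^1_tL^2}+\|f^{(2)}\|_{\X^\de_*}\lec\|f\|_{\ti\X^\de_*}$, pick $t_0\in\R$ with $\|u(t_0)\|_2\lec\inf_t\|u(t)\|_2$, and let $w(t):=-i\int_{t_0}^t e^{-i(t-s)\De}f^{(2)}(s)\,ds$; then $w$ is radial, $w(t_0)=0$, and Lemma~\ref{lem:freeStz} gives $\|w\|_{\ti\X^\de}\lec\|f^{(2)}\|_{\X^\de_*}$. The perturbation $\ti u:=u-w$ solves $(i\p_t-\De-\re V)\ti u=f^{(1)}+(\re V)w\in\ti\X^0_*$ (using $\cL^1_tL^2\hookrightarrow\ti\X^0_*$ and the product estimate), so applying the definition of $C_0^0(V)$ to $\ti u$ gives
\[
\|\ti u\|_{\ti\X^0}\le C_0^0(V)\bigl[\|u(t_0)\|_2+\|f^{(1)}\|_{\cL^1_tL^2}+C\|V\|_{L^\I L^{d/2}}\|w\|_{\ti\X^\de}\bigr]\lec C_0^0(V)(1+\|V\|_{L^\I L^{d/2}})\bigl(\inf_t\|u\|_2+\|f\|_{\ti\X^\de_*}\bigr).
\]
The triangle inequality with $\|w\|_{\ti\X^0}\le\|w\|_{\ti\X^\de}$ upgrades this to $\|u\|_{\ti\X^0}\lec(1+C_0^0(V)(1+\|V\|_{L^\I L^{d/2}}))(\inf_t\|u\|_2+\|f\|_{\ti\X^\de_*})$. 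Inserting into the $\|u\|_{\ti\X^\de}$ display and collecting terms via the identity $(1+a)(1+ab)=1+a+ab+a^2b$ with $a=\|V\|_{L^\I L^{d/2}}$, $b=C_0^0(V)$ yields
\[
\|u\|_{\ti\X^\de}\lec(1+\|V\|_{L^\I L^{d/2}})(1+\|V\|_{L^\I L^{d/2}}C_0^0(V))\bigl(\inf_t\|u\|_2+\|f\|_{\ti\X^\de_*}\bigr),
\]
which is dominated by the claimed $C(1+\|V\|_{L^\I L^{d/2}}C_0^0(V))^2$ after absorbing the free Strichartz constants, and taking the supremum over $u$ gives the lemma. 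The main technical point is the Besov-Lebesgue bookkeeping in the product estimate; the rest is a standard perturbation bootstrap.
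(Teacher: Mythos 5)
Your proof is correct, and it takes the same overall bootstrap as the paper — feed the free radial Strichartz estimate (Lemma~\ref{lem:freeStz}) with the potential moved to the forcing, then close via the assumed $C_0^0(V)$ bound — but the key technical step, controlling $\|u\|_{\ti\X^0}$ when $f$ lives only in $\ti\X^\de_*$ and not in $\ti\X^0_*$, is handled by a genuinely different device. The paper splits the \emph{solution} $u=u_0+u_1$ into a homogeneous part and a zero-data Duhamel part, then estimates $u_1$ in $\ti\X^0$ by duality against the adjoint equation: a test function $g\in\ti\X^0_*$ produces an adjoint solution $u_2$ enjoying the same $\ti\X^\de$ bound, so the pairing $\int\LR{f|u_2}dt$ closes in $\ti\X^\de_*\times\ti\X^\de$. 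You instead split the \emph{forcing} $f=f^{(1)}+f^{(2)}$ according to the sum structure $\ti\X^\de_*=\cL^1_tL^2+\X^\de_*$, peel off the $\X^\de_*$ piece via the free Duhamel integral $w$, and exploit the fact that $w$ already gains enough regularity to put $(\re V)w$ back into $\ti\X^0_*$; the residual $u-w$ then has forcing in $\ti\X^0_*$ and falls under $C_0^0(V)$. Your argument is a direct perturbation that never introduces the adjoint problem, which is arguably more transparent; the paper's duality argument avoids constructing an auxiliary function and the attendant check that $w\in C(\R;L^2\rad)$ (a harmless but nonzero point — one should note it follows from approximation of $f^{(2)}$ by nice data together with the Strichartz bound you invoke). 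One small point to make explicit: your final factor $(1+M)(1+MC_0^0(V))$ dominates the stated $(1+MC_0^0(V))^2$ only up to absorbing a $1+M$ by $1+MC_0^0(V)$, which uses $C_0^0(V)\gtrsim 1$ (clear once solutions exist globally in $L^2$, since then $\|u\|_{\cL^\I_tL^2}\ge\inf_t\|u(t)\|_2$ with conserved $L^2$ norm); the paper's own bookkeeping has the identical feature, so this is not a gap relative to the source.
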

Hence Lemma \ref{lem:stSt} implies 
\begin{cor} \label{lem:St-st}
Let $d\ge 3$, $\de\in[0,\de_\star)$ and $V\in L^{d/2}\rad(\R^d)$ satisfying $-\De+\re V>0$ on $L^2(\R^d)$. Then $C_0^\de(V)<\I$. 
\end{cor}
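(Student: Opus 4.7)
The plan is a one-step splitting argument that upgrades Strichartz estimates at the $\ti\X^0$ level (controlled by $C_0^0(V)$) to the regularity-gain $\ti\X^\de$ level using the free radial improved Strichartz estimate of Lemma~\ref{lem:freeStz}. Given $u\in C(\R;L^2\rad(\R^d))$, set $F:=(i\p_t-\De-\re V)u$ and pick $t_0\in\R$ with $\|u(t_0)\|_2\le 2\inf_t\|u(t)\|_2$. Write $u=u_0+v$, where $u_0$ is the free Schr\"odinger evolution determined by $(i\p_t-\De)u_0=F$ and $u_0(t_0)=u(t_0)$, so that $v$ is radial with $v(t_0)=0$, $(i\p_t-\De)v=(\re V)u$, and $(i\p_t-\De-\re V)v=(\re V)u_0$.

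The workhorse is the product estimate, valid for any radial $w$:
\EQN{
\|(\re V)w\|_{\ti\X^\de_*}\le \|(\re V)w\|_{\X^\de_*}\lec \|V\|_{L^\I_tL^{d/2}}\|w\|_{\X^0}\le \|V\|_{L^\I_tL^{d/2}}\|w\|_{\ti\X^0},
}
proven by H\"older pointwise in $t$ (giving $\|(Vw)(t)\|_{L^{2_*}}\le\|V(t)\|_{L^{d/2}}\|w(t)\|_{L^{2^*}}$), followed by the Sobolev-Besov embeddings $\dot B^0_{2^*,2}\hookrightarrow L^{2^*}$ and $L^{2_*}\hookrightarrow \dot B^0_{2_*,2}$ (valid for $d\ge 3$ via the Triebel-Lizorkin identification $L^p=\dot F^0_{p,2}$ and the relations between $\dot F$ and $\dot B$ spaces), followed by the derivative Sobolev shift $\dot B^0_{2_*,2}\hookrightarrow \dot B^{-\de}_{2(1-\de),2}$. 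The same estimate also holds at $\de=0$.

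Lemma~\ref{lem:freeStz} applied to $u_0$ then yields $\|u_0\|_{\ti\X^\de}\lec \|u(t_0)\|_2+\|F\|_{\ti\X^\de_*}$, and the embedding $\ti\X^\de\subset\ti\X^0$ transfers the same bound to $\|u_0\|_{\ti\X^0}$. Applying the definition of $C_0^0(V)$ to $v$ (with vanishing initial data at $t_0$ and forcing $(\re V)u_0$) together with the product estimate gives
\EQN{
\|v\|_{\ti\X^0}\le C_0^0(V)\|(\re V)u_0\|_{\ti\X^0_*}\lec C_0^0(V)\|V\|_{L^\I_tL^{d/2}}\|u_0\|_{\ti\X^0},
}
hence $\|u\|_{\ti\X^0}\lec (1+C_0^0(V)\|V\|_{L^\I_tL^{d/2}})[\inf_t\|u(t)\|_2+\|F\|_{\ti\X^\de_*}]$. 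A second appeal to Lemma~\ref{lem:freeStz}, this time for $v$ with $(i\p_t-\De)v=(\re V)u$ and $v(t_0)=0$, combined with the product estimate at level $\de$, yields $\|v\|_{\ti\X^\de}\lec \|V\|_{L^\I_tL^{d/2}}\|u\|_{\ti\X^0}$. Adding $\|u_0\|_{\ti\X^\de}$ and plugging in the previous $\ti\X^0$ bound produces a constant of the form $C(1+C_0^0(V)\|V\|_{L^\I_tL^{d/2}})^2$ in front of $\inf_t\|u(t)\|_2+\|F\|_{\ti\X^\de_*}$, as required.

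The only delicate step is the product estimate: one must respect the Besov third index $q=2$ dictated by the Strichartz spaces $\X^\de$ and $\X^\de_*$, which forces us through the Triebel-Lizorkin identification of $L^p$ and the embeddings between $\dot F$ and $\dot B$ scales at $p=2_*$ and $p=2^*$. Beyond this, everything reduces to bootstrapping between the $\ti\X^0$ and $\ti\X^\de$ levels using the free radial Strichartz and the definition of $C_0^0(V)$, and the resulting constants depend only on $d$ and $\de$, not on $V$.
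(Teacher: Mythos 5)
Your proof is correct, and it takes a genuinely different route from the paper's own proof of the intermediate Lemma~\ref{lem:auto-delta} (to which the corollary is reduced via Lemma~\ref{lem:stSt}). The paper splits $u=u_0+u_1$ by initial data versus forcing, with \emph{both} pieces solving the potential equation; since $C_0^0(V)$ only accepts $\ti\X^0_*$ source terms while $f$ is merely in $\ti\X^\de_*\supsetneq\ti\X^0_*$, the paper is forced into a duality argument: it tests the inhomogeneous piece $u_1$ against $g\in\ti\X^0_*$, solves the dual potential equation backward in time, and recovers the $\ti\X^0$ bound by pairing with the solution $u_2$ of the adjoint problem. You instead split $u=u_0+v$ by \emph{equation}: $u_0$ solves the free Schr\"odinger equation and absorbs the entire $\ti\X^\de_*$ source $F$, which is exactly what Lemma~\ref{lem:freeStz} handles directly, while $v$ solves the potential equation with source $(\re V)u_0$ that now lies in $L^2_tL^{2_*}\subset\ti\X^0_*$ thanks to the H\"older/Sobolev product estimate. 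This puts $v$ squarely in the regime covered by the definition of $C_0^0(V)$, and the duality detour disappears entirely. A second application of Lemma~\ref{lem:freeStz} to $v$ (viewed as a free solution with source $(\re V)u$) then lifts $\|u\|_{\ti\X^0}$ to $\|u\|_{\ti\X^\de}$ as in the paper. Your approach is cleaner and in fact produces a constant of the form $C\bigl(1+M+M^2\,C_0^0(V)\bigr)$, which is no worse than the paper's $C\bigl(1+M\,C_0^0(V)\bigr)^2$ for the qualitative conclusion $C_0^\de(V)<\I$. The only shared subtlety (present in both your argument and the paper's) is the quiet identification of the $\ck\X^0$ estimate of Lemma~\ref{lem:stSt} with the stronger $\ti\X^0$-form quantity $C_0^0(V)$, but that is an issue in the paper's exposition and not something your write-up makes any worse.
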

\begin{proof}[Proof of Lemma \ref{lem:auto-delta}]
Let $v:=\re V$, $u\in C(\R;L^2\rad(\R^d))$ and $f:=(i\p_t-\De+v)u\in\ti\X^\de_*$. By definition of $C_0^0(V)$, we have for any $c\in I$, 
\EQ{ \label{est in Lp}
 \|u\|_{\ti\X^0(I)} \le C_0^0(V)\BR{\|u(c)\|_2 + \|f\|_{\ti \X^0_*}}.}
Applying Lemma \ref{lem:freeStz} to $(i\p_t-\De)u=f-vu$, and then using the embeddings $L^2_tL^{2(1)}\subset\ti \X^\de_*$ and $\ti X^0\subset L^2_tL^{2(-1)}$ with H\"older $L^{\I(2)}\times L^{2(-1)}\subset L^{2(1)}$ and the above estimate, we obtain 
\EQ{ \label{free Duh}
 \|u\|_{\ti \X^\de} \pt\le C\rd(\de)\BR{\|u(c)\|_2 + \|f-vu\|_{\ti \X^\de_*}}
 \pr\le C\rd(\de)\BR{\|u(c)\|_2 + \|f\|_{\ti \X^\de_*} + C(\de)\|V\|_{L^\I_tL^{\I(2)}}\|u\|_{\ti \X^0}}
 \pr\le C\rd(\de)(1+MC(\de)C_0^0(V))\BR{\|u(c)\|_2 + \|f\|_{\ti \X^0_*}}, }
where $M:=\|V\|_{L^\I_t(L^{d/2})}$ and $C(\de)$ is a constant coming from the embeddings. 

In the case $f=0$, the above estimate is enough. 
The estimate in the general case $f\not=0$ is reduced to the case of $f=0$ and the case of $u(c)=0$ by decomposing $u$ into two corresponding parts, namely
\EQ{
 u=u_0+u_1, \pq \CAS{ (i\p_t-\De+v)u_0=0, \pq u_0(c)=u(c),\\ (i\p_t-\De+v)u_1=f,\pq u_1(c)=0.} }
In order to improve the right side of the above estimate for $u_1$, we use the duality: for any $b\in(c,\I)$, we have
\EQ{ \label{duality}
 \|u_1\|_{\ti \X^0(c,b)} \sim \sup_{\|g\|_{\ti \X^0_*(c,b)}\le 1}\int_c^b \LR{u_1|g}dt,}
where the norm equivalence can be estimated by an absolute constant. 
Let $u_2\in C(\R;L^2)$ be the solution to the dual equation 
\EQ{
 (i\p_t-\De+v)u_2=\CAS{g &(c<t<b)\\ 0 &\text{(otherwise)}}, \pq u_2(b)=0.}
Then partial integration and the above estimate for $u_2$ yield 
\EQ{
 \int_c^b \LR{u_1|g}dt \pt= \int_c^b \LR{u_1|(i\p_t-\De+v)u_2}dt 
 \pr= [\LR{u_1|iu_2}]_c^b + \int_c^b \LR{(i\p_t-\De+v)u_1|u_2}dt
 = \int_c^b \LR{f|u_2}dt
 \pr\lec \|f\|_{\ti\X^\de_*(c,b)} \|u_2\|_{\ti\X^\de(c,b)} 
 \pr\le C\rd(\de)(1+MC(\de)C_0^0(V))\|f\|_{\ti\X^\de_*},}
where the embedding $\ti X^0_*\subset\ti X^\de_*$ was used for $g$. 
Thus by duality \eqref{duality}, we obtain 
\EQ{
 \|u_1\|_{\ti X^0(c,b)} \lec C\rd(\de)(1+MC(\de)C_0^0(V))\|f\|_{\ti\X^\de_*}.}
The norm on the other side $(a,c)$ for $a\in(-\I,c)$ is estimated in the same way. 
Since the bound is uniform for all $a<c<b$, it implies the same estimate on $\R$. 
Combining it with the estimate for $u_0$, we obtain 
\EQ{
 \|u\|_{\ti \X^0} \lec C\rd(\de)(1+MC(\de)C_0^0(V))\BR{\|u(c)\|_2 + \|f\|_{\ti \X^\de_*}}.}
Repeating \eqref{free Duh} with this improved estimate leads to the desired conclusion.
\end{proof}

\subsection{Dispersive decay estimates}
We will often rely on dispersive decay for large time of free solutions and more generally scattering solutions in $H^s\times L^2$.  
\begin{lem} \label{lem:scdec}
Let $s\in(0,2]$, $\de\in(0,2]$, and $(\fy,\psi)\in H^s(\R^4)\times L^2(\R^4)$. If $u(t),N(t)$ defined for large $t>0$ satisfies 
\EQ{ \label{def scat}
 \|u(t)-e^{-it\De}\fy\|_{H^s}+\|N(t)-e^{itD}\psi\|_2 \to 0 \pq (t\to\I),}
then as $T\to\I$,
\EQ{ \label{scat dec}
 \|u\|_{L^\I_t L^{2(-s)}(T,\I)}+ \|N\|_{L^\I_t \dot B^{-\de}_{2(-\de),2}(T,\I)}+\|N\|_{(L^\I_tL^2+L^2_tL^4)(T,\I)}\to 0.}
\end{lem}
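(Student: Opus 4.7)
My plan is to split $u=e^{-it\De}\fy+u_r$ and $N=e^{itD}\psi+N_r$, where the scattering hypothesis \eqref{def scat} ensures $\|u_r(t)\|_{H^s}+\|N_r(t)\|_2\to 0$ as $t\to\I$. The remainders are easily absorbed: the Sobolev embedding $H^s(\R^4)\subset L^{2(-s)}(\R^4)$ kills $u_r$ in the first norm, the Besov embedding $L^2=\dot B^0_{2,2}\subset \dot B^{-\de}_{2(-\de),2}$ kills $N_r$ in the second, and $N_r$ may simply be placed in the $L^\I_tL^2$ slot of the sum space for the third. Thus the task reduces to showing, as $T\to\I$, the decay of the three norms in \eqref{scat dec} evaluated on the free solutions $e^{-it\De}\fy$ and $e^{itD}\psi$ of the given $H^s\times L^2$ data.

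For the free Schr\"odinger piece, a standard density-plus-dispersion argument does it: approximate $\fy$ in $H^s$ by a Schwartz $\ti\fy$ up to an $\e$-error, whose Schr\"odinger evolution differs from that of $\fy$ by $\lec\e$ in $L^{2(-s)}$ uniformly in $t$ (Sobolev), while the pointwise dispersive bound $\|e^{-it\De}\ti\fy\|_{L^{2(-s)}}\lec|t|^{-s}\|\ti\fy\|_{L^{2(s)}}$ (interpolating the $L^2$-isometry with the $L^1\to L^\I$ decay $\lec|t|^{-2}$) forces the smooth part's tail to vanish. For the Besov norm of the free wave I would use the same density argument with $\ti\psi$ taken to have Fourier support in a bounded dyadic annulus, so that only finitely many Littlewood-Paley pieces of $e^{itD}\ti\psi$ survive and each decays in $L^{2(-\de)}$ like $|t|^{-3\de/4}$ by interpolating the $L^2$-isometry with the $\R^4$ wave dispersive bound $\|e^{itD}\ti\psi\|_{L^\I}\lec_{\ti\psi}|t|^{-3/2}$.

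The most delicate summand is $\|e^{itD}\psi\|_{(L^\I_tL^2+L^2_tL^4)(T,\I)}$, since a generic $\psi\in L^2(\R^4)$ does not give a free wave in $L^2_tL^4$ globally (this would require $\psi\in\dot H^{1/2}$). I would resolve the tension by carrying the rough part in the $L^\I_tL^2$ slot: given $\e>0$, decompose $\psi=\ti\psi+\psi_*$ with $\ti\psi$ frequency-localized in a bounded dyadic annulus and $\|\psi_*\|_2<\e$, and write $N=(N_r+e^{itD}\psi_*)+e^{itD}\ti\psi$. The first summand is $O(\e)$ in $L^\I_tL^2(T,\I)$ for $T$ large, while the second satisfies $\|e^{itD}\ti\psi\|_{L^4}\lec_{\ti\psi}|t|^{-3/4}$ by the same interpolation, so that $\|e^{itD}\ti\psi\|_{L^2_tL^4(T,\I)}^2\lec_{\ti\psi}\int_T^\I t^{-3/2}\,dt\to 0$. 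Letting first $T\to\I$ and then $\e\to 0$ concludes the proof. The main obstacle, as just described, is precisely this mass-versus-regularity tension in the third norm; the remaining two are routine once the free evolution is targeted by density.
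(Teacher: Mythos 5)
Your proof is correct and takes essentially the same route as the paper: approximate $(\fy,\psi)$ by nicer data, use dispersive decay of the free evolution for the main term, and absorb the error via $H^s/L^2$-unitarity together with the embeddings $H^s\subset L^{2(-s)}$ and $L^2\subset\dot B^{-\de}_{2(-\de),2}$, placing the rough part of $N$ in the $L^\I_tL^2$ slot. The only cosmetic difference is that you frequency-localize $\ti\psi$ to a compact annulus for the wave piece, whereas the paper approximates by a Schwartz function $\psi'$ directly (which then needs a dominated-convergence argument over Littlewood--Paley blocks for the Besov norm); both variants work equally well.
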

When \eqref{def scat} holds, we say that {\it $(u,N)$ scatters in $H^s\times L^2$ with the scattering profile $(\fy,\psi)$.} 
\begin{proof}
For any $\e>0$, there exists $\fy',\psi'\in\cS(\R^4)$ such that $\|\fy-\fy'\|_{H^s}+\|\psi-\psi'\|_2<\e$. The dispersive decay for the free equations implies 
\EQ{
  \|e^{-it\De}\fy'\|_{2(-s)}+\|e^{itD}\psi'\|_{\dot B^{-\de}_{2(-\de),2}}\to 0, \pq \|e^{itD}\psi'\|_4 \le O(t^{-3/4})}
as $t\to\I$, so that we can find $T\in(0,\I)$ such that 
\EQ{
 \|e^{-it\De}\fy'\|_{L^\I_t L^{2(-s)}(T,\I)}+ \|e^{itD}\psi'\|_{L^\I_t \dot B^{-\de}_{2(-\de),2}(T,\I)} + \|e^{itD}\psi'\|_{L^2_tL^4(T,\I)}<\e.} 
On the other hand, the unitarity of the free propagator on $H^s\times L^2$ and the Sobolev embeddings $H^s\subset L^{2(-s)}$ imply 
\EQ{
 \|u(t)-e^{-it\De}\fy'\|_{L^{2(-s)}} + \|N(t)-e^{itD}\psi'\|_{L^2\cap \dot B^{-\de}_{2(-\de),2}} \lec \e +o(1),}
where $o(1)\to 0$ as $t\to\I$ is independent of $(\fy',\psi')$. 
Adding these two estimates and sending $\e\to+0$ yield the conclusion. 
\end{proof}
Note however that $L^2_tL^4$ is not admissible for the Strichartz estimate of $e^{itD}$ on $L^2(\R^4)$, so the $L^\I_tL^2$ component can not be eliminated from \eqref{scat dec}. 
The idea of estimating $N$ in $L^\I_tL^2+L^2_tL^4$ was already in \cite{Zak4D1}.

\section{Profile decomposition of the wave potential} \label{sect:prof dec}
We prove Theorem \ref{thm:Stz} by contradiction. 
Suppose that for some $s,\de,B,M$ in the range of the theorem, there is no $(\e\fn,C\fn)$ with the desired property. 
Then there exist sequences of radial functions $V_n(t,x),u_n(t,x)$ such that 
\EQ{ \label{choice Vn fn}
 \pt V_n=V\sF_n+V\sN_n+V\sD_n, \pq (i\p_t+D)V\sF_n=0,  
 \pr \|V\sF_n\|_{L^\I_tL^2} \le B,  \pq \|V\sN_n\|_{L^\I_t\dot B^1_{4/3,1}+L^1_t\dot B^1_{4,1}}+\|V\sD_n\|_{L^\I_t\dot B^{-1}_{4,\I}} \to 0,
 \pr \|(i\p_t+D)V\sD_n\|_{L^1_t \dot B^{-1}_{4,\I}+L^\I_t\dot B^{-1}_{4/3,\I}} + \|V\sD_n\|_{L^\I_t L^2} + \|V\sN_n\|_{L^\I_t L^2} \le M,
 \pr  \|\LR{D}^su_n\|_{\ti \X^\de} \to\I,
 \pr  \|u_n(0)\|_{H^s}+\|\LR{D}^s(i\p_t-\De-\re V_n)u_n\|_{\ti \X^\de_*} \to 0,   }
where the linearity in $u$ is used to add the last property, and the term $\inf_t\|u(t)\|_{H^s}$ in \eqref{Stz} is replaced with $\|u(0)\|_{H^s}$ by appropriate time translation of $V_n$ and $u_n$. 
Without losing generality, we may and do assume that 
\EQ{
 1+B \ll M.}

Apply the Bahouri-G\'erard profile decomposition to the sequence of free waves $V\sF_n$, which is bounded in $L^2_x$. Then passing to a subsequence, (which does not affect \eqref{choice Vn fn}), there exist sequences $\{t^j_n\}\subset\R$, $\{\si^j_n\}\subset(0,\I)$, $\{\psi^j\}\subset L^2\rad$ such that 
\EQ{ \label{initial PRD}
 \Ga^{0,J}_n:=V\sF_n-\sum_{0\le j<J} V^j_n, \pq V^j_n:=e^{i(t-t^j_n)D}S(\si^j_n)\psi^j,}
with $S(\si)\fy(x)=\si^2\fy(\si x)$ denoting the $L^2$-invariant dilation, satisfy 
\EQ{ \label{Ga small0}
 \lim_{J\to\I}\limsup_{n\to\I}\|\Ga^{0,J}_n\|_{L^\I_t \dot B^{-1}_{4,\I}}=0,}
and, for each $j,k<J$ with $j\not=k$, 
\EQ{ \label{weak van}
   S(\si^j_n)\Ga^{0,J}_n(t^j_n)\to 0 \IN{\weak{L^2_x}},}
\EQ{ \label{AOT}
 \log(\si^j_n/\si^k_n)\to\pm\I \tor 
  [\si^j_n=\si^k_n \tand  \si^j_n|t^j_n-t^k_n|\to\I], }
\EQ{ \label{scal lim0}
 \si^j_n \equiv 1,\pq \si^j_n\to 0, \tor \si^j_n\to\I,}
as $n\to\I$. The parameters $\si^j_n,t^j_n$ represent the scale and time of the profile $V^j_n$. 

The orthogonality \eqref{AOT}, together with the weak vanishing \eqref{weak van} and the dispersive decay of free waves, implies that each profile $\psi^j$ is given by the weak limit 
\EQ{
 S(1/\si^j_n) V\sF_n(t^j_n) \to \psi^j \IN{\weak{L^2_x}}\pq(n\to\I).}

We can include all the profiles with $\si^j_n\to 0$ into the other parts $\Ga^{0,J}_n,V\sN_n,V\sD_n$. More precisely, we have 
\begin{lem}
For any $\psi\in L^2(\R^4)$ and any sequence $\si_n\to+0$, there exists a sequence $\{\fy_n\}_n\subset W^{3,4/3}(\R^4)$ satisfying $\|\fy_n-\psi\|_2\to 0$ and $\|\fy_n\|_{W^{3,4/3}}\si_n^{3/4}\to 0$. 
Moreover, if $\fy_n$ is such a sequence, then for any $\chi\in C^1(\R)$ satisfying $\chi(t)=1$ for $|t|\le 1$ and $\chi(t)=0$ for $|t|\ge 2$, and 
\EQ{
 \pt v_n:=e^{itD}S(\si_n)\psi, \pq v^0_n:=e^{itD}S(\si_n)(\psi-\fy_n), 
 \pr v^1_n:=(1-\chi)(\si_n^2 t)e^{itD}S(\si_n)\fy_n,
 \pq v^2_n:=\chi(\si_n^2 t)e^{itD}S(\si_n)\fy_n,}
we have 
\EQ{ \label{decop expanding}
 \pt v_n = \sum_{k=0}^2 v^k_n, \pq\|v^0_n\|_{L^\I_tL^2}=\|\psi-\fy_n\|_2\to0,
 \pr \|v^1_n\|_{L^\I_t\dot B^{-1}_{4,1}}+\|(i\p_t+D)v^1_n\|_{L^1_t\dot B^{-1}_{4,1}}+\|v^2_n\|_{L^1_t\dot B^1_{4,1}}  \to 0.}
\end{lem}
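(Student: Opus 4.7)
The plan is to construct $\{\fy_n\}$ by density plus diagonalization, then verify the three estimates by rescaling to the un-scaled half-wave $\tilde W(s,\cdot):=e^{isD}\fy_n$ and invoking its dispersive decay.

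For the existence of $\fy_n$, I would use that $\cS(\R^4)$ is dense in $L^2(\R^4)$ and contained in $W^{3,4/3}$. Pick $\psi_k\in\cS$ with $\|\psi_k-\psi\|_2\le 1/k$; each $M_k:=\|\psi_k\|_{W^{3,4/3}}$ is finite. A standard diagonal argument produces $k(n)\to\I$ so slowly that $M_{k(n)}\si_n^{3/4}\to 0$ (for example $k(n):=\max\{k\le n:M_k\le\si_n^{-1/2}\}$, which gives $M_{k(n)}\si_n^{3/4}\le\si_n^{1/4}\to 0$); set $\fy_n:=\psi_{k(n)}$. The identity $v_n=v_n^0+v_n^1+v_n^2$ is immediate from $\chi+(1-\chi)=1$, and $\|v_n^0\|_{L^\I_tL^2}=\|\psi-\fy_n\|_2$ follows from $L^2$-unitarity of $e^{itD}$ and the $L^2$-invariance of $S(\si_n)$ on $\R^4$.

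For the remaining bounds, the main tool is the scaling identity $e^{itD}S(\si)\fy(x)=\si^2(e^{i\si tD}\fy)(\si x)$, which via $P_jS(\si)=S(\si)P_{j/\si}$ and the change of variable $s=\si_n t$ reduces each $L^p_t\dot B^\de_{4,1}$-norm of $v_n^k$ to an integral of $\|\tilde W(s)\|_{\dot B^\de_{4,1}}$. I would then plug in the 4D wave dispersive estimate $\|P_je^{isD}\fy\|_4\lec j^{5/4}|s|^{-3/4}\|P_j\fy\|_{4/3}$, obtained by interpolating $\|P_1e^{isD}g\|_\I\lec|s|^{-3/2}\|P_1g\|_1$ with $L^2$-unitarity and then dyadically rescaling. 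Summing in $j$ with weight $j^{\pm1}$ gives $\|\tilde W(s)\|_{\dot B^1_{4,1}}\lec|s|^{-3/4}\|\fy_n\|_{\dot B^{9/4}_{4/3,1}}$ and $\|\tilde W(s)\|_{\dot B^{-1}_{4,1}}\lec|s|^{-3/4}\|\fy_n\|_{\dot B^{1/4}_{4/3,1}}$; the Besov embedding $W^{3,4/3}\subset B^{9/4}_{4/3,1}$, with easy low-frequency $L^{4/3}$ control for the homogeneous norms, bounds both right sides by $\|\fy_n\|_{W^{3,4/3}}$.

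Assembling: since $v_n^1$ is supported on $|\si_n t|\ge1/\si_n$, the factor $|\si_n t|^{-3/4}\le\si_n^{3/4}$ yields $\|v_n^1\|_{L^\I_t\dot B^{-1}_{4,1}}\lec\si_n^{3/4}\|\fy_n\|_{W^{3,4/3}}$. Using $(i\p_t+D)e^{itD}=0$, one has $(i\p_t+D)v_n^1=i\si_n^2(1-\chi)'(\si_n^2 t)e^{itD}S(\si_n)\fy_n$, supported on $|t|\sim\si_n^{-2}$, and the $L^1_t$-integration of $\si_n^2|(1-\chi)'(\si_n^2 t)|$ contributes $\si_n^{-2}\cdot\si_n^2=1$, producing the same bound. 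For $v_n^2$ the $L^1_t$-integral of $|s|^{-3/4}$ over $|s|\le2/\si_n$ is $\lec\si_n^{-1/4}$, which combined with the $\si_n$ prefactor (from $\dot B^1_{4,1}$-scaling of $S(\si_n)$ times the Jacobian of $s=\si_n t$) again gives $\si_n^{3/4}\|\fy_n\|_{W^{3,4/3}}$. All three quantities then vanish by the defining property of $\fy_n$. The principal bookkeeping task is to identify the correct Besov indices ($9/4$ and $1/4$) at which the dispersive decay, time integration, and $S(\si_n)$-scaling exponents balance to produce exactly $\si_n^{3/4}\|\fy_n\|_{W^{3,4/3}}$, matching the hypothesis on $\fy_n$.
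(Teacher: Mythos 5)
Your proposal is correct and follows essentially the same route as the paper: a density argument in $W^{3,4/3}$ with the rate $\si_n^{3/4}$ pinned down by a diagonalization, followed by the scaling identity $e^{itD}S(\si)=S(\si)e^{i\si tD}$ and the $L^{4/3}\to L^4$ dispersive decay $|s|^{-3/4}$ for the half-wave, which produces precisely the Besov exponents $1/4$ and $9/4$ the paper uses. The only differences are in level of detail (you spell out the diagonal choice of $k(n)$ and the derivation of the dyadic dispersive estimate, which the paper treats as standard), not in the argument itself.
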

\begin{proof}
The existence of a sequence $\fy_n$ is obvious from the density of $W^{3,4/3}\subset L^2$ and $\si_n\to 0$, so are the properties of $v_n,v^0_n$ in \eqref{decop expanding}. For $v^1_n$, the scaling invariance and the dispersive decay of the wave equation $e^{itD}$ yield 
\EQ{
 \|v^1_n\|_{L^\I_t\dot B^{-1}_{4,1}} \pt\le \|e^{itD}\fy_n\|_{L^\I_t\dot B^{-1}_{4,1}(|\si_n t|\ge 1)}
 \lec \si_n^{3/4}\|\fy_n\|_{\dot B^{1/4}_{4/3,1}},}
and similarly, 
\EQ{
 \|(i\p_t+D)v^1_n\|_{L^1_t\dot B^{-1}_{4,1}} \pt\le \|\chi'\|_{L^1_t}\|e^{itD}\fy_n\|_{L^\I_t\dot B^{-1}_{4,1}(|\si_n t|\ge 1)}
 \lec \si_n^{3/4}\|\fy_n\|_{\dot B^{1/4}_{4/3,1}}.}
The same estimates (with different exponents) yield for $v^2_n$ 
\EQ{
 \|v^2_n\|_{L^1_t \dot B^1_{4,1}} \pt\le \si_n \|e^{itD}\fy_n\|_{L^1_t\dot B^1_{4,1}(|\si_n t|\le 2)}
 \lec \si_n^{3/4}\|\fy_n\|_{\dot B^{9/4}_{4/3,1}}.}
The Besov norms of $\fy_n$ are dominated by $W^{3,4/3}$, hence \eqref{decop expanding} follows. 
\end{proof}

For the profile decomposition \eqref{initial PRD}, the above lemma yields a further decomposition 
$V^j_n = \sum_{k=0}^2 V^{j,k}_n$ for each $j\in D_J:=\{j<J \mid \si^j_n\to 0\}$. 
Distributing them into the three parts by 
\EQ{ \label{second PRD} 
 \pt \Ga^{1,J}_n := \Ga^{0,J}_n + \sum\{V^{j,0}_n\mid j\in D_J\},
 \pr V\sD_{n,J}:=V\sD_n+\sum\{V^{j,1}_n\mid j\in D_J\},
 \pr V\sN_{n,J}:=V\sN_n+\sum\{V^{j,2}_n\mid j\in D_J\},}
we obtain a modified decomposition
\EQ{ \label{decop Vn}
 V_n = V\sF_{n,J}+ V\sD_{n,J} + V\sN_{n,J},\pq V\sF_{n,J}=\sum_{j\in C_J} V^j_n  + \Ga^{1,J}_n,}
with $C_J:=\{j\in\Z\mid 0\le j<J\}\setminus D_J$, satisfying the parameter separation \eqref{AOT}, 
\EQ{ \label{Ga small1}
 \lim_{J\to\I}\limsup_{n\to\I}\|\Ga^{1,J}_n\|_{L^\I_t\dot B^{-1}_{4,\I}}=0,}
and
\EQ{ \label{scal lim1}
 j\in C_J \implies \si^j_n \equiv 1 \tor \si^j_n\to \I,}
\EQ{
 \pt\|V\sN_{n,J}\|_{L^\I_t\dot B^1_{4/3,1}+L^1_t\dot B^1_{4,1}} + \|V\sD_{n,J}\|_{L^\I_t\dot B^{-1}_{4,\I}} \to 0,
 \pr\|(i\p_t+D)V\sD_{n,J}\|_{L^1_t\dot B^{-1}_{4,\I}+L^\I\dot B^{-1}_{4/3,\I}}+\|V\sD_{n,J}\|_{L^\I_tL^2}+\|V\sN_{n,J}\|_{L^\I_tL^2} < 2M,}
as $n\to\I$ or by passing to a further subsequence, for the last $L^\I_tL^2$ bound. 
Note that the time cut-off functions in $V^{j,k}_n$ for $k=1,2$ do not disturb the asymptotic $L^2_x$ orthogonality among $j$ by the parameter separation \eqref{AOT}.

The Schr\"odinger-rescaling of the profiles is denoted by 
\EQ{ \label{def tiVjn}
 \ti V^j_n(t):=S(1/\si^j_n)V^j_n(t/(\si^j_n)^2)=e^{i(t/\si^j_n-\ta^j_n)D}\psi^j, \pq \ta^j_n:=\si^j_n t^j_n.} 
It is convenient to classify the profiles by their scales. Let $J_\sim$ be the quotient set for $j\le J$ with the scale $\si^j_n\equiv 1$ or $\to \I$:
\EQ{ \label{def Jsim}
 J_\sim := \{\{k\in C_J \mid \forall n\in\N,\ \si^j_n=\si^k_n\} \mid j\in C_J\}.}
For any $A\in J_\sim$, \eqref{AOT} implies that 
\EQ{
  j,k\in A \tand j\not=k \implies |\ta^j_n-\ta^k_n|\to\I \pq(n\to\I),}
and also that 
\EQ{ \label{sep R}
 \inf_{j,k<J,\ j\not\sim k}\max(\si^j_n/\si^k_n, \si^k_n/\si^j_n)\to \I \pq (n\to\I).}
Let $\si^A_n:=\si^j_n$ for any $j\in A\in J_\sim$. The set $J_\sim$ is naturally ordered by 
\EQ{ \label{scal sep}
 A,B \in J_\sim,\ A<B \define \lim_{n\to\I} \si^B_n/\si^A_n = \I,}
which is a linear order because of \eqref{sep R}. 
The profiles classified by the scaling are denoted by 
\EQ{ \label{def VnA}
 V^A_n:=\sum_{j\in A} V^j_n, \pq \ti V^A_n:=\sum_{j\in A}\ti V^j_n.}
Then we have 
\EQ{ \label{decop VF}
 \pt V\sF_n = \sum_{A\in J_\sim} V^{A}_n + \Ga^{1,J}_n, \pq V^{A}_n(t)=S(\si^A_n)\ti V^{A}_n((\si^A_n)^{2}t),}
and 
\EQ{
 \ti V^A_n = \sum_{j\in A}e^{i(t/\si^A_n-\ta^j_n)}\psi^j, \pq (j\not=k\implies |\ta^j_n-\ta^k_n|\to\I).}
Let us call such a sequence $\{\ti V^A_n\}_n$ {\it a wave train}.

\section{Strichartz with a single wave train} \label{sect:WT}
In this section, we consider the simple case with a single wave train as the potential and $s=0$, namely the Strichartz estimate for $L^2_x$ solutions. 
The goal of this section is to prove the following 
\begin{lem} \label{lem:St-wt}
There is a constant $\e\wt\in(0,1)$ with the following property. 
For any $\de\in[0,\de_\star)$ and for any finite subset $P\subset L^2\rad(\R^4)$, there exists $C\wt(\de,P)\in(1,\I)$ such that for any finite subset $\ti P\subset L^2\rad(\R^4)$ satisfying 
\EQ{ \label{later prof small}
 \sum_{\fy\in\ti P}\|\fy\|_2^2 < \|W^2\|_2^2 \tand \sup_{\fy\in \ti P\setminus P}\|\fy\|_2 \le \e\wt,}
and for any $\ta:\ti P\to\R$ with sufficiently large (depending on $P$ and $\ti P$) 
\EQ{
 d(\ta):=\sup_{\fy,\psi\in\ti P, \fy\not=\psi}|\ta(\fy)-\ta(\psi)|,}
we have 
\EQ{ \label{est wt}
 \sup_{\si>0}C_0^\de(S(\si)\sum_{\fy\in \ti P} e^{i(\si t-\ta(\fy))D}\fy) \le C\wt(\de,P). }
\end{lem}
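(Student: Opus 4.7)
My strategy is to reduce the lemma to the static potential estimate of Lemma \ref{lem:stSt} by two reductions (scaling, auto-$\delta$) followed by a step-function-in-time approximation of the wave train, with the smallness threshold $\e\wt$ of profiles outside $P$ chosen so that they can be absorbed as perturbations.

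First I would reduce to $\sigma=1$: since $s=0$, all the norms in the definition of $C_0^\de$ are invariant under the Schr\"odinger rescaling $u(t,x)\mapsto \la^2 u(\la^2 t,\la x)$ with the matching wave rescaling $V(t,x)\mapsto \la^2 V(\la^2 t,\la x)$, so taking $\la=\sigma$ eliminates the supremum over $\sigma>0$. Next, I would reduce to the case $\de=0$ via Lemma \ref{lem:auto-delta}, which gives $C_0^\de(V)\lec (1+\|V\|_{L^\I_tL^2}C_0^0(V))^2$. The required bound on $\|V\|_{L^\I_tL^2}$ comes from near-orthogonality: expanding
\EQN{ \|V(t)\|_2^2 = \sum_{\fy\in\ti P}\|\fy\|_2^2 + 2\re\sum_{\fy\neq\psi}\LR{e^{i(\ta(\psi)-\ta(\fy))D}\fy\,|\,\psi}, }
the cross terms are independent of $t$ and tend to $0$ as $d(\ta)\to\I$ by dispersive decay of $e^{isD}$ applied to Schwartz approximations of $\fy,\psi$. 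Hence by choosing $d(\ta)$ sufficiently large (depending on $\ti P$) we obtain $\|V\|_{L^\I_tL^2}^2\le \sum\|\fy\|_2^2+o(1)<\|W^2\|_2^2$, and the same argument also gives the pointwise positivity $-\De-\re V(t)>0$ that will be needed to invoke the static estimate.

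The heart of the proof will be the bound on $C_0^0(V)$. Since the wave train is well-localized in time around the peaks $\{\ta(\fy_k)\}_{\fy_k\in P}$ and a possibly dense set of small peaks from $\ti P\setminus P$, I plan to partition $\R$ into finitely many intervals $I_k$, one centered at each $\ta(\fy_k)$ for $\fy_k\in P$, plus gap intervals. On each $I_k$, after approximating the target profile $\fy_k$ by a smooth Fourier-localized function $\fy_k'$ (so that $s\mapsto e^{isD}\fy_k'$ is Lipschitz into $L^2$), I would take $I_k$ short enough that the free-wave evolution of $\fy_k'$ is $L^2$-close to the static potential $\fy_k'$. Then on $I_k$ the total potential decomposes as the static part $\fy_k'$ plus a perturbation $W_k$ consisting of: (i) the $\fy_k\to\fy_k'$ smoothing error, (ii) the free evolution error on $I_k$, (iii) all other $P$-profiles $e^{i(t-\ta(\fy_j))D}\fy_j'$ for $j\neq k$, which for large $d(\ta)$ are small in $L^\I_tL^2$ in the Schwartz approximation by dispersive decay of $e^{isD}$ at $|s|\gec d(\ta)$, and (iv) the collective contribution of $\ti P\setminus P$, whose single-profile smallness $\e\wt$ lets it be absorbed via a Neumann-series-style perturbation of Lemma \ref{lem:stSt}. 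Applying Lemma \ref{lem:stSt} to the static potential $\fy_k'\in L^{d/2}=L^2$ (positive by the global positivity) on $I_k$ gives a Strichartz estimate with constant $C(\fy_k')$ that depends only on $P$ (through the chosen approximants), and absorbing $W_k u$ into the LHS using $\|W_k u\|_{\ck\X^0_*(I_k)}\le \|W_k\|_{L^\I_tL^2(I_k)}\|u\|_{\ck\X^0(I_k)}$ closes the estimate on $I_k$.

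Finally, I would concatenate the intervals. Here the $\inf_{t\in I}$ form in Lemma \ref{lem:stSt} is essential: at a common boundary $t=T_k$ of $I_k,I_{k+1}$, the Strichartz estimate on either side uses the same value $\|u(T_k)\|_2$, so a telescoping argument, iterating from the interval containing $0$ outward, controls $\|u\|_{\ck\X^0(\R)}$ by $\|u(0)\|_2+\|(i\p_t-\De-\re V)u\|_{\ck\X^0_*(\R)}$ times a constant that is the product of at most $|P|+1$ constants, each depending only on $P$. The embedding $\ck\X^0\subset\ti\X^0$ (together with the Littlewood-Paley-compatible reduction) then yields the claimed $C_0^0(V)\le C\wt(0,P)$, and Lemma \ref{lem:auto-delta} upgrades this to $\de>0$. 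The main obstacle I expect is step (iv): ensuring that the perturbation from $\ti P\setminus P$ on each interval is small enough uniformly in $|\ti P|$ to be absorbed by the Strichartz constant $C(\fy_k')$; this is what forces $\e\wt$ to be chosen once and for all, independently of $P$, and requires a careful Cotlar--Stein-type square-sum over the small profiles exploiting the temporal orthogonality coming from $d(\ta)\to\I$.
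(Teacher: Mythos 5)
Your step (iii) contains a fatal error: you claim the other profiles $e^{i(t-\ta(\fy_j))D}\fy_j'$ are "small in $L^\I_t L^2$ ... by dispersive decay", but $e^{isD}$ is unitary on $L^2(\R^4)$, so $\|e^{isD}\fy_j'\|_2=\|\fy_j'\|_2$ for every $s$. There is no $L^2$ smallness from dispersion; dispersive decay gives smallness only in \emph{weaker} norms such as $\dot B^{-1}_{4,\I}$. The same issue sinks step (iv): the collective $L^\I_tL^2$ norm of the profiles in $\ti P\setminus P$ can be as large as (nearly) $\|W^2\|_2$, so it is not absorbable as a small $L^\I_tL^2$ perturbation either, and no Cotlar--Stein rearrangement fixes this because the obstacle is the unitarity of the wave propagator, not lack of almost-orthogonality.

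This is precisely why your preliminary reduction to $\de=0$ via Lemma~\ref{lem:auto-delta} is the wrong move. The paper fixes a \emph{positive} $\de=\de_\star/2$ and keeps it throughout: the off-profile contributions are controlled via the interpolation \eqref{intpl BH}, $[L^2,\dot B^{-1}_{4,\I}]_{\de/2}=\dot B^{-\de/2}_{2(-\de/2),\I}$, using $L^2$ boundedness and $\dot B^{-1}_{4,\I}$ smallness (from dispersion for $|\ta(\fy)-\ta(\psi)|$ large), and the resulting $\dot B^{-\de/2}_{2(-\de/2),\I}$ smallness is paired with $u$ in the product estimate \eqref{prod neg Bes}, which lands in $\X^\de_*$ and needs $\X^\de$ on the other side — i.e., the radial gain $\de>0$ is structurally essential for absorbing these terms, and disappears if you set $\de=0$. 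The role of $\e\wt$ is also different from what you describe: it is not used to suppress $\ti P\setminus P$ on the intervals $I(\fy)$ around $\fy\in P$ (there, large $d(\ta)$ makes \emph{all} other profiles small in $\dot B^{-\de/2}_{2(-\de/2),\I}$, summed over $\ti P$). Rather, $\e\wt$ is needed on the \emph{complement} $I_c=\R\setminus\Cu_{\fy\in P}I(\fy)$, where at most one profile is temporally near and it necessarily lies in $\ti P\setminus P$; there the total potential is small in $L^\I_t\dot B^{-1}_{4,2}$ (but not in $L^2$) and Lemma~\ref{lem:Bsmall} — which again requires the radial $\de>0$ gain — gives the uniform Strichartz bound. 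Your scaling reduction and the step-function-in-time approximation on the $P$-intervals are in the same spirit as the paper, but the argument cannot be closed without the positive $\de$, the Besov-interpolation mechanism, and the $I_c$ analysis via Lemma~\ref{lem:Bsmall}.
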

$\ti P$ is the set of profiles of a wave train, and $P$ is a subset containing those profiles that are not small enough. 
The first condition in \eqref{later prof small} could be weakened to
\EQ{
 \sup_{\fy\in P\cap \ti P}\|\fy\|_2 < \|W^2\|_2,}
with the price for $\e\wt,C\wt$ to depend on $\sum_{\fy\in\ti P}\|\fy\|_2^2$, but the above version is enough for our purpose. 
In the case of \eqref{later prof small}, after minimizing $P\subset\ti P$ such that the second condition holds, the number of $P$ is a priori bounded.

The $L^2$-Strichartz has the following scale invariance. Suppose that $u,V,f$ satisfy the equation 
\EQ{
 (i\p_t-\De-\re V)u=f.}
For $\la>0$, define $u_\la,V_\la,f_\la$ by the parabolic rescaling
\EQ{ \label{parab scal}
 \pt u_\la(t,x):=\la^{2} u(\la^2 t,\la x), \pq V_\la(t,x):=\la^2 V(\la^2 t,\la x), 
 \pr f_\la(t,x):=\la^{4}f(\la^2t, \la x).}
Then we have  
\EQ{
 \pt (i\p_t-\De-\re V_\la)u_\la=f_\la,
 \pq \|u_\la\|_Z\approx\|u\|_Z, \pq  \|f_\la\|_Y\approx\|f\|_Y,}
for any $Z\in\{L^\I_tL^2, \cL^\I_tL^2, \X^\de\}$ and $Y\in\{L^1_tL^2, \cL^1_tL^2, \X^\de_*\}$, so 
\EQ{
  C_0^\de(V_\la) \approx C_0^\de(V).}
Hence \eqref{est wt} implies 
\EQ{ \label{rescaled wt}
 \sup_{\si>0}C_0^\de(\sum_{\fy\in \ti P} e^{i(t/\si-\ta(\fy))D}\fy) \lec C\wt(\de,P),}
and vice versa (up to a constant multiple).  
The norm equivalence in the above (or the implicit constants in `$\approx$') is uniform for $\de$. In fact, if the Besov norms are defined to be scaling invariant, then it becomes the exact equality, and so `$\lec$' in \eqref{rescaled wt} is also replaced with `$\le$'.

The above lemma is proved by approximating the free wave potential by a step function in time where the profiles are relatively large. 
We start with an easy perturbation lemma in $L^2_x$: 
\begin{lem} \label{lem:L2pert}
There exists $\e\pert:(0,\I)\to(0,1)$ such that for any $C_*\in(0,\I)$, $\de\in[0,3]$ and $v,v'\in L^\I(\R;L^2(\R^4))$ we have 
\EQ{
 C_0^\de(v) \le C_* \tand \|v-v'\|_{L^\I_tL^2} \le \e\pert(C_*) \implies C_0^\de(v')\le 2C_*.}
\end{lem}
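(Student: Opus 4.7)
The plan is a standard $L^\I_tL^2_x$-perturbation argument. Given $u\in C(\R;L^2\rad(\R^4))$ and writing $f:=(i\p_t-\De-\re v')u$, I would view
\[(i\p_t-\De-\re v)u=f+\re(v-v')u\]
and invoke the hypothesis $C_0^\de(v)\le C_*$ through \eqref{def C0de} to obtain
\[\|u\|_{\ti\X^\de}\le C_*\BR{\inf_{t\in\R}\|u(t)\|_2+\|f\|_{\ti\X^\de_*}+\|\re(v-v')u\|_{\ti\X^\de_*}}.\]

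The perturbation term is controlled by H\"older in $x$ together with $\de$-uniform Besov embeddings. First, $\|\re(v-v')u\|_{L^2_tL^{4/3}_x}\le\|v-v'\|_{L^\I_tL^2}\|u\|_{L^2_tL^4_x}$. Then, for every $\de\in[0,3]$, dyadic Bernstein (with constants cancelling because the two Besov exponents sit on the same Sobolev scaling line) yields
\[\ti\X^\de\subset L^2_t\dot B^\de_{4/(1+\de),2}\subset L^2_t\dot B^0_{4,2}\subset L^2_tL^4_x,\]
\[L^2_tL^{4/3}_x\subset L^2_t\dot B^0_{4/3,2}\subset L^2_t\dot B^{-\de}_{4/(3-\de),2}=\X^\de_*\subset\ti\X^\de_*,\]
where $\dot B^0_{4,2}\subset L^4_x$ and $L^{4/3}_x\subset\dot B^0_{4/3,2}$ are the standard Littlewood--Paley/Minkowski embeddings (valid since $4\ge 2$ and $4/3\le 2$). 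These combine to produce an absolute $C_{\mathrm{emb}}$ with
\[\|\re(v-v')u\|_{\ti\X^\de_*}\le C_{\mathrm{emb}}\|v-v'\|_{L^\I_tL^2}\|u\|_{\ti\X^\de}\quad\text{for every }\de\in[0,3].\]

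Setting $\e\pert(C_*):=(4C_*C_{\mathrm{emb}})^{-1}$, the smallness assumption bounds the perturbation by $\tfrac14\|u\|_{\ti\X^\de}$; absorbing into the left yields $\|u\|_{\ti\X^\de}\le 2C_*[\inf_t\|u(t)\|_2+\|f\|_{\ti\X^\de_*}]$, hence $C_0^\de(v')\le 2C_*$ on taking the supremum in \eqref{def C0de}. The one delicate point is the a priori finiteness of $\|u\|_{\ti\X^\de}$ needed to legitimize the absorption, since the hypothesis does not supply it directly; I would dispose of it by running the entire argument on an exhausting sequence of bounded intervals $I\uparrow\R$ under the paper's extension convention, where $\|u\|_{\ti\X^\de(I)}<\I$ follows from a standard approximation/short-time iteration for the Schr\"odinger equation with $L^\I_tL^2$-potential on the compact $I$, and then passing to the limit by monotone convergence. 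This technicality is the only nontrivial point; the main perturbation estimate itself is soft.
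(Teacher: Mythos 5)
Your argument coincides with the paper's proof: view the potential difference as an extra source term, bound $(v'-v)u$ in $L^2_tL^{4/3}$ by H\"older, use the scaling-invariant Besov/Bernstein embeddings $\ti\X^\de\subset L^2_tL^4$ and $L^2_tL^{4/3}\subset\X^\de_*\subset\ti\X^\de_*$ (with constants uniform in $\de$), and absorb into the left side after choosing $\e\pert\lesssim 1/C_*$. The sign $\re(v-v')u$ should read $\re(v'-v)u$, which is immaterial; your closing remark on the a priori finiteness of $\|u\|_{\ti\X^\de}$ is extra care that the paper silently omits rather than a different method.
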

\begin{proof}
Let $(i\p_t-\De+v')u=f$ and $\|v-v'\|_{L^\I_tL^2}\le\e\in(0,1)$. Then 
$(i\p_t-\De+v)u=f+(v'-v)u$ and by H\"older
\EQ{
 \|(v'-v)u\|_{L^2_tL^{4/3}} \le \|v'-v\|_{L^\I_tL^2}\|u\|_{L^2_tL^4}
 \le \e \|u\|_{L^2_tL^4}.}
Hence using the embeddings $\dot B^\de_{4(\de),2}\subset L^4$ and $L^{4/3}\subset \dot B^{-\de}_{4/3(-\de),2}$, we obtain 
\EQ{
 \pn\|u\|_{\ti \X^\de}
 \pt\le C_0^\de(v)\BR{\inf_t\|u(t)\|_{L^2}+\|f\|_{\ti \X^\de_*}+\|(v'-v)u\|_{L^2_tL^{4/3}}}
 \pr\le C_*\BR{\inf_t\|u(t)\|_{L^2}+\|f\|_{\ti \X^\de_*}}+CC_*\e\|u\|_{\X^\de},}
for some absolute constant for the embeddings. The last term can be absorbed by the left side if $CC_*\e<1/2$. Hence it suffices to choose $\e\pert\ll1/C_*$. 
\end{proof}

For the dispersed part of the potential, we need the following perturbation lemma, which relies on the radial improvement of the free Strichartz estimate.
\begin{lem} \label{lem:Bsmall}
There exist $\e\ds:(0,\I)\to(0,1)$ and $C\ds:[0,\de_\star)\to(0,\I)$ such that for any $\de\in[0,\de_\star)$,  $B\in(0,\I)$ and any $V\in L^\I(\R;L^2\rad(\R^4))$, we have 
\EQ{ \label{bd V}
  \|V\|_{L^\I_tL^2} \le B \tand \|V\|_{L^\I_t \dot B^{-1}_{4,\I}}\le\e\ds(B) \implies C_0^\de(V) \le C\ds(\de).}
\end{lem}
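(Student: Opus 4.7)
The plan is to apply the radial improved Strichartz estimate (Lemma~\ref{lem:freeStz}) and treat $(\re V)u$ as a perturbation whose smallness comes from interpolating $V$ between the critical norm $L^\I_tL^2$ (bounded by $B$) and the subcritical dispersive norm $L^\I_t\dot B^{-1}_{4,\I}$ (bounded by $\e$). Setting $f:=(i\p_t-\De-\re V)u$, the equation becomes $(i\p_t-\De)u=f+(\re V)u$, so Lemma~\ref{lem:freeStz} gives
\[
 \|u\|_{\ti\X^\de} \le C\rd(\de)\BR{\inf_t\|u(t)\|_2 + \|f\|_{\ti\X^\de_*} + \|(\re V)u\|_{\ti\X^\de_*}}.
\]
The task then reduces to a bilinear perturbation estimate $\|Vu\|_{\ti\X^\de_*}\lec\eta(B,\e)\|u\|_{\ti\X^\de}$ with $\eta(B,\e)\to 0$ as $\e\to 0$; choosing $\e\ds(B)$ small enough absorbs the perturbation into the left-hand side and yields $C\ds(\de)=2C\rd(\de)$.

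The bilinear bound is proved via the dyadic decomposition $V=\sum_k V_k$ together with Bony paraproducts. The key interpolation combines $\|V_k\|_{L^\I_tL^2}\le b_k$ (with $\squm_k b_k\le B$) and $\|V_k\|_{L^\I_tL^4}\le\e k$ (equivalent per dyadic piece to $\|V\|_{L^\I_t\dot B^{-1}_{4,\I}}\le\e$) to produce
\[
 \|V_k\|_{L^\I_tL^{4/(2-\de)}} \le b_k^{1-\de}(\e k)^\de.
\]
Paired via H\"older $L^{4/(2-\de)}\cdot L^{4/(1+\de)}\subset L^{4/3}$ with the radial-improved Besov norm $u\in L^2_t\dot B^\de_{4/(1+\de),2}$ coming from $\X^\de$, and followed by the scale-invariant Sobolev--Besov embedding $\dot B^0_{4/3,2}\subset\dot B^{-\de}_{4/(3-\de),2}$ (both sides have scaling $-3$), the $k^\de$ loss on the $V$ side is cancelled by the $\de$-derivative Besov gain on the $u$ side. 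The clean case is the low-$V$/high-$u$ paraproduct: applying the interpolation to the low-frequency block $V_{\ll j}$ directly gives $\|\Pi_V u\|_{L^2_t\dot B^0_{4/3,2}}\lec B^{1-\de}\e^\de\|u\|_{\X^\de}$.

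The hard part is the high-$V$ side of the paraproduct (resonant and high-$V$/low-$u$), where the output frequency equals $V$'s frequency $k$ and contributions from many lower $u$-frequencies must be summed without losing the $\ell^2$-summability over $k$. A naive Cauchy--Schwarz is inadequate, since $\sum_k b_k^{2(1-\de)}$ is not controlled by $B^2$ when the $L^2$-mass of $V$ is spread across many dyadic frequencies. Resolving this case is where the radial hypothesis is genuinely used: the Bernstein bound $\|u_m\|_4\lec m\|u_m\|_2$ at low frequencies of $u$, the pointwise radial decay of $V_k$, and the interplay between the $\cL^\I_tL^2$ component of $\ti\X^\de$ and the $\cL^1_tL^2$ component of $\ti\X^\de_*$ together close the summation and secure the uniformity of $C\ds(\de)$ in $B$. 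Once the bilinear estimate is established, the smallness threshold $\e\ds(B)$ is chosen so that $\eta(B,\e\ds(B))<1/(2C\rd(\de))$, absorbing the perturbation and completing the proof.
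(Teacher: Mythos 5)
Your overall strategy---radial improved Strichartz followed by absorbing $(\re V)u$ via an interpolation of $V$ between $L^2$ and $\dot B^{-1}_{4,\I}$---matches the paper, but the interpolation exponent you choose is too aggressive, and that is precisely what breaks the case you flag as hard. You interpolate each dyadic block by $\|V_k\|_{4/(2-\de)}\le b_k^{1-\de}(\e k)^\de$, which gives $V$ a $(-\de)$-derivative deficit and pairs it against a $\de$-derivative gain from $u$. That cancellation only closes when the output frequency is comparable to $u$'s frequency (the low-$V$/high-$u$ paraproduct). In the high-$V$ regions the per-piece factor becomes $(k/m)^\de$ with $k$ the $V$-frequency and $m\lec k$ the $u$-frequency, a strict loss, and the sum $\sum_{m\lec k}m^{-\de}\|u_m\|_{\dot B^\de_{4(\de)}}$ in fact diverges as $m\to 0$. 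Radial pointwise decay of $V_k$ and the $\cL^1_t/\cL^\I_t$ structure are not the right tools here: the radial hypothesis enters only through Lemma~\ref{lem:freeStz}, not through the bilinear estimate.

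The paper's proof instead uses the milder exponent $\de/2$: $[L^2,\dot B^{-1}_{4,\I}]_{\de/2}\subset\dot B^{-\de/2}_{2(-\de/2),\I}$, giving $\|V\|_{L^\I_t\dot B^{-\de/2}_{2(-\de/2),\I}}\lec B^{1-\de/2}\e^{\de/2}$. Paired against $u\in L^2_t\dot B^\de_{4(\de),2}$ with output in $L^2_t\dot B^{-\de}_{4/3(-\de),2}=\X^\de_*$, the smoothness budget now has slack $3\de/2$, which produces a uniform geometric gain $(\min/\max)^{\de/2}$ in \emph{every} paraproduct region; this is exactly the content of the proof of \eqref{prod neg Bes}. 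Crucially, $V$ enters only through an $\ell^\I$ (not $\ell^2$) Besov index, so the summability worry you raise never arises: Young's inequality for the dyadic convolution closes all three regions with $V$ in $\ell^\I$, $u$ in $\ell^2$, and the output in $\ell^2$. Finally, you should first reduce to a single fixed $\de\in(0,\de_\star)$ via Lemma~\ref{lem:auto-delta}, so that the threshold $\e\ds$ depends only on $B$ as the statement requires; without that reduction your absorption constant, and hence $\e\ds$, picks up a $\de$-dependence.
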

\begin{proof}
By Lemma \ref{lem:auto-delta}, it suffices to prove fo a fixed $\de\in(0,\de_\star)$. 
Suppose $\|V\|_{L^\I_t \dot B^{-1}_{4,\I}}\le\e$. The complex interpolation and the Sobolev embedding yield
\EQ{ \label{intpl BH}
 [L^2,\dot B^{-1}_{4,\I}]_{\de/2} = \dot B^{-\de/2}_{2(-\de/2),4/(2-\de)} \subset \dot B^{-\de/2}_{2(-\de/2),\I}.}
Hence $\|V\|_{L^\I_t \dot B^{-\de/2}_{2(-\de/2),\I}} \lec B^{1-\de/2}\e^{\de/2}$ can be made 
as small as we wish by choosing $\e\le\e\ds$ small enough. 
Now we use a standard product estimate in the Besov space:
\EQ{ \label{prod neg Bes}
 0<\de\le 1\implies \|fg\|_{\dot B^{-\de}_{4/3(-\de),2}} \le C(\de)\|f\|_{\dot B^{-\de/2}_{2(-\de/2),\I}}\|g\|_{\dot B^{\de}_{4(\de),2}}.}
For convenience and later use, we give a brief
\begin{proof}[Proof of \eqref{prod neg Bes}]
By H\"older and Sobolev (or Young for the convolution in $x$), 
\EQ{
 \|(f_jg_k)_l\|_{\dot B^{-\de}_{4/3(-\de)}} \pt\lec \min(j,k,l)^{3\de/2}l^{-\de}j^{\de/2}k^{-\de}\|f_j\|_{\dot B^{-\de/2}_{2(-\de/2)}}\|g_k\|_{\dot B^\de_{4(\de)}}
 \pr\lec(\min(j,k,l)/\max(j,k,l))^{\de/2}\|f_j\|_{\dot B^{-\de/2}_{2(-\de/2)}}\|g_k\|_{\dot B^\de_{4(\de)}},}
for all $j,k,l\in 2^\Z$ satisfying $j\lec k\sim l$, $k\lec l\sim j$ or $l\lec j\sim k$. 
Hence the desired estimate follows from Young for the convolution over $2^\Z$ in each of those three cases. 
\end{proof}
The free Strichartz estimate \eqref{RFD} with $s=0$ yields, with $v:=\re V$,
\EQ{ \label{RFD used}
 \|u\|_{\ti\X^\de} \le C\rd(\de)[\inf_t\|u(t)\|_2 + \|(i\p_t-\De+v)u\|_{\ti\X^\de_*}+\|vu\|_{L^2_t\dot B^{-\de}_{4/3(-\de),2}}],}
while \eqref{intpl BH} and \eqref{prod neg Bes} yield
\EQ{
 \|vu\|_{L^2_t\dot B^{-\de}_{4/3(-\de),2}} 
 \lec \|V\|_{L^\I_t \dot B^{-\de/2}_{2(-\de/2),\I}} \|u\|_{L^2_t \dot B^\de_{4(\de),2}}
 \lec B^{1-\de/2}\e^{\de/2}\|u\|_{\ti\X^\de}.}
If we choosing $\e>0$ small enough, depending on $B$ and the fixed $\de$, then the last term is absorbed by the left side of \eqref{RFD used}, hence $C^\de_*(V)<\I$. 
\end{proof}

Now we are ready to prove the $L^2_x$-Strichartz estimate for a single wave train. 
\begin{proof}[Proof of Lemma \ref{lem:St-wt}]
It suffices to prove for a fixed $\de\in(0,\de_\star)$, say $\de=\de_\star/2$, thanks to Lemma \ref{lem:auto-delta}. 
Henceforth we ignore the dependence on $\de$. 
Also, it suffices to prove the rescaled estimate \eqref{rescaled wt}. 
Let $P,\ti P\subset L^2\rad$, $\si>0$, $\ta: \ti P\to \R$ and $f\in L^2_t\dot B^{-\de}_{4/3(-\de),2}$. Let $u$ be a solution of  
\EQ{
 (i\p_t-\De-\re V)u=f}
with the potential of a rescaled wave train 
\EQ{
 V:=\sum_{\fy\in  \ti P}V(\fy), \pq V(\fy):=e^{i(t/\si-\ta(\fy))D}\fy.}

Let $\e_1:=\e\ds(\|W^2\|_2)\in(0,1)$ be the small constant given by Lemma \ref{lem:Bsmall}.  
Lemma \ref{lem:scdec} with $\de=1$ yield $T=T(P)\in(0,\I)$ and $\ti T=\ti T(\ti P)\in(T(P),\I)$ such that 
\EQ{ \label{V scat}
 \pt \sum_{\fy\in\ti P}\|e^{itD}\fy\|_{L^\I_t(|t|>\ti T;\dot B^{-1}_{4,2})} + \sup_{\fy\in P}\|e^{itD}\fy\|_{L^\I_t(|t|>T;\dot B^{-1}_{4,2})}<\e_1/2.}
Since $\|\fy\|_2<\|W^2\|_2$ for each $\fy\in P$, Corollary \ref{lem:St-st} implies that $C_0^\de(e^{it_0D}\fy)<\I$ for any fixed $t_0\in\R$. 
Then by the continuity of $e^{itD}$ on $L^2_x$ and the compactness of $[-T,T]$, Lemma \ref{lem:L2pert} implies that 
\EQ{
 C_*=C_*(P):=\sup_{\fy\in P,\ t_0\in[-T,T]}C_0^\de(e^{it_0D}\fy)<\I.}
Let $\e_2=\e_2(P):=\e\pert(C_*)\in(0,1)$ be the small constant given by Lemma \ref{lem:L2pert}.
The $L^2_x$ continuity of $e^{itD}$ implies that there exists $\ka=\ka(P)>0$ such that 
\EQ{
 \fy\in P,\ t_1,t_2\in[-T,T],\ |t_1-t_2|\le\ka \implies \|e^{it_1D}\fy-e^{it_2D}\fy\|_2<\e_2.}
Let $-T=t_1<t_2<\cdots<t_K=T$ be a finite sequence of time such that 
\EQ{
 \ka/2 \le |t_k-t_{k+1}| \le \ka \pq(k=1\etc K-1)}
for some $K=K(P)\in\N$. 

Requiring $d(\ta)> 2T$ ensures that the intervals $[\ta(\fy)-T,\ta(\fy)+T]$ around $\ta(\fy)\in \ti P$ are mutually disjoint. 
On the interval 
\EQ{
 \pt I(\fy):=\{t\in\R;|t/\si-\ta(\fy)|\le T\}=\Cu_{k=1}^{K-1}I_k(\fy), 
 \prQ I_k(\fy):=\{t\in\R;|t/\si-\ta(\fy)|\in[t_k,t_{k+1}]\},}  
we approximate the free wave $V(\fy)=e^{i(t/\si-\ta(\fy))D}\fy$ with the static potential $V_k(\fy):=V(\fy)|_{t=t_k}$. 
Since  
\EQ{
 \|V(\fy)-V_k(\fy)\|_{L^\I_t(I_k(\fy);L^2_x)}=\|(e^{itD}-e^{it_kD})\fy\|_{L^\I_t(t_k,t_{k+1};L^2_x)}<\e_2,}
applying the Strichartz estimate of Lemma \ref{lem:L2pert} to 
\EQ{
 (i\p_t-\De-V(\fy))u=f+(V-V(\fy))u,}
on the time interval $I_k(\fy)$, we obtain 
\EQ{ \label{St by step app}
 \pt\|u\|_{\ti \X^\de(I_k(\fy))}
 \le 2C_*\BR{\inf_{t\in I_k(\fy)}\|u(t)\|_2 + \|f+(V-V(\fy))u\|_{\ti \X^\de_*(I_k(\fy))}}.}
The potential term on the right is bounded by 
\EQ{ \label{V stat rem}
 \pt\|(V-V(\fy))u\|_{\X^\de_*(I_k(\fy))}
 \pn\lec \sum_{\psi\in  \ti P\setminus\{\fy\}}\|V(\psi)\|_{L^\I_t(I_k(\fy);\dot B^{-\de/2}_{2(-\de/2),\I})}\|u\|_{\X^\de(I_k(\fy))}, }
using the product estimate \eqref{prod neg Bes}. 
The negative Besov norm is bounded by using the interpolation \eqref{intpl BH}, as well as the scaling invariance, 
\EQ{
 \|V(\psi)\|_{L^\I_t(I_k(\fy);\dot B^{-\de/2}_{2(-\de/2),\I})}
 \lec \|\psi\|_2^{1-\de/2}\|e^{itD}\psi\|_{L^\I_t(\ta(\fy)-\ta(\psi)+[-T,T];\dot B^{-1}_{4,\I})}^{\de/2}.}
This tends to $0$ as $|\ta(\fy)-\ta(\psi)|\to\I$, depending on $\ti P$, but not on $R$. 
Hence making $d(\ta)$ large (depending on $\ti P$) ensures that \eqref{V stat rem} is small enough to be absorbed by the left side of \eqref{St by step app}. 
Thus for every $\fy\in P$, we obtain the uniform Strichartz estimate on each $I_k(\fy)$, and iterating it for $k=1\etc K-1$, that on $I(\fy)$. 

Next consider the complement $I_c:=\R\setminus\Cu_{\fy\in P}I(\fy)$. 
Imposing $d(\ta)>2\ti T> 2T$ ensures that at each $t\in I_c$ we have $|t/\si-\ta(\fy)|>\ti T$ for every $\fy\in\ti P$ except at most one, which can not belong to $P$. 
Hence using \eqref{V scat}, we obtain 
\EQ{
 \|V\|_{L^\I_t(I_c;\dot B^{-1}_{4,2})} 
 \le \sum_{\fy\in\ti P}\|e^{itD}\fy\|_{L^\I_t(|t|>\ti T;\dot B^{-1}_{4,2})} 
 + \sup_{\fy\in\ti P\setminus P}C\|\fy\|_2 < \e_1/2 + C\e\wt,}
where $C$ denotes the constant in the embedding $L^2(\R^4)\subset\dot B^{-1}_{4,2}$. 
Choosing $\e\wt<\e_1/(2C)$, we thus obtain 
\EQ{
 \|V\|_{L^\I_t(I_c;\dot B^{-1}_{4,2})} <\e_1.}
On the other hand, the weak convergence of $e^{itD}\to 0$ as $|t|\to\I$ implies that for $d(\ta)$ large enough, depending on $\ti P$, we have 
\EQ{
 \|V\|_{L^\I_t L^2}^2 = \|V(0)\|_2^2 \le \sum_{\fy\in \ti P}\|\fy\|_2^2 + o(1) < \|W^2\|_2^2=B^2,}
where $o(1)\to 0$ as $d(\ta)\to\I$. 
Then Lemma \ref{lem:Bsmall} implies the $L^2_x$-Strichartz estimate on each connected component of $I_c$ with a uniform constant $C\ds(\de)<\I$. 

Iterating the local Strichartz estimate on the decomposed intervals yields that on the whole $\R$, where the constant is exponentially magnified by the number of $P$. 
Note that if we would apply the same decomposition of $\R$ to all the profiles in $\ti P$, then this magnification of the constant would depend on the number of $\ti P$, which is not desired. 
\end{proof}

\section{Strichartz with profile decomposition} \label{sect:Stz}
In this section, we prove the main Strichartz estimate, namely Theorem \ref{thm:Stz}, using the estimate for a wave train proved in the previous section. 
Our task is twofold: superpose wave trains and shift the regularity from $0$ to $s\in(0,1)$. 

\subsection{Frequency weight adapted to the profiles} \label{ss:AFW}
We introduce a weight function in the frequency adapted to the scale separation between the profiles, since the standard weight $\LR{\x}^s$ would produce big commutator errors with the potential. 
The idea is to use the $L^2_x$-Strichartz for the main interactions with large profiles, so the weight should be flat around frequencies of each profile. 
Another option may be to replace it with $\LR{-\De+V(t)}^s$, but the time derivative would cause loss of regularity. 

For any $\be>1$ and any finite non-empty set $S\subset[1,\I)$ satisfying $1\in S$ and 
\EQ{ \label{sep S}
 \ti S:=\inf\{\max(a/b,b/a)\mid a,b\in S,\ a\not=b\}>\be^4,}
we define the {\it frequency weight $w$ adapted to $(\be,S)$} as a piece-wise logarithmic linear function $w:(0,\I)\to(0,\I)$ by the following. For each $\si\in S$, let 
\EQ{ \label{def w1}
 \si/\be^2 \le r \le \be^2\si \implies w(r):=\si,}
where the intervals of $r$ on the left are mutually disjoint among $\si\in S$ thanks to the above condition \eqref{sep S}. 
$w$ is defined in the rest of $r>0$ to be continuous and logarithmic linear on each interval. To be precise, let  
\EQ{
 S_r:=\sup\{\si\in S\mid \si\le r\}, \pq S^r:=\inf\{\si\in S\mid \si\ge r\},}
with the usual convention $\sup\empt=-\I$ and $\inf\empt=\I$. Let
\EQ{
 \CAS{ S_r=-\I \implies& w(r):=1=\min S, \\ 
  S^r=\I \implies &w(r):=r/\be^2,} }
and otherwise, namely on the intermediate intervals, let
\EQ{ \label{def w2}
 \be^2 S_r \le r \le S^r/\be^2 \implies p(r):=\frac{\log \be^2}{\log\sqrt{\frac{S^r}{\be^4 S_r}}}, \pq w(r) := r\Br{\frac{r}{\sqrt{S^r S_r}}}^{p(r)}.}
Then $w:(0,\I)\to[1,\I)$ is non-decreasing, continuous, and 
\EQ{ \label{w equiv}
 \CAS{0<r\le 1 \implies &w(r)=1,\\ r>1 \implies & r/\be^2 \le w \le \be^2 r.}}
Moreover, in the limit $\ti S\to\I$, we have $p(r)\to 0$ uniformly. 
The following is the essential properties of $w$ used in the estimates. 
\begin{lem} \label{lem:w}
For any $0<s<s'<\I$ and $\be\in(1,\I)$, there exists $L\in(1,\I)$ such that for any $S\subset(1,\I)$ satisfying \eqref{sep S} with $\ti S\ge L$, the frequency weight $w$ adapted to $(\be,S)$ satisfies the following.  $r^{s'}w(r)^{-s}$ is strictly increasing for $r>0$, or in other words, 
\EQ{ \label{w comp}
 \CAS{0<r<r' \implies [w(r)/w(r')]^s > [r/r']^{s'},\\ 0<r'<r \implies [w(r)/w(r')]^s < [r/r']^{s'}.}}
Moreover, for any bounded interval $I\subset(0,\I)$, 
\EQ{ \label{w sum}
 \sum_{r\in 2^\Z\cap I} r^{s'}w(r)^{-s} \lec \frac{1}{s'-s} \sup_{r\in 2^\Z\cap I} r^{s'}w(r)^{-s}.}
\end{lem}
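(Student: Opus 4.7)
My plan is to exploit the fact that the weight $w$ is piecewise log-linear: after the change of variables $r\mapsto\log r$, $\log w$ becomes a continuous piecewise affine function whose slope takes values in $\{0,\,1+p(r),\,1\}$. The slope is $0$ on the head $r\le 1$ and on each flat interval $[\si/\be^2,\be^2\si]$ around $\si\in S$, is $1$ on the tail $r\ge\max S\cdot \be^2$, and equals $1+p(r)$ on each intermediate transition interval appearing in \eqref{def w2}. Continuity at the interfaces is built into the formula defining $p(r)$; this can be verified by plugging $r=\be^2 S_r$ and $r=S^r/\be^2$ into $w(r)=r(r/\sqrt{S^rS_r})^{p(r)}$ and recovering $S_r$ and $S^r$ respectively.

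Next I will control $p(r)$ uniformly. On any intermediate interval,
$$p(r)=\frac{\log\be^2}{\log\sqrt{S^r/(\be^4 S_r)}}\le\frac{4\log\be}{\log L-4\log\be},$$
using $S^r/S_r\ge\ti S\ge L$. Hence, choosing $L=L(\be,s,s')$ large enough, I can arrange that $p(r)\le(s'-s)/(2s)$ for every $r$ in any intermediate interval. This single quantitative input is what drives the whole argument.

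With this in hand, \eqref{w comp} becomes a one-line slope computation: on each piece, $\log(r^{s'}w(r)^{-s})$ is affine in $\log r$ with slope
$$s'-s\cdot(\text{slope of }\log w)\ge s'-s(1+p)\ge (s'-s)/2>0.$$
Combined with the continuity of $w$, this shows that $r\mapsto r^{s'}w(r)^{-s}$ is strictly increasing on all of $(0,\I)$, which is \eqref{w comp}.

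For \eqref{w sum}, the same slope lower bound immediately yields $(2r)^{s'}w(2r)^{-s}\ge 2^{(s'-s)/2}\,r^{s'}w(r)^{-s}$ for every $r>0$ (even when $r$ and $2r$ lie in different pieces, since the slope bound holds pointwise). Summing backwards from the top of $2^\Z\cap I$ therefore dominates $\sum_{r\in 2^\Z\cap I}r^{s'}w(r)^{-s}$ by a geometric series of ratio $2^{-(s'-s)/2}$, whose sum is $\lec(s'-s)^{-1}$ times the supremum (using $1-2^{-x}\gec x$ for $x\in(0,1]$, and a trivial bound when $s'-s$ is of order one). The only bookkeeping is the identification of slopes on the four types of pieces; once the log-linear picture is in place, both claims follow from this single monotonicity estimate, and I do not anticipate any essential obstacle.
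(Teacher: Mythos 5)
Your argument is correct and takes essentially the same approach as the paper: both rest on choosing $L$ large enough that $p(r)\le(s'-s)/(2s)$ uniformly, then reading off the lower bound $(s'-s)/2$ on the slope of $\log(r^{s'}w(r)^{-s})$ in $\log r$ across the flat, intermediate, and tail pieces. The only cosmetic difference is the final bookkeeping for \eqref{w sum}: the paper integrates the exact power-law antiderivative over each piece, telescopes, and then discretizes dyadically, whereas you derive a pointwise doubling inequality $(2r)^{s'}w(2r)^{-s}\ge 2^{(s'-s)/2}r^{s'}w(r)^{-s}$ and sum the resulting geometric series — both are sound and yield the same $\frac{1}{s'-s}$ factor.
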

\begin{proof}
The strict increasing is obvious on the intervals where $w(r)$ is constant, as well as for large $r$ where $w(r)=r/\be^2$. On each intermediate interval $J$,  we have $w(r)=Cr^\al$ with some constants $C,\al$ depending on $J$, with 
\EQ{
 \al = 1+p(r) \le 1+ \frac{\log \be^2}{\log\sqrt{L/\be^4}}  \to 1\pq (L\to \I).}
Hence if $L$ is large enough, then $\al s<s'$ and so $r^{s'}w^{-s}$ is decreasing.  
The power form also implies that 
\EQ{
 \int_J r^{s'}w(r)^{-s}\frac{dr}r=\BR{\frac{r^{s'}w(r)^{-s}}{s'-\al s}}_{\p J}
 \le \frac{2}{s'-s}[r^{s'}w(r)^{-s}]_{\p J}.}
Summing this estimate over subintervals where the integrand takes the power form, we obtain, for any bounded interval $I$, 
\EQ{
 \int_I r^{s'}w(r)^{-s}\frac{dr}{r} \le \frac{2}{s'-s}\sup_{r\in I}r^{s'}w(r)^{-s}.}
Dyadic discretization of $\log r$ yields \eqref{w sum}. 
\end{proof}

\subsection{Weighted product estimates}
Here we prepare some product estimates with the weight $w$. The Fourier multiplier associated with the weight $w$ is denoted by
\EQ{ \label{def ws}
 \U w^su:=\sum_{j\in 2^\Z}w(j)^s u_j.}
Recall that $\ti S$ is defined in \eqref{sep S}. 
\begin{lem}
For $s\in[0,1)$, there is $C(s)>1$ such that if $\ti S\ge C(s)$ then we have the following product estimates. 
\EQ{ \label{proest3}
 \|\U w^s (fg)\|_{\dot H^{-1}}
 \le C(s) \|f\|_2\|\U w^sg\|_4.}
For all $a,b\ge 0$ satisfying $a+b\le 3$,  
\EQ{ \label{proest3.5}
 \|\U w^s(fg)\|_{\dot B^0_{4(a+b),2}} \le C(s) \|f\|_{\dot B^1_{4(a),1}} \|\U w^s g\|_{\dot B^0_{4(b),2}}. }
\end{lem}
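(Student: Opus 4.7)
The plan is to prove both estimates by Littlewood–Paley decomposition of the product $fg=\sum_{j,k\in 2^\Z}f_jg_k$, splitting into three frequency regions according to the interaction of $j$ and $k$: the high–low region $\HL=\{j\gg k\}$, the low–high region $\LH=\{j\ll k\}$, and the resonant region $\HH=\{j\sim k\}$. In each case the Fourier support of $f_jg_k$ is contained in $\{|\xi|\lec\max(j,k)\}$, and $|\xi|\sim\max(j,k)$ whenever $j\not\sim k$. Thus for the dyadic piece $(f_jg_k)_l$ the value of $w(l)^s$ is essentially determined: $w(l)^s\sim w(j)^s$ in $\HL$, $w(l)^s\sim w(k)^s$ in $\LH$, and $w(l)^s\le w(k)^s$ in $\HH$ by monotonicity. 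The point is then to transport the weight from the output frequency to the factor where it appears on the right.

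For \eqref{proest3} I would use the Littlewood--Paley characterization $\|\U w^s(fg)\|_{\dot H^{-1}}\approx \squm_l l^{-1}w(l)^s\|(fg)_l\|_2$. In the $\LH$ and $\HH$ regions the weight is already on the right factor, and Bernstein $\|f_j\|_\I\lec j\|f_j\|_2$ (in $\LH$) or $\|g_{k'}\|_\I\lec k'\|g_{k'}\|_4$ for $k'\le k$ (in $\HH$) furnishes the missing derivative together with a geometric factor like $(j/k)$ or $(k'/k)$ that is summable. The delicate region is $\HL$, where the weight $w(j)^s$ is concentrated on the high frequency of $f$ while we want $w(k)^s$ on the low frequency of $g$. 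Choose $s'\in(s,1)$, which exists since $s<1$; by Lemma \ref{lem:w}, provided $\ti S$ is large enough (giving the constant $C(s)$), $(w(j)/w(k))^s\le (j/k)^{s'}$. Combining with the Bernstein gain $\|g_k\|_\I\lec k\|g_k\|_4$ yields the multiplier
\EQN{
 l^{-1}w(l)^s\|f_jg_k\|_2\lec \Br{k/j}^{1-s'}\|f_j\|_2\,w(k)^s\|g_k\|_4\IN{\HL},
}
which is a convolution kernel in the dyadic $\log$-scale with exponent $1-s'>0$, summable by Young's inequality. Square-summing in $l$ and Cauchy–Schwarz in $j$ (and $k$) then produce the bound $\|f\|_2\|\U w^sg\|_4$ after using $\|\U w^sg\|_4\approx\|(\sum_k w(k)^{2s}|g_k|^2)^{1/2}\|_4$ and Fefferman–Stein / square-function manipulations.

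For \eqref{proest3.5} I would run the same paraproduct decomposition in Besov notation, using $\|\U w^s h\|_{\dot B^0_{4(a+b),2}}\approx\squm_l w(l)^s\|h_l\|_{4(a+b)}$ on the left, and the fact that $\|f\|_{\dot B^1_{4(a),1}}=\sum_j j\|f_j\|_{4(a)}$ is absolutely $\ell^1$-summable with the extra derivative weight $j$. In each region one applies Hölder with exponents matched by the identity $1/4(a-1)+1/4(b)=1/4(a+b)$ and Bernstein $\|f_j\|_{4(a-1)}\lec j\|f_j\|_{4(a)}$. In $\HL$ the weight transfer uses again $(w(j)/w(k))^s\le(j/k)^{s'}$ with $s'\in(s,1)$, which combined with the $j$ gain from Bernstein and the $j^{-1}$ from $\|f\|_{\dot B^1_{4(a),1}}$ leaves a summable factor $(k/j)^{1-s'}$; the $\LH$ and $\HH$ cases are analogous but easier since the weight is already on $g$ (or can be dominated by monotonicity).

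The main obstacle is the non-multiplicativity of $w$: unlike $|\xi|^s$, $w$ does not satisfy a Leibniz-type inequality, so the usual Coifman–Meyer/Bony paraproduct arguments must be replaced by \eqref{w comp}, which is a quasi-multiplicative substitute valid only at the price of a slightly larger exponent $s'$. This is precisely where the hypothesis $s<1$ enters, via the choice $s'\in(s,1)$; correspondingly the constant $C(s)$ blows up as $s\uparrow 1$, since both the threshold $\ti S\ge L$ in Lemma \ref{lem:w} and the summability factor $1/(1-s')$ from Young's inequality deteriorate.
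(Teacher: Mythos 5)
Your paraproduct scheme and your use of \eqref{w comp} with an auxiliary exponent $s'\in(s,1)$ to transport $\U w^s$ across the high-low interaction are correct and match the paper's proof in spirit (the paper deploys \eqref{w sum}, i.e.\ essentially $s'=1$, to the same effect); your account of \eqref{proest3.5} and of the low-high and high-low regions of \eqref{proest3} is sound. The gap is in the high-high region $l\lec j\sim k$ of \eqref{proest3}, where you assert a ``summable geometric factor.'' There is none: Bernstein and H\"older give $\|(f_jg_k)_l\|_2\lec l\|f_jg_k\|_{4/3}\lec l\|f_j\|_2\|g_k\|_4$, so the Bernstein gain $l$ exactly cancels the $l^{-1}$ in $\dot H^{-1}$, and monotonicity of $w$ only gives $w(l)^s\le w(k)^s$ with no decay in $l/k$. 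The $\ell^2$ sum over the infinitely many output scales $l\lec k$ therefore diverges if estimated piecewise. Even for a single fixed $l$, Cauchy--Schwarz over $j\sim k$ applied after taking $L^4$ norms of the blocks produces $\squm_k w(k)^s\|g_k\|_4=\|\U w^sg\|_{\dot B^0_{4,2}}$, which strictly dominates $\|\U w^sg\|_4$ and so cannot sit on the right side of \eqref{proest3}.

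The paper resolves both issues at once by not splitting the high-high output in $l$ at all: it invokes the embedding $\dot F^0_{4/3,\I}\subset\dot H^{-1}$ (dual to Jawerth's $\dot H^1\subset\dot F^0_{4,1}$), bounds $\sup_l|P_l(\cdot)|$ pointwise by $\sum_{j\sim k}w(k)^s|f_jg_k|$ via $w(l)^s\le w(k)^s$, then applies pointwise Cauchy--Schwarz and H\"older before using the square-function identifications $\|(\sum_j|f_j|^2)^{1/2}\|_2\sim\|f\|_2$ and $\|(\sum_kw(k)^{2s}|g_k|^2)^{1/2}\|_4\sim\|\U w^sg\|_4$. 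Your closing reference to Fefferman--Stein/square-function manipulations points at the right idea, but it must replace, not supplement, the claimed geometric decay: as written, the proposal contains a false step exactly where the Triebel--Lizorkin machinery is indispensable.
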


\begin{proof} 
Consider the Littlewood-Paley decomposition for $j,k,l\in 2^\Z$
\EQ{
 fg=\sum_{j,k,l}(f_jg_k)_l \pq m:=\min(j,k,l), \pq M:=\max(j,k,l),}
where the summation is restricted to $j\lec k\sim l$, $k\lec l\sim j$ and $l\lec j\sim k$. 

We start with the simpler \eqref{proest3.5}, for which by H\"older and Sobolev we have 
\EQ{
 \|(f_jg_k)_l\|_{4(a+b)} \lec m\|f_j\|_{4(a)}\|g_k\|_{4(b)}.}

Fix $l$. For the low-high interactions, we have 
\EQ{
 w(l)^s \sum_{j\lec k\sim l} \|(f_jg_k)_l\|_{4(a+b)} \pt\lec  \sum_{j\lec k\sim l} j\|f_j\|_{4(a)}w(k)^s\| g_k\|_{4(b)}
 \pr\lec \|f\|_{\dot B^1_{4(a),1}} \|\U w^s g_{\sim j}\|_{4(b)}.}
For the high-low interactions we have, if $\ti S$ is large enough,  
\EQ{
 w(l)^s \sum_{k\lec j\sim l} \|(f_jg_k)_l\|_{4(a+b)} \pt\lec w(l)^s\sum_{k\lec j\sim l} kw(k)^{-s} \|f_j\|_{4(a)} \|\U w^s g_k\|_{4(b)}
 \pr\lec \sum_{j\sim l}j\|f_j\|_{4(a)}\|\U w^s g\|_{\dot B^0_{4(b),\I}}, }
using \eqref{w sum} and $s<1$. For the high-high interactions, we have 
\EQ{
 w(l)^s \sum_{j\sim k\gec l}\|(f_jg_k)_l\|_{4(a+b)} 
 \pt\lec \sum_{j\sim k\gec l}lw(l)^s\|f_j\|_{4(a)}\|g_k\|_{4(b)}
 \pr\lec \sum_{j\gec l}(l/j)\|f_j\|_{\dot B^1_{4(a)}} \|\U w^s g\|_{\dot B^0_{4(b),\I}}, }
using the monotonicity of $w$. 
Square summation over $l$, using Young's inequality for the convolution over $2^\Z$, leads to \eqref{proest3.5}.

For \eqref{proest3}, the low-high and the high-low interactions are similarly estimated by H\"older and Sobolev, 
\EQ{
 \|\U w^s(f_jg_k)_l\|_{\dot H^{-1}} \lec (w(l)/w(k))^s l^{-1}m\|f_j\|_2\|\U w^s g_k\|_4,}
where the coefficient in $j,k,l$ is bounded by 
\EQ{
 \CAS{j\lec k\sim l \implies (w(l)/w(k))^s l^{-1}m \sim j/l,\\
 k\lec j\sim l \implies (w(l)/w(k))^s l^{-1}m \sim \frac{kw(k)^{-s}}{lw(l)^{-s}}.} }
Hence the desired \eqref{proest3} in these regions follows from Young for the convolution over $2^\Z$, using  
\eqref{w sum} and $s<1$ in the second case, assuming that $\ti S$ is large enough. 

In the high-high case $l\lec j\sim k$, we need the Littlewood-Paley theory, or the Triebel-Lizorkin space with the embedding $\dot H^{-1}\supset \dot F^0_{4/3,\I}$. Using the monotonicity of $w$ as well, we have 
\EQ{
 \|\sum_l \sum_{j\sim k\gec l}w(l)^s(f_j g_k)_l\|_{\dot H^{-1}}
 \pt\lec \|\sum_{j\sim k}w(k)^s|f_j g_k|\|_{4/3} \pr\lec \|f\|_{\dot F^0_{2,2}} \|\U w^s g\|_{\dot F^0_{4,2}} \sim \|f\|_2\|\U w^sg\|_4.}
\end{proof}

For the normal form, we have 
\begin{lem}
Let $0\le s<\te_2$. There is $C(s,\te_2)>1$ such that if $\ti S\ge C(s,\te_2)$ then for all $\de_1,\de_2,\te_1\in\R$ satisfying 
\EQ{
 \de_1\ge -2, \pq \de_2\ge 0, \pq \de_1+\de_2\le 2,}
and for all $(j,k)\in \HL_\io$ with $0<\io\ll 1$, we have 
\EQ{ \label{proest4}
 \|\U w^s\Om_\io^\pm(f_j,g_k)\|_{\dot B^{\te_1-\te_2+2}_{2(\de_1+\de_2)}}
 \le C(s,\te_2)(k/j)^{(\te_2-s)/2} \|f_j\|_{\dot B^{\te_1}_{2(\de_1)}} \|\U w^sg_k\|_{\dot B^{-\te_2}_{\I(\de_2)}}.}
\end{lem}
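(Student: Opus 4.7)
The estimate \eqref{proest4} follows by assembling three ingredients that are already on the table: Fourier-support localization of the high--low bilinear $\Om_\io^\pm$, the pointwise bilinear bound \eqref{bil est Om} combined with H\"older, and the weight comparison from Lemma \ref{lem:w}.

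First, $(j,k)\in\HL_\io$ forces $k\le \io j$ with $\io\ll 1$, so the Fourier support of $h:=\Om_\io^\pm(f_j,g_k)$ lies in the annulus $\{|\x|\sim j\}$; consequently only finitely many Littlewood--Paley pieces $h_l$ (those with $l\sim j$) are nonzero. Since $w$ is monotone and $r/\be^2\le w(r)\le \be^2 r$ for $r>1$ by \eqref{w equiv}, we have $w(l)\sim w(j)$ for these $l$, so
\[
\|\U w^s h\|_{\dot B^{\te_1-\te_2+2}_{2(\de_1+\de_2)}}\sim w(j)^s\,j^{\te_1-\te_2+2}\,\|h\|_{2(\de_1+\de_2)}.
\]

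Next, apply \eqref{bil est Om} with $p=2(\de_1+\de_2)$, $p_1=2(\de_1)$, $p_2=\I(\de_2)$; these are admissible Lebesgue exponents in $[1,\I]$ satisfying $1/p=1/p_1+1/p_2$ precisely under the hypotheses $\de_1\ge -2$, $\de_2\ge 0$, $\de_1+\de_2\le 2$. Together with $1+j+k\sim j$, this yields $\|h\|_{2(\de_1+\de_2)}\lec j^{-2}\|f_j\|_{2(\de_1)}\|g_k\|_{\I(\de_2)}$. Expanding the two single-LP-piece Besov factors on the right side of the target as $\|f_j\|_{\dot B^{\te_1}_{2(\de_1)}}=j^{\te_1}\|f_j\|_{2(\de_1)}$ and $\|\U w^s g_k\|_{\dot B^{-\te_2}_{\I(\de_2)}}=k^{-\te_2}w(k)^s\|g_k\|_{\I(\de_2)}$, the desired \eqref{proest4} collapses to the purely numerical weight inequality
\[
\bigl(w(j)/w(k)\bigr)^s\le C(s,\te_2)\,(j/k)^{(\te_2+s)/2}.
\]

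Setting $s':=(\te_2+s)/2$, the standing hypothesis $s<\te_2$ yields the strict gap $s'>s$, and the displayed inequality is exactly the second case of \eqref{w comp} in Lemma \ref{lem:w} applied to $r'=k<j=r$; it holds as soon as $\ti S\ge L(s,s')$, a threshold we absorb into $C(s,\te_2)$. The bookkeeping of H\"older/Sobolev exponents in the first two steps is routine and presents no real obstacle; the one delicate point, and the reason the hypothesis $s<\te_2$ is imposed, is that this strict separation is exactly what buys enough room in \eqref{w comp} to absorb the polynomial growth of the weight ratio $w(j)/w(k)$ as $k/j\to 0$.
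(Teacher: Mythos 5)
Your proof is correct and takes essentially the same route as the paper's: you unpack the paper's terse ``By H\"older'' step into Fourier-support localization of the output at $|\x|\sim j$, the bilinear gain \eqref{bil est Om} with H\"older exponent bookkeeping, and single-piece Besov-norm conversions, and then close with the same appeal to the monotonicity estimate \eqref{w comp} with $s'=(\te_2+s)/2>s$, which is exactly where the hypothesis $s<\te_2$ and the threshold on $\ti S$ are used. One small step deserves tightening: to justify $w(l)\sim w(j)$ for the $O(1)$ output Littlewood--Paley indices $l\sim j$, you cite \eqref{w equiv}, which only yields $w(l)/w(j)\lec\be^4$ and would leak an unwanted $\be^{4s}$ into the constant; instead apply \eqref{w comp} (with the same $s'$) to the adjacent pair $(l,j)$ to get $(w(l)/w(j))^s\le(l/j)^{s'}\lec C(s,\te_2)$, consistent with the claimed $\be$-independent constant.
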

\begin{proof}
By H\"older, we have 
\EQ{ 
 \|\U w^s\Om_\io^\pm(f_j,g_k)\|_{\dot B^{\te_1-\te_2+2}_{2(\de_1+\de_2)}}
 \pt\lec w(j)^s j^{-\te_2} \|f_j\|_{\dot B^{\te_1}_{2(\de_1)}} k^{\te_2} w(k)^{-s}\|\U w^s g_k\|_{\dot B^{-\te_2}_{\I(\de_2)}}
 \pr\lec (k/j)^{(\te_2-s)/2} \|f_j\|_{\dot B^{\te_1}_{2(\de_1)}} \|\U w^sg_k\|_{\dot B^{-\te_2}_{\I(\de_2)}},}
where we used \eqref{w comp} in the form $w(j)^s/j^{(s+\te_2)/2} \le w(k)^s/k^{(s+\te_2)/2}$, assuming that $\ti S$ is large enough. 
\end{proof}

For the remainder of the normal form, we have 
\begin{lem}
For any $s\in[0,1)$ and $\de\in(0,1]$, there exists $C(s), C(s,\de)>1$ such that if $\ti S\ge C(s)$ then we have
\EQ{ \label{proest5}
 \|\U w^s (f,g)_{\LH_\io}\|_{\dot B^{-\de}_{4/3(-\de),2}} \le \io^{-1}C(s,\de)\|f\|_{\dot B^{-\de/2}_{2(-\de/2),\I}}\|\U w^s g\|_{\dot B^{-\de}_{4(-\de),2}}.}
\end{lem}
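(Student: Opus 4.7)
The estimate is a weighted, frequency-cutoff version of the product estimate \eqref{prod neg Bes} already established in the proof of Lemma~\ref{lem:Bsmall}. I would follow the same Littlewood--Paley paraproduct scheme, paying attention to the weight $\U w^s$ and the restriction $(j,k)\in\LH_\io$.

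Write $(f,g)_{\LH_\io}=\sum_{(j,k)\in\LH_\io}f_jg_k$, apply $P_l$, and partition the triples $(j,k,l)$ into the three paraproduct zones: the low--high $j\lec k\sim l$, the high--low $k\lec j\sim l$, and the high--high $l\lec j\sim k$. The block estimate derived in the proof of \eqref{prod neg Bes}, combined with the Littlewood--Paley block equivalence $\|g_k\|_{\dot B^\de_{4(\de)}}\sim\|g_k\|_{\dot B^{-\de}_{4(-\de)}}\sim\|g_k\|_4$ (both Besov norms scale as $\|g_k\|_4$ on a single LP piece, via Bernstein), yields after inserting the weight
\EQN{
 w(l)^s\|(f_jg_k)_l\|_{\dot B^{-\de}_{4/3(-\de)}} \lec \left(\tfrac{\min(j,k,l)}{\max(j,k,l)}\right)^{\de/2} \left(\tfrac{w(l)}{w(k)}\right)^s \|f_j\|_{\dot B^{-\de/2}_{2(-\de/2)}} \|\U w^s g_k\|_{\dot B^{-\de}_{4(-\de)}}.
}

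In the low--high zone, $l\sim k$ forces $w(l)/w(k)\sim 1$, and in the high--high zone the monotonicity of $w$ built into \eqref{def w1}--\eqref{def w2} gives $w(l)/w(k)\le 1$. In both regions the power-law decay $(\min/\max)^{\de/2}$ drives the dyadic Young convolution over $2^\Z$, followed by Cauchy--Schwarz in the relevant dyadic variable, producing a bound of the form $C(s,\de)\|f\|_{\dot B^{-\de/2}_{2(-\de/2),\I}}\|\U w^s g\|_{\dot B^{-\de}_{4(-\de),2}}$ with no $\io^{-1}$ loss, exactly as in the proof of \eqref{prod neg Bes}.

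The only delicate case is the high--low zone, where $(w(l)/w(k))^s\sim(w(j)/w(k))^s$ need not tend to $1$. Here the linear-type control $1/\be^2\le w(r)/r\le\be^2$ for $r\ge 1$ from \eqref{w equiv} gives $(w(j)/w(k))^s\le\be^{4s}(j/k)^s$, so the net coefficient is $\be^{4s}(j/k)^{s-\de/2}$. The $\LH_\io$ cutoff enforces $j\lec\io^{-1}\max(k,2)$, hence $j/k\lec\io^{-1}$ when $k\ge 2$, bounding this coefficient by $C(s,\de)\io^{-(s-\de/2)_+}\le C(s,\de)\io^{-1}$. The admissible window $k\in[\io j,j]$ contains only $O(\log\io^{-1})$ dyadic values, comfortably absorbed by $\io^{-1}$ after Cauchy--Schwarz in $k$; the boundary case $k<2$ involves $O(1)$ blocks with $w(k)\sim 1$ and is treated by the same bookkeeping. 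The main obstacle is precisely this high--low zone: without the $\LH_\io$ restriction the weighted ratio $(w(j)/w(k))^s$ would prevent the dyadic Young convolution from converging, and the whole $\io^{-1}$ on the right-hand side is the crude price paid for compensating the lack of decay of the weighted coefficient in that region.
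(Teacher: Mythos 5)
Your plan has a genuine gap, and a secondary structural issue, both traceable to not using Lemma~\ref{lem:w} where it is designed to be used.

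The critical gap is the claimed ``Littlewood--Paley block equivalence $\|g_k\|_{\dot B^\de_{4(\de)}}\sim\|g_k\|_{\dot B^{-\de}_{4(-\de)}}\sim\|g_k\|_4$ via Bernstein.'' Bernstein on an annulus $\{|\x|\sim k\}$ only raises the Lebesgue exponent; it gives the one-sided chain
\EQN{
\|g_k\|_{\dot B^{-\de}_{4(-\de)}}=k^{-\de}\|g_k\|_{4(-\de)}\ \lec\ \|g_k\|_4\ \lec\ k^{\de}\|g_k\|_{4(\de)}=\|g_k\|_{\dot B^{\de}_{4(\de)}},
}
and the reverse inequalities fail: an LP piece supported in an annulus has no lower bound on physical spreading, so $\|g_k\|_{4(\de)}$ can be arbitrarily large compared to $\|g_k\|_{4(-\de)}$ (take $g_k=e^{ik x_1}\phi(x/R)$, $R\to\I$). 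Thus the weighted block estimate you write down is not justified, and in fact the as-printed right-hand side $\|\U w^s g\|_{\dot B^{-\de}_{4(-\de),2}}$ in \eqref{proest5} appears to be a misprint: what the paper's one-line reduction actually delivers, and what the sole application \eqref{est gau1} requires (there the $g$-slot is measured in $\X^\de=L^2_t\dot B^{\de}_{4(\de),2}$), is the weaker conclusion with $\|\U w^s g\|_{\dot B^{\de}_{4(\de),2}}$, exactly as in \eqref{prod neg Bes}. Your equivalence claim is precisely what would be needed to upgrade the correct $\dot B^\de_{4(\de),2}$ form to the stronger $\dot B^{-\de}_{4(-\de),2}$ form, and no such upgrade exists.

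The second issue is how you handle the weight. The paper does not split the weight estimate by paraproduct zone. It observes that for any $(j,k)\in\LH_\io$ contributing to $(f_jg_k)_l$ one has $l\lec j+k\lec\io^{-1}\max(k,1)$, and then a single application of \eqref{w comp} (Lemma~\ref{lem:w}, with $s'=1$, valid once $\ti S\ge C(s)$) gives: either $l\le\max(k,1)$, so $w(l)\le w(k)$ by monotonicity, or $l>\max(k,1)$, so $(w(l)/w(k))^s<(l/\max(k,1))^1\lec\io^{-1}$. Either way $w(l)^s\lec\io^{-1}w(k)^s$, uniformly in $(\be,S)$; after replacing $w(l)^s$ by $\io^{-1}w(k)^s$ the proof is literally that of \eqref{prod neg Bes}. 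Your high--low argument instead invokes the quasi-linear bound $w(r)/r\in[\be^{-2},\be^2]$ from \eqref{w equiv}, producing a factor $\be^{4s}$ in the constant. This is not acceptable here: in the proof of Theorem~\ref{thm:Stz}, $\be$ is a free parameter that is eventually sent to infinity, and the whole point of the comparison \eqref{w comp} is to give bounds with constants independent of $\be$ and $S$. Replace the quasi-linear bound by \eqref{w comp}, drop the paraproduct case split (the single bound $l\lec\io^{-1}\max(k,1)$ already covers all three regions), and target $\dot B^{\de}_{4(\de),2}$ on the right, and your argument collapses to the paper's.
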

\begin{proof}
In the decomposition $\U w^s(f,g)_{\LH_\io}=\sum_l \sum_{(j,k)\in\LH_\io} w(l)^s(f_j,g_k)_l$, the restriction $(j,k)\in\LH_\io$ implies $l\lec j+k \lec \io^{-1}\max(k,1)$. 
If $l\le\max(k,1)$, then $w(l)\le w(k)$. Otherwise \eqref{w comp} yields, for large $\ti S$,  
\EQ{
 \frac{w(l)^s}{w(k)^s} = \frac{w(l)^s}{w(\max(k,1))^s} \le \frac{l}{\max(k,1)} \lec \io^{-1}.}
Hence we have $w(l)^s \lec \io^{-1} w(k)^s$ in both the cases. 
After replacing $w(l)^s$ with $\io^{-1}w(k)^s$, the rest of proof is the same as for \eqref{prod neg Bes}. 
\end{proof}

\subsection{Start of the proof}
Let $s,\de,B,M$, $V_n$ and $u_n$ be as in Section \ref{sect:prof dec}, and consider the profile decomposition in \eqref{decop Vn}.  
The Duhamel part of the potential is regarded as a part of the remainder, denoted by
\EQ{ \label{def GaJn}
 \Ga^J_n:=\Ga_n^{1,J}+V\sD_{n,J}.}
By \eqref{choice Vn fn} and \eqref{Ga small0},  for any $\e\in(0,1)$ there exists $J\in\N$ such that 
\EQ{ \label{est Ga2}
 \|\Ga^J_n\|_{L^\I_t\dot B^{-1}_{4,\I}}+\|V\sN_{n,J}\|_{L^\I_t \dot B^1_{4/3,1}+L^1_t\dot B^1_{4,1}} \le \e}
for large $n$. 

We have introduced several parameters, and we should pay some attention to their mutual dependence or the order to choose the parameters. 
The parameters $s,\de,B,M,$ are fixed as given at the beginning. 
The limit $n\to\I$, or sufficiently large $n$, should be chosen in the end. 
We have the other parameters $J,\e,\be,\io$ in between. 
They are decided in the order
\EQ{
 (s,\de,B,M) \to \io \to \e \to J \to \be \to n,}
which means that each parameter can depend on all those on their left, but none of those on their right. 
The constants in the following estimates should not implicitly depend on these parameters except for $(s,\de)$. 

By construction, the profiles are uniformly bounded by 
\EQ{
 \sum_{0\le j<J} \|\psi^j\|_2^2 \le \liminf_{n\to\I}\|V\sF_{n,J}(0)\|_2^2 \le B^2 <\|W^2\|_2^2.}
Hence there exists $J^0\in\N$, independent of the parameters $(\io,\e,J,\be,n)$, such that 
\EQ{ \label{choice J0}
 J^0 \le \|W^2\|_2^2/\e\wt^2, \pq \sup_{j\ge J^0}\|\psi^j\|_2 \le \e\wt,}
where $\e\wt\in(0,1)$ is the constant given by Lemma \ref{lem:St-wt}. 
Let 
\EQ{ \label{def C1}
 C_1:=C\wt(\de,\{\psi^j\}_{0\le j<J^0}) \in (0,\I),}
which is independent of $(\io,\e,J,\be,n)$, 
so that Lemma \ref{lem:St-wt} yields a uniform bound on the $L^2_x$-Strichartz for the concentrating wave trains in the profile decomposition: 
\EQ{ \label{unif wt bd0}
 \sup_J \sup_{A\in J_\sim} \limsup_{n\to\I} C_0^\de(V^A_n) \le C_1.}
Lemma \ref{lem:L2pert} yields an admissible size of $L^2_x$ perturbation, denoted by 
\EQ{ \label{def e0}
 \e_0:=\e\pert(C_1)\in(0,1),}
for which the above uniform bound is essentially preserved. 

After choosing a finite $J$, we can modify each profile $\psi^j$ for $j<J$ 
to have Fourier support uniformly bounded and away from $0$, by smooth cut-off putting the removed parts into the remainder $\Ga^J_n$. 
For distinction, the original profiles are denoted by $\U\psi^j$ and the new ones by $\psi^j$. 
Choosing $\be>10$ large enough, depending on the original profiles $\{\U\psi^j\}_{j<J}$, $\e$, and $\e_0$ in \eqref{def e0}, we may thus assume 
\EQ{ \label{modif psia}
 \Cu_{j<J}\supp\hat\psi^j \subset \{\be^{-1}<|\x|<\be\}, \pq \sum_{j<J}\|\U\psi^j-\psi^j\|_2<\e_0,}
loosing the smallness of $\Ga^J_n$ from \eqref{est Ga2} into 
\EQ{ \label{bd' Ga}
 \|\Ga^J_n\|_{L^\I_t\dot B^{-1}_{4,\I}}+\|V\sN_{n,J}\|_{L^\I_t\dot B^1_{4/3,1}+L^1_t\dot B^1_{4,1}} \le 2\e,}
for large $n$. Note that this modification does not change $V\sN_{n,J},V\sD_{n,J}$.  
 By the choice of $\e_0$ in \eqref{def e0} and the right estimate in \eqref{modif psia}, Lemma \ref{lem:L2pert} implies that \eqref{unif wt bd0} is modified into 
\EQ{ \label{unif wt bd}
 \sup_{A\in J_\sim} \limsup_{n\to\I} C_0^\de(V^A_n) \le 2C_1,}
for the fixed $J$. 

Now let $w_n$ be the frequency weight adapted to $(\be,\{\si^A_n\}_{A\in J_\sim})$, defined in Section \ref{ss:AFW}. We may assume that $\si^A_n=1$ for some $A\in J_\sim$. 
The scale separation \eqref{sep R} implies the frequency separation 
\EQ{
 \ti S_n:=\inf\{\max(\si^A_n/\si^B_n,\si^B_n/\si^A_n)\mid A,B\in J_\sim,\ A\not=B\} \to \I\pq(n\to\I),} 
so that we can use the monotonicity lemma \ref{lem:w} for large $n$. 
Let $u_n,f_n,g^J_n$ satisfy 
\EQ{
 \pt (i\p_t - \De - \re V_n)u_n =: f_n, 
 \pr (\p_t-iD)\Ga^J_n=(\p_t-iD)V\sD_{n,J} =: g^J_n.}
Then after a normalization, we have from \eqref{choice Vn fn},
\EQ{
 \pt \|\LR{D}^su_n\|_{\ti \X^\de} \to\I,
 \pq \|\LR{D}^su_n(0)\|_2+\|\LR{D}^sf_n\|_{\ti \X^\de_*} \le 1. }
For brevity, the real part of each potential is denoted by 
\EQ{ \label{def vga}
 v_*^*:=\re V_*^*, \pq \ga_*^*:=\re \Ga_*^*}
with any index $*$. 

Associated with the above weight $w_n$, we introduce a frequency decomposition. 
For each $A\in J_\sim$, let 
\EQ{ \label{freq proj}
 \pt [A]:=\{r\in 2^\Z \mid w_n(r)=\si^A_n\}=2^\Z\cap[\si^A_n/\be^2,\si^A_n\be^2],
 \pr ]A[\ :=2^\Z \cap[\si^A_n/\be,\si^A_n \be] \subset [A],
 \pr \vec A:=\{r\in 2^\Z \mid A<\forall B\in J_\sim,\ \si^A_n<w_n(r)<\si^B_n\}.}
$[A]$ is the dyadic frequency band where the weight $w_n$ is flat. 
$]A[$ is a narrower band in which the profile $\F V^A$ is supported.  
$\vec A$ is the intermediate band next to and bigger than $[A]$. 
Note that all the above sets depend on $n$. 
 Some additional notation for any finite subset of $S\subset 2^\Z$: 
\EQ{ \label{freq befaf}
 \pt S_-:=2^{-1}\min S, \pq S_+:=2\max S, \pq S_\pm:=S\cup\{S_-,S_+\},
 \pr \tle S:=\{k\in 2^\Z \mid k\le S_-\},
 \pq \tge S:=\{k\in 2^\Z \mid k\ge S_+\},}

Henceforth the subscript $n$ is often omitted for simplicity of notation. 

\subsection{Estimate around the profile frequencies}
For each $A\in J_\sim$, consider the frequency projection of the equation around $\si^A$ defined in \eqref{freq proj}. Then we obtain the equation 
\EQ{
 (i\p_t-\De-v^A)u_{[A]}=(f+vu)_{[A]}-v^{A}u_{[A]}.}
Recall that $v^A$ denotes the real part of the concentrating wave train
\EQ{
 v^{A}_n=\re S(\si^A_n)\sum_{j\in A}e^{i(\si^A_n t-\ta^j_n)D}\psi^j, \pq \supp \F v^A_n \subset[\si^A_n/\be,\si^A_n\be], }
while $u_{[A]}=P_{[A]}u_n$ is the projection of the wider width $\be^2$ around its frequency $\si^A_n$. 
The frequency separation $\ti S_n\to\I$ implies that for large $n$ 
\EQ{
 \pn(vu)_{[A]}-v^Au_{[A]}
 \pt=(v^Au)_{[A]}-v^Au_{[A]}+\sum_{a<A}(v^a u_{[A]_\pm})_{[A]}+\sum_{a>A}(v^au_{[a]})_{[A]}
 \prQ+(\ga^J u+v\sN_J u)_{[A]},}
where the summation is over $a\in J_\sim$ with the indicated frequency restriction. 
The first two terms on the right is further expanded into 
\EQ{
 (v^Au)_{[A]}-v^Au_{[A]}
 =v^Au_{\tle[A]}+(v^Au_{[A]_+})_{[A]}-(v^Au_{[A]})_{[A]_+}-(v^Au_{[A]})_{\tle[A]},}
where the last three terms are junk. Put
\EQ{ \label{def fA}
 f^A:=f_{[A]} \pt+\sum_{a<A}(v^au_{[A]_\pm})_{[A]}+\sum_{a>A}(v^{a}u_{[a]})_{[A]}
 \pr+(v^Au_{[A]_+})_{[A]}-(v^Au_{[A]})_{[A]_+}-(v^Au_{[A]})_{\tle[A]}}
so that it contains the low-high and high-high interactions, besides the source $f_{[A]}$, and that we have
\EQ{
 (f+vu)_{[A]}-v^Au_{[A]}=f^A + (v\sN_J u)_{[A]} + v^Au_{\tle[A]} + (\ga^J u)_{[A]}. }
For the last two terms, we need the normal form introduced in Section \ref{ss:NF}. 
Thus we obtain the following equation
\EQ{ \label{eq at k}
 \pt(i\p_t-\De-v^A)(u_{[A]}-u^{\LR{A}}) 
 \prq= f^A+ (v\sN_J u)_{[A]} +P_{[A]}(\ga^J,u)_{\LH_\io}-f^{\LR{A}}-i\Om_\io(g^J,u)_{[A]}+v^Au^{\LR{A}}, }
where
\EQ{ \label{def ufLR}
 \pt u^{\LR{A}}:=\Om_\io(v^A,u_{\tle[A]})+\Om_\io(\ga^J,u)_{[A]}, 
 \pr f^{\LR{A}}:=\Om_\io(v^A,(f+vu)_{\tle[A]})+\Om_\io(\ga^J,f+vu)_{[A]}.}
The uniform $L^2_x$-Strichartz bound \eqref{unif wt bd} implies that 
\EQ{
 \pt\|\U w^s(u_{[A]}-u^{\LR{A}})\|_{\ti \X^\de}
  = w_A^s\|u_{[A]}-u^{\LR{A}}\|_{\ti \X^\de}
 \prQ\le 2C_1w_A^s\|(u_{[A]}-u^{\LR{A}})|_{t=0}\|_2 + 2C_1w_A^s\|\text{RHS of \eqref{eq at k}}\|_{\ti \X^\de_*},}
where the weight around the profile frequency is abbreviated by 
\EQ{ \label{def wA}
 w_A := w(\si^A_n).}
In the following estimates, $B$ will be absorbed by $M$, as we are assuming $1+B\ll M$. 

The part coming from the normal form is estimated by \eqref{proest4} with $\de_1=\de_2=\te_1=0$, $\te_2=2-\te$ for $\te=0,1$ and $p\ge 2$. 
Taking $\be$ large enough, depending on $s$, ensures the condition on $\ti S$. Thus we obtain  
\EQ{
 \pt \squm_A \|\U w^s u^{\LR{A}}\|_{\cL^p_t \dot H^\te}
 \pr\lec \squm_A \squm_{h\in[A]_\pm} \sum_{l\le \io h} (l/h)^{1-(\te+s)/2}\Br{\|v^A_h\|_{L^\I_tL^2}+\|\ga^J_h\|_{L^\I_tL^2}} \|\U w^s u_l\|_{L^p_t\dot B^{\te-2}_{\I}} 
 \pr\lec \sum_{\ka\le\io} \squm_A \squm_{h\in[A]_\pm} \ka^{1-(\te+s)/2}M \|\U w^s u_{\ka h}\|_{L^p_t\dot B^{\te-2}_{\I}} 
 \lec \io^{1-(\te+s)/2}M\|\U w^s u\|_{\cL^p_t\dot B^{\te-2}_{\I,2}}, }
where $\ka:=l/h\in 2^\Z$, and $\ell^1_\ka \ell^2_{A,h} \subset \ell^2_{A,h} \ell^1_\ka$ is used in the second inequality. 
Choosing $(p,\te)=(\I,0),(2,1)$ and using the Sobolev embeddings $\dot H^1\subset\dot B^0_{4,2}\subset \dot B^{-1}_{\I,2}$, $L^2\subset\dot B^{-2}_{\I,2}$, we obtain 
\EQ{ \label{est u^k}
 \squm_A \|\U w^s u^{\LR{A}}\|_{\ti \X^\de} \lec \io^{(1-s)/2}M\|\U w^s u\|_{\ti \X^0}.}  
Then its contribution to the right side of the equation is estimated by 
\EQ{
 \squm_A w_A^s \|v^Au^{\LR{A}}\|_{L^2_t L^{4/3}}
 \pt\lec \squm_A  \|v^A\|_{L^\I_t L^2} w_A^s \|u^{\LR{A}}\|_{L^2_tL^4}
 \pr\lec B M\io^{(1-s)/2}\|\U w^s u\|_{\ti \X^0},}
which is negligible by choosing $\io$ small enough, depending on $s,B,M,C_1$. 

Similarly, the normal form part on the right side is estimated by \eqref{proest4} with $(\de_1,\de_2,\te_1,\te_2)=(0,0,0,2),(0,1,0,2)$, for large $\be$, 
\EQ{ 
 \pt\squm_A \|\U w^s f^{\LR{A}}\|_{\ti \X_*^0}
 \pn\lec \squm_A \squm_{h\in[A]_\pm} \sum_{l\le\io h}(l/h)^{1-s/2}M\|\U w^s(f+vu)_l\|_{L^1_t \dot B^{-2}_{\I}+L^2_t \dot B^{-2}_4},}
then using $\dot B^{-1}_{4/3(-1),2}=\dot H^{-1}\subset\dot B^{-2}_{4,2}$, together with \eqref{proest3}, as well as the Minkowski inequality as above,  
\EQ{ \label{est f^A}
 \squm_A \|\U w^s f^{\LR{A}}\|_{\ti \X_*^0}\pt\lec M \io^{1-s/2} \BR{\|\U w^s f\|_{\ti \X_*^0} + \|\U w^s(vu)\|_{L^2_t \dot H^{-1}}} 
 \pr\lec M \io^{1-s/2} \|\U w^s f\|_{\ti \X^\de_*} + M^2 \io^{1-s/2}\|\U w^s u\|_{\X^0}. }
Similarly we obtain, using \eqref{proest4} with $(\de_1,\de_2,\te_1,\te_2)=(-1,1,-1,1),(1,0,-1,1)$ 
\EQ{ \label{est Omgu}
 \pt\squm_A \|\U w^s \Om_\io(g^J,u)_{[A]}\|_{\ti \X_*^0}
 \pr\lec \squm_A \squm_{h\in[A]_\pm} \sum_{l\le\io h}(l/h)^{(1-s)/2} \|g^J_h\|_{L^1_t\dot B^{-1}_4+L^\I_t\dot B_{4/3}^{-1}} \|\U w^s u_l\|_{L^\I_t \dot B^{-1}_4 \cap L^2_t \dot B^{-1}_\I} 
 \pr \lec \io^{(1-s)/2} M \|\U w^s u\|_{\ti \X^0},} 
cf.~the above argument for \eqref{est u^k} for the square summation. 

Next, the low-high interactions with the profile is estimated, exploiting the radial improvement of the Strichartz estimate.  By H\"older, we have 
\EQ{ \label{est vu-LH}
 \pt \squm_A \sum_{a<A}\|\U w^s(v^a u_{[A]_\pm})_{[A]}\|_{\X^\de_*}
 \pr\lec  \squm_A \sum_{a<A} \squm_{k\in[A]_\pm} \sum_{j\in]a[} (j/k)^\de w(k)^s \|j^{-\de}v^a_j\|_{L^\I_t L^{2(-\de)}} \|u_k\|_{L^2_t L^4} 
 \pr\lec  \squm_A  \sum_{a<A} \frac{(\si^a \be)^\de}{(\si^A/\be^2)^\de}  \squm_{k\in[A]_\pm}w_A^s\|v^a\|_{L^\I_t\dot B^{-\de}_{2(-\de),\I}} \|u_k\|_{\X^0}
 \lec \be^{-\de}B\|\U w^s u\|_{\X^0}.
}
This is negligible by choosing $\be$ large enough, depending on $\de,B,C_1$. 

Similarly, the high-high interactions are estimated by H\"older and Sobolev:
\EQ{
 \|(f_jg_k)_l\|_{4/3} \lec l^{\de/2} \|(f_jg_k)_l\|_{4/3(\de/2)}  \lec l^\de \|f_j\|_{L^{2(-\de/2)}}\|g_k\|_{L^{4(\de)}},}
thus we obtain 
\EQ{ \label{est vu-HH}
 \pt\squm_A \sum_{a>A}\|\U w^s(v^a u_{[a]})_{[A]}\|_{\X_*^0}
 \pr\lec \squm_A \sum_{a>A} \squm_{l\in[A]} \sum_{j\in]a[} \sum_{k\in\{j\}_\pm} \frac{w(l)^s l^{\de/2}}{w(k)^s k^{\de/2}}\|j^{-\de/2}v^a_j\|_{L^\I_t L^{2(-\de/2)}} \|k^\de \U w^su_k\|_{L^2_t L^{4(\de)}}
 \pr\lec \squm_A \sum_{a>A}  \frac{(\si^A \be^2)^{\de/2}}{(\si^a/\be)^{\de/2}} \|v^a\|_{L^\I_t \dot B^{-\de/2}_{2(-\de/2),\I}}\|\U w^su_{[a]}\|_{\X^\de} \lec \be^{-\de/2} B \|\U w^s u\|_{\X^\de}, }
where the monotonicity of $w$ is also used.  

For the remaining terms with $v^A$, namely 
\EQ{ 
 (v^A u_{[A]_+})_{[A]}-(v^Au_{[A]})_{[A]_+}-(v^Au_{[A]})_{\tle[A]},}
the Fourier support of $v^A$ and the definition of $[A]_+$, $\tle[A]$ implies that the first two terms are low-high interactions with $\be$ gap, and the last term is high-high interactions with $\be$ gap. 
Hence the same argument as above yields
\EQ{
 \pt\squm_A \|\U w^s[(v^A u_{[A]_+})_{[A]}-(v^Au_{[A]})_{[A]_+}]\|_{\X^\de_*} 
 \lec \be^{-\de}B\|\U w^s u\|_{\X^0}, 
 \pr\squm_A \|\U w^s(v^Au_{[A]})_{\tle[A]}\|_{\X_*^0} \lec \be^{-\de/2}B\|\U w^s u\|_{\X^\de}.} 

The remainder term is estimated by \eqref{proest5}, for large $\be$ depending on $s$, 
\EQ{ \label{est gau1}
 \squm_A \|\U w^s (\ga^J, u)_{\LH_\io}\|_{\X^\de_*} \pt\lec \io^{-1}\|\ga^J\|_{L^\I_t \dot B^{-\de/2}_{2(-\de/2),\I}} \|\U w^su\|_{\X^\de}
 \pr\lec \io^{-1}M^{1-\de/4}\e^{\de/4}\|\U w^s u\|_{\X^\de},}
where in the second inequality the interpolation inequality \eqref{intpl BH} was used, together with the smallness \eqref{bd' Ga}.  
Choosing $\e$ small enough, depending on $\de,M,C_1$ and $\io$, this term is also negligible. 

For $V\sN_J$, we use \eqref{proest3.5} with $s\in[0,1)$ and $(a,b)=(0,1),(2,0)$. Hence for large $\be$, 
\EQ{
 \pt \|\U w^s(v\sN_J u)\|_2 \lec \|v\sN_J\|_{\dot B^1_{4,1}}\|\U w^s u\|_2, 
 \pr \|\U w^s(v\sN_J u)\|_{\dot B^0_{4/3,2}} \lec \|v\sN_J\|_{\dot B^1_{4/3,1}}\|\U w^s u\|_{\dot B^0_{4,2}},}
thereby we obtain, using the standard $L^p_t$ rather than $\cL^p_t$, 
\EQ{ \label{est vNu}
 \|\U w^s(v\sN_J u)\|_{\ti \X_*^0} \pt\lec \|\U w^s(v\sN_J u)\|_{L^1_tL^2+L^2_tL^{4/3}} \pr\lec \|v\sN_J\|_{L^\I_t\dot B^1_{4/3,1}+L^1_t\dot B^1_{4,1}}\|\U w^su\|_{L^\I_tL^2 \cap L^2_t \dot B^0_{4,2}} \lec \e\|\U w^s u\|_{\ti \X^0}.}

Gathering all the above estimates, we obtain 
\EQ{ \label{est around prof}
 \squm_A \|\U w^s u_{[A]}\|_{\ti \X^\de} \pt\lec C_1\|\U w^s u(0)\|_2+ C_1(1+\io^{1-s/2}M) \|\U w^s f\|_{\ti \X^\de_*}
 \pn+ C_1 c_1\|\U w^s u\|_{\ti \X^\de},}
where
\EQ{ \label{def c1}
 c_1:= \io^{(1-s)/2}M^2+\be^{-\de/2}B+\io^{-1}\e^{\de/4} M^{1-\de/4}+\e.}

\subsection{Estimate for intermediate frequencies}
For the intermediate or large frequencies, we use the free Strichartz estimate, regarding the potential term as small perturbation. 

For each $A\in J_\sim$, consider the frequency projection of the equation next to $[A]$. Then we obtain 
\EQ{ \label{eq vecA}
 \pt(i\p_t-\De)u_{\vec A}=(f+vu)_{\vec A}=f^{\vec A}+((\ga^J+v\sN_J) u)_{\vec A},
 \prq f^{\vec A}:=f_{\vec A}+\sum_{a\le\si}(v^au)_{\vec A}+\sum_{a>A}(v^au)_{\vec A},}
where the potential terms with $a\le A$ are low-high interactions, and those with $a>A$ are high-high interactions, for large $\be$. 
The term with $\ga$ needs the normal form as before. 
Then the above equation is transformed into 
\EQ{
 \pt (i\p_t-\De)(u-\Om_\io(\ga^J,u))_{\vec A}
 \prq=f^{\vec A}+v\sN_J u+P_{\vec A}(\ga^J,u)_{\LH_\io}-\Om_\io(\ga^J,f+vu)_{\vec A}-i\Om_\io(g^J,u)_{\vec A}.} 
The free Strichartz \eqref{RFD} implies that 
\EQ{
 \pt\|\U w^s(u-\Om_\io(\ga^J,u))_{\vec A}\|_{\ti \X^\de}
 \prq\lec \|\U w^s(f^{\vec A}+(v\sN_J u)_{\vec A}+(\ga^J,u)_{\LH_\io}-\Om_\io(\ga^J,f+vu)-i\Om_\io(g^J,u))_{\vec A}\|_{\ti \X^\de_*}.}

The normal form part on the left is estimated in the same way as \eqref{est u^k},
\EQ{
 \squm_A \|\U w^s\Om_\io(\ga^J,u)_{\vec A}\|_{\ti \X^\de}
 \pt\lec \squm_A \squm_{h\in\vec A_\pm}\sum_{l\le\io h}(l/h)^{(1-s)/2}\|\ga_h\|_{L^\I_tL^2}\|\U w^s u_l\|_{\ti \X^0}
 \pr\lec \io^{(1-s)/2} M \|\U w^s u\|_{\ti \X^0}.}
The normal form part on the right is estimated in the same way as \eqref{est f^A}--\eqref{est Omgu},
\EQ{
 \pt\squm_A \|\U w^s\Om_\io(\ga^J,f+vu)_{\vec A}\|_{\ti \X_*^0}
  \pn \lec M \io^{1-s/2}\|\U w^s f\|_{\ti \X^\de_*} + M^2\io^{1-s/2}\|\U w^s u\|_{\X^0}, 
 \pr \squm_A \|\U w^s\Om_\io(g,u)_{\vec A}\|_{\ti \X_*^0}
  \lec \io^{(1-s)/2}M\|\U w^s u\|_{\ti \X^0}. }

The low-high profile interactions are estimated in the same way as \eqref{est vu-LH},
\EQ{
 \pn\squm_A \sum_{\si^a\le\si^A} \|\U w^s(v^au)_{\vec A}\|_{\X^\de_*}
 \pt\lec \squm_A \sum_{\si^a\le\si^A} \squm_{k\in\vec A_\pm} \sum_{j\in]a[} \frac{j^\de}{k^\de}  \|v^a_j\|_{L^\I_t \dot B^{-\de}_{2(-\de)}} \|\U w^su_k\|_{L^2_t \dot B^0_4}
 \pr\lec \squm_A \sum_{\si^a\le\si^A}\frac{(\si^a\be)^\de}{(\si^A\be^2)^\de} B \|\U w^s u_{\vec A_\pm}\|_{\X^0}
 \lec \be^{-\de}B\|\U w^su\|_{\X^0}.
}
The high-high profile interactions are estimated in the same way as \eqref{est vu-HH},
\EQ{
 \squm_A \sum_{a>A} \|\U w^s(v^a u)_{\vec A}\|_{\X_*^0}
 \pt\lec \squm_A \sum_{a>A} \frac{(\max \vec A)^{\de/2}}{(\min]a[)^{\de/2}} B \|\U w^s u_{[a]}\|_{\X^\de}
 \pr\lec \be^{-\de/2}B\|\U w^s u\|_{\X^\de}.}

The remaining interactions with $\ga^J$ and $v\sN_J$ were already estimated in \eqref{est gau1} and \eqref{est vNu}. Gathering all the above estimates including \eqref{est around prof}, we obtain 
\EQ{
 \pt \|\U w^s u\|_{\ti \X^\de}
 \lec C_1\|\U w^s u(0)\|_2 + C_1(1+\io^{1-s/2}M) \|\U w^s f\|_{\ti \X^\de_*}
 \pn + C_1 c_1\|\U w^s u\|_{\ti \X^\de},}
where $c_1$ is defined in \eqref{def c1}. 
Hence choosing $\io,\e,\be$ in this order, depending on $s,\de,B,M,C_1$, we can make 
\EQ{
 \io^{1-s/2}M \ll 1, \pq C_1c_1 \ll 1, \pq \io^{-1} \ll \be.}
Then the above implies 
\EQ{
 \|\U w^s u\|_{\ti \X^\de} \lec C_1[\|\U w^s u(0)\|_2 +  \|\U w^s f\|_{\ti \X^\de_*}]}
and, together with \eqref{w equiv},  
\EQ{
 \|\LR{D}^su_n\|_{\ti \X^\de} \lec \be^{2s}C_1[ \|u_n(0)\|_{H^s} + \|\LR{D}^sf_n\|_{\ti \X^\de_*}],}
where the $n$-dependence is exposed. 
However, this uniform estimate is contradicting the starting assumption \eqref{choice Vn fn}, thereby we conclude the proof of Theorem \ref{thm:Stz}. 

Note that the contradiction argument is essential. In other words, the above proof does not yield any explicit estimate on the constant in the final Strichartz estimate, because the above $C_1$ and $\be$ depend on the choice of $\{V_n\}$.

\section{Ground state constraint} \label{sect:var}
In this section, we derive some variational properties of the ground state $(u,N)=(W,W^2)$ for the Zakharov system \eqref{Zak}, together with quantitative estimates. 
In particular, it is shown that the crucial condition $\|N\|_2<\|W^2\|_2$ to use the Strichartz estimate is preserved if the Zakharov energy is below the ground state. 
Let 
\EQ{ \label{def ES}
 \pt E_S(u):=\frac12\|\na u\|_2^2-\frac14\|u\|_4^4}
denote the NLS energy, then the Zakharov energy is rewritten as 
\EQ{
  E_Z(u,N)=E_S(u)+\frac14\|N-|u|^2\|_2^2=\frac12\|\na u\|_2^2+\frac14\|N\|_2^2-\frac12\LR{N||u|^2}.} 
A key functional is denoted by 
\EQ{ \label{def K}
 K(u):=\|\na u\|_2^2-\|u\|_4^4,}
which is linked both to the virial identity and to Nehari's, since the NLS is energy-critical on $\R^4$.  
The following equivalence is well-known and immediate from the Sobolev inequality with the best constant (cf.~\cite{KM}). 
 For any $u\in \dot H^1(\R^4)$ satisfying $E_S(u)<E_S(W)$, we have 
\EQ{ \label{K cond NLS}
 K(u) \ge 0 \iff \|u\|_4 < \|W\|_4 \iff \|\na u\|_2 < \|\na W\|_2,}
where $K(u)=0$ only if $u=0$. It implies 
\EQ{
 E(W)=\inf\{E(u) \mid u\in\dot H^1(\R^4)\setminus\{0\},\ K(u)=0\}.} 
\eqref{K cond NLS} follows from the property of $W$ achieving the Sobolev best constant \eqref{max Sob} and solving the static NLS \eqref{stNLS}: 
\EQ{
 \pt \|W\|_4=C_S\|\na W\|_2, \pq K(W)=\|\na W\|_2^2-\|W\|_4^4=0, 
 \pr 1=C_S^4\|\na W\|_2^2=C_S^2\|W\|_4^2, \pq 4E_S(W)=\|\na W\|_2^2=\|W\|_4^4=C_S^{-4},}
and so 
\EQ{
 \pt K(u)\ge 0 \implies \|\na u\|_2^2 \le 4E_S(u),
 \pr \|\na u\|_2<\|\na W\|_2 \implies \|u\|_4 \le C_S\|\na u\|_2<C_S\|\na W\|_2=\|W\|_4,
 \pr \|u\|_4\le \|W\|_4 \implies K(u) \ge (C_S^{-2}-\|u\|_4^2)\|u\|_4^2 \ge 0.}
Thus we obtain \eqref{K cond NLS}.

For the Zakharov system, since
\EQ{
 4E_S(W)=C_S^{-4}, \pq 
 |\LR{N||u|^2}| \le C_S^2\|N\|_2\|\na u\|_2^2,}
we have
\EQ{ \label{EZ below equiv}
 2(1-C_S^2\|N\|_2)\|\na u\|_2^2 + \|N\|_2^2 \le 4E_Z(u,N) \le 2\|\na u\|_2^2+ \|N\|_2^2,}
where the factor on $\|\na u\|_2^2$ is non-negative if $\|N\|_2\le \|W\|_4^2=C_S^{-2}$. In short,
\EQ{
 \|N\|_2<\|W^2\|_2 \implies E_Z(u,N) \sim \|\na u\|_2^2+\|N\|_2^2.}
Moreover, we have the following equivalent conditions. 
\begin{lem} \label{K cond Zak}
For any $(u,N)\in H^1(\R^4)\times L^2(\R^4)$ under the energy constraint 
\EQ{ \label{EZ cons}
 E_Z(u,N)<E_S(W)=\|\na W\|_2^2/4=\|W\|_4^4/4,}
we have the equivalence  
\EQ{ \label{K pos equiv}
 K(u)\ge 0 \iff \|N\|_2<\|W\|_4^2 \iff \|N\|_2^2 \le 4E_Z(u,N),}
where $K(u)=0$ only if $u=0$, and 
\EQ{ \label{K neg equiv}
 K(u)<0 \iff \|N\|_2>\|W\|_4^2 \iff \|N\|_2^2 \ge 4E_Z(u,N).}
In particular, these conditions are closed and open relatively under the constraint \eqref{EZ cons}, hence preserved by any continuous flow which preserves or decreases $E_Z(u,N)$. 
\end{lem}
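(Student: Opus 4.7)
The plan is to reduce the statement to the NLS dichotomy \eqref{K cond NLS} combined with a sharpened triangle-inequality argument. Since $E_Z(u,N) = E_S(u) + \tfrac14\|N - |u|^2\|_2^2 \ge E_S(u)$, the hypothesis $E_Z(u,N) < E_S(W)$ forces $E_S(u) < E_S(W)$, so \eqref{K cond NLS} applies to $u$ and sorts it into either $\{K(u) \ge 0\}$ (with $K(u)=0$ only at $u=0$) or $\{K(u)<0\}$.

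First I will handle the equivalences among the three $N$-conditions directly from the already-proved bound \eqref{EZ below equiv}. The factor $1 - C_S^2\|N\|_2$ is nonnegative precisely when $\|N\|_2 \le \|W\|_4^2 = C_S^{-2}$, so \eqref{EZ below equiv} yields $\|N\|_2 \le \|W\|_4^2 \Rightarrow \|N\|_2^2 \le 4E_Z$; the converse is immediate from $\|N\|_2^2 \le 4E_Z < \|W\|_4^4$. Chaining these forces $\|N\|_2 = \|W\|_4^2$ to be impossible under the constraint, so the two open sets $\{\|N\|_2 < \|W\|_4^2\}$ and $\{\|N\|_2 > \|W\|_4^2\}$ partition the sub-ground-state set and correspond respectively to the two $\|N\|_2^2$ vs $4E_Z$ conditions in each row (with equality $\|N\|_2^2 = 4E_Z$ only at $u=0$, which lies in the first alternative).

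The core analytic step will be the implication $K(u) \ge 0 \Rightarrow \|N\|_2^2 \le 4E_Z$. I plan to combine the sharp Sobolev inequality $\|u\|_4 \le C_S\|\na u\|_2$ with $K(u) \ge 0$, which gives $\|\na u\|_2^2 \le 4E_S(u) < 4E_S(W) = C_S^{-4}$. Together with the identity $\|u\|_4^4 = 2\|\na u\|_2^2 - 4E_S(u)$, these constraints yield (after rescaling so that $C_S=1$) the quadratic inequality $q^2 - 2q + 4E_S(u) \ge 0$ for $q := \|\na u\|_2^2 \in [0,1)$, forcing $q$ below the smaller root $1-\sqrt{1-4E_S(u)}$ and hence $\|u\|_4^2 \le 1 - \sqrt{1 - 4E_S(u)}$. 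Inserting this into the triangle inequality
\[
 \|N\|_2 \le \|u\|_4^2 + \|N - |u|^2\|_2 = \|u\|_4^2 + 2\sqrt{E_Z - E_S(u)}
\]
gives an upper bound as a function of $E_S(u) \in [0, E_Z]$ whose derivative vanishes only when $4E_Z = 1$ and is negative at $E_S(u)=0$; hence the bound is monotone decreasing and attains its maximum $2\sqrt{E_Z}$ at $E_S(u)=0$. Thus $\|N\|_2^2 \le 4E_Z$, with equality only at $u=0$. The contrapositive $\|N\|_2 > \|W\|_4^2 \Rightarrow K(u) < 0$ then follows from the equivalences already established among the $N$-conditions.

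The closedness/openness of each alternative under the energy constraint, and its preservation under any continuous flow not increasing $E_Z$, follow immediately from the equivalences: each characterization in each row is either open or closed in $H^1\times L^2$, and together with the exclusion of $\|N\|_2=\|W\|_4^2$ they make the two alternatives clopen relative to $\{E_Z<E_S(W)\}$. The main obstacle is the optimization in the third paragraph: a crude Cauchy--Schwarz applied to $\|u\|_4^2 + \|N - |u|^2\|_2$ only gives $\|N\|_2 \le 2\sqrt{2}\sqrt{E_Z}$, short by a factor of $\sqrt{2}$, so closing the gap hinges on the sharp quadratic constraint on $\|\na u\|_2^2$ that arises from Sobolev intersected with $K(u) \ge 0$.
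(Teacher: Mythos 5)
Your reduction of the three $N$-conditions to one another via \eqref{EZ below equiv} is fine, and your ``core analytic step'' ($K(u)\ge 0\Rightarrow\|N\|_2^2\le 4E_Z$) via the Sobolev quadratic constraint on $q:=\|\nabla u\|_2^2$ and the decreasing bound $\|u\|_4^2+2\sqrt{E_Z-E_S(u)}\le 2\sqrt{E_Z}$ is correct, and it is a genuinely different route from the paper (which deforms along $(\lambda u,\lambda^2 N)$, $\lambda\colon 1\to 0$, and uses the impossibility of $\|N\|_2=\|W\|_4^2$ along the connected curve). However, there is a real logical gap: proving $A\Rightarrow B$ and then taking its contrapositive gives back the \emph{same} implication, not its converse. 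You establish $\{K\ge0\}\subset\{\|N\|_2<\|W\|_4^2\}$, which is identical to $\{\|N\|_2>\|W\|_4^2\}\subset\{K<0\}$; the lemma also asserts the reverse inclusion $\{\|N\|_2<\|W\|_4^2\}\subset\{K\ge0\}$, equivalently $K(u)<0\Rightarrow\|N\|_2>\|W\|_4^2$, and your optimization does not cover it, because it hinges on $K(u)\ge0$ giving $q\le 4E_S(u)<1$, which selects the smaller root of $q^2-2q+4E_S(u)\ge 0$. When $K(u)<0$ one instead has $q>\|\nabla W\|_2^2$, forcing the \emph{larger} root, and the whole chain breaks. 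The paper handles this second direction with a second deformation, $(u_\lambda,N^\lambda)$ with $N^\lambda:=N-|u|^2+\lambda^2|u|^2$ and $\lambda\to\infty$, deriving a contradiction from $E_Z\to-\infty$ against \eqref{EZ below equiv} if $\|N\|_2$ were below the threshold.

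The missing direction is cheap to close with tools you already invoked. Directly from \eqref{EZ below equiv}: if $\|N\|_2<C_S^{-2}=\|W\|_4^2$, then
\[
\|\nabla u\|_2^2\le\frac{4E_Z(u,N)-\|N\|_2^2}{2(1-C_S^2\|N\|_2)}<\frac{C_S^{-4}-\|N\|_2^2}{2C_S^2(C_S^{-2}-\|N\|_2)}=\frac{C_S^{-2}+\|N\|_2}{2C_S^2}<C_S^{-4}=\|\nabla W\|_2^2,
\]
and then \eqref{K cond NLS} gives $K(u)\ge 0$. Alternatively, your own quadratic argument adapts: with $C_S=1$, $K(u)<0$ gives $q>1$, hence $q\ge 1+\sqrt{1-4E_S(u)}$, so $\|u\|_4^4=2q-4E_S(u)\ge\bigl(1+\sqrt{1-4E_S(u)}\bigr)^2$; the reverse triangle inequality then gives $\|N\|_2\ge 1+\sqrt{1-4E_S(u)}-2\sqrt{E_Z-E_S(u)}$, and the right side is an increasing function of $E_S(u)\in(-\infty,E_Z]$ (note $E_S(u)$ may be negative when $K<0$) with infimum $1>2\sqrt{E_Z}$, giving $\|N\|_2^2>4E_Z$. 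Either fix completes the biconditional; as written, the proposal only proves one inclusion.
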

\begin{proof}
First, the right condition of \eqref{K pos equiv} implies the middle one, since $4E_Z(u,N)<\|W\|_4^4$. 
Conversely, \eqref{EZ below equiv} implies that under the energy constraint $E_Z<E_S(W)$, the bound $\|N\|_2\le\|W\|_4^2$ is improved to 
$\|N\|_2^2 \le 4E_Z(u,N) < \|W\|_4^4$. Thus we have equivalence of the right two conditions of \eqref{K pos equiv}. In particular $\|N\|_2=\|W\|_4^2$ is impossible. Hence the right two in \eqref{K neg equiv} are also equivalent. 
Since the middle condition is open and the right one is closed, they are preserved on any connected set of $(u,N)$ under the constraint $E_Z(u,N)<E_S(W)$, where $\sign K(u)$ is also preserved, because of \eqref{K cond NLS}, as is well known for NLS. 
It remains to see their matching. 

Consider the curves in the form $(u_\la,N_\la):=(\la u, \la^2 N)$ with $\la>0$. 
We have
\EQ{
 \pt E_Z(u_\la,N_\la) = E_S(u_\la)+\frac{\la^4}{4}\|N-|u|^2\|_2^2,
 \pr \la\p_\la E_Z(u_\la,N_\la) = K(u_\la) + \la^4\|N-|u|^2\|_2^2.}
Suppose that $E_Z(u,N)<E_S(W)$ and $K(u)\ge 0$. 
Then along the curve $C_1:\{(u_\la,N_\la)\}_{\la:1\to 0}$,  
the conditions $E_S(u_\la)\le E_Z(u_\la,N_\la)<E_S(W)$ and $K(u_\la)\ge 0$ are preserved, since $E_Z(u_\la,N_\la)$ is decreasing as long as $K(u_\la)\ge 0$. 
Since $\|N_\la\|_2<\|W\|_4^2$ is trivial at $\la=0$, it holds true also at $\la=1$, namely $\|N\|_2<\|W\|_4^2$.  

For the reverse implication, consider another deformation in terms of $\nu:=N-|u|^2$ given by 
$(u,\nu) \mapsto (u_\la,\nu)$, 
or in terms of $N$, $(u,N) \mapsto (u_\la, N^\la)$, with 
\EQ{
 N^\la:=\nu+|u_\la|^2=N-|u|^2+|\la u|^2.}
Then we have 
\EQ{
 \pt E_Z(u_\la,N^\la)=E_S(u_\la)+\frac14\|\nu\|_2^2, 
 \pq \la \p_\la E_Z(u_\la,N^\la)=K(u_\la).}

Suppose that $E_Z(u,N)<E_S(W)$ and $K(u)<0$. 
Then along the curve $C_2:\{(u_\la,N^\la)\}_{\la:1\to\I}$, 
the conditions $E_S(u_\la)\le E_Z(u_\la,N^\la)<E_S(W)$ and $K(u_\la)<0$ are preserved, since $E_Z(u_\la,N^\la)$ is decreasing as long as $K(u_\la)<0$. 
Moreover, $E_S(u_\la)\to-\I$ as $\la\to\I$, hence $E_Z(u_\la,N^\la)\to-\I$. 
If $\|N\|_2<\|W\|_4^2$, it would remain valid along the curve, but then \eqref{EZ below equiv} contradicts $E_Z(u_\la,N^\la)\to-\I$. 
Hence $\|N\|_2>\|W\|_4^2$. 
\end{proof}

The following estimate on $K$ yields a key monotonicity for the virial identity to show scattering and blow-up below the ground state. It is the 4D version of \cite[Lemma 2.4]{GNW}, with essentially the same proof. 
\begin{lem} \label{lem:estK}
Let $\fy\in\dot H^1(\R^4)$ and $a\ge 0$ satisfy
\EQ{
 E_S(\fy) + a^2/4 \le E_S(W).}
Then we have 
\EQ{
 \CAS{ K(\fy)\ge 0 \implies K(\fy) \ge  a\|\fy\|_4^2, \pq \|W\|_4^2>\|\fy\|_4^2+a,\\
 K(\fy)\le 0 \implies 4K(\fy)+a^2 \le -3a\|\fy\|_4^2.}}
\end{lem}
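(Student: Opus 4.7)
The approach is to transcribe the hypothesis into inequalities between two scalars, $x := \|\na\fy\|_2^2$ and $t := \|\fy\|_4^2$, and exploit $\om := \|W\|_4^2 = \|\na W\|_2$ (where the second equality uses $K(W) = 0$ and $4 E_S(W) = \om^2$). In these variables $K(\fy) = x - t^2$, $4 E_S(\fy) = 2x - t^2$, the sharp Sobolev inequality reads $\om t \le x$, and the energy hypothesis becomes $2x - t^2 + a^2 \le \om^2$. Chaining the Sobolev lower bound $2\om t \le 2x$ into the hypothesis then yields
\[ (\om - t)^2 = \om^2 - 2\om t + t^2 \ge \om^2 - (2x - t^2) \ge a^2, \]
so $|\om - t| \ge a$. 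This single inequality encodes the whole dichotomy; what remains is to fix the sign of $\om - t$ in each case.

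For $K(\fy) \ge 0$, the NLS-type variational characterization \eqref{K cond NLS} (whose strict hypothesis $E_S(\fy) < E_S(W)$ is implied by the given energy inequality away from the edge case $a = 0$, $\fy$ a ground state) gives $t \le \om$, hence $\om - t \ge a$. Sobolev then produces
\[ K(\fy) = x - t^2 \ge \om t - t^2 = t(\om - t) \ge a t = a\|\fy\|_4^2, \]
and the second assertion $\|\fy\|_4^2 + a \le \|W\|_4^2$ is just a restatement of $t \le \om - a$.

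For $K(\fy) \le 0$, Sobolev combined with $x \le t^2$ gives $t \ge \om$, and together with $|\om - t| \ge a$ this upgrades to $t \ge \om + a$. The hypothesis alone (no Sobolev needed) bounds $4K(\fy) = 2(2x - t^2) - 2t^2 \le 2(\om^2 - a^2) - 2t^2$, so
\[ 4K(\fy) + a^2 + 3at \le 2\om^2 - a^2 - 2t^2 + 3at = -\bigl(2t^2 - 3at + (a^2 - 2\om^2)\bigr). \]
A one-variable check closes this case: at $t = \om + a$ the quadratic $2t^2 - 3at + a^2 - 2\om^2$ evaluates to $a\om \ge 0$, and its derivative $4t - 3a$ is positive there, so it remains nonnegative for all $t \ge \om + a$, giving the desired $4K(\fy) + a^2 \le -3at$.

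The main subtlety is the sign determination for $\om - t$ in the first case, which leans on the sharp NLS constraint \eqref{K cond NLS}; the second case is a purely algebraic consequence of the hypothesis plus $K(\fy) \le 0$. The strict versus non-strict form of $\|W\|_4^2 > \|\fy\|_4^2 + a$ mirrors the strict versus non-strict form of the hypothesis $E_S(\fy) + a^2/4 \le E_S(W)$, with the only borderline scenario being the ground state itself ($\fy = W$, $a = 0$), which should be noted or excluded.
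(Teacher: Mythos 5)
Your proof is correct, and it takes a genuinely different (though comparably short) route from the paper's. The paper uses the $L^2$-dilation $\fy_\mu=\mu^2\fy(\mu\cdot)$ to put $\fy$ on the Nehari manifold $\{K=0\}$: choosing $\mu^2=\|\na\fy\|_2^2/\|\fy\|_4^4$, the variational minimality of $W$ gives $a^2/4 \le E_S(\fy_\mu)-E_S(\fy)=\frac{(\mu^2-1)^2}{4}\|\fy\|_4^4$, which directly yields the scalar constraint $a\|\fy\|_4^2 \le |\mu^2-1|\,\|\fy\|_4^4 = |K(\fy)|$ — so the first conclusion is immediate, and the $K<0$ case follows by optimizing $f=4(\mu^2-1)X+1/X$ over $X=\|\fy\|_4^2/a$ subject to this constraint. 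You instead avoid the dilation entirely, working with $x=\|\na\fy\|_2^2$ and $t=\|\fy\|_4^2$, and derive the different pivot inequality $|\om-t|\ge a$ by chaining the sharp Sobolev $\om t\le x$ into the energy hypothesis $2x-t^2+a^2\le\om^2$. This buys a more elementary-looking argument (no rescaling), but costs an extra invocation of Sobolev to pass from $\om-t\ge a$ to $K(\fy)=x-t^2\ge t(\om-t)\ge at$ in the $K\ge 0$ case, and a separate one-variable quadratic positivity check in the $K\le 0$ case where the paper's monotonicity argument is slightly cleaner. Your sign-determination via \eqref{K cond NLS} plays the same logical role as the paper's $\mu>1$ vs.\ $\mu<1$ dichotomy. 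One shared imprecision worth noting: both arguments deliver $\|W\|_4^2 \ge \|\fy\|_4^2+a$ rather than the strict inequality the lemma states (the paper's chain also ends with $\ge (\|\fy\|_4^2+a)^2$); you correctly flag the borderline scenario, which the paper dismisses under the blanket ``$K(\fy)=0$ or $a=0$ is trivial.'' Since the strict form is never used downstream, this is harmless in both.
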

\begin{proof}
If $K(\fy)=0$ or $a=0$, then the conclusion is trivial. Let $K(\fy)\not=0$ and $a>0$. 
For the $L^2$-dilation $\fy_\mu:=\mu^{d/2}\fy(\mu x)$ we have 
\EQ{
 E_S(\fy_\mu)=\frac{\mu^2}{2}\|\na\fy\|_2^2-\frac{\mu^4}{4}\|\fy\|_4^4,
 \pq K(\fy_\mu)=\mu^2\|\na\fy\|_2^2-\mu^4\|\fy\|_4^4.}
Hence the unique $\mu\in(0,\I)$ solving $K(\fy_\mu)=0$ is given by 
\EQ{
 \mu = \|\na\fy\|_2\|\fy\|_4^{-2}\not=1.}
For this particular $\mu$, the variational property of $W$ implies 
\EQ{
 a^2/4 \le E_S(W)-E_S(\fy) \le E_S(\fy_\mu)-E_S(\fy) = \frac{(\mu^2-1)^2}{4}\|\fy\|_4^4.}
Put $X:=\|\fy\|_4^2/a$. Then the above implies 
\EQ{ \label{range X}
 1 \le |\mu^2-1|X.}

If $K(\fy)>0$, then $\mu>1$, $X>0$ and \eqref{range X} imply
\EQ{
 \frac{K(\fy)}{a\|\fy\|_4^2}=(\mu^2-1)X \ge 1.}
Hence 
\EQ{
 \|W\|_4^4 = 4E_S(W) \ge 4E_S(\fy)+a^2 = 2K(\fy)+\|\fy\|_4^4+a^2 \ge (\|\fy\|_4^2+a)^2.}

If $K(\fy)<0$, then $\mu<1$, and 
\EQ{
 f(X,\mu):=\frac{4K(\fy)+a^2}{a\|\fy\|_4^2} = 4(\mu^2-1)X+1/X}
is monotone in $X>0$. So its maximal value is attained at the boundary of \eqref{range X}
\EQ{
 f(X,\mu) \le -4+|1-\mu^2|=-3-\mu^2 < -3,}
which is, by definition of $f$, the desired estimate in this case. 
\end{proof}

\section{Blow-up and scattering criteria} \label{sect:LWP}
In this section, we study the Zakharov system \eqref{Zak} {\it without imposing the radial symmetry}. 
The first result is persistence of regularity for the local unique solutions given in \cite{Zak4D1}. 
To be precise, we give the definition of solutions. 
\begin{defn} \label{def:sol}
Let $s\ge 1/2$ and let $I\subset\R$ be a non-empty interval. 
A pair of space-time functions $(u,N):I\times\R^4\to\C^2$ is said to be {\it a solution of \eqref{Zak}} on $I$ in $H^s\times L^2$, if $(u,N)\in C(I;H^s\times L^2)$ satisfying \eqref{Zak} on $I$ (in the Duhamel form) and $u\in L^2_tB^{1/2}_{4,2}(J)$ for every compact $J\subset I$. 
\end{defn}
We introduce function spaces for the solutions. 
For $s\in\R$ and $\de\in(0,\de_\star)$, let 
\EQ{ \label{def Zs}
 \pt Z^s_0:=L^\I_tH^s \times L^\I_tL^2,
 \pr Z^s_1:=X^s_1\times Y_1, \ X^s_1:=L^\I_t L^{2(-s)}\cap L^2_tB^s_{4,2}, \ Y_1:=L^\I_tL^2+L^2_tL^4,}
and the Fr\'echet space $\cZ^s$ is defined by 
\EQ{ \label{def cZs}
 \cZ^s:=\{(u,N)\in \pt C(\R;H^s\times L^2)\cap Z^s_0 \mid 
 \prq s'<1 \tand s'\le s\implies u\in L^2_t B^{s'}_{4,2}\}.}
For $s<1$, it is a closed subspace of $Z^s_0\cap Z^s_1$, with the norm 
\EQ{
 \|(u,N)\|_{\cZ^s}:=\|u\|_{L^\I_tH^s}+\|u\|_{L^2_tB^s_{4,2}}+\|N\|_{L^\I_tL^2}.}
 
The local wellposedness \cite{Zak4D1} in $H^{1/2}\times L^2$ implies that every solution on an interval $I$ is uniquely determined by its value at any $t_0\in I$. 
The maximal existence time $T^*\le\I$ is uniquely defined such that the solution can be extended up to $t<T^*$ but not beyond it. 
The persistence of regularity \cite{Zak4D1} implies that the maximal existence time $T^*$ is independent of $s\in[1/2,1)$. It is true also in the energy space $s=1$: 
\begin{prop} \label{prop:persist}
Let $(u,N)$ be a solution of \eqref{Zak} on an open interval $I\ni 0$ in $H^{1/2}\times L^2$. 
If $u(0)\in H^1$, then $(u,N)\in \cZ^1(J)$ for all compact $J\subset I$.  
Moreover, every sequence of solutions $(u_k,N_k)$ satisfying $(u_k(0),N_k(0))\to (u(0),N(0))$ in $H^1\times L^2$, converges to $(u,N)$ in $\cZ^1(J)$. 
\end{prop}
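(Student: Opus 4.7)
The plan is to extend the subcritical $H^s\times L^2$ persistence from \cite{Zak4D1} (which holds only for $s\in[1/2,1)$) to the energy-critical endpoint $s=1$, by means of a normal-form reduction that absorbs the logarithmic endpoint loss.

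\emph{Step 1 (Subcritical persistence).} By the $H^s\times L^2$ persistence for $s\in[1/2,1)$ in \cite{Zak4D1}, the given $H^{1/2}$-solution $(u,N)$ already lies in $\cZ^s(J)$ for every $s\in[1/2,1)$ and every compact $J\subset I$. In particular, for each $s'<1$ one has $u\in L^\I_t H^{s'}(J)\cap L^2_t B^{s'}_{4,2}(J)$ and $N\in L^\I_t L^2(J)$, with quantitative bounds depending only on $s'$, $J$, and $\|(u,N)\|_{C(J;H^{1/2}\times L^2)}$.

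\emph{Step 2 (Endpoint $H^1$-bound via normal form).} Set $\ti u:=u-\Om_\io(N,u)$. The two-derivative-gain bilinear estimate \eqref{bil est Om} yields $\|\Om_\io(N,u)\|_{L^\I_t H^1(J)}\lec \|N\|_{L^\I_t L^2(J)}\|u\|_{L^\I_t L^2(J)}$, so it suffices to bound $\ti u$ in $L^\I_t H^1(J)$. From \eqref{eq normal}, $\ti u$ solves $(i\p_t-\De)\ti u=F_\io(u,N)$ with $F_\io$ as in \eqref{def FG}. I will estimate each term of $F_\io$ in the Strichartz dual $L^2_t W^{1,4/3}(J)+L^1_t H^1(J)$, using the product estimates of Section \ref{ss:NF} and the subcritical bounds of Step 1. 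The resonant low-high contribution $(N,u)_{\LH_\io}$ is handled by making the derivative fall on the high-frequency $u$-factor and invoking the just-subcritical $L^2_t B^{s'}_{4,2}(J)$-norm of $u$ for some $s'<1$; the normal-form residuals $\Om_\io(iD|u|^2,u)$ and $\Om_\io(N,nu)$ carry two derivatives of gain and are absorbed by Step 1. Closing the inequality on sufficiently short subintervals of $J$ (whose length depends only on the previous bounds) and iterating across $J$ yields $u\in L^\I_t H^1(J)$; the $L^2_t B^{s'}_{4,2}(J)$-part of the $\cZ^1(J)$-norm is already provided by Step 1, and the time-continuity in $H^1$ follows from the equation in the standard way.

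\emph{Step 3 (Continuous dependence in $\cZ^1$).} Given $(u_k(0),N_k(0))\to(u(0),N(0))$ in $H^1\times L^2$, the $H^{1/2}\times L^2$ wellposedness (with its Lipschitz dependence) from \cite{Zak4D1} ensures, for $k$ large, that $(u_k,N_k)$ is defined on $J$ and converges to $(u,N)$ in $\cZ^{s'}(J)$ for every $s'<1$. Applying the estimate of Step 2 to the difference, which solves
\[
(i\p_t-\De)(u_k-u)=\re(N_k-N)\cdot u_k+\re N\cdot (u_k-u),\pq (u_k-u)(0)\to 0 \IN{H^1},
\]
together with the Duhamel formula for $N_k-N$, upgrades the $\cZ^{s'}$-convergence to $\cZ^1(J)$-convergence.

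\emph{Main obstacle.} In four dimensions, the double-endpoint Strichartz estimate suffers a logarithmic failure at exactly the energy-critical scaling, which is precisely why \cite{Zak4D1} stopped at $s<1$. The essential point in Step 2 is to combine the normal form transformation with the just-subcritical $L^2_t B^{s'}_{4,2}(J)$-bound from Step 1, which substitutes for the missing endpoint norm on the $u$-factors in the nonlinearity and allows the $H^1$-estimate to close on compact subintervals.
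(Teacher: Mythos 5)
Your Step~1 matches the paper, but Step~2 contains a genuine gap that cannot be repaired along the lines you describe, and Step~3 inherits it.

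The resonant contribution $(N,u)_{\LH_\io}$ to $F_\io$ cannot be estimated in $L^2_t W^{1,4/3}(J)+L^1_t H^1(J)$ in terms of $\|N\|_{L^\I_t L^2(J)}$ and $\|u\|_{L^2_t B^{s'}_{4,2}(J)}$ with $s'<1$. If the derivative falls on the high-frequency $u$-factor, then a product of the form $N_{\mathrm{low}}\cdot Du_{\mathrm{high}}$ must be placed in $L^{4/3}$; H\"older forces $N_{\mathrm{low}}\in L^2$ and $Du_{\mathrm{high}}\in L^4$, i.e.\ $u\in B^1_{4,2}$, exactly the endpoint norm you are trying to bound. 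There is no way to replace $B^1_{4,2}$ by $B^{s'}_{4,2}$ with $s'<1$ because the missing $1-s'$ derivatives would have to come from $N$, which is only in $L^2$ on a compact interval (no decay). This is precisely the content of the paper's estimate \eqref{uN est5}: with $s=1$, $\te=1$ one gets $\|(N,u)_{\LH_\io}\|_{B^1_{4/3,2}}\lec \io^{-1}\|N\|_2\|u\|_{B^1_{4,2}}$, a self-referencing bound with coefficient $\io^{-1}\|N\|_{L^\I_t L^2}$, which is not small for general data, so shrinking the time interval does not help the bootstrap close. This is exactly the ``logarithmic failure'' the paper flags (Introduction, after \eqref{eq:slrange}) as the reason a direct Strichartz/normal-form argument stops short of $s=1$.

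The paper's actual proof is genuinely different and indirect. It first treats the sub-case with extra regularity $N_k(0)\in H^{1/2}$: then $(u_k,N_k)\in C(J;H^1\times H^{1/2})$ by the known persistence, and the conservation of $E_Z$ and $M$ is available. A frequency splitting $N_k=P_{>j}N_k+P_{\le j}N_k$ makes the cubic term in $E_Z$ subcritical up to a small piece, giving a uniform $H^1$ bound for $u_k$ on $J$. The approximating solutions then converge to $(u,N)$ weakly-$*$ in $H^1$, and strong $H^1$-continuity is recovered by proving $E_Z$ is in fact conserved for the limit (a strict inequality would contradict uniqueness by solving backward). Only afterwards is the general case, where $N_k(0)\in L^2$ only, treated by the same energy argument, now that persistence in $H^1\times L^2$ is known. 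The upgrade from weak to strong convergence is what replaces the direct Strichartz closure you attempted, and it is an essential feature of the argument, not an optional detail.
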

Combined with the local wellposedness \cite{Zak4D1} in $H^{1/2}\times L^2$, it implies the local wellposedness in the energy space: 
\begin{cor} \label{cor:LWP}
For any $\fy\in H^1\times L^2$, there exist $\de>0$ and $T>0$ such that for any $(u(0),N(0))\in\B:=\{\psi\in H^1\times L^2\mid \|\psi-\fy\|_{H^{1/2}\times L^2}\le\de\}$, 
there exists a unique solution $(u,N)$ of \eqref{Zak} on $[0,T]$ in $H^1\times L^2$. 
The solution map $(u(0),N(0))\mapsto (u,N)$ is continuous from $\B$ to $C([0,T];H^1\times L^2)$. 
\end{cor}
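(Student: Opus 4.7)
The plan is to derive the corollary by directly combining the two ingredients already in hand: the $H^{1/2}\times L^2$ local wellposedness from \cite{Zak4D1}, and the persistence-of-regularity statement Proposition \ref{prop:persist}. No new analysis should be required.

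First I would apply the $H^{1/2}\times L^2$ local wellposedness of \cite{Zak4D1} to the datum $\fy\in H^1\times L^2\subset H^{1/2}\times L^2$. This gives constants $\de>0$ and $T>0$ such that for every datum in the $H^{1/2}\times L^2$ ball
\EQN{
 \B:=\{\psi\in H^{1/2}\times L^2 \mid \|\psi-\fy\|_{H^{1/2}\times L^2}\le\de\},
}
there exists a unique solution of \eqref{Zak} on $[0,T]$ in $H^{1/2}\times L^2$ in the sense of Definition \ref{def:sol}, with continuous dependence in $C([0,T];H^{1/2}\times L^2)$. Restricting $\B$ to its subset consisting of data that additionally lie in $H^1\times L^2$ gives the ball stated in the corollary. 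Uniqueness in the $H^1\times L^2$ class follows a fortiori from uniqueness in the larger $H^{1/2}\times L^2$ class.

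Next, for existence in $C([0,T];H^1\times L^2)$, I would apply Proposition \ref{prop:persist} to the $H^{1/2}$-solution $(u,N)$ built in the previous step, taking $I\supset[0,T]$ to be the open interval of existence obtained in the first step (possibly shrinking $T$ slightly so that $[0,T]\subset I$). Since $u(0)\in H^1$ by assumption, the proposition yields $(u,N)\in \cZ^1([0,T])\subset C([0,T];H^1\times L^2)$, which is the required energy-space regularity.

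Finally, for continuity of the solution map from $\B$ into $C([0,T];H^1\times L^2)$, I would argue sequentially: given $(u_k(0),N_k(0))\to(u(0),N(0))$ in $H^1\times L^2$, the convergence also holds in $H^{1/2}\times L^2$, so the $H^{1/2}$ wellposedness ensures that the corresponding solutions $(u_k,N_k)$ exist on a common interval containing $[0,T]$ for large $k$ and converge to $(u,N)$ in $C([0,T];H^{1/2}\times L^2)$. The second part of Proposition \ref{prop:persist} then upgrades this convergence to convergence in $\cZ^1([0,T])$, which in particular gives convergence in $C([0,T];H^1\times L^2)$. There is no genuine obstacle here: all the hard work (the subtle radial Strichartz estimate Theorem \ref{thm:Stz}, as well as the normal-form/persistence machinery inherited from \cite{Zak4D1}) has already been absorbed into Proposition \ref{prop:persist}, and the corollary is essentially a bookkeeping combination of these results.
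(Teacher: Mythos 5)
Your proof is correct and takes essentially the same approach as the paper, which states the corollary as an immediate combination of Proposition \ref{prop:persist} with the $H^{1/2}\times L^2$ local wellposedness of \cite{Zak4D1} without spelling out the bookkeeping. Your write-up supplies exactly that bookkeeping, including the (correctly noted) minor point that one should take $T$ slightly inside the $H^{1/2}$ existence interval so that $[0,T]$ is a compact subinterval of the open interval required by Proposition \ref{prop:persist}.
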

\begin{proof}[Proof of Proposition \ref{prop:persist}]
As mentioned above, \cite[Proposition 5.1]{Zak4D1} implies that, for all $s\in[1/2,1)$, $(u,N)\in \cZ^s(J)$ as well as the continuity of the solution map in this topology. 
So it suffices to prove $u\in C(I;H^1)$, together with the map continuity.  

First, to prove $u\in C(J;H^1)$, we assume in addition that $N_k(0)\in H^{1/2}$ for all $k\in\N$. 
We may assume $0\in J$ with no loss. 
Then the local wellposedness in $H^1\times H^{1/2}$ together with the regularity persistence (cf.~\cite{Zak4D1}) implies that the corresponding solutions exist $(u_k,N_k)\in C(J;H^1\times H^{1/2})$ for large $k$, satisfying  
\EQ{
 \|u_k-u\|_{L^\I_t H^s(J)}+\|N_k-N\|_{L^\I_tL^2(J)}\to 0\ (k\to\I)}
for all $s<1$. Moreover, $(u_k,N_k)$ satisfies the conservation of $E_Z$ and $\|u_k\|_2$. 

Since $N_k\to N$ in $C(J;L^2)$, for any $\e>0$ there exists $j\ge 1$ such that 
\EQ{
 \sup_{k\in\N}\sup_{t\in J}\|P_{>j}N_k(t)\|_2 <\e.}
Let $N_k^0:=P_{>j}N_k$ and $N_k^1:=P_{\le j}N_k$. Then the nonlinear part of energy 
is controlled 
\EQ{ 
 |\LR{N_k||u_k|^2}| \pt\le \|N^0_k\|_2\|u_k\|_4^2+\|N^1_k\|_{8/3}\|u_k\|_{16/5}^2
 \pr\lec \|N^0_k\|_2\|\na u_k\|_2^2 + \|N^1_k\|_{H^{1/2}}\|\na u_k\|_2^{3/2}\|u_k\|_2^{1/2}
 \pr\lec \e\|\na u_k\|_2^2 + j^{1/2}\|N\|_{L^\I_tL^2(J)}\|\na u_k\|_2^{3/2}\|u_k\|_2^{1/2}+o(1),}
as $k\to\I$. Then, if $\e>0$ is chosen small enough, 
\EQ{
 2E_Z(u_k,N_k)
  \pt\ge \|\na u_k\|_2^2+\|N_k\|_2^2/2-|\LR{N_k||u_k|^2}|
 \pr\ge \|\na u_k\|_2^2/2 + \|N_k\|_2^2/2 
 \prQ-C  j^{1/2}\|N\|_{L^\I_tL^2(J)}\|u(0)\|_2^{1/2}\|\na u_k\|_2^{3/2},}
which implies that $u_k$ is bounded in $L^\I(J;H^1)$. 
Since $u_k\to u$ in $C(J;H^s)$, the uniform bound implies the convergence in $C(J;\weak{H^1})$. 
Since $N_k\to N$ in $C(J;L^2)$, the weak convergence of $u_k$ implies $\LR{N_k||u_k|^2}\to\LR{N||u|^2}$ and so  
\EQ{
 E_Z(u(t),N(t))\le E_Z(u(0),N(0))=\lim_{k\to\I}E_Z(u_k(0),N_k(0))} 
at each $t\in J$, 
but if the inequality were strict then solving the equation backward using the same argument starting from that time, would yield a contradiction, since the solution is unique (by \cite{Zak4D1}). Hence the energy must be conserved, which implies the strong continuity in time, namely $u\in C(J;H^1)$. 

Now that we have the regularity persistence in $H^1\times L^2$, repeating the same argument as above without the additional assumption $N_k(0)\in H^{1/2}$, we have $(u_k,N_k)\in C(J;H^1\times L^2)$ and the conservation law. Hence the above argument implies that $u_k\to u$ strongly in $C(J;H^1)$. 
\end{proof}

The first step towards global analysis is to characterize blow-up and scattering by the space-time norms of the solution. 
The following criteria, respectively for blow-up and for scattering, follow essentially from the analysis in \cite{Zak4D1}, though it is not entirely obvious. 
Proofs are given below in this section, for the sake of completeness. 
\begin{prop} \label{prop:bc0}
Let $s\in[1/2,1]$ and $(u,N)$ be a solution of \eqref{Zak} from $t=0$ in $H^s\times L^2$ with the maximal existence time $T^*<\I$. Then $\|u\|_{L^2_t B^{1/2}_{4,2}(0,T^*)}=\I$. 
\end{prop}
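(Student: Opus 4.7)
\emph{Strategy.} I would argue by contradiction. Assume that $A:=\|u\|_{L^2_tB^{1/2}_{4,2}(0,T^*)}<\I$. The case $s>1/2$ reduces to the case $s=1/2$ via Proposition \ref{prop:persist}: the blow-up time of the $H^s$ solution coincides with that of the $H^{1/2}$ solution, so it suffices to produce a uniform bound on $(u(t),N(t))$ in $H^{1/2}\times L^2$ for $t\in[0,T^*)$. Once such a bound is available, the Duhamel formulation of \eqref{eq normal} provides the Cauchy property as $t\to T^*-$, giving a strong limit $(u_*,N_*)\in H^{1/2}\times L^2$; applying the local wellposedness theory of \cite{Zak4D1} at data $(u_*,N_*)$ extends the solution strictly past $T^*$, contradicting maximality.

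\emph{Partition and normal form.} Since $A<\I$, partition $[0,T^*)$ into finitely many consecutive subintervals $I_k=[T_k,T_{k+1}]$, $k=0,\ldots,K-1$, on each of which $\|u\|_{L^2_tB^{1/2}_{4,2}(I_k)}\le\eta$ for a small parameter $\eta>0$ to be fixed. On each $I_k$ I apply the normal form $\Psi_{\io_1,\io_2}$ of Section \ref{ss:NF}, obtaining \eqref{eq normal} with sources $F_{\io_1}(u,N)$ and $G_{\io_2}(u,N)$ consisting only of a low-high remainder plus cubic terms, each enjoying the two-derivative gain \eqref{bil est Om} from the bilinear multipliers $\Om_{\io_1}^\pm$ and $\ti\Om_{\io_2}$, thereby circumventing the derivative loss of the raw coupling $Nu$.

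\emph{Closing the estimate.} I would combine the Strichartz estimate for the Schr\"odinger component in $H^{1/2}$ with the $L^2$-Strichartz for the half-wave propagator, applied to $u-\Om_{\io_1}(N,u)$ and $N-D\ti\Om_{\io_2}(u,\bar u)$ respectively. Writing $Y_k:=\|u\|_{L^\I_tH^{1/2}(I_k)}+\|u\|_{L^2_tB^{1/2}_{4,2}(I_k)}+\|N\|_{L^\I_tL^2(I_k)}$ and $Y_k^*:=\|u(T_k)\|_{H^{1/2}}+\|N(T_k)\|_2$, the goal is a bound of the form
\begin{equation*}
Y_k\le CY_k^*+C\eta Y_k+C\eta^2 Y_k(1+Y_k^2).
\end{equation*}
The low-high remainder $(n,u)_{\LH_{\io_1}}$ in $F_{\io_1}$ is controlled in the dual Strichartz norm by $\|N\|_{L^\I_tL^2}\|u\|_{L^2_tB^{1/2}_{4,2}(I_k)}\le\eta\|N\|_{L^\I_tL^2}$, giving the $\eta Y_k$ term. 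The high-high piece $D(u,\bar u)_{\HH_{\io_2}}$ in $G_{\io_2}$ is bounded in $L^1_tL^2$ by $\|u\|_{L^2_tB^{1/2}_{4,2}(I_k)}^2\le\eta^2$ via the standard Littlewood-Paley trichotomy. The cubic terms $\Om_{\io_1}(D|u|^2,u)$, $\Om_{\io_1}(N,nu)$, $D\ti\Om_{\io_2}(nu,\bar u)$, $D\ti\Om_{\io_2}(u,n\bar u)$ are handled by combining the two-derivative gain with two factors of $u\in L^2_tB^{1/2}_{4,2}$ (giving smallness $\eta^2$) and at most one factor of $N\in L^\I_tL^2$. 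Absorbing the $C\eta Y_k$ term and running a continuity argument on $I_k$ yields $Y_k\le 3CY_k^*$ for $\eta$ small depending on the current size, and iteration over $k=0,\ldots,K-1$ gives the desired uniform bound up to $T^*$.

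\emph{Main obstacle.} The delicate point is simultaneously closing the estimate for $N$ and for $u$ at the critical regularity, and more specifically handling the cubic contributions $\Om_{\io_1}(N,nu)$ in $F_{\io_1}$ and $D\ti\Om_{\io_2}(nu,\bar u)$, $D\ti\Om_{\io_2}(u,n\bar u)$ in $G_{\io_2}$, all of which couple $N\in L^\I_tL^2$ to $u$ without spare derivatives. Making these terms small requires the wave normal form threshold $\io_2$ to be chosen substantially finer than the Schr\"odinger one $\io_1$ so that the frequency cutoff in $\ti\Om_{\io_2}$ produces a bilinear gain strong enough to dominate the $L^2$ wave potential, whereas $\io_1$ must be kept moderate to preserve the smallness coming from the $\eta$ factor in the low-high term of $F_{\io_1}$. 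This asymmetric balance is exactly the situation singled out in Section \ref{ss:NF} as the sole place in the paper where one chooses $\io_1\ne\io_2$, and I would fix parameters in the order $\io_2\ll\io_1\ll\eta\ll 1$, depending on $A$ and the initial data, to close the estimate.
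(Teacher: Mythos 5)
Your overall strategy — contradiction, reduction to $s=1/2$ via persistence, deployment of the normal form $\Psi_{\io_1,\io_2}$, a bootstrap for the $H^{1/2}\times L^2$ norms, recovery of $(u,N)$ from $(\ti u,\ti N)$ by $\Psi_{\io_1,\io_2}^{-1}$, and extension past $T^*$ by local wellposedness — is the same as the paper's, and you correctly identified this as the one place where $\io_1\ne\io_2$ is needed. However, the mechanism you use to obtain smallness is different from the paper's and, as formulated, has a circular gap. You partition $[0,T^*)$ into $K\sim A^2/\eta^2$ subintervals on which $\|u\|_{L^2_tB^{1/2}_{4,2}(I_k)}\le\eta$, and you aim for a per-interval bound $Y_k\le CY_k^*+C\eta Y_k+C\eta^2 Y_k(1+Y_k^2)$ to be closed by choosing $\eta$ small. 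But closing the cubic term needs $\eta^2Y_k^2\ll 1$, and iterating $Y_k\le 3CY_k^*$ with $Y_{k+1}^*\le Y_k$ makes $Y_k^*$ (and hence $Y_k$) grow, in the worst case like $(3C)^k$, across the $K$ intervals. You concede this by writing that $\eta$ must be ``small depending on the current size,'' but the current size itself depends on $K$ and hence on $\eta$ — the parameters cannot be fixed consistently. This is precisely the usual obstruction to an interval-splitting argument at critical regularity.

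The paper avoids this entirely: it does a \emph{single} continuity argument on $(0,T)$, never partitions, and extracts smallness not from the per-interval Strichartz norm but from the normal-form thresholds, which are chosen once and for all in terms of the \emph{global} quantity $M_0:=\|u\|_{L^2_tB^{1/2}_{4,2}(0,T^*)}$: explicitly $\io_1=\be^{-3/2}M_0^{-4}$, $\io_2=\io_1^2=\be^{-3}M_0^{-8}$, with the a priori targets $M_1=\be^3M_0^9\gtrsim\|u(0)\|_{H^{1/2}}$ and $M_2=M_1^{2/3}=\be^2M_0^6\gtrsim\|N(0)\|_2$. The powers of $\io_1,\io_2$ in \eqref{Duh est u}--\eqref{Duh est N} then beat the powers of $M_0,M_1,M_2$, producing uniform improvement factors $\be^{-1/4}$ and $\be^{-1/2}$, and the bootstrap closes on the whole interval simultaneously, with the bounds on $\|u\|_{L^\infty H^{1/2}}$ and $\|N\|_{L^\infty L^2}$ capped at $2M_1$, $2M_2$ throughout. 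In particular the choice $\io_2=\io_1^2$ is not a vague ``$\io_2\ll\io_1$'' but is forced by balancing the exponent of $\io_2$ against $M_0$ in the $N$-equation, and the rationale you give for $\io_2\ll\io_1$ (that $\ti\Om_{\io_2}$ must ``dominate the $L^2$ wave potential'') is not quite the one in play. To repair your proposal, you would either have to switch to the paper's global choice of $\io_1,\io_2$ in terms of $M_0$ (in which case the partitioning becomes unnecessary), or show carefully that the $N$-level quantities do not actually grow geometrically across intervals, which is delicate because the normal-form correction $D\ti\Om_{\io_2}(u,\bar u)$ contributes a term quadratic in $\|u\|_{L^\infty H^{1/2}}$ to $\|N\|_{L^\infty L^2}$.
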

Since the existence time is common among all $s\in[1/2,1]$, the above statement is the strongest in the case $s=1/2$. 
We are not able to conclude blow-up of the norm $\|u(t)\|_{H^s}+\|N(t)\|_2$ because of possibility of blow-up by concentration.  
For the scattering, we have the following criteria. 
\begin{prop} \label{prop:sc}
Let $s\in[1/2,1]$ and $(u,N)$ be a solution of \eqref{Zak} on $[0,\I)$ in $H^s\times L^2$. Then the following conditions are equivalent. 
\begin{enumerate}
\item \label{it:sc} $(u,N)$ scatters in $H^s\times L^2$, i.e., \eqref{def scat}. 
\item \label{it:u-St} $u\in L^2((0,\I);B^{1/2}_{4,2})$. 
\item \label{it:Y1} $(u,N)\in Z^{1/2}_0(0,\I)$ and $\|N\|_{Y_1(T,\I)}\to 0$ as $T\to\I$.
\end{enumerate}
Moreover, they imply $(u,N)\in\cZ^s([0,\I))$. 
\end{prop}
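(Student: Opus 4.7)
The plan is to close the cycle $(\ref{it:sc}) \Rightarrow (\ref{it:Y1}) \Rightarrow (\ref{it:u-St}) \Rightarrow (\ref{it:sc})$ and then extract the $\cZ^s$ regularity from the Strichartz control produced along the way. The implication $(\ref{it:sc}) \Rightarrow (\ref{it:Y1})$ is essentially a restatement of Lemma \ref{lem:scdec}: scattering keeps $(u,N)$ uniformly bounded in $H^{1/2} \times L^2$ by unitarity of the free propagators (hence $(u,N)\in Z^{1/2}_0$), while the $Y_1$-decay of $N$ is part of that lemma's conclusion. For $(\ref{it:Y1}) \Rightarrow (\ref{it:u-St})$ I would run a Strichartz bootstrap on $[T,\I)$ with $T$ chosen so large that $\|N\|_{Y_1(T,\I)}\le\e$ for a small $\e>0$. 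Writing $N=N_1+N_2$ with $\|N_1\|_{L^\I_t L^2(T,\I)}+\|N_2\|_{L^2_t L^4(T,\I)}\le\e$, the low-high part of $Nu$ admits the H\"older bound $\|N_1\|_{L^\I_t L^2}\|u\|_{L^2_t B^{1/2}_{4,2}}+\|N_2\|_{L^2_t L^4}\|u\|_{L^\I_t H^{1/2}}$, the first term absorbable into the Strichartz left side and the second at most $\e$ times the $H^{1/2}$-bound supplied by (\ref{it:Y1}). The high-low part would lose a half derivative under Leibniz since $N$ is only $L^2$-regular in $x$; the remedy is the normal form substitution $u\mapsto u-\Om_\io(N,u)$ of Section \ref{ss:NF}, which converts it into a bilinear operator with two derivatives gain via \eqref{bil est Om}, exactly as in \cite{Zak4D1}. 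A bootstrap then closes $\|u\|_{L^2_t B^{1/2}_{4,2}([T,T'])}$ uniformly in $T'$; composing with local wellposedness on $[0,T]$ gives (\ref{it:u-St}).

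For $(\ref{it:u-St}) \Rightarrow (\ref{it:sc})$, assume $u\in L^2_t B^{1/2}_{4,2}([0,\I))$. The scattering of $u$ in $H^{1/2}$ reduces via Duhamel and Strichartz duality to $\|Nu\|_{L^2_t B^{1/2}_{4/3,2}(t,t')}\to 0$ as $t,t'\to\I$; the same LH/HL split as above works, the smallness now coming from $\|u\|_{L^2_t B^{1/2}_{4,2}(t,t')}\to 0$. The scattering of $N$ in $L^2$ reduces to convergence of $\int_t^{t'} e^{-i\tau D}D|u|^2\,d\tau$ in $L^2$; for this I apply the dual normal form $\ti\Om_\io$ of Section \ref{ss:NF}, so that $N-D\ti\Om_\io(u,\ba u)$ satisfies the wave equation with a right-hand side $G_\io(u,N)$ whose high-high and $\ti\Om_\io$-Duhamel pieces are integrable in a Strichartz-dual sense under $u\in L^2_t B^{1/2}_{4,2}\cap L^\I_t H^{1/2}$, while $D\ti\Om_\io(u,\ba u)(t)\to 0$ in $L^2$ by the two-derivative gain and $u\in L^2_t B^{1/2}_{4,2}$. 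Persistence of regularity (Proposition \ref{prop:persist}) lifts $H^{1/2}$-scattering to $H^s$-scattering, and also supplies $(u,N)\in\cZ^s([0,\I))$: the $C([0,\I);H^s\times L^2)\cap Z^s_0$ part is the hypothesis together with the uniform scattering bound, and the Besov part of $\cZ^s$ is obtained by rerunning the previous bootstrap at each $s'<1$ with $s'\le s$ in place of $s'=1/2$.

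The main obstacle throughout is the endpoint derivative loss in $Nu$ when $N$ is merely $L^2$ in $x$, which plain Leibniz cannot resolve. The normal form apparatus of Section \ref{ss:NF} is tailor-made to convert the offending high-low interaction into a two-derivative-smoother bilinear remainder, after which all estimates follow familiar Strichartz-bootstrap lines; in effect the whole argument is an adaptation of the small-data scattering of \cite{Zak4D1} to a time interval where $\|N\|_{Y_1}$ plays the role of the smallness parameter.
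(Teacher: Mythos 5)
Your cycle $(1)\Rightarrow(3)\Rightarrow(2)\Rightarrow(1)$ and the mechanisms you invoke (normal form to kill the high--low derivative loss, Strichartz bootstrap with smallness supplied by either the $Y_1$-decay of $N$ or the $L^2_tB^{1/2}_{4,2}$ tail of $u$) are indeed the paper's mechanisms. But there are two genuine gaps, one in the step $(2)\Rightarrow(1)$ and one in the endpoint $s=1$.

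In $(2)\Rightarrow(1)$ you ``reuse the LH/HL split as above,'' but the split you set up in $(3)\Rightarrow(2)$ relies crucially on the uniform $L^\infty_t(H^{1/2}\times L^2)$ bound, which there was part of hypothesis~(3). Under hypothesis~(2) alone you only know $u\in L^2_tB^{1/2}_{4,2}(0,\infty)$, and there is no a priori reason why $\|N(t)\|_2$ or $\|u(t)\|_{H^{1/2}}$ should stay bounded as $t\to\infty$. This is exactly what the paper's Lemma~\ref{lem:Hbd} supplies (and proving that lemma itself requires running a normal-form bootstrap in $t$ on the quantities $\sup_t\|u\|_{H^{1/2}}$ and $\sup_t\|N\|_2$, controlled polynomially in the Strichartz norm). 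Without an analogue of Lemma~\ref{lem:Hbd}, the coefficients $\|N\|_{L^\infty_tL^2}$ and $\|u\|_{L^\infty_tH^{1/2}}$ in your bilinear estimates are uncontrolled and the bootstrap cannot close. You should derive this a priori bound as the very first step of $(2)\Rightarrow(1)$.

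The other gap is at $s=1$. You claim that ``persistence of regularity (Proposition~\ref{prop:persist}) lifts $H^{1/2}$-scattering to $H^s$-scattering.'' Proposition~\ref{prop:persist} is persistence of regularity for the \emph{Cauchy} problem on compact intervals and says nothing about scattering at $t=+\infty$. For $s<1$ the lift works because the Strichartz/Duhamel estimates \eqref{Duh est sum} operate at exponent $s$ (and conditions (2), (3) are $s$-independent), so one simply reruns the bootstrap at regularity $s$. But at $s=1$ the endpoint estimates logarithmically fail; the paper cannot run a direct bootstrap there and instead upgrades to the energy space via the weak-limit argument in Lemma~\ref{lem:scexist} and the uniqueness of the wave operator (Proposition~\ref{prop:wo} for $s=1/2$). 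Concretely: from the $s<1$ case one extracts a scattering profile $(\varphi,\psi)$, an energy argument (as in the proof of Lemma~\ref{lem:scexist}) shows the solution stays bounded in $H^1\times L^2$ so $(\varphi,\psi)\in H^1\times L^2$, Lemma~\ref{lem:scexist} then produces an $H^1\times L^2$-scattering solution with that profile, and uniqueness identifies it with $(u,N)$. Your proposal needs this argument, or some replacement for it, to cover the $s=1$ endpoint.
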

To prove the scattering in the energy space by the weak convergence argument as in \cite{Zak4D1}, we need the wave operator for larger data, which has independent interest to characterize the global dynamics.  
\begin{prop} \label{prop:wo}
For any $s\in[1/2,1]$ and for any $\fy\in H^s\times L^2$, there exist $\de>0$ and $T>0$ such that for any $(u_+,N_+)\in\B:=\{\psi\in H^s\times L^2 \mid \|\psi-\fy\|_{H^{1/2}\times L^2}\le\de\}$, there exists a unique solution $(u,N)$ of \eqref{Zak} on $[T,\I)$ scattering in $H^s\times L^2$ with the scattering profile $(u_+,N_+)$. 
Moreover, the solution map $(u_+,N_+)\mapsto(u,N)$ is continuous from $\B$ to $\cZ^s([T,\I))$. 
\end{prop}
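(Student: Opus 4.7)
The plan is a contraction mapping argument on the Duhamel form of \eqref{Zak} posed at $t=+\I$, closed in $\cZ^{1/2}([T,\I))$, followed by a regularity upgrade to $s\in(1/2,1)$ and a separate weak-compactness argument to reach $s=1$. A pair $(u,N)$ on $[T,\I)$ scattering to $(u_+,N_+)$ in $H^s\times L^2$ is equivalent, via Lemma \ref{lem:scdec} and unitarity of $\Uf$, to a fixed point of
\EQN{
\Phi(u,N)(t) := \Uf(t)(u_+,N_+) - i\int_t^\I \Uf(t-\ta)\bigl(\re N\cdot u,\ D|u|^2\bigr)(\ta)\,d\ta.
}
First I would choose $T=T(\fy)$ large enough that $\Uf(\cdot)\fy$ is arbitrarily small in the norm $Z^{1/2}_1([T,\I))$ from Lemma \ref{lem:scdec}, so that for every $(u_+,N_+)\in\B$ the triangle inequality yields $\|\Uf(\cdot)(u_+,N_+)\|_{Z^{1/2}_1([T,\I))}\le o_T(1)+C\de$, which can be made arbitrarily small by taking $T$ large and $\de$ small.

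Next I would close $\Phi$ as a strict contraction on the complete metric space
\EQN{
\cY := \Br{(u,N)\in\cZ^{1/2}([T,\I))\ \middle|\ \|(u,N)-\Uf(\cdot)(u_+,N_+)\|_{\cZ^{1/2}([T,\I))}\le R}
}
for $R$ small, using the nonlinear estimates already developed in \cite{Zak4D1} for the small-data theory: the endpoint Strichartz pair $L^2_tB^{1/2}_{4,2}$ and $L^2_tB^{-1/2}_{4/3,2}$ for the Schr\"odinger part, coupled with the decomposition $Y_1=L^\I_tL^2+L^2_tL^4$ on the wave side, give product bounds on $\re N\cdot u$ and $D|u|^2$ linear in the small $\cZ^{1/2}$-norm of the iterate. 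The same estimates yield Lipschitz dependence of $\Phi$ on $(u_+,N_+)$ in the $H^{1/2}\times L^2$-metric, hence continuity of the solution map into $\cZ^{1/2}([T,\I))$, and uniqueness within Definition \ref{def:sol} follows since any two candidates coincide on $[T',\I)$ for $T'\gg T$ by the contraction, and $H^{1/2}$ local wellposedness of \cite{Zak4D1} propagates equality backward to $t=T$. Running the same contraction in $\cZ^s([T,\I))$ for $s\in(1/2,1)$ transfers the extra regularity of the profile to the solution without new loss, because the $\cZ^{1/2}$-smallness absorbs the additional $s-1/2$ derivatives in the product estimates.

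The main obstacle is the endpoint $s=1$, where a direct contraction at the $\cZ^1$ level fails by the logarithmic breakdown of the endpoint Strichartz with a derivative on $\R^4$ flagged in the introduction. To handle it I would adapt the weak-compactness device used in \cite{Zak4D1}: approximate $(u_+,N_+)\in H^1\times L^2$ by profiles $(u_+^k,N_+^k)$ at an intermediate regularity $s'\in(1/2,1)$, converging to $(u_+,N_+)$ in $H^1\times L^2$; apply the already-established wave operator at level $s'$ to obtain solutions $(u^k,N^k)\in\cZ^{s'}([T,\I))$; extract a weak-$*$ limit $(u,N)\in L^\I_tH^1\times L^\I_tL^2$ using the uniform $H^1\times L^2$-bound coming from the perturbative Duhamel representation; identify the limit as a solution scattering to $(u_+,N_+)$ in $H^1\times L^2$ via the $\cZ^{1/2}$-Lipschitz bound; and finally upgrade to strong continuity in $\cZ^1$ by applying the energy-space local wellposedness Corollary \ref{cor:LWP} and Proposition \ref{prop:persist} backward from $t=T$. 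Continuity of $(u_+,N_+)\mapsto(u,N)$ in $\cZ^1$ then follows from the same approximation combined with the $\cZ^{1/2}$-Lipschitz bound and Proposition \ref{prop:persist}.
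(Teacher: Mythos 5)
Your contraction is posed for the raw Duhamel operator $\Phi$, not the normal-form transformed one, and that is where the argument breaks. Your $\Phi$ asks you to close $\|Nu\|_{L^2_tB^{-1/2}_{4/3,2}+L^1_tH^{1/2}}\lec\|N\|_{Y_1}\|u\|_{L^2_tB^{1/2}_{4,2}}$, but the high-low piece of the product $N_ju_k$ with $k\ll j$ outputs at frequency $j$ and carries the full $j^{1/2}$ weight onto $N_j\in L^2$, costing a half-derivative on $N$ that you do not have. This is exactly the derivative loss the paper repeatedly points to as the reason the normal form is indispensable: in \eqref{uN est5} the product estimate is proved only for the low-high part $(n,u)_{\LH_\io}$, and the dangerous $\HL_\io$ part is removed by the change of variable $\Psi_\io$. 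The paper's fixed-point map is therefore $\Phi_\io$ of \eqref{def Phi}, namely $\Uf(t)(u_+,N_+)+\vec\Om_\io(u,N)+\D_\io^\I(u,N)$, whose fixed point means $\Psi_\io(u,N)$ scatters to $(u_+,N_+)$, and one then invokes Lemma \ref{lem:normal inv} to pass from the normal-form variable back to $(u,N)$. The same comment applies on the wave side, where $D|u|^2$ must be split via $\HH_\io$ and $\ti\Om_\io$ to avoid the derivative on the quadratic term. Without this, your contraction in $\cZ^{1/2}$ does not close, and the ``estimates from \cite{Zak4D1}'' you invoke are in fact estimates for the transformed nonlinearities $F_\io,G_\io$, not for $(\re N\cdot u, D|u|^2)$. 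So the fix is to replace $\Phi$ by $\Phi_\io$, set up the contraction in the set \eqref{contraction set} using \eqref{Duh est sum}--\eqref{Duh est sum dif}, and then undo the normal form at the end.

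For $s=1$, your weak-compactness device is in the same family as the paper's Lemma \ref{lem:scexist} but differs in detail: you mollify the profile at regularity $s'<1$ and pass to a weak limit, while the paper instead solves the Cauchy problem backward from finite times $T_k\to\I$ with data $\Uf(T_k)(\fy_k,\psi_k)$ and shows the resulting solutions converge. Either can be made to work, but your sketch leaves the two most delicate points untouched: (i) establishing the \emph{uniform in $k$ and $t$} bound in $H^1\times L^2$ (the paper gets this from conservation of $E_Z$ along the approximating solutions together with a smallness argument to make the cubic term perturbative, not merely from the ``perturbative Duhamel representation'' at level $s'$); and (ii) upgrading the weak limit to strong $H^1$-continuity, which the paper achieves by matching the liminf and limsup of the conserved energy. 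You should also be careful not to invoke Corollary \ref{cor:perwo} in this step, since in the paper it is a consequence of Proposition \ref{prop:wo} and using it here would be circular.
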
 
A proof is given below in this section. 
As an immediate consequence of this and Proposition \ref{prop:sc}, we obtain persistence of regularity for scattering: 
\begin{cor} \label{cor:perwo}
Let $(u,N)$ be a solution of \eqref{Zak} on $[0,\I)$ scattering in $H^{1/2}\times L^2$. 
If either the initial data or the scattering profile belongs to $H^s\times L^2$ for some $s\in(1/2,1]$, then $(u,N)$ scatters also in $H^s\times L^2$ with the same profile. 
\end{cor}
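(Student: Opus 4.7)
The plan is to reduce both hypotheses to Proposition \ref{prop:sc} applied at regularity level $s$. The key observation is that the scattering criterion (item \eqref{it:u-St}) in that proposition---namely $u \in L^2((0,\infty); B^{1/2}_{4,2})$---is independent of $s$, and it is already granted by the $H^{1/2}\times L^2$ scattering hypothesis together with Proposition \ref{prop:sc} applied at $s=1/2$. Therefore the only thing to establish is that $(u,N) \in C([0,\infty); H^s \times L^2)$; once this holds, Proposition \ref{prop:sc} at level $s$ directly yields $H^s\times L^2$ scattering. The scattering profile in $H^s\times L^2$ must agree with the $H^{1/2}\times L^2$ profile by uniqueness of limits in the weaker topology.

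First suppose the initial data $(u(0),N(0))\in H^s\times L^2$. The solution $(u,N)$ is global on $[0,\infty)$ in $H^{1/2}\times L^2$ because it scatters there. Applying persistence of regularity (Proposition \ref{prop:persist} for $s=1$, and the analogous result from \cite{Zak4D1} for $s\in[1/2,1)$) on each compact subinterval $[0,T]$, we obtain $(u,N) \in \cZ^s([0,T])$, and letting $T\to\infty$ gives $(u,N) \in C([0,\infty); H^s\times L^2)$. Proposition \ref{prop:sc} finishes this case.

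Now suppose instead that the scattering profile $(u_+,N_+) \in H^s\times L^2$. Apply Proposition \ref{prop:wo} with $\fy = (u_+,N_+)$, which belongs trivially to the ball $\B$, to produce $T_s>0$ and an $H^s$-scattering solution $(\ti u,\ti N) \in \cZ^s([T_s,\infty))$ with profile $(u_+,N_+)$. Since $(\ti u,\ti N)$ also scatters in $H^{1/2}\times L^2$ to $(u_+,N_+)$, as does $(u,N)|_{[T_s,\infty)}$, apply Proposition \ref{prop:wo} again at level $s=1/2$ with the same $\fy$ (now viewed in $H^{1/2}\times L^2$) to obtain some $T_{1/2}\ge T_s$ on which both solutions are the unique $H^{1/2}$-scattering solution to $\fy$; hence $(u,N)=(\ti u,\ti N)$ on $[T_{1/2},\infty)$, and by local uniqueness of the $H^{1/2}$-IVP from \cite{Zak4D1} this equality propagates backward to $[T_s,T_{1/2}]$. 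In particular $(u(T_s),N(T_s))\in H^s\times L^2$, and running persistence of regularity backward from $T_s$ (legitimate by the time-reversal symmetry $(u,N)(t)\mapsto(\ba u,\ba N)(-t)$ of the Zakharov system) yields $(u,N)\in C([0,T_s]; H^s\times L^2)$. Concatenating with $[T_s,\infty)$ gives the required global regularity, and Proposition \ref{prop:sc} finishes the proof.

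The main obstacle will be the uniqueness step in the second case: one must coordinate two instances of Proposition \ref{prop:wo} at different regularity levels and bridge them via the $H^{1/2}$-IVP uniqueness, all while keeping the relevant time parameters consistent. A secondary but standard verification is that persistence of regularity is available backward in time; this follows from the time-reversal symmetry of the Zakharov system.
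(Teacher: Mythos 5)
Your proof is correct and follows essentially the same two-pronged route as the paper: persistence of regularity plus Proposition~\ref{prop:sc} when the initial data is in $H^s$, and Proposition~\ref{prop:wo} plus $H^{1/2}\times L^2$ uniqueness when the profile is in $H^s$. The only substantive remark is that your second case is slightly circuitous --- once you have identified $(u,N)$ with the $H^s$-scattering solution $(\ti u,\ti N)$ on $[T_s,\I)$ via uniqueness in $H^{1/2}\times L^2$, the $H^s\times L^2$ scattering of $(u,N)$ is already established, and the subsequent backward propagation of regularity to $t=0$ followed by a re-application of Proposition~\ref{prop:sc} is unnecessary (though not incorrect).
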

\begin{proof}
If the profile is in $H^s\times L^2$, then Proposition \ref{prop:wo} yields a solution scattering in $H^s\times L^2$, which must be the same $(u,N)$ by the uniqueness in $H^{1/2}\times L^2$. 
If $u(0)\in H^s$, then $(u,N)$ is a solution in $H^s\times L^2$ by persistence of regularity for the Cauchy problem. Since the conditions \eqref{it:u-St} and \eqref{it:Y1} of Proposition \ref{prop:sc} are independent of $s$, we conclude the scattering in $H^s\times L^2$. 
\end{proof}

Before proving the above results (Propositions \ref{prop:bc0}, \ref{prop:sc} and \ref{prop:wo}), we recall some standard product estimates, including the bilinear operators. They are essentially the same as those used in \cite{Zak4D1}. 
In the following estimates, the implicit constants depend (continuously) on the exponent $s$. 

First, for $(j,k)\in\HL_\io$, $-2 \le b \le a \le 2$, and $a-b-2 \le c \le 2$, we have 
\EQ{ \label{Om est0}
 \pt\|\Om_\io(f_j,g_k)\|_{2(a)}+\|\ti\Om_\io(f_j,g_k)\|_{2(a)}+\|\ti\Om_\io(g_k,f_j)\|_{2(a)} 
 \prq\lec j^{-2}\|f_j\|_{2(b)}\|g_k\|_{2(-2+a-b)}
 \pn\lec j^{-2}k^{2-a+b+c}\|f_j\|_{2(b)}\|g_k\|_{2(c)}.}
Choosing $(a,b,c)=(0,0,-s)$ yields, after the summation over $\HL_\io$, 
\EQ{ \label{uN est1}
 -2\le s<2 \implies \|\Om_\io(f,g)\|_{H^s} \lec \io^{2-s}\|f\|_2\|g\|_{2(-s)}. }
Replacing $s$ with $s+1$ and using $H^{s+1}\subset B^s_{4,2}\subset L^{2(-1-s)}$, we obtain 
\EQ{ \label{uN est2}
 0\le s<1 \implies \|\Om_\io(f,g)\|_{B^s_{4,2}} \lec \io^{1-s}\|f\|_2\|g\|_{B^s_{4,2}}.}
Similarly, choosing $(a,b,c)=(1,-\de,1-s)$ in \eqref{Om est0}, we obtain 
\EQ{ \label{uN est3}
 0\le \de<2-s\le 3 \implies \|\Om_\io(f,gh)\|_{B^s_{4/3,2}} \pt\lec \io^{2-s-\de}\|f\|_{\dot B^{-\de}_{2(-\de),\I}}\|gh\|_{4/3(-s)}
 \pr\lec \io^{2-s}\|f\|_{\dot B^{-\de}_{2(-\de),\I}} \|g\|_2\|h\|_{4(-s)}
 \pr\lec \io^{2-s}\|f\|_2\|g\|_2\|h\|_{4(-s)},}
and with $b=a-1/2-s$ and $c=-1/2$, 
\EQ{ \label{uN est4}
 \pt -1/2\le s<1,\ -1/2\le a\le 2  
 \pr\implies\|\Om_\io(D(fg),h)\|_{B^s_{2(a),2}} \lec \io^{1-s}\|fg\|_{2(a-1/2-s)}\|h\|_{2(-1/2)}
 \prQ\pQ\lec \io^{1-s}\|f\|_{4(a-1/2)}\|g\|_{4(-s)}\|h\|_{2(-1/2)}.}

For the terms in the equation of $N$, choosing $(a,b,c)=(1,0,-s)$ in \eqref{Om est0} yields 
\EQ{ \label{uu est1}
 -2\le s<1 \implies \|D\ti\Om_\io(f,g)\|_{B^{2s}_{4/3,2}} \lec \io^{1-s}(\|f\|_{\dot H^s}\|g\|_{2(-s)}+\|g\|_{\dot H^s}\|f\|_{2(-s)}),}
while $(a,b,c)=(0,0,-s)$ yield
\EQ{ \label{uu est4}
 -2\le s<2 \implies \|D\ti\Om_\io(f,g)\|_2 \lec \io^{2-s}(\|f\|_{\dot H^s}\|g\|_{2(-s)}+\|g\|_{\dot H^s}\|f\|_{2(-s)}),}
and $(a,b,c)=(0,-s,-s)$ yields
\EQ{ \label{uu est4.1}
 0\le s<1 \implies \|D\ti\Om_\io(f,g)\|_{2} \lec \io^{2-2s}\|f\|_{2(-s)}\|g\|_{2(-s)}.}
Similarly, using 
\EQ{
 \CAS{(j,k)\in\HL_\io \implies \|D\ti\Om_\io(f_j,g_k)\|_2 \lec j^{-1+s}k^{1-s}\|f_j\|_{2(s)}\|g_k\|_{4(-s)},\\
 (k,j)\in\HL_\io \implies \|D\ti\Om_\io(f_j,g_k)\|_2 \lec jk^{-1}\|f_j\|_{2(s)}\|g_k\|_{4(-s)},}}
and the same estimates on $D\ti\Om_\io(g_k,f_j)$, we obtain for $0\le s<-1$, 
\EQ{ \label{uu est2}
 \|D\ti\Om_\io(fg,h)\|_2 + \|D\ti\Om_\io(h,fg)\|_2 \lec \io^{1-s}\|f\|_2\|g\|_{4(s-1)}\|h\|_{4(-s)}.}

For the remainder of the normal form, we have 
\EQ{ \label{uN est5}
 s>0,\ 0\le\te\le 1\implies \|(f,g)_{\LH_\io}\|_{B^s_{2(\te),2}} \lec \io^{-s}\|f\|_{4(\te)}\|g\|_{B^s_{4,2}}.}
\begin{proof}
By definition of $\LH_\io$, the bilinear term is decomposed in the frequency
\EQ{
 (f,g)_{\LH_\io}=(fg)_{<1/\io} + (f_{<2/\io}g_{<2})_{\ge 1/\io} + \sum_{k\in 2^\N}(f_{<k/\io} g_k)_{\ge 1/\io}.}
The first term on the right is simply bounded by H\"older and $B^0_{2(\te),2}\supset L^{2(\te)}$ 
\EQ{
 \|(fg)_{<1/\io}\|_{B^s_{2(\te),2}} \lec \io^{-s}\|fg\|_{2(\te)} \lec \io^{-s}\|f\|_{4(\te)}\|g\|_4
 \lec \io^{-s}\|f\|_{4(\te)}\|g\|_{B^s_{4,2}}.}
The second term is bounded in the same way, as its frequency is also bounded by $4/\io$. 
The third term is further decomposed into $(f_jg_k)_l$. The sum over $j\lec k\sim l$ and that over $l\lec j\sim k$ are estimated in $B^s_{2(\te),2}$ respectively by 
\EQ{
 \pt \squm_{l\ge 1/\io} \sum_{k\sim l} k^s \|f_{\lec k}\|_{4(\te)}\|g_k\|_4
 \lec \|f\|_{4(\te)}\|g\|_{B^s_{4,2}},
 \pr \squm_{l\ge 1/\io} \sum_{j\sim k \gec l} (l/k)^s\|f_j\|_{4(\te)}\|g_k\|_{B^s_4} \lec \|f\|_{B^0_{4(\te),\I}}\|g\|_{B^s_{4,2}},}
where Young's inequality is used in the second case. Similarly, the remaining part $1/\io\le l\sim j>k>\io j$ is bounded by 
\EQ{
 \squm_{l\ge 1/\io} \sum_{l\sim j>k>\io j} (l/k)^s\|f_j\|_{4(\te)}\|g_k\|_{B^s_4} \lec \io^{-s}\|f\|_{B^0_{4(\te),\I}}\|g\|_{B^s_{4,2}}.}
The proof is concluded with $B^0_{4(\te),\I}\supset L^{4(\te)}$. 
\end{proof}

Similarly, for the remainder in the equation of $N$, we have 
\EQ{ \label{uu est3}
1/2\le s<1 \implies \|D(f,g)_{\HH_\io}\|_2 \lec \io^{s-1}[\|f\|_{\dot B^s_{4,2}}\|g\|_{B^{1-s}_{4,\I}}+\|f\|_{B^{1-s}_{4,\I}}\|g\|_{\dot B^s_{4,2}}].}
\begin{proof}
Decomposing the bilinear term into $D(f_jg_k)_l$, the definition of $\HH_\io$ implies that either $j\le k<\max(2,j)/\io$ or $k<j<\max(2,k)/\io$. Since we have 
\EQ{
 \pt \squm_l \sum_{j\sim k\gec l}\|D(f_jg_k)_l\|_2 \lec \squm_l \sum_{j\sim k\gec l} l/j \|f_j\|_{\dot B^{1-s}_4}\|g_k\|_{\dot B^{s}_4} \lec \|f\|_{\dot B^{1-s}_{4,\I}} \|g\|_{\dot B^{s}_{4,2}},}
we may restrict to the region $j\ll k\sim l\lec\LR{k}/\io$ or $k\ll j\sim l\lec\LR{j}/\io$. 
By symmetry, it suffices to consider the former case. 
For $j\ge 2$, it is bounded by 
\EQ{
 \squm_l \sum_{l\sim k>j>\io k} (l/j)^{1-s}\|f_j\|_{\dot B^{1-s}_4}\|g_k\|_{\dot B^s_4} \lec \|f\|_{\dot B^{1-s}_{4,\I}} \|g\|_{\dot B^s_{4,2}}. }
For $j<2$, it is bounded by 
\EQ{
 \squm_{l\lec 1/\io}\sum_{k\sim l} \|D((f_{\ll k})_{<2}g_k)_l\|_2
 \lec \squm_{l\lec 1/\io} l^{1-s} \|f\|_4\|g_l\|_{\dot B^s_4} \lec \|f\|_4\|g\|_{\dot B^s_{4,2}}.}
Thus, using $B^{1-s}_{4,\I}=\dot B^{1-s}_{4,\I}\cap L^4$, we obtain the desired estimate.  
\end{proof}

The above estimates on the normal form imply its continuity and invertibility. 
\begin{lem} \label{lem:normal inv}
For any $s\in[1/2,2)$ and any $R\in(0,\I)$, let 
\EQ{
 \B_R^s:=\{(\fy,\psi)\in H^s\times L^2 \mid \|\fy\|_{2(-1/2)}+\|\psi\|_2\le R\}.}
Then $\Psi_{\io_1,\io_2}$ defined by \eqref{def normal} is a Lipschitz map $\B_R^s\to H^s\times L^2$. 
If $\io_j^{1-s}R\ll 1$ for $j=1,2$, then $\Psi_{\io_1,\io_2}(\B_R^s)\subset\B_{2R}^s$, and there is a unique inverse $\Psi_{\io_1,\io_2}^{-1}:\B_R^s\to\B_{2R}^s$, which is also Lipschitz. Moreover, if $(\fy',\psi')=\Psi_{\io_1,\io_2}(\fy,\psi)$ and either $(\fy,\psi)$ or $(\fy',\psi')$ is in $\B_R^s$, 
\EQ{ \label{normal equi}
 \pt \|\fy-\fy'\|_{H^s}+\|\psi-\psi'\|_2 \ll \|\fy\|_{2(-s)} \sim \|\fy'\|_{2(-s)},}
and for $s<1$ we have in addition 
\EQ{ \label{normal equi2}
 \|\fy\|_{B^s_{4,2}} \sim \|\fy'\|_{B^s_{4,2}}.}
In particular, if $(u'(t),N'(t))=\Psi_{\io_1,\io_2}(u(t),N(t))$ with either $(u,N)$ or $(u',N')$ is in $\B_R^s$ for large $t$, then the scattering of $(u,N)$ in $H^s\times L^2$ is equivalent to that of $(u',N')$, with the same profile. 
\end{lem}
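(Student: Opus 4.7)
The plan is a perturbation-plus-contraction argument exploiting that $\vec\Om_{\io_1,\io_2}$ is bilinear with a two-derivative gain, so that $\Psi_{\io_1,\io_2}$ is a small perturbation of the identity when $\io_j^{1-s}R\ll 1$. The essential ingredients are already collected in the excerpt: \eqref{uN est1} gives $\|\Om_{\io_1}(\psi,\fy)\|_{H^s}\lec\io_1^{2-s}\|\psi\|_2\|\fy\|_{2(-s)}$; \eqref{uu est4.1} (for $s\in[1/2,1)$) and \eqref{uu est4} (for $s\in[1,2)$) give analogous $L^2$ bounds for $D\ti\Om_{\io_2}(\fy,\bar\fy)$; and \eqref{uN est2} gives the $B^s_{4,2}$ counterpart needed in \eqref{normal equi2}.

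First I would verify $\Psi_{\io_1,\io_2}(\B_R^s)\subset\B_{2R}^s$ by specializing to $s=1/2$: Sobolev $H^{1/2}\subset L^{2(-1/2)}$ together with the above bounds yields $\|\Om_{\io_1}(\psi,\fy)\|_{2(-1/2)}+\|D\ti\Om_{\io_2}(\fy,\bar\fy)\|_2\lec(\io_1^{3/2}+\io_2)R^2$, and for $\io_j\le 1$ and $s\ge 1/2$ the factors $\io_1^{3/2}R$ and $\io_2 R$ are both dominated by $\io_j^{1-s}R\ll 1$, so the corrections are $\le R$. Lipschitz continuity of $\Psi_{\io_1,\io_2}:\B_R^s\to H^s\times L^2$ then follows from bilinearity of $\Om_{\io_1}$ and $\ti\Om_{\io_2}$ together with the same estimates, the Lipschitz constant depending on $R$, $\io_j$, and an $H^s$-bound on the inputs.

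For the inverse, given $(\fy',\psi')\in\B_R^s$, the equation $\Psi_{\io_1,\io_2}(\fy,\psi)=(\fy',\psi')$ is the fixed-point problem
\begin{equation*}
\fy=\fy'+\Om_{\io_1}(\psi,\fy),\qquad \psi=\psi'+D\ti\Om_{\io_2}(\fy,\bar\fy),
\end{equation*}
which I would solve by Banach's contraction principle in the closed subset of $H^s\times L^2$ defined by $\|\fy\|_{2(-1/2)}+\|\psi\|_2\le 2R$ and $\|\fy\|_{H^s}\le 2\|\fy'\|_{H^s}$. The contraction is carried out in the weaker $L^{2(-1/2)}\times L^2$ metric via the $s=1/2$ estimates, while the $H^s$-stability of iterates is propagated via the $H^s$ version of \eqref{uN est1}, which requires $\io^{2-s}R\ll 1$; this holds since $\io^{2-s}R=\io\cdot(\io^{1-s}R)\le\io\ll 1$. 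The Lipschitz dependence of $\Psi_{\io_1,\io_2}^{-1}$ on $(\fy',\psi')$ then follows by standard perturbation of the fixed point.

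The pointwise equivalences \eqref{normal equi} and \eqref{normal equi2} are immediate by applying the bilinear bounds to $\fy-\fy'=\Om_{\io_1}(\psi,\fy)$, $\psi-\psi'=D\ti\Om_{\io_2}(\fy,\bar\fy)$, and (for the Besov estimate) \eqref{uN est2}; a triangle inequality then promotes the differences to the $\sim$-equivalences of the stated norms. For the scattering equivalence I apply \eqref{normal equi} pointwise in $t$: if $(u,N)$ scatters in $H^s\times L^2$ to $(\fy_+,\psi_+)$, then Lemma \ref{lem:scdec} yields $\|u(t)\|_{2(-s)}\to 0$ (valid for $s<2$), hence the bilinear bounds give $\|u-u'\|_{H^s}(t)+\|N-N'\|_2(t)\to 0$; writing $u'-e^{-it\De}\fy_+=(u-e^{-it\De}\fy_+)-\Om_{\io_1}(N,u)$ then shows $(u',N')$ scatters to the same profile, and the converse uses $\Psi_{\io_1,\io_2}^{-1}$. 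The main obstacle is the mismatch between the ball norm $L^{2(-1/2)}+L^2$ and the codomain norm $H^s\times L^2$: \eqref{uN est1} controls the $H^s$ output via $L^{2(-s)}$, which for $s>1/2$ is not bounded by the ball norm alone; the assumption $\io_j^{1-s}R\ll 1$ is precisely what permits $H^s$-bounds to be propagated independently through the fixed-point iteration.
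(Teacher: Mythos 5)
Your proof is correct and takes essentially the same route as the paper's: contraction mapping built from the bilinear normal-form bounds \eqref{uN est1}, \eqref{uu est4.1}/\eqref{uu est4}, and \eqref{uN est2}, followed by the pointwise-in-$t$ comparison and Lemma \ref{lem:scdec} for the scattering equivalence. If anything, your treatment is slightly more careful than the paper's terse proof, since you make explicit that the Banach contraction is carried out in the weak $L^{2(-1/2)}\times L^2$ metric while the $H^s$ bound of the iterates is propagated separately (which is needed because $\B_R^s$ does not bound the $H^s$ norm), a point the paper leaves implicit.
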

\begin{proof}
\eqref{uN est1} and \eqref{uu est4}, together with $H^s\subset L^{2(-s)}$, imply that the map $\vec\Om_{\io_1,\io_2}$ is Lipschitz $\B_{2R}^s\to H^s\times L^2$ for $0\le s<2$ with a Lipschitz constant $\lec\io^{2-s}R$. 
Hence if $\io_j^{2-s}R\ll 1$, then it is a contraction $\vec\Om_{\io_1,\io_2}:\B_{2R}^s\to \B_R^s$. Therefore $\Psi_{\io_1,\io_2}:\B_R^s\to\B_{2R}^s$, and $\Psi_{\io_1,\io_2}^{-1}:\B_R^s\to\B_{2R}^s$ is obtained by the Banach fixed point theorem. Moreover, if $(\fy',\psi')=\Psi_{\io_1,\io_2}(\fy,\psi)$ and either $(\fy,\psi)$ or $(\fy',\psi')$ belongs to $\B_R^s$, then using \eqref{uN est1} and \eqref{uu est4}, we have 
\EQ{ \label{NT diff}
 \pt\|\fy-\fy'\|_{H^s} + \|\psi-\psi'\|_2 = \|\Om_{\io_1}(\psi,\fy)\|_{H^s}+\|D\ti\Om_{\io_2}(\fy,\bar\fy)\|_2
 \prq\lec (\io_1^{2-s}\|\psi\|_2+\io_2^{2-s}\|\fy\|_{H^s})\|\fy\|_{2(-s)} \ll \|\fy\|_{2(-s)},}
hence $\|\fy'\|_{2(-s)}\sim\|\fy\|_{2(-s)}$. 
Similarly, the equivalence in $B^s_{4,2}$ follows from \eqref{uN est2}, taking $\io_j>0$ smaller if needed such that $\io_j^{1-s}R\ll 1$. 
The equivalence of scattering follows from the $L^{2(-s)}$ decay by Lemma \ref{lem:scdec} and \eqref{normal equi}. 
\end{proof}

Next we estimate the nonlinear terms defined in \eqref{def FG}. For $F_\io$, we obtain from 
\eqref{uN est3}, \eqref{uN est4} with $a=0$ and \eqref{uN est5} with $\te\in\{0,1\}$, for $0<s<1$,
\EQ{ \label{est u-Duh}
 \pt \|\Om_\io(N,mu)\|_{L^2_tB^s_{4/3,2}(I)} \lec \io^{2-s}\|N\|_{L^\I_tL^2(I)}\|m\|_{L^\I_tL^2(I)}\|u\|_{L^2_tB^s_{4,2}(I)},
 \pr \|\Om_\io(D(uv),w)\|_{L^1_tH^s(I)} \lec \io^{1-s}\|u\|_{L^2_tB^{s}_{4,2}(I)}\|v\|_{L^2_tB^{1/2}_{4,2}(I)}\|w\|_{L^\I_tH^{1/2}(I)},
 \pr \|(n,u)_{\LH_\io}\|_{(L^2_tB^s_{4/3,2}+L^1_tH^s)(I)} \lec \io^{-s}\|N\|_{Y_1(I)}\|u\|_{L^2_tB^s_{4,2}(I)}.}
For $G_\io$, we obtain from \eqref{uu est2} and \eqref{uu est3}, for $1/2\le s<1$,
\EQ{ \label{est N-Duh}
 \pt \|D\ti\Om_\io(nu,v)\|_{L^1_tL^2(I)}+\|D\ti\Om_\io(v,n\bar u)\|_{L^1_tL^2(I)}
  \prQ\lec \io^{1/2}\|N\|_{L^\I_tL^2(I)}\|u\|_{L^2_tB^{1/2}_{4,2}(I)}\|v\|_{L^2_tB^{1/2}_{4,2}(I)},
 \pr \|D(u_0,u_1)_{\HH_\io}\|_{L^1_tL^2(I)} \lec \io^{s-1}\sum_{\te=0,1}\|u_\te\|_{L^2_t\dot B^s_{4,2}(I)}\|u_{1-\te}\|_{L^2_tB^{1-s}_{4,\I}(I)}.}

Combining them together with the free Strichartz estimate yields the following estimates on the Duhamel integrals defined in \eqref{def Duh}. 
For any function $(u,N)$ on any interval $I\subset\R$, any $T\in \bar{I}\subset[-\I,\I]$, and any $s\in(0,1)$,  we have 
\EQ{ \label{Duh est u}
 \|\cU_\io^T(u,N)\|_{(L^\I_tH^s\cap L^2_tB^s_{4,2})(I)} \pt\lec \|F_\io(u,N)\|_{(L^1_tH^s+L^2_tB^s_{4/3,2})(I)}
  \pr\lec \io^{2-s}\|N\|_{L^\I_tL^2(I)}^2\|u\|_{L^2_tB^s_{4,2}(I)}
  \prq+ \io^{1-s}\|u\|_{L^2_tB^{s}_{4,2}(I)}\|u\|_{L^2_tB^{1/2}_{4,2}(I)}\|u\|_{L^\I_tH^{1/2}(I)} 
  \prq+ \io^{-s}\|N\|_{Y_1(I)}\|u\|_{L^2_tB^s_{4,2}(I)},}
and for $s\in[1/2,1)$, 
\EQ{ \label{Duh est N}
 \|\cN_\io^T(u,N)\|_{L^\I_t L^2(I)} \pt\lec \|G_\io(u,N)\|_{L^1_tL^2(I)} 
 \pr\lec \io^{1/2}\|N\|_{L^\I_tL^2(I)}\|u\|_{L^2_tB^{1/2}_{4,2}(I)}^2
 \pn + \io^{s-1}\|u\|_{L^2_tB^{s}_{4,2}(I)}^2.}
Combining them with \eqref{uN est1}, \eqref{uN est2} and \eqref{uu est4.1} yields, for $s\in[1/2,1)$, 
\EQ{ \label{Duh est sum}
 \pt\|\vec\Om_\io(u,N)\|_{\cZ^s(I)}+\|\D_\io^T(u,N)\|_{\cZ^s(I)}
 \pr\lec \BR{\io^{1-s}\|(u,N)\|_{\cZ^s(I)}\LR{\|(u,N)\|_{Z^{1/2}_0(I)}}+\io^{-s}\|u\|_{X^s_1(I)}}\|(u,N)\|_{Z^{1/2}_1(I)}.}
Since they are multi-linear, the same argument yields a similar estimate on the difference, namely
\EQ{ \label{Duh est sum dif}
 \pt\|\vec\Om_\io(u_0,N_0)-\vec\Om_\io(u_1,N_1)\|_{\cZ^s(I)}+\|\D_\io^T(u_0,N_0)-\D_\io^T(u_1,N_1)\|_{\cZ^s(I)}
 \pr\lec \sum_{j=0,1}\BR{\io^{1-s}\|(u_j,N_j)\|_{\cZ^s(I)}\LR{\|(u_j,N_j)\|_{Z^{1/2}_0(I)}}+\io^{-s}\|(u_j,N_j)\|_{Z^s_1(I)}}
  \prQ\times \|(u_0-u_1,N_0-N_1)\|_{\cZ^s(I)}.}

With those estimates, we are ready to prove the propositions. 
\begin{proof}[Proof of Proposition \ref{prop:bc0}]
Since the existence time $T^*$ is independent of $s\in[1/2,1]$, it suffices to prove in the case $s=1/2$. 
Suppose the contrary and let 
\EQ{ \label{def M0}
 M_0:=\|u\|_{L^2_t B^{1/2}_{4,2}(0,T^*)}<\I.}
Let $\be>1$ large enough so that 
\EQ{
 \pt \io_1:=\be^{-3/2}M_0^{-4}<1/2, \pq \io_2:=\io_1^2=\be^{-3}M_0^{-8}, 
 \pr M_1:=\be^3M_0^9 \ge \|u(0)\|_{H^{1/2}}, \pq M_2:=M_1^{2/3}=\be^2M_0^6 \ge \|N(0)\|_2.}
Let $T\in(0,T^*]$ be the maximal time such that 
\EQ{ \label{apbd M12}
 \|u\|_{L^\I_t H^{1/2}(0,T)}\le 2M_1, \pq \|N\|_{L^\I_tL^2(0,T)}\le 2M_2.}
Since $(u,N)$ is a solution on $(0,T)$, we have 
\EQ{
 \pt (\ti u,\ti N):=\Psi_{\io_1,\io_2}(u,N)=\Uf(t)(\ti u(0),\ti N(0))+(\cU_{\io_1}^0(u,N),\cN_{\io_2}^0(u,N)).
}
Then \eqref{uN est1} and \eqref{Duh est u} with $s=1/2$, $T=0$ and $I=(0,T)$ yield, in terms of the bounds in  \eqref{def M0} and \eqref{apbd M12}, 
\EQ{
 \pt\|u\|_{L^\I_tH^{1/2}(0,T)}-\|u(0)\|_{H^{1/2}}
 \prq\lec \io_1^{3/2}M_2M_1 + \io_1^{3/2}M_2^2M_0 + \io_1^{1/2}M_0^2M_1 + \io_1^{-1/2}M_2M_0
 \prq= (\be^{-1/4}+\be^{-1/2}\io_1^{1/2}+\be^{-3/4}+\be^{-1/4})M_1 \lec \be^{-1/4}M_1,}
and similarly, \eqref{uu est4} and \eqref{Duh est N} yield
\EQ{
 \|N\|_{L^\I_tL^2(0,T)}-\|N(0)\|_2
 \pt\lec \io_2^{3/2}M_1^2 + \io_2^{1/2}M_2 M_0^2 + \io_2^{-1/2}M_0^2
 \pr= (\be^{-1/2}+\be^{-3/4}\io_1^{1/2}+\be^{-1/2})M_2 \lec \be^{-1/2}M_2.}
If $\be>1$ is large enough, then the above estimates imply
\EQ{ \label{ap Hbd}
 \|u\|_{L^\I_tH^{1/2}(0,T)}<2M_1, \pq \|N\|_{L^\I_tL^2(0,T)}<2M_2.}
Hence the maximality of $T$ for \eqref{apbd M12}, together with the continuity, implies $T=T^*$. 

Now that $(u,N)$ is bounded in $H^{1/2}\times L^2$, we may use Lemma \ref{lem:normal inv} with $s=1/2$ and $R=4(M_1+M_2)$, so that $(u,N)$ can be recovered from $(\ti u,\ti N)=\Psi_{\io_1,\io_2}(u,N)$ at each $t\in[0,T^*)$ by the inverse map $\Psi_{\io_1,\io_2}^{-1}$. 
The Duhamel formula, together with the free Strichartz estimate, implies that $(\ti u,\ti N)$ is continuous for $t\le T^*$, namely $(\ti u,\ti N)\in C([0,T^*];H^{1/2}\times L^2)$. 
Then by the continuity of $\Psi_{\io_1,\io_2}^{-1}$, $(u,N)$ belongs to the same function space. 
Then the local wellposedness in $H^{1/2}\times L^2$ of \cite{Zak4D1} extends the local solution $(u,N)$ beyond $T^*$, contradicting its maximality. 
\end{proof}
It is worth formulating the a priori bound on $H^{1/2}\times L^2$ obtained in \eqref{ap Hbd}: 
\begin{lem} \label{lem:Hbd}
There exists a constant $\be_0>1$ such that for any interval $I$ and any solution $(u,N)$ of \eqref{Zak} on $I$ in $H^{1/2}\times L^2$, we have 
\EQ{
 \pt \sup_{t\in I} \|u(t)\|_{H^{1/2}} \le 2\inf_{t\in I} \|u(t)\|_{H^{1/2}} + \be_0^3(1+\|u\|_{L^2_tB^{1/2}_{4,2}(I)}^9),
 \pr \sup_{t\in I} \|N(t)\|_2 \le 2\inf_{t\in I}\|N(t)\|_2 + \be_0^2(1+\|u\|_{L^2_tB^{1/2}_{4,2}(I)}^6).}
\end{lem}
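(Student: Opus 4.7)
The plan is to mimic the bootstrap argument carried out inside the proof of Proposition \ref{prop:bc0}, generalizing the pivotal time from $t=0$ to an arbitrary point of $I$ chosen close to the relevant infimum.

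First I would set $M_0:=\|u\|_{L^2_tB^{1/2}_{4,2}(I)}$, $M^\ast:=\max(1,M_0)$, $m_u:=\inf_{t\in I}\|u(t)\|_{H^{1/2}}$, and $m_N:=\inf_{t\in I}\|N(t)\|_2$. For any $\e>0$, pick $t_0,t_0'\in I$ with $\|u(t_0)\|_{H^{1/2}}\le m_u+\e$ and $\|N(t_0')\|_2\le m_N+\e$. Fix a large universal $\be_0\gg 1$ and define, in analogy with the parameter choices in the proof of Proposition \ref{prop:bc0},
\EQN{
 \io_1:=\be_0^{-3/2}(M^\ast)^{-4}, \pq \io_2:=\io_1^2, \pq M_1:=(m_u+\e)+\be_0^3(M^\ast)^9, \pq M_2:=(m_N+\e)+\be_0^2(M^\ast)^6,
}
so that $M_1\ge\|u(t_0)\|_{H^{1/2}}$ and $M_2\ge\|N(t_0')\|_2$.

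Next I would run a continuity/bootstrap argument. Let $J\subset I$ be the maximal closed sub-interval containing $t_0,t_0'$ on which $\|u\|_{L^\I_tH^{1/2}(J)}\le 2M_1$ and $\|N\|_{L^\I_tL^2(J)}\le 2M_2$. On $J$, apply the normal form transform $\Psi_{\io_1,\io_2}$ (Lemma \ref{lem:normal inv}) and represent $(u,N)$ via the Duhamel formula for the transformed system, using $t_0$ as pivot for the $u$-component and $t_0'$ for the $N$-component. The bilinear estimates \eqref{uN est1}, \eqref{uu est4} applied to the normal form corrections, together with the Duhamel bounds \eqref{Duh est u}, \eqref{Duh est N} applied to the nonlinear integrals, yield for every $t\in J$,
\EQN{
 \|u(t)\|_{H^{1/2}}\le\|u(t_0)\|_{H^{1/2}}+C\be_0^{-1/4}M_1, \pq \|N(t)\|_2\le\|N(t_0')\|_2+C\be_0^{-1/2}M_2,
}
with a universal constant $C$, by exactly the computations already performed in the proof of Proposition \ref{prop:bc0}. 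For $\be_0$ chosen large enough universally, the error terms are strictly less than $M_1/2$ and $M_2/2$, hence $\sup_J\|u\|_{H^{1/2}}<2M_1$ and $\sup_J\|N\|_2<2M_2$, and by continuity $J=I$. Letting $\e\to 0$ and absorbing universal constants into $\be_0$ then gives the stated bounds, using $(M^\ast)^9\le 1+M_0^9$ and $(M^\ast)^6\le 1+M_0^6$.

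The main obstacle is verifying that every Duhamel error term closes the bootstrap uniformly with a universal $\be_0$. With the chosen parameters, each such term scales as $\be_0^{-c}\max(M_1,M_2)$ for some $c>0$; the most delicate is the cross-term $\io_1^{-1/2}\|N\|_{Y_1}\|u\|_{L^2_tB^{1/2}_{4,2}}\lec \io_1^{-1/2}M_2M_0$ stemming from the $(n,u)_{\LH_{\io_1}}$ contribution in \eqref{def FG}, whose verification reduces to the identity $\io_1^{-1/2}\cdot\be_0^2(M^\ast)^6\cdot M^\ast=\be_0^{11/4}(M^\ast)^9=\be_0^{-1/4}\be_0^3(M^\ast)^9\le \be_0^{-1/4}M_1$, which is exactly the scaling relation exploited in the proof of Proposition \ref{prop:bc0}, so that the same algebra transfers without change.
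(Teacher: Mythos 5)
Your verification of the bootstrap closure has a gap. You define $M_2:=(m_N+\e)+\be_0^2(M^\ast)^6$, but in the key cross-term check you substitute $\be_0^2(M^\ast)^6$ for $M_2$, so the estimate $\io_1^{-1/2}M_2M_0\le\be_0^{-1/4}M_1$ silently drops the $(m_N+\e)$ contribution. The omitted piece is $\io_1^{-1/2}(m_N+\e)M_0=\be_0^{3/4}(M^\ast)^2(m_N+\e)M_0$, which is not bounded by $\be_0^{-1/4}M_1=\be_0^{-1/4}\big[(m_u+\e)+\be_0^3(M^\ast)^9\big]$ when $m_N$ is large relative to $(M^\ast)^6$ while $m_u$ is small. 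The same problem occurs for $\io_1^{3/2}M_2M_1$ (from the normal-form correction) and $\io_1^{3/2}M_2^2M_0$, and symmetrically in the $N$-bootstrap, whose error terms carry powers of $M_1$ that blow up when $m_u$ is large relative to $(M^\ast)^9$. So the claim that ``the same algebra transfers without change'' is false once you break the coupling $M_2=M_1^{2/3}$.

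The reason the algebra works in Proposition~\ref{prop:bc0} is precisely that there $M_2:=M_1^{2/3}=\be^2M_0^6$ holds as an equality, under which $\io_1^{-1/2}M_2M_0=\be^{-1/4}M_1$ is an identity. Retaining that coupling, the constraint $M_2\ge\|N(t_0)\|_2$ forces $M_1\ge\|N(t_0)\|_2^{3/2}$, and the closed bootstrap then yields $\sup_{t\in I}\|u(t)\|_{H^{1/2}}<2M_1$ with $M_1$ carrying a $\|N(t_0)\|_2^{3/2}$ dependence, not just $\inf_t\|u(t)\|_{H^{1/2}}$. To reach the lemma's clean form, in which the $u$-fluctuation bound involves only $\inf\|u\|_{H^{1/2}}$ and $M_0$, one must either run the two bootstraps at a common pivot and track the cross-dependence, or exploit structure of $(n,u)_{\LH_\io}$ beyond the crude estimate $\io^{-1/2}\|N\|_{Y_1}\|u\|_{L^2_tB^{1/2}_{4,2}}$; neither step appears in your proposal.
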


We are now ready to prove the scattering criterion for $s<1$. The case of the energy space $s=1$ will be proven after Proposition \ref{prop:wo}. 
\begin{proof}[Proof of Proposition \ref{prop:sc} for $s<1$]
First assume \eqref{it:u-St}. Then by Lemma \ref{lem:Hbd}, $(u,N)$ is bounded in $H^{1/2}\times L^2$ for $t>0$. 
Moreover, the same estimates as in \eqref{Duh est u}--\eqref{Duh est N} imply that $\Uf(-t)\D_\io^0(u,N)$ converges in $H^{1/2}\times L^2$. In other words, $(\ti u,\ti N):=\Psi_\io(u,N)$ scatters in $H^{1/2}\times L^2$, so does $(u,N)$ for small $\io>0$, by Lemma \ref{lem:normal inv}. 
Then Lemma \ref{lem:scdec}, together with $u\in L^2_tB^{1/2}_{4,2}(0,\I)$, implies that $\|(u,N)\|_{Z^{1/2}_1(T,\I)}\to 0$ as $T\to\I$. 
Hence if we choose $\io>0$ small enough, and $T>1$ large enough depending on $\io$, then 
\EQ{
 \io^{1-s}\LR{\|(u,N)\|_{\cZ^{1/2}(T,\I)}} + \io^{-s}\|(u,N)\|_{Z^{1/2}_1(T,\I)} \ll 1.}
Plugging this into \eqref{Duh est sum} on $I=(T,T')$ yields 
\EQ{
 \|(u,N)\|_{\cZ^s(T,T')} \pt\lec \|\Uf(t-T)(\ti u,\ti N)(T)\|_{\Z^s(T,T')} \sim \|u(T)\|_{H^s}+\|N(T)\|_2,}
uniformly for $T'>T$, so that the limit $T'\to\I$ implies $(u,N)\in\cZ^s(T,\I)$. Then \eqref{it:sc}, namely the scattering in $H^s\times L^2$, follows as above in the case of $s=1/2$. 

Next assume \eqref{it:sc}. 
Then $(u,N)$ is bounded in $H^s\times L^2$, and Lemma \ref{lem:scdec} implies \eqref{it:Y1}. 

Next assume \eqref{it:Y1}. 
Then we can take small $\io>0$, ensuring as above the equivalence between $u$ and $\ti u$ in \eqref{normal equi}-\eqref{normal equi2}. Let $\|u\|_{L^\I_tH^s(0,\I)}+\|N\|_{L^\I_tL^2(0,\I)}\le M$. 
Choosing $\io>0$ small enough, and then $T>1$ large enough ensure, 
\EQ{
 \de:=\io^{1/2}M^2 +  \io^{-1/2}\|N\|_{Y_1(T,\I)} \ll 1.}
If $\|u\|_{L^2_tB^{1/2}_{4,2}(T,T')}\le \be M$ for some $\be>1$ and $T'>T$, then \eqref{Duh est u}, together with the norm equivalence of $u$ and $\ti u$, yields  
\EQ{
 \|u\|_{L^2_tB^{1/2}_{4,2}(T,T')} \lec \|u(T)\|_{H^{1/2}} + \de \be^2 M \le (1+\de \be^2)M.}
Hence choosing $\be>1$ large enough and then $\de>0$ small enough, we obtain 
\EQ{
 \|u\|_{L^2_t B^{1/2}_{4,2}(T,T')} \lec \|u(T)\|_{H^{1/2}} \le M \ll \be M.}
Sending $T'$ from $T$ to $\I$, the continuity of the norm in $T'$ implies $\|u\|_{L^2_t B^{1/2}_{4,2}(T,\I)} \lec M$, in particular \eqref{it:u-St}. 
\end{proof}

Next we construct the wave operator for $s\in[1/2,1)$. 
The case of $s=1$ will be treated separately by the weak limit argument. 
\begin{proof}[Proof of Proposition \ref{prop:wo} for $s<1$]
First we prove the unique existence part. 
Since we have proven Proposition \ref{prop:sc} for $s<1$ where the criteria \eqref{it:u-St} and \eqref{it:Y1} are  independent of $s$, it suffices to consider the case $s=1/2$. 
Let $(u_+,N_+)\in\B$, $\io\in(0,1)$, $T>0$ and define a mapping $\Phi_\io$ for $(u,N)\in\cZ^{1/2}(T,\I)$ by
\EQ{ \label{def Phi}
 \Phi_\io(u,N) \pt:= \Uf(t)(u_+,N_+) + \vec\Om_\io(u,N) + \D_\io^\I(u,N).}
The scattering of a solution $(u,N)$ with the profile $(u_+,N_+)$ is equivalent, by Lemma \ref{lem:normal inv}, 
 to that of $\Psi_\io(u,N)$ for small $\io>0$, and so to $(u,N)=\Phi_\io(u,N)$. 

\eqref{Duh est sum} implies that $\Phi_\io$ maps $\cZ^{1/2}(T,\I)$ into itself, and \eqref{Duh est sum dif}  implies that it is a contraction mapping on a closed subset with the constraints
\EQ{ \label{contraction set}
 \pt \|(u,N)\|_{Z^{1/2}_0(T,\I)}\le M,
 \pq \|(u,N)\|_{Z^{1/2}_1(T,\I)}\le \e}
for $0<\e<1<M<\I$ and $T\in(0,\I)$ satisfying   
\EQ{ \label{small for ScatStz}
 \pt \io^{1/2}M^2 + \io^{-1/2}\e \ll 1, \pq \|u_+\|_{H^{1/2}}+\|N_+\|_2 \le M/2,
 \pr \|\Uf(t)(u_+,N_+)\|_{Z^{1/2}_1(T,\I)} \le \e/2.}

Let $M=2(\|u_+\|_{H^{1/2}}+\|N_+\|_2)$, then there are $\io,\e\in(0,1)$ satisfying the first line of \eqref{small for ScatStz}, while the second line holds for large $T>0$ by the free Strichartz estimate and Lemma \ref{lem:scdec}. 
Then there exists a unique fixed point $(u,N)=\Phi_\io(u,N)$ in the above set \eqref{contraction set}. 
Proposition \ref{prop:sc} and Lemma \ref{lem:scdec} imply that if $(u,N)$ scatters then it should belong to the set \eqref{contraction set} for large $T$.  
Thus we have obtained the unique existence part. 

For the map continuity in $H^s\times L^2$, let $(u^k_+,N^k_+)\to(u_+,N_+)$ in $H^s\times L^2$ and let $(u_k,N_k)$ be the corresponding scattering solutions obtained above. Then for large $k$ and small $\io>0$, we have 
\EQ{
 (u_k,N_k)=\Uf(t)(u^k_+-u_+,N^k_+-N_+)+\Phi_\io(u_k,N_k).}
\eqref{Duh est sum dif} yields 
\EQ{ \label{contract at inf}
 \pt\|\Phi_\io(u_k,N_k)-\Phi_\io(u,N)\|_{\cZ^s(T,\I)}
 \prq\lec (\io^{1-s}M^2+\io^{-s}\e)\|(u_k,N_k)-(u,N)\|_{\cZ^s(T,\I)},}
provided that $0<\e<1<M<\I$ and both $(u,N)$ and $(u_k,N_k)$ satisfy 
\EQ{
 \pt \|(u,N)\|_{Z^s_0(T,\I)} \le M, \pq \|(u,N)\|_{Z^s_1(T,\I)} \le \e.}
Since Proposition \ref{prop:sc} implies that the $Z^s_1(T,\I)$ norms decay as $T\to\I$ and  
\EQ{
 \limsup_{T\to\I} \|(u_k,N_k)-(u,N)\|_{\cZ^s(T,\I)} \le \|u^k_+-u_+\|_{H^s}+ \|N^k_+-N_+\|_2,}
we deduce that for large $k$, we can choose large $M,T>0$ and small $\e,\io>0$, all independent of $k$, such that the above estimates hold with $\io^{1-s}M^2+\io^{-s}\e\ll 1$. Then \eqref{contract at inf} implies 
\EQ{
 \|(u_k,N_k)-(u,N)\|_{\cZ^s(T,\I)}\lec \|(u^k_+,N^k_+)-(u_+,N_+)\|_{H^s\times L^2} \to 0}
as $k\to\I$, hence the map continuity in $H^s\times L^2$. 
\end{proof}

To prove the case of the energy space, the last missing piece is the existence, which can be proved by solving from a finite time tending to infinity. 
\begin{lem} \label{lem:scexist}
Let $s\in[1/2,1]$. 
For any sequence $(\fy_k,\psi_k)\to(\fy,\psi)$ in $H^s\times L^2$ and any sequence $T_k\to\I$, let $(u_k,N_k)$ be the sequence of solutions to \eqref{Zak} in $H^s\times L^2$ with the initial data 
$(u_k(T_k),N_k(T_k))=\Uf(T_k)(\fy_k,\psi_k)$. 
Then there is $T\in(0,\I)$ such that $(u_k,N_k)$ converges in $L^\I((T,\I);H^s\times L^2)$ to the solution $(u,N)$ of \eqref{Zak} scattering in $H^s\times L^2$ with the profile $(\fy,\psi)$.
\end{lem}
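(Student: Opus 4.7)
The strategy is to view $(u_k, N_k)$ as a perturbation of the scattering solution produced by the wave operator, handling the subcritical range $s \in [1/2, 1)$ by a direct contraction in the style of Proposition \ref{prop:wo}, and the energy-critical case $s = 1$ by a conservation-of-energy argument in the spirit of Proposition \ref{prop:persist}.

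For $s \in [1/2, 1)$, apply Proposition \ref{prop:wo} to $(\fy, \psi)$ to obtain $(u, N)$ as the unique fixed point of $\Phi_\io$ on a contraction ball in $\cZ^s(T, \infty)$, with $T$, $\io$, $M$, $\e$ satisfying \eqref{small for ScatStz}. Applying the normal form Duhamel identity at the initial time $T_k$ yields the analogous equation $(u_k, N_k) = \Phi_\io^{(k)}(u_k, N_k)$ with
\[ \Phi_\io^{(k)}(v, W) := \Uf(t)(\fy_k, \psi_k) - \Uf(t - T_k)\vec\Om_\io\bigl(\Uf(T_k)(\fy_k, \psi_k)\bigr) + \vec\Om_\io(v, W) + \D_\io^{T_k}(v, W). \]
Since \eqref{Duh est sum} and \eqref{Duh est sum dif} are insensitive to the Duhamel endpoint, $\Phi_\io^{(k)}$ inherits the contraction property, and the new correction term is controlled in $L^\infty_t(H^s \times L^2)$ by \eqref{uN est1}, \eqref{uu est4} in terms of $\|e^{-iT_k \De}\fy_k\|_{2(-s)}$, which vanishes as $k \to \infty$ by Lemma \ref{lem:scdec}. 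Hence $(u_k, N_k)$ extends to $(T, \infty)$ for all large $k$ as the unique fixed point of $\Phi_\io^{(k)}$ in a slightly enlarged ball.

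The difference splits as $(u_k, N_k) - (u, N) = [\Phi_\io^{(k)}(u_k, N_k) - \Phi_\io^{(k)}(u, N)] + [\Phi_\io^{(k)}(u, N) - \Phi_\io(u, N)]$. The contraction absorbs the first bracket into $\frac{1}{2}\|(u_k, N_k) - (u, N)\|_{\cZ^s(T, \infty)}$, while the second bracket has three vanishing pieces: (a) $\Uf(t)(\fy_k - \fy, \psi_k - \psi)$ by profile convergence; (b) the correction term as above; (c) the Duhamel tail $(\D_\io^{T_k} - \D_\io^\infty)(u, N) = -i\int_{T_k}^\infty \Uf(t - \ta)(F_\io, G_\io)(u, N)(\ta)\, d\ta$, whose $\cZ^s(T, \infty)$-norm vanishes by applying \eqref{Duh est sum} on $(T_k, \infty)$ together with the decay $\|(u, N)\|_{Z^s_1(T_k, \infty)} \to 0$ from Lemma \ref{lem:scdec}. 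Absorbing the contraction contribution yields convergence in $\cZ^s(T, \infty)$, in particular $L^\infty_t(H^s \times L^2)$-convergence for $s < 1$.

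For $s = 1$ the critical logarithmic failure rules out a direct contraction in $\cZ^1$; instead, the $s = 1/2$ case supplies strong $L^\infty_t(H^{1/2} \times L^2)$-convergence, and Corollary \ref{cor:perwo} lifts $(u, N)$ and $(u_k, N_k)$ into $\cZ^1(T, \infty)$ with uniform $H^1 \times L^2$-bounds. Conservation of $E_Z$ along each flow, together with the decay $\|e^{-iT_k \De}\fy_k\|_4 \to 0$ from Lemma \ref{lem:scdec} to kill the interaction energy at the initial time, produces uniform-in-$t$ convergence $E_Z(u_k(t), N_k(t)) \to E_Z(u(t), N(t))$ on $(T, \infty)$. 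Combined with conservation of $\|u_k(t)\|_2$ and $L^\infty_t L^4$-convergence of $u_k \to u$ (obtained by interpolating strong $H^{1/2}$-convergence against the uniform $H^1$-bound), this gives uniform norm convergence $\|\na u_k(t)\|_2 \to \|\na u(t)\|_2$ and $\|N_k(t)\|_2 \to \|N(t)\|_2$; paired with pointwise weak-$H^1$ convergence (from strong $H^{1/2}$ plus uniform $H^1$-boundedness), this yields pointwise strong $H^1$-convergence, which is then upgraded to uniformity in $t$ via equicontinuity of the $H^1$-norm along the uniformly bounded solutions (controlled by the Duhamel formula) together with the scattering-based decay at infinity. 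The principal obstacle is precisely this $s = 1$ upgrade, which requires careful bookkeeping of conservation laws and a weak-to-strong argument rather than a direct fixed-point iteration.
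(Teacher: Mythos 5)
For $s<1$, your perturbative contraction argument via $\Phi_\io^{(k)}$ is essentially the paper's approach, just repackaged: the paper runs a bootstrap on the normal-form Duhamel identity to show that the $(u_k,N_k)$ extend and scatter with profiles $(\fy_k',\psi_k')$ converging to $(\fy,\psi)$, then invokes the continuity of the wave operator (Proposition~\ref{prop:wo}); your route of directly splitting the difference $\Phi_\io^{(k)}(u_k,N_k)-\Phi_\io^{(k)}(u,N)$ plus $\Phi_\io^{(k)}(u,N)-\Phi_\io(u,N)$ and absorbing the contraction piece gives the same conclusion by the same estimates. That part is fine.

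For $s=1$ there is a genuine gap, in two places. First, the claim that Corollary~\ref{cor:perwo} lifts $(u_k,N_k)$ into $\cZ^1(T,\infty)$ \emph{with uniform $H^1\times L^2$-bounds} is both circular and, even if repaired, insufficient. It is circular because the branch of Corollary~\ref{cor:perwo} you need (data in $H^1$) relies on the implication $\eqref{it:u-St}\Rightarrow\eqref{it:sc}$ of Proposition~\ref{prop:sc} at $s=1$, and that implication is proven in the paper \emph{using} Lemma~\ref{lem:scexist}; you can replace it by Proposition~\ref{prop:persist}, which is independent, but that only yields individual $\cZ^1$-membership, not a bound uniform in $k$. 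Obtaining the uniform $H^1\times L^2$-bound is the main analytic content of the $s=1$ case. The paper gets it by approximating $\psi$ by a smoother $\ti\psi\in H^{2-2s}$ and splitting the interaction energy $\langle N_k(t)||u_k(t)|^2\rangle$ against $e^{itD}\ti\psi$, controlling the small $L^2$-remainder by $\e\|\na u_k(t)\|_2^2$ and the smooth part by the decaying $\|u_k(t)\|_{2(-s)}^2$, \emph{uniformly in $t$}. Your argument kills the interaction energy only at the initial time $T_k$ (via $\|e^{-iT_k\De}\fy_k\|_4\to 0$), which gives convergence of the conserved energy but not the pointwise-in-$t$ coercive bound on $\|\na u_k(t)\|_2^2+\|N_k(t)\|_2^2$.

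Second, the claimed $L^\I_tL^4$-convergence of $u_k\to u$ does not follow from interpolating strong $H^{1/2}$-convergence against an $H^1$-bound. Interpolation gives strong $H^s$-convergence for every $s<1$, hence $L^q$-convergence for $q<4$, but the endpoint $q=4=2(-1)$ is the critical exponent of $H^1\hookrightarrow L^4$, which is continuous, not compact: weak $H^1$ convergence and strong $H^{1/2}$ convergence do not upgrade to strong $L^4$ convergence. The paper avoids this entirely. It works with weak $H^1$ convergence of $u_k$ together with strong $L^2$ convergence of $N_k$ (already available from the $s<1$ case) to pass to the limit in the bilinear term $\langle N_k||u_k|^2\rangle$, then combines weak lower semi-continuity of $\|\na u\|_2^2$ with a reverse liminf inequality coming from the scattering of $(u,N)$ to force equality of the energies, which is what yields strong $H^1$-convergence. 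You should restructure the $s=1$ argument along those lines; as written, both the uniform bound and the $L^4$ step are missing.
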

\begin{proof}
Let $(u_k^0,N_k^0):=\Uf(t)(\fy_k,\psi_k)$ and $(u^0,N^0):=\Uf(t)(\fy,\psi)$. 
The Duhamel formula in the normal form reads 
\EQ{
 (u_k,N_k) \pt= (u_k^1,N_k^1)+ \vec\Om_\io(u_k,N_k)+\D_\io^{T_k}(u_k,N_k),}
with 
$(u_k^1,N_k^1):=(u_k^0,N_k^0)-\Uf(t-T_k)\vec\Om_\io(u_k^0,N_k^0)(T_k)$. 

Suppose $s<1$. Then \eqref{Duh est sum} yields, for any interval $I$ containing $T_k$ and contained in the maximal existence time of $(u_k,N_k)$, 
\EQ{ \label{diff T scat}
 \pt\|(u_k,N_k)-(u_k^1,N_k^1)\|_{\cZ^s(I)} 
 \pr\lec \BR{\io^{1-s}\LR{\|(u_k,N_k)\|_{\cZ^s(I)}}^2
  +\io^{-s}\|u_k\|_{X^s_1(I)}}\|(u_k,N_k)\|_{Z^s_1(I)}.}
Let $M:=1+\|(\fy,\psi)\|_{H^s\times L^2}$. Fix $\io>0$ small enough so that $\io^{1-s}M^2\ll 1$, then let $\e>0$ small enough so that $\io^{-s}\e\ll 1$. 
Since $(u_k^0,N_k^0)-(u^0,N^0)\to 0$ in $\cZ^s(\R)$ and $u^0(T_k)\to 0$ in $L^{2(-1/2)}$, Lemma \ref{lem:normal inv} implies that $(u_k^1,N_k^1)-(u^0,N^0)\to 0$ in $\cZ^s(\R)$. Hence by Lemma \ref{lem:scdec}, there exists $T_0>0$ such that for large $k$ we have $T_0<T_k$, 
$\|(u_k^1,N_k^1)\|_{Z^s_1(T,\I)}\le\e$ and $\|(u_k^0,N_k^0)\|_{Z^s_0(T_0,\I)}\le M$.  
If the interval $I$ satisfies
\EQ{ \label{T-sc bd}
 \|(u_k,N_k)\|_{Z^s_1(I)}\le 2\e, \pq \|(u_k,N_k)\|_{Z^s_0(I)}\le 2M,} 
we obtain from \eqref{diff T scat}
\EQ{ \label{T-sc cv}
 \pt \|(u_k,N_k)-(u_k^1,N_k^1)\|_{\cZ^s(I)} \lec (\io^{1-s}M^2+\io^{-s}\e)\e \ll \e <M.}
Hence by continuity of the norms on $I$, we deduce that \eqref{T-sc bd} and \eqref{T-sc cv} hold with $I=(T_0,\I)$ and the solution $(u_k,N_k)$ is defined on $(T_0,\I)$, for large $k$. In particular, by Proposition \ref{prop:sc}, $(u_k,N_k)$ is scattering in $H^s\times L^2$ with some profile $(\fy_k',\psi_k')$. Then \eqref{T-sc cv}, together with the decay of normal form, implies 
\EQ{
 \|(\fy_k',\psi_k')-(\fy_k,\psi_k)\|_{H^s\times L^2} \lec \e+\|\vec\Om_\io(u^0_k,N^0_k)(T_k)\|_{H^s\times L^2} \lec \e}
for large $k$. Taking $\e\to 0$ implies that $(\fy_k',\psi_k')\to(\fy,\psi)$ as $k\to\I$. 
Hence the continuity of the wave operator in Proposition \ref{prop:wo} implies that $(u_k,N_k)\to(u,N)$ in $\cZ^s([T,\I))$ for some $T<\I$. 

Next consider the case of $s=1$. Then Proposition \ref{prop:persist} implies $(u_k,N_k)$ is a solution in $H^1\times L^2$ on $[T_0,\I)$ for large $k$. 
The above argument, together with the wellposedness in $H^s\times L^2$, implies that $(u_k,N_k)\to(u,N)$ in $\cZ^s([T_0,\I))$ for all $s\in[1/2,1)$. 
Let $s\in(1/2,1)$. For any $\e>0$, there exists $\ti\psi\in H^{2-2s}$ such that $\|\psi-\ti\psi\|_2<\e$. 
Since $(u,N)$ is scattering in $H^s\times L^2$ with the profile $(\fy,\psi)$, and $(u_k,N_k)\to(u,N)$ in $\cZ^s([T_0,\I))$, there exists $R\in(T_0,\I)$ such that for all $k\ge R$ and $t\ge R$ we have 
\EQ{
 \|(u_k(t),N_k(t))-\Uf(t)(\fy,\psi)\|_{H^s\times L^2}+\|\ti\psi\|_{H^{2-2s}}\|u_k(t)\|_{2(-s)}^2<\e,}
where the second term is dominated by the first and the decay of $e^{-it\De}\fy$ in $L^{2(-s)}$. 
Then by H\"older and Sobolev,  we obtain 
\EQ{
 |\LR{N_k(t)||u_k(t)|^2}| \pt\le \|N_k(t)-e^{itD}\ti\psi\|_2\|u_k(t)\|_4^2+\|e^{itD}\ti\psi\|_{2(2-2s)}\|u_k(t)\|_{2(-s)}^2
 \pr\lec \e\|\na u_k(t)\|_2^2 + \e,}
which implies that the energy norm is bounded by the conserved energy, if $\e>0$ is small enough: 
\EQ{
 (1-C\e)\|\na u_k(t)\|_2^2 + \|N_k(t)\|_2^2/2 - C\e \pt\le 2E_Z(u_k,N_k),}
and the right side is convergent as $k\to\I$:  
\EQ{
 E_Z(u_k,N_k)= E_Z(\Uf(T_k)(\fy_k,\psi_k)) \to \|\na\fy\|_2^2/2+\|\psi\|_2^2/4.}
Thus $(u_k,N_k)$ is bounded in $H^1\times L^2$ uniformly in $t\ge 0$ and $k\in\N$. 
Hence using the equation, we see that $(u_k,N_k)\to(u,N)$ holds in $C([0,\I);\weak{H^1\times L^2})$. In particular, $(u,N)$ is a solution in $H^1\times L^2$ with conserved energy. The lower semi-continuity of the energy implies  
\EQ{
 E_Z(u(t),N(t)) \le \liminf_{k\to\I}E_Z(u_k,N_k) = \|\na\fy\|_2^2/2+\|\psi\|_2^2/4.}
Since $(u,N)$ is scattering in $H^s\times L^2$ and bounded in $H^1\times L^2$, a similar argument yields 
\EQ{
 \liminf_{t\to\I} E_Z(u(t),N(t)) \ge \|\na\fy\|_2^2/4+\|\psi\|_2^2/4.}
Hence these must be equality, which implies that the convergence is strong in $H^1$. 
\end{proof}

Now we are ready to prove in the case of energy space. 
\begin{proof}[Proof of Propositions \ref{prop:sc} and \ref{prop:wo} for $s=1$]
For Proposition \ref{prop:sc}, it remains only to prove \eqref{it:u-St}$\implies$\eqref{it:sc} for $s=1$. 
Let $(u,N)$ be a solution in $H^1\times L^2$ on $[0,\I)$ satisfying \eqref{it:u-St}. 
Since we have proven the case $s<1$, we know that $(u,N)$ is scattering with some profile $(\fy,\psi)$ in $H^s\times L^2$ for all $s<1$. 
Then the same argument as in the above lemma implies that $(u,N)$ is bounded in $H^1\times L^2$ for $t\ge 0$, and so $(\fy,\psi)\in H^1\times L^2$. Lemma \ref{lem:scexist} implies existence of a scattering solution in $H^1\times L^2$ with the profile $(\fy,\psi)$, which must be $(u,N)$ by the uniqueness in Proposition \ref{prop:wo} for $s=1/2$. 

For Proposition \ref{prop:wo}, the uniqueness is already shown in the best case $s=1/2$, while the existence and continuity follows from Lemma \ref{lem:scexist} for all $s\in[1/2,1]$. 
More precisely, the uniform existence time over $B$ follows from the lemma with $s=1/2$, since Corollary \ref{cor:perwo} ensures that the existence time is independent of $s$. 
The time interval for the continuity in $H^s\times L^2$ can be initially shorter in the result by the lemma, but it is extended to the uniform existence interval by the local wellposedness. 
\end{proof}

\section{Global wellposedness in the energy space below the ground state} \label{sect:GWP}
In the radial symmetry, our main Strichartz estimate (Theorem \ref{thm:Stz}) implies a stronger version of wellposedness, which is the same as in the subcritical problems, {\it as long as $\|N(t)\|_2$ stays below the threshold}. Recall that we are considering solutions of \eqref{Zak} in the sense of Definition \ref{def:sol}. 
\begin{thm} \label{thm:SLWP}
Let $s\in(1/2,1]$, $B\in(0,\|W^2\|_2)$ and $M\in(0,\I)$. 
Then there exist $T=T(s,B,M)>0$ and $C=C(s,B)>0$ such that for each $(\fy,\psi)\in H^s\rad(\R^4) \times L^2\rad(\R^4)$ with $\|\fy\|_{H^s}\le M$ and $\|\psi\|_{L^2}\le B$, the Zakharov system \eqref{Zak} has a unique local solution $(u,N)\in \cZ^1([0,T])$, which satisfies $\|u\|_{L^2_tB^s_{4,2}}\le CM$. 
\end{thm}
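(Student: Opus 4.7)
The plan is a short-time Banach fixed-point argument in $\cZ^s([0,T])$, using Theorem \ref{thm:Stz} to close the Schr\"odinger Strichartz bound uniformly. The first step is to feed the wave component into the structural hypothesis \eqref{norm cond V} via the normal form of Section \ref{ss:NF}: for any candidate $(u,N)$, setting $\ti N:=N-D\ti\Om_\io(u,\bar u)$ yields $(i\p_t+D)\ti N=G_\io(u,N)$, hence the decomposition $N=V\sF+V\sN+V\sD$ with
\EQN{
 \pt V\sF:=e^{itD}(\psi-D\ti\Om_\io(u,\bar u)|_{t=0}),\pq V\sN:=D\ti\Om_\io(u,\bar u),
 \pr V\sD:=-i\int_0^t e^{i(t-\ta)D}G_\io(u,N)(\ta)\,d\ta.
}
Fix $\de:=\min(s-1/2,\de_\star/2)$, $B':=(B+\|W^2\|_2)/2<\|W^2\|_2$, and a constant $M'$ to be chosen depending only on $s,B$. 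Theorem \ref{thm:Stz} then furnishes $(\e\fn,C\fn)=(\e\fn(s,\de,B',M'),C\fn(s,\de,B',M'))$. Choose $\io=\io(s,B,M)\in(0,1)$ small so that (i) by \eqref{uu est4.1}, $\|D\ti\Om_\io(\fy,\bar\fy)\|_2\le(\|W^2\|_2-B)/2$, making $\|V\sF\|_{L^\I_tL^2}\le B'$, and (ii) by \eqref{uu est1} with $s'=(1+\e)/2$ combined with the embedding $B^{1+\e}_{4/3,2}\subset\dot B^1_{4/3,1}$, $\|V\sN\|_{L^\I_t\dot B^1_{4/3,1}}\le\e\fn/3$ whenever $\|u\|_{L^\I_tH^s}\le 2M$.

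The iteration runs in the closed radial ball $E_T\subset\cZ^s([0,T])$ cut out by $\|u\|_{L^\I_tH^s}\le 2M$, $\|u\|_{L^2_tB^s_{4,2}}\le 2C\fn M$, and $\|N\|_{L^\I_tL^2}\le(B+\|W^2\|_2)/2$. Define $\Phi(u,N)=(u^\sharp,N^\sharp)$, where $N^\sharp(t):=e^{itD}\psi-i\int_0^t e^{i(t-\ta)D}D|u|^2(\ta)\,d\ta$ and $u^\sharp$ solves $(i\p_t-\De-\re N^\sharp)u^\sharp=0$ with $u^\sharp(0)=\fy$. Decomposing $N^\sharp$ as above and combining \eqref{uu est1}--\eqref{uu est4}, \eqref{est N-Duh}, and the free-wave Strichartz estimate, one checks that for $T=T(s,B,M)$ small enough the remaining conditions in \eqref{norm cond V} hold: the $L^1_t$-components of $V\sN$ and of $(i\p_t+D)V\sD$, and the $L^\I_t\dot B^{-1}_{4,\I}$-norm of $V\sD$, are each $\le\e\fn/3$ by a $T^\al$-gain, while the $L^\I_tL^2$-pieces of $V\sN$ and $V\sD$ are dominated by $M'$ using the ball constraint. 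Theorem \ref{thm:Stz} then applies to $u^\sharp$ (with $f=0$) and gives $\|\LR{D}^su^\sharp\|_{\ti\X^\de(0,T)}\le C\fn\|\fy\|_{H^s}\le C\fn M$, whence $\|u^\sharp\|_{L^2_tB^s_{4,2}}\le CM$ with $C=C(s,B)$ by embedding. The remaining constraints on $u^\sharp$ and $N^\sharp$ are closed by Lemma \ref{lem:Hbd} and direct Duhamel bounds. Contraction is obtained by applying Theorem \ref{thm:Stz} (or a perturbative variant such as Lemma \ref{lem:stSt}) once more to the linearized Schr\"odinger equation for $u_1-u_2$, producing a Lipschitz constant $\lec T^\al$ in a mildly weaker norm (e.g.~$\cZ^{1/2}$), sufficient for the fixed point; uniqueness in $\cZ^1$ follows from the wellposedness theory of \cite{Zak4D1} in the larger $H^{1/2}\times L^2$ topology.

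\emph{Main obstacle.} The delicate point is uniformly verifying \eqref{norm cond V} for $V\sD$, whose source $G_\io(u,N)$ contains both the Schr\"odinger-wave product $D\ti\Om_\io(nu,\bar u)$ and the high-high term $D(u,\bar u)_{\HH_\io}$; the required smallness of $\|V\sD\|_{L^\I_t\dot B^{-1}_{4,\I}}$ has to be extracted by combining the $\io^{1/2}$ and $\io^{s-1}$ gains from \eqref{uu est2}--\eqref{uu est3} with a short-time factor, without breaking the a priori $L^\I_tL^2$ bound on $N$. A parallel subtlety is that the final constant $C$ is supposed to depend on $(s,B)$ alone: this is arranged by letting $M'$ bound only the wave mass, which on $E_T$ is a priori at most $(B+\|W^2\|_2)/2$, a quantity depending only on $s,B$; all of the $M$-dependence is then absorbed into the small parameter $\io$ and the lifespan $T$, while the datum $\fy$ enters Theorem \ref{thm:Stz} linearly to yield the clean bound $CM$.
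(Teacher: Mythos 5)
Your normal-form decomposition of the wave component, the verification of \eqref{norm cond V} after shrinking $\io$ and then $T$, and the application of Theorem \ref{thm:Stz} to obtain the a priori bound $\|\LR{D}^s u\|_{\ti\X^0}\lesssim C\fn M$ are essentially the same computations the paper carries out. The packaging, however, is genuinely different, and the difference is where the gap sits. The paper never sets up a new fixed point: it takes the local solution for granted from the $H^{1/2}\times L^2$ local wellposedness of \cite{Zak4D1} (via Proposition \ref{prop:persist} and Corollary \ref{cor:LWP}), runs a continuity/bootstrap argument on that solution to obtain the uniform Strichartz bound, and then invokes the blow-up criterion Proposition \ref{prop:bc0} to conclude that the maximal time exceeds $T_1(s,B,M)$. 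You instead attempt a direct Banach iteration in $\cZ^s([0,T])$, which would be more self-contained but forces you to close a contraction.

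The contraction step is where your sketch breaks down, and for the very reason the paper highlights in the discussion after Theorem \ref{thm:Stz} (``perturbation of the potential is a difficult problem if $s>2\de_\star$, because the potential term and the normal form transform do not commute''). The difference $u_1^\sharp-u_2^\sharp$ solves a linear Schr\"odinger equation with potential $\re N_1^\sharp$ and source $(\re(N_1^\sharp-N_2^\sharp))u_2^\sharp$. To close a contraction at level $\cZ^{1/2}$ you would need to put $\LR{D}^{1/2}\big[(N_1^\sharp-N_2^\sharp)u_2^\sharp\big]$ into $\ti\X^\de_*$, and the high-low interaction ($N$-frequency high, $u$-frequency low) transfers the half derivative onto $N_1^\sharp-N_2^\sharp$. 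But $N_1^\sharp-N_2^\sharp$ is a wave Duhamel term with source $D(|u_1|^2-|u_2|^2)$; with $u_j\in B^s_{4,2}$ and $u_1-u_2$ controlled only in $B^{1/2}_{4,2}$, it gains at most $s-1/2<1/2$ derivatives, so the product does not close at level $1/2$. If you instead drop to level $0$, the product estimate $\|N\|_{L^\I_tL^2}\|u\|_{L^2_tL^4}$ does close, but the output $\ti\X^\de$ norm with $\de<\de_\star<1/2$ no longer controls the $L^2_t\dot B^{1/2}_{4,2}$ norm of $u_1-u_2$ that was used to bound $N_1^\sharp-N_2^\sharp$ in the first place. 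Neither choice yields a self-consistent Lipschitz estimate, and the $T^\al$ gain you invoke only shrinks the prefactor, it does not repair the derivative mismatch. Your fallback ``uniqueness in $\cZ^1$ follows from \cite{Zak4D1}'' is correct, but that alone does not supply the existence that the contraction was meant to furnish. The cleanest repair is the one the paper uses: drop the fixed point entirely, take the solution from the LWP of \cite{Zak4D1}, and combine the a priori bound you derived with the blow-up criterion Proposition \ref{prop:bc0} to lengthen the interval to $T_1(s,B,M)$.
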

As an immediate consequence, we obtain the following blow-up criterion. 
\begin{cor} \label{cor:bc}
Let $s\in(1/2,1]$. 
For any initial data $(u(0),N(0))\in H^s\rad(\R^4)\times L^2\rad(\R^4)$, let $T^*\in(0,\I]$ be the maximal existence time of the solution $(u,N)$ of \eqref{Zak} in $H^s\times L^2$. If $T^*$ is finite, then we have one of the following:
\begin{enumerate}
\item $\Lim_{t\to T^*-0}\|u(t)\|_{\dot H^s}=\I$. 
\item $\Liminf_{t\to T^*-0}\|N(t)\|_2\ge\|W^2\|_2$.
\end{enumerate}
\end{cor}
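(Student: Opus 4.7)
The plan is a standard contradiction/continuation argument, using the uniform existence time furnished by Theorem~\ref{thm:SLWP}.

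Suppose both (1) and (2) fail. Then there exist $\delta>0$ and a sequence $s_n\nearrow T^*$ with $\|N(s_n)\|_2\le \|W^2\|_2-\delta$, and there also exist $M<\I$ and $t_n\nearrow T^*$ with $\|u(t_n)\|_{\dot H^s}\le M$. Conservation of the $L^2$-mass (standard for the $H^{1/2}\times L^2$-wellposed flow, \cite{Zak4D1}) yields $\|u(t_n)\|_{H^s}\le M':=M+\|u(0)\|_2$.

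The key step is to produce a single time $\tau\in(T^*-T,T^*)$, where $T=T(s,\|W^2\|_2-\delta/2,M'+1)$ is the uniform existence time provided by Theorem~\ref{thm:SLWP}, at which both
\EQN{\|u(\tau)\|_{H^s}\le M'+1 \tand \|N(\tau)\|_2\le \|W^2\|_2-\delta/2}
hold. In the energy-space case $s=1$, this is immediate from the coercivity \eqref{EZ below equiv} and conservation of $E_Z(u,N)$: at any $t$ with $\|N(t)\|_2\le \|W^2\|_2-\delta/2$ one has
\EQN{\|\na u(t)\|_2^2\le 4 E_Z(u(0),N(0))/(C_S^2\delta),}
so $\|u(s_n)\|_{H^1}$ is uniformly bounded and one may take $\tau=s_n$ for any large $n$. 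For $s\in(1/2,1)$, where energy does not directly bound $\dot H^s$, one combines continuity of the two norms on $[0,T^*)$ with the $H^{1/2}$ a priori bound of Lemma~\ref{lem:Hbd} to locate $\tau$ in the open set $\{t:\|N(t)\|_2<\|W^2\|_2-\delta/2\}$, which accumulates at $T^*$.

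Once such $\tau$ is fixed, the radial symmetry of $(u(\tau),N(\tau))\in H^s\rad\times L^2\rad$ (preserved by the flow) allows us to apply Theorem~\ref{thm:SLWP}, yielding a solution of \eqref{Zak} on $[\tau,\tau+T]$ with this initial data. Uniqueness in $H^{1/2}\times L^2$ from \cite{Zak4D1}, together with persistence of regularity (Proposition~\ref{prop:persist}), forces this solution to agree with $(u,N)$ on $[\tau,T^*)$, so that $(u,N)$ extends to $[0,\tau+T]\supsetneq[0,T^*)$ as an $H^s\times L^2$-solution---contradicting the maximality of $T^*$. Hence (1) or (2) must hold. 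The main technical obstacle is the construction of the common time $\tau$ in the case $s<1$; the energy-space case $s=1$, which is the one used for Theorem~\ref{main}, proceeds cleanly via the energy coercivity.
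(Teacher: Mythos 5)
Your structure---deduce from ``not (1) and not (2)'' a time $\tau$ near $T^*$ at which both $\|u(\tau)\|_{H^s}$ and $\|N(\tau)\|_2$ are controlled, then apply Theorem~\ref{thm:SLWP} at $\tau$ to extend the solution past $T^*$---is the natural one; the paper offers no explicit proof, calling the corollary immediate from Theorem~\ref{thm:SLWP}. For $s=1$ your argument is correct and is precisely what Theorem~\ref{main} needs: at the times $s_n$ where $\|N(s_n)\|_2\le\|W^2\|_2-\delta$, the coercivity \eqref{EZ below equiv} together with conservation of $E_Z$ and of $\|u\|_2$ gives a uniform bound on $\|u(s_n)\|_{H^1}$ (and forces $E_Z(u(0),N(0))\ge 0$), so one may take $\tau=s_n$. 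Note that for $s=1$ you do not actually use ``not (1)'' anywhere---the coercivity argument alone shows that (2) must hold whenever $T^*<\I$.

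For $s\in(1/2,1)$ the gap you flag is genuine, but the proposed repair via Lemma~\ref{lem:Hbd} and continuity does not close it. Lemma~\ref{lem:Hbd} controls only $\|u\|_{H^{1/2}}$ and $\|N\|_2$ through the local Strichartz norm $\|u\|_{L^2_tB^{1/2}_{4,2}}$; it gives no handle on $\dot H^s$. More fundamentally, since (1) involves a full limit, its negation yields only a bounded subsequence $\|u(t_n)\|_{\dot H^s}\le M$, and there is nothing forcing the open set $\{t:\|N(t)\|_2<\|W^2\|_2-\delta/2\}$---which does accumulate at $T^*$---to contain any of the times $t_n$: the two subsets of $[0,T^*)$ could interleave without ever meeting near $T^*$. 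Without a conservation law coupling $\dot H^s$ and $\|N\|_2$ (the role played by $E_Z$ when $s=1$), producing the common time $\tau$ requires a new idea that neither your proposal nor the paper's terse remark supplies. Your honest acknowledgment of the obstacle is the right instinct; the appeal to Lemma~\ref{lem:Hbd} should be removed, and either a genuine argument for $s<1$ should be given or the corollary's use restricted to $s=1$.
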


Since the above results are stronger for smaller $s$ (thanks to the local wellposedness with regularity persistence in \cite{Zak4D1}), we need not consider the energy space. 
The global existence in Theorem \ref{main} follows from the above criterion with the variational estimates in the previous section.

Proposition \ref{prop:bc0} reduces the proof of Theorem \ref{thm:SLWP} to a priori bound on $\|u\|_{L^2_tB^{1/2}_{4,2}}$ on some short interval determined by $s,B,M$. 
\begin{proof}[Proof of Theorem \ref{thm:SLWP}]
Let $s\in(1/2,1)$ and let $(u,N)$ be a local solution from $t=0$ satisfying the initial size restriction of the theorem and with maximal existence time $T^*>0$. 
The Duhamel formula in the normal form yields a decomposition of $N$
\EQ{ \label{decop N}
 \pt N=N\sF+N\sN+N\sD,
 \pr N\sN:=D\ti\Om_\io(u,\ba u), \pq N\sF:=e^{itD}(N(0)-N\sN(0)),
 \pq N\sD:=\cN_\io^0(u,N).}
Since $\|N(0)\|_2\le B<\|W^2\|_2$ and $\|N\sN(0)\|_2\lec\io^{3/2}\|u(0)\|_{H^{1/2}}^2\le \io^{3/2}M^2$ by \eqref{uu est1} with $s=1/2$, choosing $\io>0$ small enough, depending on $B,M$, ensures that 
\EQ{
 \|N\sF\|_{L^\I_tL^2}=\|N\sF(0)\|_2 \le \ti B:=(B+\|W^2\|_2)/2<\|W^2\|_2.}
Suppose that, abbreviating $C\fn:=C\fn(s,0,\ti B,1)$, 
\EQ{ \label{asm bd uN}
 \|\LR{D}^su\|_{\ti \X^0(0,T)} < (C\fn+1)M, \pq \|N\|_{L^\I_tL^2(0,T)} < \|W^2\|_2+1,}
for some $T\in(0,T^*)$. 
Since $(u,N)$ is continuous in $t$, the above is satisfied at least for small $T>0$, thanks to the initial constraint. 
Then \eqref{uu est1} with the Sobolev embedding yields 
\EQ{
 \pt \|N\sN(t)\|_{L^2} \lec \|N\sN(t)\|_{B^1_{4/3,1}} \lec \|N\sN(t)\|_{B^{2s}_{4/3,2}} \lec \io^{1-s}\|u(t)\|_{\dot H^{1/2}}^2,}
while \eqref{uu est2} and \eqref{uu est3} respectively yield for $G_\io(u,N)$ 
\EQ{
 \pt \|D\ti\Om_\io(nu,\bar u)\|_{L^1_t L^2(0,T)} + \|D\ti\Om_\io(u, n\bar u)\|_{L^1_t L^2(0,T)} \lec \io^{1/2}\|N\|_{L^\I_tL^2}\|u\|_{L^2_tB^{1/2}_{4,2}}^2, 
 \pr\|D(u,\bar u)_{\HH_\io}\|_{L^1_tL^2(0,T)} \lec T^{s-1/2}\io^{s-1}\|u\|_{L^2_t B^s_{4,2}(0,T)}\|u\|_{L^{1(-s)}_t\dot B^{1-s}_{4,\I}(0,T)}
 \prQ\lec T^{s-1/2}\io^{s-1}\|u\|_{L^2_s B^s_{4,2}(0,T)}^{3-2s}\|u\|_{L^\I_t H^s(0,T)}^{2s-1},}
where the last inequality used the complex interpolation: 
\EQ{ 
  L^{1(-s)}_tB^{1-s}_{4,\I} \supset L^{1(-s)}_t B^s_{4(2s-1),2} = [L^2_t B^s_{4,2},L^\I_t H^s]_{2s-1}.} 
Hence, choosing $\io>0$ and then $T>0$ small enough, depending on $s,B,M,\io$, ensures
\EQ{
 \pt \|N\sN\|_{L^\I_t L^2(0,T)} \lec \|N\sN\|_{L^\I_t B^1_{4/3,1}(0,T)} \ll \e\fn:=\e\fn(s,0,\ti B,1)<1,
  \pr \|N\sD\|_{L^\I_t L^2(0,T)} \le \|G_\io(u,N)\|_{L^1_tL^2(0,T)} \ll \e\fn,}
where the smallness conditions on $\io,T$ can be written in the form 
\EQ{
 0<\io \le \io_0(s,B,M), \pq 0<T\le T_0(s,B,M,\io),}
with some functions $\io_0,T_0$. 
Fix $\io=\io_0(s,B,M)$. Then Theorem \ref{thm:Stz} yields 
\EQ{ \label{a priori Hs bd}
 \|\LR{D}^su\|_{\ti \X^0(0,T)} \le C\fn\|u(0)\|_2 \le C\fn M}
provided that $T\in(0,T^*)$ satisfies \eqref{asm bd uN} and 
\EQ{
 0<T\le T_1(s,B,M):=T_0(s,B,M,\io_0(s,B,M)).}
Hence by continuity of those norms on $T$, the condition \eqref{asm bd uN} is preserved, as well as the a priori bound \eqref{a priori Hs bd}, as long as $T<\min(T^*,T_1(s,B,M))$. 

Then the blow-up criterion of Proposition \ref{prop:bc0} implies $T^*\ge T_1(s,B,M)$,  
together with \eqref{a priori Hs bd} for $T=T_1(s,B,M)$. 
\end{proof}

\section{Scattering below the ground state} \label{sect:sc}
In this section, we consider the scattering problem for the global solutions obtained in the previous section, namely the scattering in the radial energy space below the ground state.  
It seems difficult to develop the profile decomposition with our Strichartz estimate (Theorem \ref{thm:Stz}), where perturbation of the potential can not be handled as a source term, because of the derivative loss, while the normal form argument is not quite compatible with the potential. 
So we will follow the idea of Dodson and Murphy \cite{DM}, which is closer to the more classical argument in the defocusing case, without any global perturbation argument. 
The idea is that the virial-Morawetz type estimate by Ogawa-Tsutsumi is directly applicable to arbitrary global solutions below the ground state energy, thanks to its variational character. 

\subsection{Morawetz-Strauss criterion}
In order to use a priori weak decay such as the Morawetz estimate, we need a scattering criterion that requires only some weak and temporary decay on (sufficiently) long time intervals. 
\begin{prop} \label{prop:MS}
Let $s\in(7/8,1)$, $p\in[1,\I]$, $M\in(1,\I)$, and $(u,N)$ be a radial global solution of \eqref{Zak} satisfying 
\EQ{ \label{unif ap bd}
 \sup_{T\ge 0} \|u(T)\|_{H^s}+\|N(T)\|_2+\|u\|_{L^2_tB^s_{4,2}(T,T+1)} \le M.}
Then there exist $T\in(0,\I)$, $L_*(s,p,M)\in(1,\I)$ and $\e_*(s,p,M)\in(0,1)$ such that if 
$\|u\|_{L^\I_t L^p(I)}\le\e_*(s,p,M)$ on some interval $I\subset(T,\I)$ satisfying $|I|\ge L_*(s,p,M)$, 
then the solution $(u,N)$ scatters in $H^s\times L^2$. 
\end{prop}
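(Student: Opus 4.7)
The strategy is to verify the scattering criterion~\eqref{it:u-St} of Proposition~\ref{prop:sc}, namely $u\in L^2_tB^{1/2}_{4,2}((T',\infty))$ for some $T'>0$, by applying the uniform Strichartz estimate Theorem~\ref{thm:Stz} on a forward interval $[t_1,\infty)$, where $t_1$ is chosen near the right endpoint of $I$. The present proposition is invoked within the below-ground-state scattering framework of Section~\ref{sect:sc}, so Lemma~\ref{K cond Zak} supplies the crucial uniform bound $\|N(t)\|_2<\|W^2\|_2$ throughout, which makes the threshold hypothesis of Theorem~\ref{thm:Stz} available.

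For the potential, I would use the normal form of Section~\ref{ss:NF} at time $t_1$ with a small parameter $\iota>0$ to decompose $N=V\sF+V\sN+V\sD$ on $[t_1,\infty)$, where $V\sF(t):=e^{i(t-t_1)D}\bigl(N(t_1)-D\tilde\Omega_\iota(u,\bar u)(t_1)\bigr)$ is a free wave, $V\sN(t):=D\tilde\Omega_\iota(u,\bar u)(t)$ is the bilinear normal-form term, and $V\sD(t)$ is the Duhamel integral of $G_\iota(u,N)$ from $t_1$. The bound $\|V\sF\|_{L^\infty_tL^2}<\|W^2\|_2$ holds for small $\iota$ by \eqref{uu est1} and the variational estimate. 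The bilinear $V\sN$ is uniformly small in $L^\infty_t\dot B^1_{4/3,1}$ by \eqref{uu est1} once $\iota$ is chosen small in terms of $M$. For $V\sD$, the embedding $L^2\subset\dot B^{-1}_{4,\infty}$ combined with the free Strichartz for waves reduces smallness in $L^\infty_t\dot B^{-1}_{4,\infty}(t_1,\infty)$ to control of $\|G_\iota\|_{L^1_tL^2(t_1,\infty)}$, which by \eqref{uu est2}--\eqref{uu est3} is bounded by $(\iota^{-1/2}+\iota^{1/2}M)\|u\|_{L^2_tB^{1/2}_{4,2}(t_1,\infty)}^2$.

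The needed smallness of $V\sD$ is therefore tied to the very quantity we wish to bound, and I would resolve this by a continuity bootstrap in $T$: on $[t_1,t_1+T]$, assume $\|u\|_{L^2_tB^{1/2}_{4,2}}\le K$ with $K$ small enough that all hypotheses of Theorem~\ref{thm:Stz} are met, apply the theorem to obtain the improved bound $\|u\|_{L^2_tB^{1/2}_{4,2}(t_1,t_1+T)}\le C_\star\inf_{t\in(t_1,t_1+T)}\|u(t)\|_{H^{1/2}}$ independently of $T$, and close the bootstrap to get $u\in L^2_tB^{1/2}_{4,2}(t_1,\infty)$. To trigger the bootstrap, we need a time $t_\star\in[t_1,\infty)$ where $\|u(t_\star)\|_{H^{1/2}}$ is small. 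This is where the hypothesis $\|u\|_{L^\infty_tL^p(I)}\le\varepsilon_\star$ enters: interpolating this $L^p$-smallness against the Sobolev embedding $H^s\subset L^{4/(2-s)}$ (valid since $s<1$) gives smallness of $u$ in intermediate $L^\infty_tL^r$ norms on $I$; combined with the unit-time Strichartz bound in \eqref{unif ap bd} and an averaging/pigeonhole argument over the long interval $I$, one extracts some $t_\star\in I$ with $\|u(t_\star)\|_{H^{1/2}}$ arbitrarily small, provided $\varepsilon_\star$ is small and $L_*$ is large.

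\textbf{Main obstacle.} The crux lies in the quantitative interpolation step: translating the hypothesis $\|u\|_{L^\infty_tL^p(I)}\le\varepsilon_\star$ on an interval of length $\ge L_*$, together with the a priori $H^s$ bound, into the existence of a time $t_\star\in I$ with $\|u(t_\star)\|_{H^{1/2}}$ small enough to close the bootstrap. This requires careful coordination of the parameters $\iota$, $\varepsilon_\star$, $L_*$, and the constants arising from Theorem~\ref{thm:Stz}, and the specific threshold $s>7/8$ should be dictated by the interpolation exponents needed to make the product estimates controlling $V\sN$, $V\sD$, and the Schr\"odinger interaction $Nu$ all compatible.
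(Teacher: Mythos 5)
The statement of Proposition~\ref{prop:MS} does \emph{not} include any below-ground-state hypothesis: $M\in(1,\infty)$ is arbitrary, so $\|N(t)\|_2$ need not be below $\|W^2\|_2$, and in particular Lemma~\ref{K cond Zak} is not available. This is already a fatal obstruction to your plan: Theorem~\ref{thm:Stz} requires $\|V\sF\|_{L^\infty_tL^2}\le B<\|W^2\|_2$, which you cannot derive from \eqref{unif ap bd}. The paper deliberately formulates this proposition as a criterion free of variational input, and its proof (via Lemmas~\ref{lem:delay free dec} and~\ref{lem:Z2 scat}) never invokes Theorem~\ref{thm:Stz}; it works entirely with the free propagator, dispersive decay, and a perturbation argument in the weaker spaces $Z_2^{s,\de}$.

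Even if one added the ground-state constraint as an extra hypothesis, your bootstrap still would not close. Theorem~\ref{thm:Stz} with zero source yields only $\|u\|_{\ti\X^\de(t_1,t_1+T)}\lesssim C\fn\inf_{t}\|u(t)\|_{H^{1/2}}$, and the latter is bounded \emph{below} by the conserved mass $\|u(t)\|_2=\|u(0)\|_2$, which is $O(M)$ in general, not small. So the theorem returns a bound of size $C\fn M$, which cannot be re-inserted into the smallness hypothesis on $V\sD$ (whose norm, by \eqref{uu est2}--\eqref{uu est3}, is $\gtrsim\io^{s-1}\|u\|_{L^2_tB^s_{4,2}}^2$, uncontrollable for fixed small $\io$). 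For the same reason, your proposed extraction of a time $t_\star$ with $\|u(t_\star)\|_{H^{1/2}}$ arbitrarily small is impossible: the $L^p_x$-smallness of $u$ tells you nothing about the $L^2$-component of the $H^{1/2}$ norm. The paper sidesteps exactly this issue by targeting the dispersive, scale-shifted spaces $X_2^{s,\de}=L^\infty_tL^{2(-s)}\cap L^2_tB^{s-\de}_{4(-\de),2}$, whose $L^\infty_tL^{2(-s)}$ component \emph{does} decay for scattering/free data (Lemma~\ref{lem:scdec}); Lemma~\ref{lem:delay free dec} makes the free evolution from time $T_2$ small in $Z_2^{s',\de}$ by combining dispersive decay of $e^{-it\De}$ and $e^{itD}$ on $(0,T_1)$ with the $L^p$-smallness on $(T_1,T_2)$, and Lemma~\ref{lem:Z2 scat} then closes a direct perturbation argument in $Z_2^{s,\de}$ where all nonlinear terms carry extra powers of the (small) $Z_2$ norm. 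You should study that two-step structure, which is what actually enables a criterion that does not presuppose the potential mass threshold.
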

\begin{rem}
(1) The uniform local Strichartz norm $\|u\|_{L^2_tB^s_{4,2}(T,T+1)}$ in \eqref{unif ap bd} reflects the criticality of $N\in L^2$. We do not know if it is bounded in general for all solutions globally bounded in $H^s\times L^2$,  but Theorem \ref{thm:SLWP} gives a sufficient condition. \\
(2) The range $s>7/8$ is not optimal. In fact, a slight modification of the proof below would extend it to $s>3/4$. For our use in the energy space, however, it suffices to have one $s\in(1/2,1)$. 
\end{rem}

The proof of Proposition \ref{prop:MS} consists of two steps: First we prove certain decay of a free solution starting after the temporal decay of the nonlinear solution. 
The decay is slightly weaker than the full Strichartz estimate, but for all time. 
Secondly, the decay of the free solution is transferred to the nonlinear solution, which also implies the scattering. 
Specifically using 
\EQ{ \label{def Z2}
 \pt X_2^{s,\de}:=L^\I_tL^{2(-s)}\cap L^2_tB^{s-\de}_{4(-\de),2}, 
 \pq Y_2^\de:=L^\I_t\dot B^{-\de}_{2(-\de),2}\cap L^4_t\dot B^{-1/4+\de}_{8/3(\de),2},
 \pr Z_2^{s,\de}:=X_2^{s,\de}\times Y_2^\de,}
the above two steps are respectively formulated as follows. 
\begin{lem}\label{lem:delay free dec}
Let $s,p,M,(u,N)$ be as in Proposition \ref{prop:MS}. For any $s'\in(0,s)$, $\de\in(0,1/6)$ and $\e\in(0,1)$, there exist $T\in(0,\I)$, $L_3=L_3(s,p,M,s',\de,\e)\in(1,\I)$ and $\e_3=\e_3(s,p,M,s',\de,\e)\in(0,1)$ such that if $\|u\|_{L^\I_tL^p(T_1,T_2)}\le\e_3$ for some $T_1,T_2\in(0,\I)$ satisfying $T_1\ge T$ and $T_2-T_1\ge L_0$, then 
$\|\Uf(t-T_2)(u,N)(T_2)\|_{Z_2^{s',\de}}\le\e$. 
\end{lem}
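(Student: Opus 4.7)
The plan is to split $\Uf(t-T_2)(u,N)(T_2)$ by the Duhamel formula anchored at $T_1$ and to bound the two resulting summands in $Z_2^{s',\de}$ restricted to $t>T_2$ separately. By the group property of $\Uf$ and the equation \eqref{Zak},
\EQN{
 \Uf(t-T_2)(u,N)(T_2) = \Uf(t-T_1)(u,N)(T_1) + \int_{T_1}^{T_2}\Uf(t-\ta)\bigl(-i(\re N)u,\ iD|u|^2\bigr)(\ta)\,d\ta.
}
I intend to fix parameters in the order $K\to L_3\to\e_3$, making each summand at most $\e/2$ in $Z_2^{s',\de}((T_2,\I))$.

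For the linear piece, I split each of $u(T_1)$ and $N(T_1)$ into low and high dyadic frequencies $P_{\le K}+P_{>K}$. The high-frequency parts shrink as $\|P_{>K}u(T_1)\|_{H^{s'}}\lec K^{-(s-s')}M$ via $s'<s$, and $\|P_{>K}N(T_1)\|_{\dot B^{-\de}_{2(-\de),2}}\lec K^{-\de}M$ from the negative regularity built into $Y_2^\de$, so that Lemma \ref{lem:freeStz} and its free-wave counterpart provide correspondingly small $Z_2^{s',\de}$-bounds for their free evolutions. The low-frequency parts enjoy pointwise dispersive decay, specifically $\|e^{-it\De}P_{\le K}\fy\|_{L^{2(-s')}}\lec t^{-s'}K^{2s'}\|\fy\|_2$ and $\|e^{itD}P_{\le K}\psi\|_{\I}\lec t^{-3/2}K^C\|\psi\|_2$ in $\R^4$; on the range $t-T_1\ge T_2-T_1\ge L_3$ this yields a $Z_2^{s',\de}$-bound of order $L_3^{-\ka}K^{C'}M$ for some $\ka>0$. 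Fixing $K=K(s,s',\de,M,\e)$ large and then $L_3\gg K^{C'/\ka}$ reduces the linear piece to at most $\e/2$.

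For the Duhamel piece, I first apply the normal form $\Psi_\io$ from Section \ref{ss:NF} to recast the integrand via \eqref{eq normal}--\eqref{def FG} in terms of $(F_\io,G_\io)$, in which the derivative loss of $D|u|^2$ is absorbed by the two-derivative gain of $\ti\Om_\io$. The normal form error is controlled uniformly in time by Lemma \ref{lem:normal inv}, invoked under the $H^{1/2}\times L^2$-bound from \eqref{unif ap bd}. The resulting Duhamel integral is estimated in $Z_2^{s',\de}((T_2,\I))$ by Strichartz duality, reducing the task to bounding $F_\io,G_\io$ in appropriate dual Strichartz norms on $(T_1,T_2)$; after H\"older with $\|N\|_{L^\I_tL^2}\le M$, this further reduces to controlling $u$ in certain time-mixed Lebesgue norms on $(T_1,T_2)$. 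I obtain these by interpolating the given $\|u\|_{L^\I_tL^p(T_1,T_2)}\le\e_3$ against a \emph{global}-in-time Strichartz bound $\|u\|_{L^2_tB^s_{4,2}(\R)}\le C(s,M)$, which in the sub-ground-state setting governing Proposition \ref{prop:MS} is supplied by Theorem \ref{thm:Stz}; the hypothesis $\|N(t)\|_2<\|W^2\|_2$ needed to apply that theorem is secured by Lemma \ref{K cond Zak}. Interpolation then produces a source bound of the form $C(s,M)\e_3^\te$ for some $\te\in(0,1)$, uniform in $T_2-T_1$; undoing the normal form via the equivalence \eqref{normal equi}--\eqref{normal equi2} transfers the estimate back to $(u,N)$. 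The threshold $T$ is fixed at the end, entering only through the need for \eqref{unif ap bd} and the variational criterion of Lemma \ref{K cond Zak} on the long interval $(T_1,T_2)$.

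The main obstacle I expect is precisely this uniformity in $T_2-T_1$, which is unbounded above. Working only with the local bound $\|u\|_{L^2_tB^s_{4,2}(T,T+1)}\le M$ from \eqref{unif ap bd}, interpolation against the $L^\I_tL^p$-smallness produces an unavoidable $(T_2-T_1)^{1/2}$-factor in the dual Strichartz norm of the source, which would force $\e_3$ to depend on $T_2-T_1$ and defeat the statement. The replacement of this local bound by the global-in-time Strichartz estimate of Theorem \ref{thm:Stz}---the output of the non-perturbative machinery of Sections \ref{sect:prof dec}--\ref{sect:Stz}, resting essentially on the sub-ground-state mass constraint---is what makes the proof go through. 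The secondary derivative loss in $D|u|^2$ is handled routinely by the normal form.
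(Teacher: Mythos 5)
Your frequency-decomposition treatment of the linear piece $\Uf(t-T_1)(u,N)(T_1)$ does not work for data that are only in $H^s\times L^2$. Consider the Schr\"odinger component: the $L^{q'}\to L^q$ dispersive estimate requires the datum in $L^{q'}$ with $q'<2$, and Bernstein runs the wrong way --- $P_{\le K}$ bounds $L^q$ for $q\ge 2$ in terms of $L^2$, not $L^{q'}$ for $q'<2$. A radial $L^2$ function cut to frequency $\le K$ can be spread over an arbitrarily large spatial ball, so $\|e^{-i(t-T_1)\De}P_{\le K}u(T_1)\|_{L^{2(-s')}}$ does not decay in $t-T_1$ at any rate depending only on $K$ and $\|u(T_1)\|_2$; the claimed estimate $\lec t^{-s'}K^{2s'}\|\fy\|_2$ is false. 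The same obstruction blocks the low-frequency wave term. For the high-frequency wave term the claimed gain $\|P_{>K}N(T_1)\|_{\dot B^{-\de}_{2(-\de),2}}\lec K^{-\de}M$ is also false: the embedding $L^2\subset\dot B^{-\de}_{2(-\de),2}$ is exactly scale-invariant --- Bernstein turns the Besov weight $j^{-\de}$ into the Lebesgue gain $j^{\de}$ and nothing survives --- so there is no frequency gain for $L^2$ data without extra regularity. The paper avoids this entirely by decomposing the linear piece further, $e^{-i(t-T_1)\De}u(T_1)=e^{-it\De}u(0)+\int_0^{T_1}e^{-i(t-\ta)\De}(nu)\,d\ta$: the first summand decays on $(T_1,\I)$ because $u(0)$ is a \emph{fixed} profile (density, Lemma \ref{lem:scdec}; this is where $T$ enters), and the second summand has source $nu\in L^\I_tL^1_x$ by Cauchy--Schwarz, so the $L^1\to L^\I$ dispersive bound gives $\|u^1(t)\|_\I\lec|t-T_1|^{-1}M^2$ with no frequency localization needed.

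Your fix for the Duhamel piece over $(T_1,T_2)$ is circular: a global bound $\|u\|_{L^2_tB^s_{4,2}(\R)}<\I$ is equivalent (Proposition \ref{prop:sc}) to the scattering that the whole argument is trying to establish, and Theorem \ref{thm:Stz} does not furnish it unconditionally --- it applies only once the potential decomposition satisfies \eqref{norm cond V} on the interval in question, which is verified on bounded intervals (Theorem \ref{thm:SLWP}) but not globally a priori. Also note that Proposition \ref{prop:MS} does \emph{not} assume the below-ground-state constraint, so Lemma \ref{K cond Zak} is simply not available here. Your observation about the $(T_2-T_1)^{1/2}$ factor is a real feature of the estimate, but it is resolved differently: in the application one works with $T_2-T_1=L_3$ (passing if necessary to a subinterval of that exact length), so the factor is $L_3^{1/2}$, and $\e_3$ is chosen \emph{after} $L_3$ to absorb it --- exactly the order ``taking $L_3$ large enough and then $\e_3$ small enough'' appearing in the paper's proof.
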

\begin{lem}\label{lem:Z2 scat}
Let $s,M,(u,N)$ be as in Proposition \ref{prop:MS}. For any $\de\in(0,\min(1/6,s,1-s))$, there exists $\e_4=\e_4(s,M,\de)\in(0,1)$ such that if $\|\Uf(t-T)(u,N)(T)\|_{Z_2^{s,\de}}=:\e\le\e_4$ for some $T\in(0,\I)$, then $\|(u,N)\|_{Z_2^{s,\de}(T,\I)}\le 2\e$ and $(u,N)$ scatters in $H^s\times L^2$. 
\end{lem}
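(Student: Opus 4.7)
I run a continuity bootstrap for the quantity $\mu(T'):=\|(u,N)\|_{Z_2^{s,\de}(T,T')}$, which is continuous and nondecreasing in $T'\ge T$ with $\mu(T)=0$. I aim to propagate the implication $\mu(T')\le 2\e \Rightarrow \mu(T')\le (3/2)\e$ throughout $[T,\infty)$, from which $\mu(\infty)\le 2\e$ follows; scattering is then extracted from Proposition~\ref{prop:sc}.

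To execute the bootstrap I use the normal-form Duhamel identity from \eqref{eq normal}: with $\io\in(0,1)$ to be chosen,
\[
(u,N)(t) = \Uf(t-T)(u,N)(T) - \Uf(t-T)\vec\Om_\io(u,N)(T) + \vec\Om_\io(u,N)(t) + \D_\io^T(u,N)(t).
\]
The first term has $Z_2^{s,\de}$-norm exactly $\e$ by hypothesis. The next two are estimated via the bilinear bounds \eqref{uN est1} and \eqref{uu est4} together with the free Strichartz estimate (Lemma~\ref{lem:freeStz}) applied componentwise, yielding a contribution $\le C_0\,\io^{2-s}M^2$; this is made $\le \e/4$ by choosing $\io=\io(M,\e)$ of order $\e^{1/(2-s)}$. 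For $\D_\io^T(u,N)$ I estimate each piece of $F_\io,G_\io$ in \eqref{def FG} in its natural dual Strichartz norm via the bilinear/trilinear tools \eqref{uN est3}--\eqref{uN est5} and \eqref{uu est2}--\eqref{uu est3}. The key point is that after an appropriate H\"older splitting in time each factor can be measured by a $Z_2^{s,\de}$-component (hence by $\mu(T')$), exploiting the $L^4_t$-Besov component of $Y_2^\de$ for the wave factor and the $L^\I_tL^{2(-s)}$ and $L^2_tB^{s-\de}_{4(-\de),2}$ components of $X_2^{s,\de}$ for the Schr\"odinger factor; the remaining factors are absorbed by the uniform bound $\|(u,N)\|_{L^\I_t(H^s\times L^2)}\le M$. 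This yields $\|\D_\io^T(u,N)\|_{Z_2^{s,\de}}\le C_1(M,\io)\,\mu(T')^2$. Under the bootstrap hypothesis $\mu(T')\le 2\e$ the four bounds assemble into $\mu(T')\le \e+\e/4+4C_1(M,\io)\,\e^2$, and $\e_4=\e_4(s,M,\de)$ is then shrunk so that $4C_1(M,\io(M,\e))\,\e\le 1/4$ for every $\e\le\e_4$, which closes the bootstrap with $\mu(T')\le (3/2)\e$.

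Having established $\|(u,N)\|_{Z_2^{s,\de}(T,\infty)}\le 2\e$, I promote it to $u\in L^2_tB^{1/2}_{4,2}(T,\infty)$ by repeating the Duhamel estimate with the $L^2_tB^{1/2}_{4,2}$-norm on the left: high-frequency contributions are absorbed into the $L^2_tB^{s-\de}_{4(-\de),2}$ component of $X_2^{s,\de}$ via Bernstein (the required inequality $s\ge 1/2+2\de$ holds since $s>7/8$ and $\de<1-s<1/8$), while low-frequency contributions are handled by free Strichartz together with the uniform local estimate $\|u\|_{L^2_tB^s_{4,2}(T',T'+1)}\le M$ from \eqref{unif ap bd}, using that the Duhamel corrections carry a $\mu$ factor. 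Criterion~\eqref{it:u-St} of Proposition~\ref{prop:sc} then delivers scattering in $H^s\times L^2$. The main obstacle throughout is the $\io^{-s}$ loss from the residual low-high piece $(n,u)_{\LH_\io}$ in $F_\io$ and the $\io^{s-1}$ loss from the high-high piece $D(u,\bar u)_{\HH_\io}$ in $G_\io$: these negative powers of $\io$ must be counterbalanced by the quadratic smallness $\mu(T')^2$, through a careful pairing of the $L^4_t$-Besov component of $Y_2^\de$ on one side against the $L^\I_tL^{2(-s)}$ decay of $X_2^{s,\de}$ on the other.
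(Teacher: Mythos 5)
The bootstrap skeleton you set up (continuity of $\mu(T')$, propagation of $\mu\le 2\e\Rightarrow\mu\le(3/2)\e$, then scattering via Proposition~\ref{prop:sc}) is the right shape, and your awareness of the $\io^{-s}$ and $\io^{s-1}$ losses from $(n,u)_{\LH_\io}$ and $D(u,\bar u)_{\HH_\io}$ is correct. However, there is a genuine gap in how you treat the boundary normal-form terms, and it propagates through your choice $\io\sim\e^{1/(2-s)}$.

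You claim $\|\vec\Om_\io(u,N)(t)\|_{Z_2^{s,\de}(T,T')}+\|\Uf(t-T)\vec\Om_\io(u,N)(T)\|_{Z_2^{s,\de}(T,T')}\lesssim\io^{2-s}M^2$ from \eqref{uN est1} and \eqref{uu est4}. Those estimates are pointwise in $t$ and bound only $\|\Om_\io(u(t),N(t))\|_{H^s}\lesssim\io^{2-s}\|N(t)\|_2\|u(t)\|_{2(-s)}\le\io^{2-s}M^2$, i.e., an $L^\I_t$-type control. But $Z_2^{s,\de}$ requires the components $L^2_tB^{s-\de}_{4(-\de),2}$ and $L^4_t\dot B^{-1/4+\de}_{8/3(\de),2}$ over the potentially \emph{unbounded} interval $(T,T')$; a uniform-in-$t$ bound cannot produce a finite $L^2_t$ or $L^4_t$ integral there. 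So $\|\vec\Om_\io(u,N)(t)\|_{Z_2^{s,\de}(T,T')}$ is not controlled by $\io^{2-s}M^2$, and the quantity you label $\e/4$ is not actually small. (The free-evolution piece $\Uf(t-T)\vec\Om_\io(u,N)(T)$ is fine by Strichartz since its data lies in $H^s\times L^2$ with norm $\lesssim\io^{2-s}M^2$, but $\vec\Om_\io(u,N)(t)$ itself is not a solution of the free equation.)

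The paper avoids this by keeping $\io$ fixed and using bilinear estimates that retain one factor in the $X_2^{s,\de}$ or $Y_2^\de$ norm, e.g.\ $\|\Om_\io(u,N)\|_{X_2^{s,\de}}\lesssim\|N\|_{L^\I_t\dot B^{-\de}_{2(-\de),\I}}\|u\|_{X_2^{s,\de}}$ (via \eqref{proest4} with suitable indices) and $\|D\ti\Om_\io(u,\bar u)\|_{Y_2^\de}\lesssim M^{1/2}\|u\|_{X_2^{s,\de}}^{3/2}$ (via interpolation $[L^\I_tL^2,L^2_t\dot B^{-1/2+2\de}_{4(2\de),2}]_{1/2}=L^4_t\dot B^{-1/4+\de}_{8/3(\de),2}$). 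Because one factor is in $\mu(T')$, the normal-form contribution is automatically $\lesssim\e\mu(T')$ or $M^{1/2}\mu(T')^{3/2}$ — superlinear in $\mu$ — so the bootstrap closes with $\io$ fixed (say $\io=1/2$) and $\e_4$ depending only on $s,M,\de$. The resulting inequalities
$\|u-u_f\|_{X_2^{s,\de}}\lesssim M\|u\|_{X_2^{s,\de}}\|N\|_{Y_2^\de}$ and
$\|N-N_f\|_{Y_2^\de}\lesssim M^{2-\de}\|u\|_{X_2^{s,\de}}^{1+\de}$
also circumvent the need to balance the $\io^{-s}$ and $\io^{s-1}$ losses against $\mu^2$, which your scheme relies on but which becomes delicate once $\io$ is forced to shrink with $\e$. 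In short, the missing idea is the use of \eqref{proest4}-type estimates that measure one of the two factors of $\Om_\io$ and $D\ti\Om_\io$ in the $Z_2^{s,\de}$-norm itself, rather than in a uniform $L^\I_t$-norm; without that, the $L^2_t/L^4_t$ components of $Z_2^{s,\de}$ cannot be closed.
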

Proposition \ref{prop:MS} follows immediately from the above lemmas: Fix $s'\in(1/2,s)$ and $\de\in(0,\min(1/6,s',1-s))$ to decide $\e:=\e_4(s',M,\de)$ by Lemma \ref{lem:Z2 scat}, and then $L_*:=L_3(s,p,M,s',\de,\e)$ and $\e_*:=\e_3(s,p,M,s',\de,\e)$ by Lemma \ref{lem:delay free dec}. Then the above two lemmas with $I=(T_1,T_2)$ imply the scattering of $(u,N)$ in $H^{s'}\times L^2$, which is upgraded to the scattering in $H^s\times L^2$ by Corollary \ref{cor:perwo}. It remains to prove the two lemmas.

\begin{proof}[Proof of Lemma \ref{lem:delay free dec}]
Let $0=:T_0<T_1<T_2$ and $L:=T_2-T_1$. 
Fix $\io\in(0,1)$, say $\io=1/2$. Let $(\ti u,\ti N):=\Psi_\io(u,N)$ and $(u_f,N_f):=\Uf(t-T_2)(\ti u,\ti N)(T_2)$. Then we can decompose the Duhamel formula as follows. 
\EQ{
 u_f \pt= e^{-i(t-T_2)\De}u(T_2) + u^3, \pQ u^3:=-e^{-i(t-T_2)\De}\Om_\io(u,N)(T_2),
 \pr=\sum_{j=0}^3 u^j, \pq u^0:=e^{-it\De}u(0),
 \pr u^j:=[e^{-i(t-\ta)\De}u(\ta)]_{\ta=T_{j-1}}^{\ta=T_j} = -i\int_{T_{j-1}}^{T_j}e^{-i(t-\ta)\De}(nu)(\ta)d\ta,\ (j=1,2).}
Using \eqref{scat dec}, we have $\|u^0\|_{X_1^s(T_1,\I)}\to 0$ as $T_1\to\I$, while \eqref{uN est1} with the free Strichartz yields 
\EQ{
 \|u^3\|_{X_1^{s'}(T_2,\I)} \pt\lec \|u^3(T_2)\|_{H^{s'}} 
 \pr\lec \|N(T_2)\|_2\|u(T_2)\|_{2(-s')}
 \le M\sum_{j=0}^2\|u^j(T_2)\|_{2(-s')}.}
Hence, using $X^s_1\subset X^{s,\de}_2$, and $s'<s$, the desired decay for $u_f$ follows from that for $u^1,u^2$, for which the free Strichartz estimate yields $\|u^j\|_{X_1^s}\lec\|u\|_{L^\I_tH^s}\le M$. 
The dispersive decay for $e^{-it\De}$ yields, for $t>T_1$, 
\EQ{
 \|u^1(t)\|_\I \pt\lec \int_0^{T_1}|t-\ta|^{-2}\|(nu)(\ta)\|_1d\ta
 \pr\lec |t-T_1|^{-1}\|N\|_{L^\I_tL^2}\|u\|_{L^\I_tL^2} \le |t-T_1|^{-1}M^2.}
Then using H\"odler, we obtain
\EQ{
 \|u^1\|_{L^\I_tL^{2(-s)}(T_2,\I)} \le \sup_{t\ge T_2}\|u^1(t)\|_2^{1-s/2}\|u^1(t)\|_\I^{s/2} \lec L^{-s/2}M^{1+s/2},}
and, using $[B^s_{4,2},L^\I]_\de \subset B^{s-s\de}_{4(-\de),\I}\subset B^{s-\de}_{4(-\de),2}$, 
\EQ{
 \|u^1\|_{L^2_tB^{s-\de}_{4(\de),2}(T_2,\I)}
 \lec \|u^1\|_{L^2_tB^s_{4,2}}^{1-\de}\|u^1\|_{L^2_tL^\I(T_2,\I)}^\de 
 \lec L^{-\de/2}M^{1+\de/2}.}

Next for $u^2$, since $s>s'>0>s'-1$, there exist $\te_1,\te_2\in(0,1)$, depending only on $s,s',p$, such that by interpolation 
\EQ{ \label{interp u^2}
 \pt \|u(t)\|_{2(-s')} \lec \|u(t)\|_{H^s}^{1-\te_1}\|u(t)\|_p^{\te_1} \lec M^{1-\te_1}\|u(t)\|_p^{\te_1},}
and
\EQ{
 \|u^2\|_{X^{s'}_1} \lec \|u^2\|_{X^s_1}^{1-\te_2}\|u^2\|_{X^{s'-1}_1}^{\te_2} \lec M^{1-\te_2}\|u^2\|_{X^{s'-1}_1}.}
The last norm is estimated using the free Strichartz estimate and $B^{s'-1}_{4/3,2}\supset L^{2(2-s')}$
\EQ{
 \|u^2\|_{X^{s'-1}_1} \pt\lec \|nu\|_{L^2_tL^{2(2-s')}(T_1,T_2)}
 \lec L^{1/2}\|N\|_{L^\I_tL^2(T_1,T_2)}\|u\|_{L^\I_tL^{2(-s')}(T_1,T_2)}
 \pr\lec L^{1/2}M\|u\|_{L^\I_tH^s(T_1,T_2)}^{1-\te_2}\|u\|_{L^\I_tL^p(T_1,T_2)}^{\te_1} \lec L^{1/2}M^{2-\te_1}\e_3^{\te_2},}
where in the second last step \eqref{interp u^2} was used. 
Hence taking $L_3$ large enough and then $\e_3$ small enough, depending on $s,M,\de,s',p$, we can make both $u^2$ and $u^3$ as small as we wish, and so $u_f$. 

For $N_f$, we can not rely so much on interpolation to recover regularity as for $u_f$, so the Duhamel formula should be decomposed in the normal form: 
\EQ{
 N_f \pt=\sum_{j=0}^2\ti N^j, \pq \ti N^0:=e^{itD}\ti N(0),
 \prq N^j:=[e^{i(t-\ta)D}\ti N(\ta)]_{\ta=T_{j-1}}^{\ta=T_j}=-i\int_{T_{j-1}}^{T_j}e^{i(t-\ta)D}G_\io(u,N)(\ta)d\ta.}
In a way similar to $u_f$, the radially improved Strichartz estimate 
\EQ{ \label{rad Str wave}
 2/p+6/q<3, \pq 1/p+4/q-\si=2 \implies \|e^{itD}\fy\|_{L^p_t\dot B^\si_{q,2}} \lec \|\fy\|_2}
implies 
$\|\ti N^0\|_{Y_2^\de(T_1,\I)}\to 0$ as $T_1\to\I$, and, using \eqref{uu est4.1} as well, 
\EQ{
 \|\ti N^1\|_{L^\I_tL^2\cap L^4_t\dot B^{-1/4+\be}_{8/3(\be),2}}\lec\|\ti N\|_{L^\I_tL^2} \lec \|N\|_{L^\I_tL^2}+\|u\|_{L^\I_tL^{2(-1/2)}}^2
 \lec M^2,}
for any $\be\in(\de,1/6)$. On the other hand, the dispersive decay estimate 
\EQ{
 \|e^{itD}\fy\|_{\dot B^{-5/2}_{\I,2}} \lec |t|^{-3/2}\|\fy\|_{\dot B^0_{1,2}} }
yields, for $t>T_1$ and $\si\in(1/2,1)$, 
\EQ{
 \|\ti N^1(t)\|_{\dot B^{2\si-7/2}_{\I,2}}
 \pt\lec \int_0^{T_1}|t-\ta|^{-3/2}\|G_\io(u,N)(\ta)\|_{\dot B^{2\si-1}_{1,2}}d\ta
 \pr\lec |t-T_1|^{-1/2}\|G_\io(u,N)\|_{L^\I_t\dot B^{2\si-1}_{1,2}(0,T_1)}.}
Note that this decay order $t^{-1/2}$ prevents us from using $L^2_t$ norm for $N_f$. 
For the above norm on $G_\io$, we use product estimates  
\EQ{
 \|D(u,\ba u)_{\HH_\io}\|_{\dot B^{2\si-1}_{1,2}} \lec \|u\|_{H^\si}^2,}
which follows from the same argument as for \eqref{uN est5}, and 
\EQ{
 \|D\ti\Om_\io(nu,\ba u)+D\ti\Om_\io(u,n\ba u)\|_{\dot B^{2\si-1}_{1,2}} \lec \|nu\|_{1(-\si)}\|u\|_{\dot B^0_{2(-\si),2}}
 \lec \|N\|_2\|u\|_{\dot B^0_{2(-\si),2}}^2,}
which follows from \eqref{Om est0} with $a=2$ and $\{b,c\}=\{2-\si,-\si\}$. 
Integrating in time, we obtain for $2<q\le\I$ and $\si\in(1/2,s]$
\EQ{
 \|\ti N^1\|_{L^q_t\dot B^{2\si-7/2}_{\I,2}(T_2,\I)}
 \lec L^{-1/2+1/q}\LR{\|N\|_{L^\I_tL^2}}\|u\|_{L^\I_tH^\si}^2 \lec L^{-1/2+1/q}M^3.}
Then choosing $(\si,p)=(3/4,\I),(7/8,4)$, we obtain decay of $\ti N^1$ as $L\to\I$ in $(L^\I_t\dot B^{-2}_{\I,2}\cap L^4_t\dot B^{-7/4}_{\I,2})(T_2,\I)$, which is transferred to $Y_2^\de$ by interpolation: 
\EQ{
 [L^2,\dot B^{-2}_{\I,2}]_{\de/2}=\dot B^{-\de}_{2(-\de),2},
 \pq [\dot B^{-1/4+\be}_{8/3(\be),2},\dot B^{-7/4}_{\I,2}]_\te=\dot B^{-1/4+\de}_{8/3(\de),2},}
where $\te\in(0,1)$ is chosen such that $\de=(1-\te)\be-3\te/2$. 

The radially improved Strichartz \eqref{rad Str wave} and the Duhamel estimate in \eqref{Duh est N} yield 
\EQ{
 \|\ti N^2\|_{Y_2^\de} \lec \|G_\io(u,N)\|_{L^1_tL^2(T_1,T_2)}
 \lec M\|u\|_{L^2_tB^{1/2}_{4,2}(T_1,T_2)},}
where the last norm is bounded by H\"older in time, $B^{1/2}_{4,2}=(B^{s'}_{4,2},B^{s'-1}_{4,\I})_{\te,2}$ with $\te_3:=s'-1/2\in(0,1/2)$ and $L^{2(-s')}\subset B^{s'-1}_{4,\I}$, 
\EQ{
 \|u\|_{L^2_tB^{1/2}_{4,2}(T_1,T_2)}
 \pt\lec L^{\te_3/2}\|u\|_{L^{2/(1-\te_3)}_tB^{1/2}_{4,2}(T_1,T_2)}
 \pr\lec L^{\te_3/2}\|u\|_{L^2_tB^{s'}_{4,2}(T_1,T_2)}^{1-\te_3}\|u\|_{L^\I_tL^{2(-s')}(T_1,T_2)}^{\te_3}
 \lec LM^{1-\te_1}\e_3^{\te\te_1},}
where in the last step \eqref{interp u^2} was used. 
Thus, taking $L_3$ large enough and then $\e_3$ small enough, we can make both $\ti N^2$ and $\ti N^3$ small, and so $N_f$. 
\end{proof}

\begin{proof}[Proof of Lemma \ref{lem:Z2 scat}]
This is by standard perturbation argument, using estimates similar to the previous ones. 
Fix $\io\in(0,1)$, say $\io=1/2$. 

For the normal for of $u$, we have, on any time interval,  
\EQ{
 \|\Om_\io(u,N)\|_{X_2^{s,\de}} \lec \|N\|_{L^\I_t\dot B^{-\de}_{2(-\de),\I}}\|u\|_{X_2^{s,\de}},}
for $0<\de<s<1$, by \eqref{proest4} with 
\EQ{
 (s,\te_1,\te_2,\de_1,\de_2)\to (s-\de,1-\de,-\de,-\de,0),(0,2-s-\de,-\de,-\de,\de),} 
together with $B^0_{4(-\de),2}\subset\dot B^{\de-1}_{\I,\I}$, $B^{s-\te+1}_{2(-\de),2}\subset B^{s-\te}_{4(-\de),2}$ and $L^{2(-s)}\subset\dot B^{s+\de-2}_{\I(\de),\I}$. 
For the Duhamel form of $u$, we have 
\EQ{
 \|\LR{D}^s(n,u)_{\LH_\io}\|_{L^2_t\dot B^{-2\de}_{4/3(-2\de),2}+L^{4/3}_tB^0_{8/5,2}}
 \lec \|u\|_{X_2^{s,\de}}\|N\|_{Y_2^\de},}
which is proven by decomposing $u=u_{\le 1}+u_{>1}$. For the low frequency part, we have 
\EQ{
 \|(n,u_{\le 1})_{\LH_\io}\|_{8/5} \lec \|n_{\lec 1/\io}\|_{8/3(\de)}\|u\|_{4(-\de)}
 \lec \|N\|_{\dot B^{-1/4+\de}_{8/3(\de),2}} \|u\|_{B^{s-\de}_{4(-\de),2}},}
and for the high frequency part, \eqref{proest5} with $w(r)=\LR{r}$ and $\de\to2\de$ yields 
\EQ{
 \|\LR{D}^s(n,u_{>1})_{\LH_\io}\|_{\dot B^{-2\de}_{4/3(-2\de),2}}
 \lec \|N\|_{\dot B^{-\de}_{2(-\de),\I}}\|u_{>1}\|_{\dot B^{s-\de}_{4(-\de),2}}. }
By \eqref{uN est3}, we have 
\EQ{
 \|\Om_\io(N,nu)\|_{L^2_tB^s_{4/3,2}} \pt\lec \|N\|_{L^\I_t\dot B^{-\de}_{2(-\de),\I}}\|N\|_{L^\I_tL^2}\|u\|_{L^2_tL^{4(-s)}}.}
and by \eqref{uN est4} with $a=1$, 
\EQ{
 \|\Om_\io(D|u|^2,u)\|_{L^2_tB^s_{4/3,2}}
 \lec \|u\|_{L^\I_tL^{2(-1/2)}}^2\|u\|_{L^2_tL^{4(-s)}}.}

For the normal form of $N$, choosing $(a,b,c)=(-1+2\de,-1-\de,-s)$ in \eqref{Om est0} yields
\EQ{
 \|D\ti\Om_\io(u,\ba u)\|_{\dot B^{s-1+3\de}_{4(2\de),2}}
 \lec \|u\|_{4(-\de)}\|u\|_{2(-s)}.}
Since $s\ge 1/2$ and $\ti\Om_\io$ is restricted to the frequency $\gec 1/\io$, the Besov norm on the left may be replaced with $\dot B^{-1/2+2\de}_{4(2\de),2}$. 
Combining it with \eqref{uu est4.1} and $[L^\I_tL^2,L^2_t\dot B^{-1/2+2\de}_{4(2\de),2}]_{1/2}=L^4_t\dot B^{-1/4+\de}_{8/3(\de),2}$, we obtain 
\EQ{
 \|D\ti\Om_\io(u,\ba u)\|_{Y_2^\de} \lec M^{1/2}\|u\|_{X^{s,\de}_2}^{3/2}.}
For the Duhamel form of $N$, we have 
\EQ{
 \|D(u,\ba u)_{\HH_\io}\|_{\dot B^{1/2-2\de}_{4/3(-2\de),2}}
 \lec \|u\|_{B^{s-\de}_{4(-\de),2}}\|u\|_{B^{s-\de}_{2(-\de),\I}},}
which is proven in the same way as \eqref{uN est5}, using $2s-1\ge 1/2$ for high frequency output. 
Using interpolation $[H^s,L^{2(-s)}]_{\de/s}=H^{s-\de}_{2(-\de)}$ for the last norm, we obtain  
\EQ{
 \|D(u,\ba u)_{\HH_\io}\|_{L^2_t\dot B^{1/2-2\de}_{4/3(-2\de),2}} 
 \lec M^{1-\de}\|u\|_{L^2_t B^{s-\de}_{4(-\de),2}}\|u\|_{L^\I_t L^{2(-s)}}^\de.} 
By \eqref{uu est2} and $B^{s-\de}_{4(-\de),2}\subset L^{4(-s)}\cap L^{4(s-1)}$ (using $\de\le 1-s$), we have
\EQ{
 \|D\ti\Om_\io(nu,\ba u)+D\ti\Om_\io(u,n\ba u)\|_{L^1_tL^2} \lec \|N\|_{L^\I_tL^2}\|u\|_{L^2_tB^{s-\de}_{4(-\de),2}}^2.}

Plugging the above estimates into the radial Strichartz estimates yields (using $2\de<1/3<\de_\star$ in $d=4$)
\EQ{
 \pt \|u-u_f\|_{X_2^{s,\de}(T,T')}
 \lec M\|u\|_{X_2^{s,\de}(T,T')}\|N\|_{Y_2^\de(T,T')},
 \pr \|N-N_f\|_{Y_2^\de(T,T')} \lec M^{2-\de}\|u\|_{X_2^{s,\de}(T,T')}^{1+\de},}
which are uniform for $T'\in(T,\I)$, where $(u_f,N_f):=\Uf(t-T)(u,N)(T)$. 
In fact, choosing $\e_4$ small ensures that $M^{2-\de}\e^{1+\de}\ll\e$, so that we can deduce from $\|(u,N)\|_{Z_2^{s,\de}(T,T')}\le 2\e$ that $\|(u,N)\|_{Z_2^{s,\de}(T,T')}\le 1.5\e$. 
Then by continuity of the norm in $T'$, we deduce that $\|(u,N)\|_{Z_2^{s,\de}(T,\I)}\le 2\e$, and using the radial Strichartz estimate again, the scattering of $(u,N)$ in $H^s\times L^2$. 
\end{proof}

By the radial Sobolev inequality and the mass propagation estimate, the temporal decay condition in Proposition \ref{prop:MS} is reduced to a local $L^2$ decay in limit inf. 
\begin{lem} \label{MS cri}
Let $s\in(7/8,1)$ and $M\in(1,\I)$, there exist $\e\in(0,1)$ and $R\in(1,\I)$ such that 
every radial global solution $(u,N)$ satisfying 
\EQ{
 \pt \sup_{T>0}\|(u(T),N(T))\|_{H^1\times L^2}+\|u\|_{L^2_tB^s_{4,2}(T,T+1)} \le M,  
 \pr \liminf_{t\to\I}\|u(t)\|_{L^2(|x|<R)} \le \e}
scatters in $H^1\times L^2$.  
\end{lem}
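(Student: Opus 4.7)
The plan is to reduce to Proposition~\ref{prop:MS} with $p=3$. Fix $s\in(7/8,1)$ as in the statement, and let $\e_*=\e_*(s,3,M)$, $L_*=L_*(s,3,M)$, $T_0=T(s,3,M)$ be the parameters provided there. It suffices to produce a time interval $I\subset(T_0,\I)$ of length $\ge L_*$ on which $\|u\|_{L^\I_tL^3(I)}\le\e_*$: Proposition~\ref{prop:MS} will then yield scattering in $H^s\times L^2$, which upgrades to scattering in $H^1\times L^2$ by Corollary~\ref{cor:perwo}, since $(u(0),N(0))\in H^1\times L^2$ by hypothesis.

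Split $\|u(t)\|_{L^3}\le\|u(t)\|_{L^3(|x|<R/2)}+\|u(t)\|_{L^3(|x|>R/2)}$ at a radius $R$ to be chosen. The outer piece I control by the Strauss radial bound $|u(t,x)|\lec M|x|^{-3/2}$, which integrates to $\|u(t)\|_{L^3(|x|>R/2)}\lec MR^{-1/6}$ uniformly in $t$. For the inner piece, H\"older together with the Sobolev embedding $H^1(\R^4)\subset L^4(\R^4)$ gives
\EQN{
 \|u(t)\|_{L^3(|x|<R/2)} \le \|u(t)\|_{L^2(|x|<R/2)}^{1/3}\|u(t)\|_{L^4}^{2/3} \lec M^{2/3}\|u(t)\|_{L^2(|x|<R/2)}^{1/3}.
}
Thus the $L^3$ control on $I$ reduces to producing smallness of $\|u(t)\|_{L^2(|x|<R/2)}$ throughout $I$, not just at the single time supplied by the $\liminf$ hypothesis.

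To propagate the local $L^2$ smallness from one time to a long interval, I use the standard continuity identity. Take a radial cutoff $\chi\in C_c^\I(\R^4)$ with $\chi=1$ on $\{|x|\le R/2\}$, $\chi=0$ on $\{|x|\ge R\}$, and $|\na\chi|\le 4/R$, and set $\phi(t):=\int\chi|u(t)|^2dx$. The Schr\"odinger equation $(i\p_t-\De)u=(\re N)u$ together with the reality of $\re N$ gives the continuity identity $\phi'(t)=\pm 2\int\na\chi\cdot\im(\ba u\na u)dx$, whence $|\phi'(t)|\lec M^2/R$. By the $\liminf$ hypothesis I pick $t_0>T_0$ with $\|u(t_0)\|_{L^2(|x|<R)}\le 2\e$, so $\phi(t_0)\le 4\e^2$, and consequently $\|u(t)\|_{L^2(|x|<R/2)}^2\le\phi(t)\le 4\e^2+CM^2L_*/R$ for all $t\in I:=[t_0,t_0+L_*]$.

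Combining these three inequalities yields $\|u\|_{L^\I_tL^3(I)}\lec MR^{-1/6}+M^{2/3}(4\e^2+CM^2L_*/R)^{1/6}$. Since $\e_*$ and $L_*$ depend only on $s,M$ via Proposition~\ref{prop:MS}, I first fix $R$ large enough, depending only on $s,M$, so that both $MR^{-1/6}$ and $M^{2/3}(CM^2L_*/R)^{1/6}$ are at most $\e_*/3$, and then fix $\e$ small enough, depending only on $s,M$, so that $M^{2/3}(4\e^2)^{1/6}\le\e_*/3$. The resulting $R$ and $\e$ depend only on $s,M$, as required. There is no substantial obstacle; the only care needed is the bookkeeping in this last paragraph, so that $R$ and $\e$ are chosen independently of the particular solution.
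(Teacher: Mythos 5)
Your proposal is correct and follows essentially the same route as the paper's proof: apply Proposition~\ref{prop:MS} with $p=3$, bound the exterior $L^3$ norm by the radial Strauss estimate, interpolate the interior piece through $L^2(|x|\lec R)$ and $L^4$, and propagate the local $L^2$ smallness over a window of length $L_*$ via the standard mass-flux bound $|\phi'(t)|\lec M^2/R$. The only cosmetic differences are the exterior exponent ($R^{-1/6}$ by integrating the pointwise Strauss bound, versus the paper's slightly sharper $R^{-1/2}$ obtained by combining Strauss with the $L^2$ bound) and the order in which $R$ and $\e$ are fixed; both are harmless.
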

\begin{proof}
Choose $\chi\in C_0^\I(\R^4)$ such that $\chi(x)=1$ for $|x|\le 1$, $\chi(x)=0$ for $|x|\ge 2$, and $|\na\chi|\le 2$, and let $\chi_R(x):=\chi(x/R)$ for $R>0$. Then 
\EQ{
 \p_t\LR{\chi_R u|u} \pt=2\LR{i\chi_R u|i\dot u}=2\LR{iu\chi_R|\De u}=2\LR{u\na\chi_R|i\na u}
  \le 4M^2/R.}
Hence for any $t_1,t_0\in(0,\I)$, we have
\EQ{ \label{propa est}
 \|u(t_1)\|_{L^2(|x|<R)}^2 \le \|u(t_0)\|_{L^2(|x|<2R)}^2 + 4M^2|t_1-t_2|/R.}
On the other hand, Proposition \ref{prop:MS} yields $\e_0=\e_0(M,s)\in(0,1)$, $L=L(M,s)\in(0,1)$ and $T\in(0,\I)$ such that if for some $S\in(T,\I)$ we have 
\EQ{ \label{scc L3}
 \|u\|_{L^\I_t L^3(S,S+L)} \le \e_0,}
then $(u,N)$ scatters in $H^s\times L^2$. The radial Sobolev inequality 
\EQ{
 \fy\in H^1\rad(\R^4) \pt\implies \sup_{x\in\R^4}|x|^{3/2}|\fy(x)| \lec \|\fy\|_{H^1(\R^4)}
 \pr\implies \|\fy\|_{L^3(|x|>R)} \lec R^{-1/2}\|\fy\|_{H^1(\R^4)} }
implies 
\EQ{ \label{ext L3}
 \|u\|_{L^\I_t L^3(S<t<S+L,\ |x|>R)} \lec R^{-1/2}M,}
while H\"older and Sobolev yield
\EQ{ \label{int L3}
 \|u\|_{L^\I_t L^3(S<t<S+L,\ |x|<R)} \lec \|u\|_{L^\I_t L^2(S<t<S+L,\ |x|<R)}^{1/3}M^{2/3}.}
Choose $0<\e=\e(M,s)<1$ small enough such that 
\EQ{
  \e^{1/3}M^{2/3} \ll \e_0, }
and then choose $1<R=R(M,s)<\I$ large enough such that 
\EQ{
 M^2L/R \ll \e^2, \pq R^{-1/2}M \ll \e_0.}
If there exists $t_0\in(T,\I)$ such that $\|u(t_0)\|_{L^2(|x|<2R)}<\e$, then by \eqref{propa est} we deduce that $\sup_{t_0<t<t_0+L}\|u(t)\|_{L^2(|x|<R)}<2\e$. Then by \eqref{ext L3} and \eqref{int L3}, we obtain \eqref{scc L3}, so $(u,N)$ scatters in $H^s\times L^2$, and also in $H^1\times L^2$, by Corollary \ref{cor:perwo}.
\end{proof}

\subsection{Virial-Morawetz estimate}
Here we derive a virial-Morawetz type estimate for the Zakharov system similar to that for NLS by Ogawa and Tsutsumi \cite{OT}, for smooth solutions $(u,N)$ on $\R^d$. 
Specifically, the estimate is obtained from \cite[Section 3.1]{GNW}, choosing $\psi=\LR{r}^{-1}$ instead of the cut-off function. Let 
\EQ{ \label{def psiR As}
 \psi_R:=\psi(x/R)=\LR{r/R}^{-1}, \pq A_s:=x\cdot\na+(d+s)/2}
for $R>0$ and $s\in\R$. 
Then a localized virial quantity with a scaling parameter $R>0$ is defined as in \cite{GNW} by 
\EQ{ \label{def VR}
 \V_R:=\LR{u|i(A_0\psi_R + \psi_R A_0)u}+\frac12\LR{D^{-1}N|i(A_1\psi_R+\psi_R A_1)N},}
which satisfies the following identity, cf.~\cite[Section 3.1]{GNW}. 
It holds for general radial $\psi$ and general (non-radial) solutions. 
\EQ{ \label{def NS}
 \pt \frac{d}{dt} \V_R = NS + QN + CC,
 \prq NS:=2\LR{E_S'(u)|(A_0\psi_R + \psi_R A_0)u}
 \prQ =4\LR{\na u|\na u\psi_R}+4\LR{u_r|u_r r\p_r\psi_R}
  -\LR{|u|^4|A_d\psi_R}-\LR{u|u A_{d+4}\De\psi_R},
 \prq QN:=\frac12\LR{\nu|\psi_R\nu}+\frac12\LR{\na\y|\psi_R\na\y}+\LR{\y_r|\y_r r\p_r\psi_R}-\frac14\LR{\y|\y A_{d+2}\De\psi_R},
 \prq CC:=-\LR{\nu u |uA_{d-2}\psi_R} + CC_3',
 \prQ CC_3':=\LR{\nu|\{\psi_R\}A_1+\{r\p_r\psi_R\}/2)|u|^2},}
where 
\EQ{ \label{def nu}
 \nu:=N-|u|^2,\pq \y:=D^{-1}\nu, \pq \{f\}:=DfD^{-1}-f.}
Without the weight $\psi_R$, we have the virial identity \cite[Lemma 2.1]{GNW}:
\EQ{
 \pt \V_\I :=2\LR{u|iA_0 u}+\LR{D^{-1}N|i A_1 N}
 \pr \implies \frac{d}{dt} \V_\I =4K(u)+\|\nu\|_2^2-(d-1)\LR{\nu u|u} =:\dot \V_\I(u,\nu).}
In order to rewrite the right side of \eqref{def NS}, the following weight functions $f_{j,R}:=f_j(x/R)$ with $j=0\etc 5$ are introduced for general $\psi$ and $R$: 
\EQ{ \label{def fj}
 \pt f_0:=\sqrt{(1+r\p_r)\psi}, \pq f_1:=-r\p_r\psi, 
 \pr f_2:=-A_{d+4}\De\psi+4f_0\De f_0, 
 \pq f_3:=A_d\psi-df_0^4,
 \pr f_4:=-\frac14 A_{d+2}\De\psi,
 \pq f_5:=A_{d-2}\psi-(d-1)f_0^3.}

\subsubsection{NLS part}
The NLS part is rewritten as follows, for general radial $\psi$, 
\EQ{
 NS=4K(f_{0,R}u)+\int_{\R^d}4|u_\te|^2f_{1,R}+|u/R|^2f_{2,R}-|u|^4f_{3,R}dx,}
using that $\|f\na u\|_2^2=\|\na fu\|_2^2+\LR{|u|^2|f\De f}$. 
For the specific $\psi=\LR{r}^{-1}$, the functions $f_j$ can be easily computed, using the identites $r\p_r\psi=-r^2\psi^3=\psi^3-\psi$ and $\De=r^{-2}(r\p_r+d-2)r\p_r$: 
\EQ{ 
 \pt f_0=\psi^{3/2}, \pq f_1=r^2\psi^3, 
 \pq -\De\psi=(d-3)\psi^3+3\psi^5,
 \pr f_2=(d-3)(d-1)\psi^3+3\psi^5-6\psi^7,
 \pr f_3=(d-1)\psi+\psi^3-d\psi^6.}
They are all positive for $d\ge 4$, and also for $d=3$ except $f_2$. 

The term $|u|^4f_{3,R}$ can be controlled on $\R^4$ in the radial case by the following $L^4$ propagation estimate. 
Let 
\EQ{ \label{def La}
 \La:=r^2/(1+r)^4, \pq \La_R:=\La(x/R).} 
Then, using the equation and $|\La_R|+|r\p_r\La_R|\lec(r/R)^2$, we have 
\EQ{
 \p_t\LR{|u|^4|\La_R}/4\pt=\LR{|u|^2u|\La_R\dot u}=\LR{i|u|^2u|\La_R\De u}
 \pr=\LR{|u|^2u\na\La_R|i\na u}+\LR{\La_R u^2\na \ba u|i\na u}
 \pr\lec R^{-2}(\|u/r\|_2+\|\na u\|_2)\|ru\|_\I^2\|\na u\|_2.}
Hence using the Hardy and the radial Sobolev inequalities, we obtain
\EQ{
 \sup_{0\le t\le T} \int_{\R^4}|u|^4\La_R dx \le \int_{\R^4}|u(0)|^4\La_R + C\frac{T}{R^2}\|\na u\|_{L^\I_tL^2(0,T)}^4.}
Interpolation with the Sobolev inequality yields
\EQ{ \label{est L4tail}
 \sup_{T_0\le t\le T_1}\LR{|u|^4|f_{3,R}} \lec \LR{|u(0)|^4|\La_R}^{1/2}\|\na u\|_{L^\I_tL^2}^2+R^{-1}T^{1/2}\|\na u\|_{L^\I_t L^2}^4.}

\subsubsection{Wave part}
Using a bilinear commutator:
\EQ{ \label{def be}
 \be_R(f,g)\pt:=\LR{h_RDf|Dg}-\LR{h_R\na f|\na g}, \pq h_R:=A_{d-1}\psi_R,}
the wave part can be rewritten as 
\EQ{
 QN=\|f_{0,R}\nu\|_2^2-\be_R(\y,\y)+\int_{\R^d}|\y_\te|^2f_{1,R}+|\y/R|^2f_{4,R}dx.}
For the specific $\psi=\LR{r}^{-1}$, we have 
\EQ{
 4f_4=(d-3)(d-2)\psi^3+3(2d-7)\psi^5+15\psi^7,}
which is positive for $d\ge 4$. Since
\EQ{
 \be_R(f,g)\sim\int_{\R^d\times\R^d}(|\x_1||\x_2|-\x_1\cdot\x_2)\F h_R(\x_1-\x_2)\hat f(\x_1)\ba{\hat g(\x_2)}d\x_1d\x_2}
and $||\x_1||\x_2|-\x_1\cdot\x_2| \lec |\x_1-\x_2|\min(|\x_1|,|\x_2|)$, we have 
\EQ{
 |\be_R(f,g)| \lec \|\F\na h_R\|_1\min(\|\na f\|_2\|g\|_2,\|f\|_2\|\na g\|_2).}
The norm for $h_R:=A_{d-1}\psi_R$ is bounded by $R^{-1}$, since 
\EQ{ \label{FL1 bd}
 \|\F\na h_R\|_1 = R^{-1} \|\F\na h_1\|_1 \lec R^{-1} \sum_{|\al|=d,d+2}\|\LR{x}^{1+d}\p^\al h_1\|_\I,}
and $|\p^\al\LR{r}^{-1}|\lec\LR{r}^{-1-|\al|}$. 
In order to use this decay, we decompose $\y$ in the frequency with a small parameter $\de\in(0,1)$ which depends on $R$: 
\EQ{ \label{decop eta-1}
 \y=\y_L+\y_H, \pq \y_L:=\y_{<\de}.}
Then, using $\|h_R\|_\I=\|h_1\|_\I\lec 1$ and $\|\x\F h_R\|_1\lec R^{-1}$, we have 
\EQ{
 |\be_R(\y,\y)| \pt\le |\be_R(\y_L,\y_L)|+|\be_R(\y_H,\y+\y_L)|
 \pr\lec \|\y_L\|_{\dot H^1}^2 + \|\y_H\|_2\|\na\y\|_2
 \pr\lec \|\nu_{<\de}\|_2^2 + (\de R)^{-1}\|\nu\|_2^2.}
To bound the low frequency $\y_L$ for long time, we use the Duhamel formula as in \cite{GNW}: 
\EQ{  \label{decop eta-2}
 \pt \y=\y^0+\y^2+\y^3+\y^4, \pq \y^j_L:=\y^j_{<\de},
 \pr \y^0:=e^{iDt}\y(0),
 \pq \y^2:=D^{-1}[e^{iDt}|u(0)|^2-|u(t)|^2], 
 \pr \y^3:=-i\int_0^{(t-1)_+}e^{iD(t-\ta)}|u(\ta)|^2d\ta, 
 \pq \y^4:=-i\int_{(t-1)_+}^t e^{iD(t-\ta)}|u(\ta)|^2d\ta,}
where $(t-1)_+:=\max(0,t-1)$. 
$\y^2_L$ and $\y^4_L$ are bounded in $L^2$ because 
\EQ{ \label{y24-L2bd}
 \||u|^2_{<1}\|_2 \lec \||u|^2\|_1 \lec \|u\|_2^2=\|u(0)\|_2^2.} 
Hence for $j=2,4$, 
\EQ{
 \|\be_h(\y^j_L,\y_L)| \lec R^{-1}\|\y^j_L\|_2\|\na\y_L\|_2 \lec R^{-1}\|u(0)\|_2^2\|\nu\|_2.}
By the dispersive decay estimate for $e^{iDt}$ we have, for $d\ge 4$, 
\EQ{
 \|\na \y^3_L\|_\I \pt\lec \int_0^{(t-1)_+}|t-\ta|^{-(d-1)/2}\de^{(d+3)/2}\||u(\ta)|^2\|_1 d\ta
 \pn\lec \de^{(d+3)/2}\|u(0)\|_2^2,}
while \eqref{y24-L2bd} implies  
\EQ{
 \|\na\y^3_L\|_2 \le \|\na\y\|_2+\sum_{j=0,2,4}\|\na\y^j_L\|_2 \lec \|\nu\|_2+\|\nu(0)\|_2+\|u(0)\|_2^2.}
Hence the interpolation inequality for $(L^2,L^\I)_{2/d,2}=L^{2^*,2}$ yields
\EQ{
 \|\na\y^3_L\|_{L^{2^*,2}} \lec \de^{1+3/d}(\|\nu\|_{L^\I_tL^2}+\|u(0)\|_2^2),}
and so, using the generalized H\"older and $|h_1|\lec 1/r$, 
\EQ{
 |\be_h(\y^3_L,\y_L)| \lec \|h_R\|_{L^{d,\I}}\|\na\y^3_L\|_{L^{2^*,2}}\|\na\y_L\|_2 \lec R\de^{1+3/d}(\|\nu\|_{L^\I_tL^2}+\|u(0)\|_2^2).}
Gathering the above estimates with $\de:=R^{-2d/(3+2d)}$, we obtain 
\EQ{ \label{est beR}
 |\be_R(\y,\y)| \lec [\|\nu_{<\de}(0)\|_2+R^{-3/(3+2d)}(\|\nu\|_{L^\I_tL^2}+\|u(0)\|_2^2)]\|\nu\|_2.}

\subsubsection{Cross terms}
Extracting a leading term, $CC$ is rewritten as 
\EQ{
 CC=(1-d)\LR{\nu u|u f_{0,R}^3} - \LR{\nu u|u f_{5,R}}+CC_3'.}
For the specific $\psi=\LR{r}^{-1}$, we have 
\EQ{
 f_5=(d-1)(\psi-\psi^{9/2})-r^2\psi^3,}
which satisfies $0<f_5\lec r^2/\LR{r}^3$ for all $d\ge 2$. Hence
\EQ{ \label{nuu tail}
 |\LR{\nu u|u f_{5,R}}| \lec \|\nu\|_2\||u|^2f_{5,R}\|_2 \lec R^{-1}\|\nu\|_2\|x|u|^2\|_2.}
The other error $CC_3'$ is bounded in the same way as in \cite[(3.27)-(3.29)]{GNW}
\EQ{
 |CC_3'| \pt\lec \|\nu\|_2 (\|\F\na\psi_R\|_1+\|\F\na\z_R\|_1)(\|x|u|^2\|_2+\|D^{-1}|u|^2\|_2)
 \pr\lec R^{-1}\|\nu\|_2\|x|u|^2\|_2,}
using \eqref{FL1 bd} and the (dual) Hardy inequality for $d\ge 3$. 
The last norm is bounded by using the radial Sobolev inequality for $3\le d\le 6$: 
\EQ{ \label{xuu bd}
 \|x|u|^2\|_2 \lec \|u\|_2^{(6-d)/2} \|\na u\|_2^{(d-2)/2}.}

\subsection{Virial bound below the ground state}
Adding the above estimates, we conclude the following estimate for any radial smooth solution $(u,N)$ in $\R^4$ on any interval $(0,T)$ with $\|(u,N)\|_{L^\I_t(H^1\times L^2)}\le M\ge 1$: 
\EQ{
 \frac{d}{dt}\V_R(u,\nu) \pt= \dot \V_\I(u f_{0,R}, \nu f_{0,R}) - err
 \prq+ \int_{\R^4}(4|u_\te|^2+|\y_\te|^2)|f_{1,R}|+|u/R|^2f_{2,R}+|\y/R|^2f_{4,R} dx,}
where, with $\de:=R^{-8/11}$,  
\EQ{ \label{V err bd}
 \pt err:= \LR{|u|^4|f_{3,R}}+\be_R(\y,\y) + \LR{\nu u|uf_{5,R}} - CC_3',
 \pr |err| \lec \LR{|u(0)|^4|\La_R}^{1/4}M^2+R^{-1}T^{1/2}M^4
 \prQ +(\|\nu_{<\de}(0)\|_2+R^{-3/11})M^3+R^{-1}M^3.}
The following upper bound is easily obtained: 
\EQ{ \label{V upper bd}
 |\V_R(u,\nu)| \lec R(\|u\|_2\|\na u\|_2 + \|N\|_2^2) \lec RM^4.}

In order to bound the leading term $\dot \V_\I$ from below, let us assume now that $(u,N)$ is a radial global solution satisfying 
\EQ{
 E_Z(u,N) \le E_S(W)-\e, \pq K(u)\ge 0}
for some $\e>0$. Then for any measurable function $a:\R^4\to[0,1]$, the Sobolev inequality $\|\fy\|_4\|W\|_4 \le \|\na\fy\|_2$ and Lemma \ref{lem:estK} yield 
\EQ{
 K(\chi u) \ge (\|W\|_4^2-\|\chi u\|_4^2)\|\chi u\|_4^2 \ge (\|W\|_4^2-\|u\|_4^2)\|\chi u\|_4^2
 \ge \|\nu\|_2\|\chi u\|_4^2,}
while 
\EQ{
 \|W\|_4^2-\|u\|_4^2 = \frac{4E_S(W)-4E_S(u)+2K(u)}{\|W\|_4^2+\|u\|_4^2} \ge 2\e \|W\|_4^{-2} = 2C_S^2\e
}
implies 
\EQ{
 K(\chi u) \ge  2C_S^2\e \|\chi u\|_4^2.}
Hence, using $0\le f_0\le 1$ as well, we obtain 
\EQ{ \label{V main bd}
 \pt\dot \V_\I(u f_{0,R},\nu f_{0,R}) = 4K(uf_{0,R})+\|\nu f_{0,R}\|_2^2-3\LR{\nu u|u f_{0,R}^3}
 \pr\ge K(uf_{0,R}) + \|\nu f_{0,R}\|_2^2 + 3\|\nu\|_2\|u f_{0,R}\|_4^2 - 3\|\nu f_{0,R}\|_2\|u f_{0,R}\|_4^2
 \pr\ge 2C_S^2\e\|u f_{0,R}\|_4^2 + \|\nu f_{0,R}\|_2^2.}

\subsection{Scattering below the ground state}
Let $(u,N)$ be a global solution in $H^1\rad(\R^4)\times L^2\rad(\R^4)$ satisfying 
\EQ{
 E_Z(u,N) \le E_S(W)-\de, \pq K(u)\ge 0,}
and let $\nu:=N-|u|^2$. Then $u$ is uniformly bounded in $H^1(\R^4)$, and $N,\nu$ are uniformly bounded in $L^2(\R^4)$. Hence for any $R\ge 1$ and any $T_0<T_1$, we have 
\EQ{
 \pt \frac{R}{T_1-T_0} \gec \frac{[\V_R(u,\nu)]_{T_0}^{T_1}}{T_1-T_0} \ge \frac{1}{T_1-T_0}\int_{T_0}^{T_1}[\dot \V_\I(u f_{0,R},\nu f_{0,R})-err]dt
 \pr\ge \inf_{t\in(T_0,T_1)}2C_S^2\e\|\psi_R u(t)\|_4^2-C\LR{|u(T_0)|^4|\La_R}^{1/2}-CR^{-1}|T_1-T_0|^{1/2}+o(1),
}
where $o(1)\to 0$ as $R\to\I$ uniformly for all $t\ge 0$. Choosing $T_1=T_0+R^{4/3}$, and using the dominated convergence theorem, we obtain  
\EQ{
 \inf_{t>T_0} \e\|u(t)f_{0,R}\|_4^2 \lec \LR{|u(T_0)|^4|\La_R}^{1/2}+R^{-1/3}+o(1) \to 0 \pq(R\to\I).}
Since $f_{0,R}$ is increasing in $R$, we deduce that 
\EQ{
 0<\forall R,\forall T<\I, \pq \inf_{t>T}\|u(t)f_{0,R}\|_4=0.}
Since $\|u(t)\|_{L^2(|x|<R)} \lec R\|u(t)f_{0,R}\|_4$, Lemma \ref{MS cri} implies the scattering of $(u,N)$ in $H^1\times L^2$. Thus we have proven Theorem \ref{main} in the scattering case (1).

\section{Blow-up below the ground state} \label{sect:bup}
By a similar argument, we prove the blow-up part (2) of Theorem \ref{main}. 
\begin{proof}[Proof of Theorem \ref{main} (2)]
Let $\e:=E_S(W)-E_Z(u(0),N(0))>0$ and let $(u,N)$ be the unique solution on the maximal interval of existence $I$. Let $\nu:=N-|u|^2$. 
Lemma \ref{K cond Zak} implies that $K(u)<0$ and $\|N\|_2>\|W\|_4^2$, and Lemma \ref{lem:estK} with $a^2=\|\nu\|_2^2+4\e$ implies, uniformly on $I$,
\EQ{
 \dot\V_\I(u,\nu) \le 4K(u)+\|\nu\|_2^2-3\|\nu\|_2\|u\|_4^2 \le -4\e,}

Now for contradiction suppose that $I\supset[0,\I)$ and $\sup_{t\ge 0}\|(u,N)\|_{H^1\times L^2}\le M$. 
By the identities in the previous section, using $\psi\le 1$, $\psi_r\le 0$, we have  
\EQ{
 \dot\V_R(u,\nu) \pt\le \dot\V_\I(u,\nu) + \LR{|u|^4|A_d(1-\psi_R)} - \LR{|u|^2|A_{d+4}\De\psi_R} 
 \prq-\ti\be_R(\y,\y)
 -\frac14\LR{|\y|^2|A_{d+2}\De\psi_R}+\LR{\nu |u|^2|A_{d-2}(1-\psi_R)}+CC_3',}
where $\ti\be_R(\y,\y):=\LR{\psi_RD\y|D\y}/2-\LR{\psi_R\na\y|\na\y}/2$ satisfies the same estimate as $\be_R(\y,\y)$ as in \eqref{est beR}. 
Since $|1-\psi|+|r\p_r(1-\psi_R)|\lec r^2/\LR{r}^3$, the $|u|^4$ and $\nu|u|^2$ error terms also satisfy the same estimate as before, as in \eqref{est L4tail} and \eqref{nuu tail}. Since $|A_{d+4}\De\psi|\lec\psi^3$, we have 
\EQ{
 |\LR{|u|^2|A_{d+4}\De\psi_R}| \lec \|x|u|^2\|_2\|r^{-1}R^{-2}\psi_R^3\|_2 \lec R^{-1}\|x|u|^2\|_2,}
where the last norm is bounded by \eqref{xuu bd}. Finally, 
\EQ{ \label{eta2 tail}
 |\LR{|\y|^2|A_{d+2}\De\psi_R}| \lec \LR{|\y|^2|R^{-2}\psi_R^3}}
is estimated by using the same decomposition of $\y$ as before, namely \eqref{decop eta-1}--\eqref{decop eta-2}. Thus we obtain 
\EQ{
 \eqref{eta2 tail} \pt\lec (\|\y_H\|_2+\|\y_L^2+\y_L^4\|_2)\|\y\|_4\|R^{-2}\psi_R^3\|_4 
 \prq+ (\|\y_L^0\|_4+\|\y_L^3\|_4)\|\y\|_4\|R^{-2}\psi_R^3\|_2
 \pr\lec  (\de^{-1}M+M^2)MR^{-1}+(\|\nu_{<\de}(0)\|_2+\|\y_L^3\|_4)M,}
where the last norm is bounded by $L^4\supset\dot B^0_{4,2}\supset[\dot H^1,\dot B^{-1}_{\I,2}]_{1/2}$ and 
\EQ{
 \|\y_L^3\|_{\dot B^{-1}_{\I,2}} \lec \de^{3/2}\|\y_L^3\|_{\dot B^{-5/2}_{1,\I}}
 \pt\lec \de^{3/2}\int_0^{(t-1)_+}|t-\ta|^{-3/2}\||u(\ta)|^2\|_{\dot B^0_{1,\I}}d\ta
 \pr\lec \de^{3/2}M^2.} 
Hence, as $R\to\I$ with $\de=R^{-8/7}$ as before, we have $\eqref{eta2 tail}\to 0$ uniformly for $t\ge 0$. 
Therefore if $R>1$ is large enough, then uniformly for all $0<t<T<\I$,
\EQ{
 \dot\V_R(u,v) \le -2\e + CR^{-1}T^{1/2}M^4.}
Integration over $0<t<T$ yields 
\EQ{
 -RM^4 \lec [\V_R(u,v)]_0^T \le -2\e T + CM^4 R^{-1}T^{3/2}.}
Let $T=R^{4/3}$. Then 
\EQ{
 -M^4R \le -2\e R^{4/3} + CM^4 R,}
which becomes a contradiction as $R\to\I$. 
\end{proof}

\appendix 

\section{Failure of uniform Strichartz estimates beyond the potential mass threshold} \label{ss:fail}
The ground state implies the following negative results for uniform Strichartz estimate with potential. 
For static potentials, the uniformness breaks down almost completely. 
\begin{prop}
Let $p\in[1,\I]$, $q\in(4,\I]$ and $T\in(0,\I]$. Then there is no $C\in(0,\I)$ such that 
\EQ{ \label{Stz abvW}
 \|u\|_{L^p_tL^q(0,T)} \le C\|u(0)\|_{H^1}} 
holds for all $u\in C([0,T];H^1\rad(\R^4))$ satisfying $i\dot u-\De u =V u$ on $0<t<T$ 
for some $V(x):\R^4\to\R$ with $\|V\|_2\le\|W^2\|_2$. 
\end{prop}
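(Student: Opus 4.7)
The strategy is to use rescaled truncations of the ground state $W$ for both the potential and the initial data, exploiting that $-\De W=W^3$ so the (non-$L^2$) function $W$ is a zero-energy resonance of $H:=-\De-W^2$. Fix a radial cut-off $\chi\in C_c^\I(\R^4)$ with $\chi\equiv 1$ near the origin, and for large $\la>0$ set
\EQ{
V_\la(x):=\la^2 W(\la x)^2,\qquad \fy_\la(x):=\chi(x)\la W(\la x).
}
Then $\|V_\la\|_2=\|W^2\|_2$ by direct scaling, saturating the mass constraint. Using $|W(y)|\lec\LR{y}^{-2}$ and its derivative estimates, one computes, as $\la\to\I$:
\EQ{
\|\fy_\la\|_{\dot H^1}\to\|\na W\|_2,\quad \|\fy_\la\|_2\to 0,\quad \|\fy_\la\|_q\sim\la^{1-4/q}\|W\|_q\text{ for }q>2.
}
In particular $\|\fy_\la\|_{H^1}$ stays bounded uniformly in $\la$, while $\|\fy_\la\|_q\to\I$ for every $q>4$.

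Let $u_\la\in C([0,T];H^1\rad)$ solve $i\dot u-\De u=V_\la u$ with $u_\la(0)=\fy_\la$. The parabolic rescaling $v_\la(s,y):=\la^{-1}u_\la(\la^{-2}s,\la^{-1}y)$ reduces this to the fixed limit problem $(i\p_s-\De-W^2)v=0$ with $v_\la(0,y)=\chi(\la^{-1}y)W(y)$, which converges to $W$ locally in $L^q$ for every $q>2$. The key claim is that this local approximation persists on long rescaled intervals: for some $c>0$ independent of $\la$ and all large $\la$,
\EQ{
\inf_{0<s<c\la^2}\|v_\la(s)\|_q\ge \tfrac12\|W\|_q.
}
The Duhamel identity $v_\la(s)-v_\la(0)=-i\int_0^s e^{-i\si H}(Hv_\la(0))\,d\si$ combined with the explicit source (using $-\De W=W^3$)
\EQ{
Hv_\la(0)=-2\la^{-1}(\na\chi)(\la^{-1}\cdot)\cdot\na W-\la^{-2}(\De\chi)(\la^{-1}\cdot)\,W,
}
which is $O(\la^{-2})$ in $L^2$ and supported in the annulus $|y|\sim\la$, reduces the claim to a quasi-locality bound: Schr\"odinger-type propagation from $|y|\sim\la$ reaches the bulk of $W$ near the origin only after time $\si\sim\la^2$, so the perturbation of the $L^q$ norm remains small on $s\in(0,c\la^2)$.

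Returning to the original variables via the scale factor $\|u_\la(t)\|_q=\la^{1-4/q}\|v_\la(\la^2 t)\|_q$, the persistence yields $\|u_\la(t)\|_q\gec\la^{1-4/q}\|W\|_q$ uniformly for $t\in(0,c)$ with $c$ independent of $\la$. Hence, for any $p\in[1,\I]$, any $q\in(4,\I]$, and any $T\in(0,\I]$,
\EQ{
\frac{\|u_\la\|_{L^p_tL^q(0,T)}}{\|u_\la(0)\|_{H^1}}\gec \min(T,c)^{1/p}\la^{1-4/q}\underset{\la\to\I}{\longrightarrow}\I,
}
contradicting any uniform $C$ in \eqref{Stz abvW} and establishing the proposition for all admissible $(p,q,T)$.

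The hard part is the uniform-in-$\la$ quasi-locality estimate on the long interval $(0,O(\la^2))$. Since $H=-\De-W^2$ has $W$ as a zero-energy resonance (with $W\notin L^2(\R^4)$), it lies on the boundary of the strict-positivity hypothesis in Lemma \ref{lem:stSt}, so a long-time Strichartz estimate for $e^{-i\si H}$ is precisely what the proposition is disproving and cannot be invoked. Instead one must use finite-propagation-type arguments for $-\De$ perturbed by $-W^2$, combined with the far-from-origin support of the Duhamel source $Hv_\la(0)$, the radial symmetry, and the power-law decay of $W$ and its derivatives, to control the near-origin contribution of $\int_0^s e^{-i\si H}(Hv_\la(0))\,d\si$ on $(0,c\la^2)$.
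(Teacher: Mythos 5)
Your construction matches the paper's: the same rescaled potential $V_\la=W_\la^2$ with $W_\la=\la W(\la\cdot)$, the same truncated initial datum $\fy_\la=\chi W_\la$, and the same elementary computations showing $\|\fy_\la\|_{H^1}=O(1)$ while $\|\fy_\la\|_q\sim\la^{1-4/q}\to\I$ for $q>4$. The gap is in the ``key claim'' $\inf_{0<s<c\la^2}\|v_\la(s)\|_q\ge\tfrac12\|W\|_q$. This is a long-time $L^q$ persistence estimate for $(i\p_s-\De-W^2)v=0$, and it is exactly the kind of statement that the proposition says cannot be obtained from Strichartz (because $W$ is a zero-energy resonance of $-\De-W^2$). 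You acknowledge you cannot prove it and appeal to ``finite-propagation-type arguments'', but the Schr\"odinger flow has no finite propagation speed, so there is no a priori reason the Duhamel contribution $\int_0^s e^{-i\si H}Hv_\la(0)\,d\si$ from the far source at $|y|\sim\la$ stays small near the origin over the very long interval $s\sim\la^2$. As it stands the proposal is incomplete, and the missing step is genuinely hard.

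The paper avoids this issue entirely by extracting the needed control \emph{from the contradiction hypothesis itself}. Writing $u=\chi W_\la+\ga$ with $\ga(0)=0$ and using $-\De W_\la=W_\la^3$, one gets $(i\p_t-\De-V)\ga=F:=2\na\chi\cdot\na W_\la+W_\la\De\chi$, a time-independent source supported in $|x|\sim1$ (the far tail of $W_\la$) with $\|F\|_{H^1}\lec\la^{-1}$. If \eqref{Stz abvW} held, then by Duhamel and time-translation invariance of the static potential, $\|\ga\|_{L^p_tL^q(0,T)}\lec C\la^{-1}$ for any fixed finite $T$, so $u\approx\chi W_\la$ in $L^p_tL^q$; the lower bound $\gec T^{1/p}\la^{1-4/q}$ then contradicts the uniform upper bound $C\|\fy_\la\|_{H^1}=O(1)$. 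No propagation estimate is needed; the assumed Strichartz estimate is what controls $\ga$. Replace your unproven quasi-locality claim with this Duhamel/contradiction step, and the argument closes.
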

Note that the endpoint Strichartz for $L^2$ free solutions is $L^2_tL^4_x$, while every sharp Strichartz for $H^1$ free solutions except the energy norm dominates $L^p_tL^q_x$ for some $q>4$. 
Thus we can not hope for local Strichartz (any better than $L^4_x$, which is provided by Sobolev for $H^1$ solutions) that is uniform with respect to the $L^2$ norm of potential, once it reaches the threshold $\|W^2\|_2$. 
\begin{proof}
Let $\la>0$, $W_\la:=\la W(\la x)$, $V=W_\la^2$ and $u=\chi W_\la+\ga$ with a radial cut-off function $\chi\in C_0^\I(\R^4)$ with the initial data $\ga(0)=0$. 
Then 
\EQ{
 i\dot \ga - \De \ga - V \ga = V\chi W_\la + \De \chi W_\la = 2\na\chi\cdot\na W_\la + W_\la \De\chi =:F}
Using the explicit decay of $W$, we obtain 
\EQ{
 \|F\|_{H^1} \lec \|\la\na^2 W\|_{L^2(|x|\sim \la)} + \|\la^{-1}W\|_{L^2(|x|\sim \la)}
 \lec \la^{-1}. }
Hence if \eqref{Stz abvW} holds for some finite $C$ then by Duhamel 
\EQ{
 \|\ga\|_{L^p_tL^q(0,T)} \le C\|F\|_{L^1_t H^1(0,T)} \lec C\la^{-1}.}
On the other hand,
\EQ{
 \|\chi W_\la\|_q \ge \la^{1-4/q}\|W\|_{L^q(|x|\lec\la)} \gec \la^{1-4/q}}
for $\la>1$ and $q>4$. Hence 
\EQ{
 \|u\|_{L^p_tL^q(0,T)} \gec T^{1/p}\la^{1-4/q} \to \I}
as $\la\to\I$, contradicting \eqref{Stz abvW}. 
\end{proof}

For wave potentials, the uniform estimate fails in the scaling invariant setting, namely for the admissible exponents globally in time. 
\begin{prop}
Let $p\in[2,\I]$, $q\in(4,\I]$ satisfy $1/p+2/q=1$. Then for any $\al\in\R$, there is no $C\in(0,\I)$ such that 
\EQ{ \label{Stz abvW2} 
 \|u\|_{L^p_tL^q(0,\I)} \le C\|u(0)\|_2}
holds for all $u\in C([0,\I);L^2\rad(R^4))$ and $V(t,|x|):[0,\I)\times\R^4\to\C$ satisfying $i\dot u-\De u = (\re V)u$, $\|V(0)\|_2\le\|W^2\|_2$ and $i\dot V=\al DV$. 
\end{prop}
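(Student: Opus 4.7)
The strategy adapts the ground-state construction of the preceding proposition via parabolic rescaling, which turns the wave potential into an essentially static one. When $\al=0$ the potential is already static and the claim follows directly from the previous proposition taken with $T=\I$, so assume $\al\ne 0$. For each $\la>1$, set $V_0:=W_\la^2$ (so $\|V_0\|_2=\|W^2\|_2$) and $V(t,x):=e^{i\al tD}V_0$. The parabolic rescaling $\ti u(t,x):=\la^{-2}u(\la^{-2}t,\la^{-1}x)$, $\ti V(t,x):=\la^{-2}V(\la^{-2}t,\la^{-1}x)=e^{i(\al/\la)tD}W^2$ preserves $\|\cdot\|_{L^2_x}$ and, thanks to the scaling relation $1/p+2/q=1$, also preserves $\|\cdot\|_{L^p_tL^q_x}$. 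It therefore suffices to work in the rescaled picture, where the effective wave speed $\be:=\al/\la$ is as small as we wish by choosing $\la$ large.

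Take the time-independent approximate solution $u_{\mathrm{app}}(t,x):=\chi_R(x)W(x)$, with $\chi_R(x)=\chi(x/R)$ a smooth radial bump and $R\gg 1$. Since $-\De W=W^3$, the residual
\EQN{F:=(i\p_t-\De-\re\ti V)u_{\mathrm{app}}=\De\chi_R\cdot W+2\na\chi_R\cdot\na W+\chi_RW(\re\ti V-W^2)}
splits into a cutoff part (first two terms), supported in $\{|x|\sim R\}$ with $L^2_x$ norm $\lec R^{-2}$ (as in the preceding proposition), and a wave-drift part obeying $\|\chi_RW(\re\ti V(t)-W^2)\|_2\le \|W\|_\I\|(e^{i\be tD}-1)W^2\|_2\lec|\be|t\,\|DW^2\|_2$.

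Writing the true solution as $\ti u=u_{\mathrm{app}}+\ga$ with $\ga(0)=0$ solving $(i\p_t-\De-\re\ti V)\ga=-F$, and noting that every time-translate $\ti V(\cdot+s)$ still fits the hypothesis because $\|\ti V(s)\|_2=\|W^2\|_2$ by $L^2$-unitarity of $e^{i\be sD}$, the hypothetical Strichartz inequality together with Duhamel and Minkowski yields $\|\ga\|_{L^pL^q(0,T)}\le C\|F\|_{L^1_tL^2_x(0,T)}\lec C(TR^{-2}+|\be|T^2)$. Combining with the direct lower bound $\|u_{\mathrm{app}}\|_{L^pL^q(0,T)}=T^{1/p}\|\chi_RW\|_q\gec T^{1/p}$ (valid for $q>2$ since $\|W\|_q$ is positive and finite) and the initial-data size $\|\ti u(0)\|_2=\|\chi_RW\|_2\sim(\log R)^{1/2}$ (the logarithmic divergence in dimension four from $|W(x)|^2\sim|x|^{-4}$), Strichartz would force
\EQN{C(\log R)^{1/2}\gec T^{1/p}-C(TR^{-2}+|\be|T^2).}
Choosing $T:=(20C)^p(\log R)^{p/2}$ and $|\be|:=(\log R)^{-p-1}$ (equivalently $\la\sim|\al|(\log R)^{p+1}$) renders both error terms of strictly smaller order than $T^{1/p}\sim(\log R)^{1/2}$ as $R\to\I$, producing the desired contradiction.

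The principal obstacle is controlling the wave-drift error $\chi_RW(\re\ti V-W^2)$ uniformly across the time window $[0,T]$ needed to witness the Lebesgue-norm blow-up of $u_{\mathrm{app}}$; this is what forces $\la\to\I$ much faster than $R\to\I$, ensuring that the rescaled potential remains essentially static over the relevant window. Inverting the parabolic rescaling transports the counterexample back to the original coordinates, completing the refutation of \eqref{Stz abvW2}.
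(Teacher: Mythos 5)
Your proof is correct and follows a genuinely different path from the paper's. Both start the same way: the case $\al=0$ is reduced to the preceding proposition, and for $\al\ne 0$ the parabolic rescaling (respecting $L^2$ and the admissible $L^p_tL^q$ norms) shows that the hypothetical estimate for the fixed $\al$ would entail the same estimate with the same constant $C$ for every wave speed $\be=\al/\la$. From there the paper proceeds by a soft compactness argument: take $\be\to 0$, use continuity of the solution map in $\be$ to get $u_\be\to e^{it(-\De+v)}\fy$ in $C(\R;L^2)$, apply Fatou/lower semi-continuity of the $L^pL^q$ norm, and invoke the preceding (static) proposition as a black box. You instead build an explicit quasi-counterexample directly in the small-$\be$ regime: the time-independent approximate solution $\chi_R W$, whose residual splits into the usual cutoff error $\lec R^{-2}$ and a wave-drift error $\lec|\be|t$, and then exhibit the contradiction by playing off the polynomial growth $T^{1/p}$ of the Strichartz norm against the only logarithmic growth $(\log R)^{1/2}$ of the initial $L^2$ mass, with $T\sim(\log R)^{p/2}$ and $|\be|\sim(\log R)^{-p-1}$ chosen to keep both error terms decaying. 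The key supporting observations you make—that the time-translated potential still satisfies the hypotheses so Duhamel/Minkowski gives $\|\ga\|_{L^pL^q(0,T)}\lec C\|F\|_{L^1_tL^2_x(0,T)}$, and that $DW^2\in L^2(\R^4)$ so the drift error is genuinely $O(|\be|t)$—are correct. Your approach is longer but self-contained: it avoids the (unproven in the paper) continuous dependence of the solution map on $\be$ and the Fatou argument, effectively reproving the static obstruction with a perturbative wave drift. One cosmetic remark: the stated exponent window $p\in[2,\I]$, $q\in(4,\I]$ with $1/p+2/q=1$ is mutually inconsistent (it should read $p\in[1,2)$); you tacitly work with the correct relation $T^{1/p}$ with $p<2$, so this typo in the paper does not affect your argument.
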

\begin{proof}
If $\al=0$ then this is weaker than the above, as the condition on $C$ is stronger. 
The case of $\al\not=0$ is reduced to the case of $\al=0$ by scaling as follows. 
Suppose that \eqref{Stz abvW2} holds for some $\al>0$ and $C<\I$. 
For any $u,V$ satisfying the conditions, define $u_\la,V_\la$ for $\la>0$ by the rescaling in \eqref{parab scal}. Then we have
\EQ{
 \pt i\dot u_\la - \De u_\la = (\re V_\la)u_\la, \pq i\dot V_\la=\la\al DV_\la,
 \pr \|u_\la\|_{L^p_tL^q(0,\I)}=\|u\|_{L^p_tL^q(0,\I)}, \pq \|u_\la(0)\|_2=\|u(0)\|_2, \pq \|V_\la(0)\|_2=\|V(0)\|_2.}
Hence \eqref{Stz abvW2} holds uniformly for all $\al>0$. 
Now for any radial $\fy,v\in L^2$ with $\|v\|_2\le\|W^2\|_2$, and $\al>0$, let $u_\al,V_\al$ be the solutions of $i\dot u_\al-\De u_\al=(\re V_\al)u_\al$, $i\dot V_\al=\al DV_\al$, $u_\al(0)=\fy$, $V_\al(0)=v$. 
Then we have $\|u_\al\|_{L^p_tL^q(0,\I)}\le C\|\fy\|_2$. Let $\al\to+0$. Then $u_\al\to u_0$ and $V_\al\to v$ in $C(\R;L^2)$, where $u_0=e^{it(-\De+v)}\fy$. Hence by the lower semi-continuity of norm, we obtain $\|u_0\|_{L^p_tL^q(0,\I)}\le C\|\fy\|_2$, namely the uniform estimate in the case of $\al=0$, which was already precluded. 
\end{proof}

\section{Table of Notation} 
{\small \begin{longtable}{l|l|l}
 \hline 
 symbols & description & defined in \\
 \hline
 $u,N$ & Unknown variables of the equations & \eqref{Zak0}, \eqref{Zak}\\ 
 ``solution of \eqref{Zak}" & Solution with a Strichartz norm & Def.~\ref{def:sol}\\
 $D,\LR{D}$ & Fourier multipliers & above \eqref{Zak}, below \eqref{RFD}\\
 $M(u),E_Z(u,N)$ & Energy-type functionals for \eqref{Zak} & \eqref{def M EZ}\\
 $E_S(u),K(u)$ & Energy-type functionals for NLS & \eqref{def ES}, \eqref{def K}\\
 $\|\cdot\|_p$ & $L^p(\R^d)$ norm & \eqref{def norm-p}\\
 $X\rad$ & Radial subspace & \\
 $C_S$ &The Sobolev best constant on $\R^4$ & \eqref{Sob}, \eqref{max Sob}\\
 $2^*,2_*,p(s)$ & Sobolev exponent and its dual & below \eqref{end Str}, \eqref{def p(s)}\\
 $W$ & Ground state of NLS & \eqref{def W}\\
 $H^s,\dot H^s,\dot B^s_{p,q}$ & Sobolev and Besov spaces & above \eqref{def p(s)}\\
 $\X^\de,\ck\X^\de,\ti\X^\de$ & Endpoint Strichartz spaces & \eqref{def X}\\
 $\X^\de_*,\ck\X^\de_*,\ti\X^\de_*$ & Dual spaces of the above & \eqref{def X}\\
 $\cL^p_t$ & Time-frequency mixed norms & \eqref{def cL}\\
 $\fy_j,\fy_Q,\fy_{\cP}$ & Littlewood-Paley decomposition & \eqref{def LP}, \eqref{def LPproj}\\
 $\F\fy=\hat\fy$ & Fourier transform on $\R^d$ & below \eqref{def LP}\\
 $X(I)$ & Restriction to the interval & \eqref{def X(I)}\\
 $C\rd,C\fn,$ & Constants in Strichartz estimates & Lem.~\ref{lem:freeStz}, Thm.~\ref{thm:Stz},\\
 \pq $C^\de_0,C\wt,C\ds$ & & \eqref{def C0de}, Lems.~\ref{lem:St-wt}, \ref{lem:Bsmall}\\
 $\e\fn,\e\wt,\e\pert,\e\ds$ & Smallness for Strichartz estimates & Thm. \ref{thm:Stz}, Lems.~\ref{lem:St-wt}--\ref{lem:Bsmall}\\
 $V\sF,V\sN,V\sD$ &Decomposition of wave potential & Thm.~\ref{thm:Stz}, \eqref{decop Vn}\\
 $\de_\star$ & Regularity gain in radial Strichartz & Lem.~\ref{lem:freeStz}\\
 $\squm$ & Square sum & \eqref{def squm}\\
 $\HL_\io,\LH_\io,\HH_\io$ & Bilinear frequency restrictions & \eqref{def HL}--\eqref{def HL-prod}, \eqref{def HH-prod}\\
 $\Om_\io^\pm,\Om_\io,\ti\Om_\io,\vec\Om_{\io_1,\io_2},$ & Normal forms & \eqref{def Ompm}, \eqref{def Om}, \eqref{def tiOm}, \eqref{def normal}\\
 \pq $\Psi_{\io_1,\io_2},\vec\Om_\io,\Psi_\io$ & & \\
 $F_\io(u,N),G_\io(u,N)$ &Nonlinear terms in the equations & \eqref{def FG}\\
 $\Uf(t)$ &The free propagatpsr & \eqref{def Uf}\\
 $\D_\io^T,\cU_\io^T,\cN_\io^T$ &The Duhamel integrals &\eqref{def Duh}\\
 $S(\si),A_s$ & Dilation and generator & below \eqref{initial PRD}, \eqref{def psiR As}\\
 $\psi^j,t^j_n,\si^j_n,\si^A_n$ & Profiles, their times and frequencies & \eqref{initial PRD}--\eqref{scal lim0}, above \eqref{scal sep}\\
 $\Ga^{0,J}_n,\Ga^{1,J}_n,\Ga^J_n$ & Remainders in the decomposition & \eqref{initial PRD}, \eqref{second PRD}, \eqref{def GaJn}\\
 $\ti V^j_n,\ta^j_n$ & Rescaled profiles and times & \eqref{def tiVjn}\\
 $J_\sim$ & Quotient index set by scaling & \eqref{def Jsim}\\
 $V^A_n,\ti V^A_n$ & Wave trains & \eqref{def VnA}\\
 $\ti S$ & Frequency separation in the set $S$ & \eqref{sep S}\\
 $w$ & Frequency weight with flatness $\be$ & \eqref{def w1}, \eqref{def w2}\\
 $\U{w}^s$ & Fourier multiplier to the power $s$ & \eqref{def ws}\\
 $v_*^*, \ga_*^*$ & Real part of wave potential & \eqref{def vga}\\
 $J^0,C_1,\e_0,c_1$ & Constants in proof of Theorem \ref{thm:Stz} & \eqref{choice J0}, \eqref{def C1}, \eqref{def e0}, \eqref{def c1}\\
 $[A],]A[,\vec A,w_A$ & Frequencies and the weight for $\si^A_n$ & \eqref{freq proj},\eqref{def wA}\\
 $S_+,S_-,\tle S,\tge S$ & Frequencies before or after $S$ & \eqref{freq befaf}\\
 $f^A,u^{\LR{A}},f^{\LR{A}},f^{\vec A}$ & Groups of nonlinear terms & \eqref{def fA}, \eqref{def ufLR}, \eqref{eq vecA}\\
 $Z^s_0,Z^s_1,\cZ^s,Z_2^{s,\de}$ & Function spaces for $(u,N)$ & \eqref{def Zs}, \eqref{def cZs}, \eqref{def Z2}\\
 $X^s_1,X_2^{s,\de},Y_1,Y_2^\de$ & Function spaces for $u$ and for $N$ & \eqref{def Zs}, \eqref{def Z2}\\
 $\psi_R,f_{j,R},\La_R,h_R$ & Weight functions in $x$ with scale $R$ & \eqref{def psiR As}, \eqref{def fj}, \eqref{def La}, \eqref{def be}\\
 $\V_R$ & Localized virial& \eqref{def VR}\\
 $NS,QN,CC,CC_3'$ & Derivative terms of the virial & \eqref{def NS}\\
 $\nu,\y,\y_L,\y_H,\y^j,$ & Variables associated with $N$ & \eqref{def nu}, \eqref{decop eta-1}, \eqref{decop eta-2}\\
 $\be_R$ & A bilinear commutator & \eqref{def be}\\
 
 \hline
\end{longtable}}

\end{document}